\newcounter{subsection1}[section]
\newtheorem{lemma}[subsection1]{Lemma}
\newtheorem{prop}[subsection1]{Proposition}
\newtheorem{theorem}[subsection1]{Theorem}
\newtheorem{remark}[subsection1]{Remark}
\newcounter{mysubequations}
\renewcommand{\themysubequations}{(\roman{mysubequations})}
\newcommand{\mysubnumber}{\refstepcounter{mysubequations}\themysubequations}
\newcommand{\E}[1]{{\mathbb E}\left[#1\right]}
\newcommand{\p}[1]{{\mathbb P}\left(#1\right)}
\newcommand{\psub}[2]{{\mathbb P}_{#1}\left(#2\right)}
\newcommand{\Esub}[2]{{\mathbb E}_{#1}\left[#2\right]}
\long\def\new#1{{#1}}
\long\def\newnew#1{{#1}}
\newcounter{assumptions}
\numberwithin{equation}{section} 
\numberwithin{subsection1}{section}
\begin{document}

\def\P{\mathbb{P}}
\def\R{\mathbb{R}} %
\def\C{\mathbb{C}} %
\def\N{\mathbb{N}}
\def\Q{\mathbb{Q}}
\def\Z{\mathbb{Z}}
\def\1{\mathds{1}}
\def\to{\rightarrow}
\def\F{\mathscr F}
\newcommand{\Vect}{\operatorname{Vect}}
\newcommand{\dom}{\operatorname{dom}}
\newcommand{\supp}{\operatorname{supp}}
\newcommand{\thr}{\operatorname{thr}}

\def\aminus{a_{-}}
\def\aplus{a_{+}}

\def\ball{\mathcal{B}}

\def\kmax{\overline k}
\def\ep{\varepsilon}

\allowdisplaybreaks

\author{Pascal Maillard\thanks{Institut de Mathématiques de Toulouse, CNRS, UMR5219, Université de Toulouse, 118 route de Narbonne, F-31062 Toulouse cedex 09, France. E-mail: \texttt{Pascal.Maillard at math dot univ-toulouse dot fr}. Partially supported by ANR grant ANR-20-CE92-0010-01.}, Sarah Penington\thanks{Department of Mathematical Sciences, University of Bath, Claverton Down, Bath, UK. E-mail: \texttt{S.Penington at bath dot ac dot uk}. Supported by a Royal Society University Research Fellowship.}}

\title{Branching random walk with non-local competition}

\date{March 21, 2024}
\maketitle

\vspace{-0.35cm}

\begin{abstract}
    We study the Bolker-Pacala-Dieckmann-Law (BPDL) model of population dynamics in the regime of large population density. The BPDL model is a particle system in which particles reproduce, move randomly in space, and compete with each other locally. We rigorously prove global survival as well as a shape theorem describing the asymptotic spread of the population, when the population density is sufficiently large. In contrast to most previous studies, we allow the competition kernel to have an arbitrary, even infinite range, whence the term \emph{non-local competition}. This makes the particle system non-monotone and of infinite-range dependence, meaning that the usual comparison arguments break down and have to be replaced by a more hands-on approach. Some ideas in the proof are inspired by works on the non-local Fisher-KPP equation, but the stochasticity of the model creates new difficulties.
\end{abstract}

\medskip 

\noindent \textbf{Keywords:} population dynamics, branching random walk, non-monotone particle system

\noindent \textbf{MSC2020 subject classifications:} 60K35; 60J80; 92D25.

\section{Introduction}

\subsection{Definition of the model} \label{subsec:modeldefn}

In this article, we study the Bolker-Pacala-Dieckmann-Law (BPDL) model, which we also refer to as \emph{branching random walk with non-local competition} (BRWNLC). The BRWNLC can be regarded as a Markov process $(\xi_t)_{t\geq 0}$ taking values in $\N_0^{\Z^d}$, with the interpretation that $\xi_t(x)$ is the number of particles at site $x$ at time $t$, for $t\ge0$ and $x\in\Z^d$. 

\new{Take $\gamma >0$.
Let $p:\Z^d\to [0,1]$ satisfy the following assumptions:
\begin{equation} \label{eq:p_assumptions}
    \begin{aligned}\setcounter{mysubequations}{0}
      \mysubnumber\quad &p(x)\ge 0 \; \; \forall x\in \Z^d,\\
      \mysubnumber\quad &\sum_{x\in \Z^d}p(x)=1,\\
      \mysubnumber\quad &\exists R_1<\infty \text{ s.t. }p(x)=0\; \forall x\in \Z^d\text{ with }\|x\|\ge R_1,\\
      \mysubnumber\quad &\{x\in\Z^d:p(x)>0\}\text{ is a spanning set of the vector space }\R^d.
    \end{aligned}
\end{equation}
(Here, and throughout the article, we write $\|\cdot \|$ to denote the $\ell^2$ or Euclidean norm.)
In words, $p$ is a finite range jump kernel on $\Z^d$ whose support is a spanning set of $\R^d$.
Let $\Lambda :\Z^d \to [0,\infty)$ with $\sum_{x\in \Z^d}\Lambda(x)\in (0,\infty)$.

We now define the BRWNLC with jump rate $\gamma$, jump kernel $p$ and competition kernel $\Lambda$.}
Starting from an initial configuration $\xi_0$ consisting of a finite number of particles, the model evolves as follows:
\begin{itemize}
\item Particles \emph{branch} (or \emph{reproduce}) at (constant) rate 1. 

That is, for each $x\in \Z^d$, the transition $\xi_t \to \xi_t +\delta_x$ occurs with rate $\xi_t(x)$ (i.e.~such a transition occurs at each jump time in an inhomogeneous Poisson process with rate $\xi_t(x)$ at time $t$).

\item Particles \emph{jump} at (constant) rate $\gamma > 0$ \new{with jump kernel $p$}.

\new{That is,} for each $x\in \Z^d$ and $y\in \Z^d$, the transition $\xi_t \to \xi_t-\delta_x+\delta_{x+y}$ occurs with rate $\gamma \xi_t(x) p(y)$.

\item Particles \emph{compete} with each other \new{with competition kernel $\Lambda$}.

A particle at $x$ gets killed with rate 
\begin{equation} \label{eq:Ktdefn}
K_t(x) \coloneqq \xi_t* \Lambda (x)=\sum_{y\in \Z^d}\xi_t(x-y)\Lambda (y),
\end{equation}
i.e.~for each $x\in \Z^d$, the transition $\xi_t\to \xi_t-\delta_x$ occurs with rate $K_t(x)\xi_t(x)$.
\end{itemize}
For $\xi\in \N_0 ^{\Z^d}$, write $\mathbb P_\xi$ for the probability measure under which $(\xi_t)_{t\ge 0}$ is a BRWNLC with $\xi_0=\xi$.
A precise construction of the model is given in Section~\ref{sec:construction}. 

We further define
\begin{align}
\label{eq:Ndefn}
N &=N(\Lambda)\coloneqq \frac{1}{\sum_{x\in \Z^d}\Lambda (x)}\\
\label{eq:mudefn}
\text{and} \quad \mu &=\mu(p) \coloneqq\gamma \sum_{y\in \Z^d}yp(y) \in \R^d.
\end{align}
The parameter $N$ should be interpreted as the local population density or carrying capacity (we do not need to assume that $N$ is an integer). We will be interested in the regime where $N$ is large. One might think of a sequence of competition kernels $\Lambda_N = \frac1N \Lambda_1$ for some fixed $\Lambda_1$ with $\sum_{x\in \Z^d}\Lambda_1 (x) = 1$; however, our results hold in greater generality.

\subsection{Main results}

We state two theorems. 
\new{The first theorem concerns the global survival of the BRWNLC with competition kernel $\Lambda$ when $N=N(\Lambda)$ is sufficiently large and $\Lambda$ satisfies a suitable exponential decay:
\begin{theorem}[Global survival]
\label{th:survival}
For $\delta, \gamma, \lambda, R >0$ and $p:\Z^d\to [0,1]$ satisfying~\eqref{eq:p_assumptions}, 
there exist $\kappa=\kappa(\gamma, p)>0$ and $N_0=N_0(\delta,\gamma,p,\lambda,R)>0$ such that the following holds.
Suppose $\Lambda :\Z^d \to [0,\infty)$ with $\sum_{x\in \Z^d}\Lambda(x) \in (0,\infty)$ and
\begin{equation} \label{eq:Lambda_assumptions}
    \begin{aligned}\setcounter{mysubequations}{0}
    \mysubnumber\quad &\sum_{x\in \Z^d}\Lambda (x) \le \frac 1 {N_0},\\
      \mysubnumber\quad &\frac{\Lambda(0)}{\sum_{x\in \Z^d}\Lambda (x)} \ge \lambda,\\
      \mysubnumber\quad &\frac{\Lambda(x)}{\sum_{y\in \Z^d}\Lambda (y)} \le e^{-\kappa \|x\|} \quad \forall x\in \Z^d \text{ with }\|x\|\ge R.\\      
    \end{aligned}
\end{equation}
Let $(\xi_t)_{t\ge 0}$ be a BRWNLC with jump rate $\gamma$, jump kernel $p$ and competition kernel $\Lambda$, as defined in Section~\ref{subsec:modeldefn}.
Then for any $\xi\in \N_0^{\Z^d}$ consisting of a finite number of particles with $\xi\not\equiv 0$,
$$
\psub{\xi}{\sum_{x\in \Z^d} \xi_t(x)>0 \; \forall t\ge 0}> 1-\delta.
$$
\end{theorem}
}

\new{We remark that when $\delta >0$ is chosen to be small, $N_0$ is very large and so the condition (i) in the conditions on $\Lambda$ in~\eqref{eq:Lambda_assumptions} requires that $\sum_{x\in \Z^d}\Lambda (x)$ is very small, or in other words $N$, as defined in~\eqref{eq:Ndefn}, is very large.
In particular, this condition means that the killing rate (given in~\eqref{eq:Ktdefn}) is very small when there are not many particles in the system, which in turn means that the probability of survival can be close to 1 even when the system begins with a small number of particles. 
Condition (ii) on $\Lambda$ ensures that the normalised competition kernel $\Lambda(\cdot)/\sum_{x\in \Z^d}\Lambda (x)$ is strictly positive at $0$, or in other words, there is some on-site competition in the particle system.
Condition (iii) requires exponential decay for the normalised competition kernel.
}

The next theorem describes the asymptotic spread of the BRWNLC \new{when started with a single particle at the origin}.
\new{Take $\gamma>0$ and $p:\Z^d\to [0,1]$ satisfying~\eqref{eq:p_assumptions}.}
Let $(X_t)_{t\ge 0}$ denote a continuous-time random walk started at $0$ with jump rate $\gamma$ and jump kernel $p$.
The Cramér transform of $X_1$ is expressed for $u\in\R^d$ as
\begin{equation}
\label{eq:Cramer}
\E{e^{\langle u, X_1 \rangle}} = \exp\bigg(\gamma\sum_{x\in \Z^d}p(x) \left(e^{ \langle u, x \rangle }-1\right)\bigg) < \infty,
\end{equation}
since the jump kernel $p$ is of finite range by assumption. The rate function is expressed for $v\in\R^d$ as
\begin{equation}  \label{eq:Idefn}
I(v)\new{=I(v;\gamma,p)}=\sup_{u \in \R^d} \left( \langle v, u \rangle - \log \E{e^{\langle u, X_1 \rangle}} \right) =\sup_{u \in \R^d} \bigg( \langle v, u \rangle - \gamma \sum_{x\in \Z^d}p(x) \left(e^{ \langle u, x \rangle }-1\right) \bigg).
\end{equation}
By the finiteness of the Cramér transform, $I$ is a good convex rate function, i.e.~it is convex and all sub-level sets $\{I \le a\}$, $a\in\R$, are compact \cite[Lemma~2.2.31]{DemboZeitouni}. %
In particular, the set
\begin{equation} \label{eq:I1defn}
\mathcal I_1 \new{=\mathcal I_1(\gamma,p)}:= \{x\in\R^d: I(x\new{;\gamma,p})\le 1\}
\end{equation}
is compact and convex. See Figure~\ref{fig:shape} for examples.

\begin{figure}
\begin{center}
\includegraphics[scale=1]{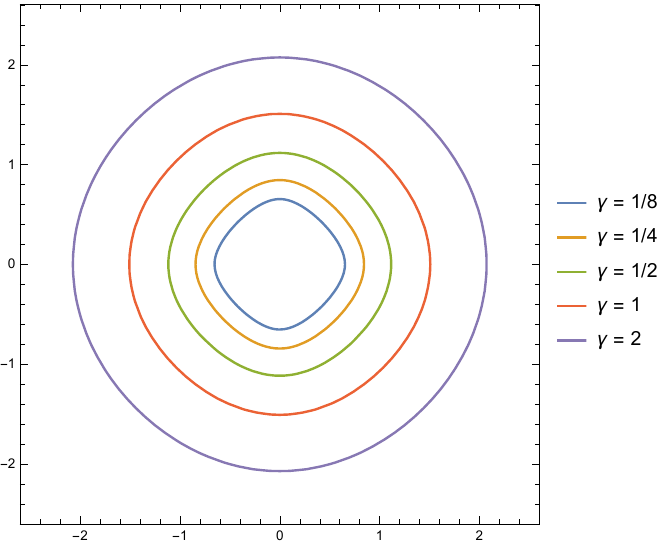}

\includegraphics[scale=1]{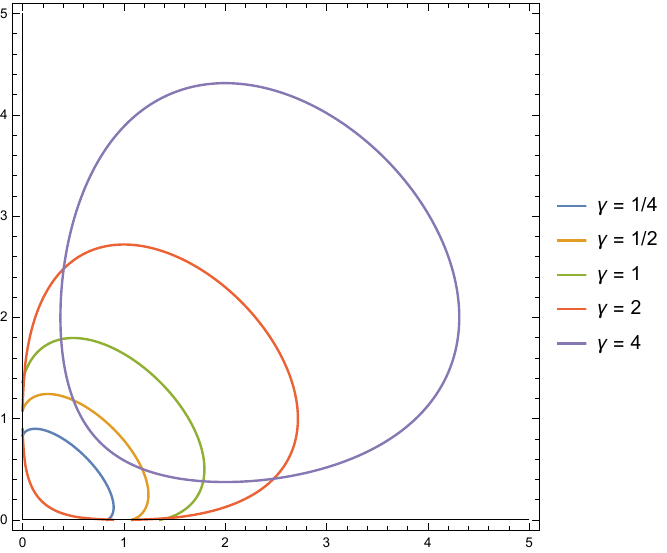}
\end{center}
\caption{\label{fig:shape} Two examples of plots of the boundary of the set $\mathcal I_1$, for two different jump kernels $p$ and various values of $\gamma$. Upper figure: $p((0,1)) = p((0,-1)) = p((1,0)) = p((-1,0)) = 1/4$. $I((x_1,x_2)) = \gamma(g(x_1/\gamma)+g(x_2/\gamma))$, with $g(x) = 1 -\sqrt{1+x^2}+x \sinh ^{-1}(x)$. Lower figure: $p((0,1)) = p((1,0)) = p((0,0))=p((1,1))=1/4$. $I$ is of the same form, with $g(x) = x\log(2x)-x+1/2$. In this case, the sets $\mathcal I_1$ are contained in the first quadrant and their boundaries contain segments of the axes, when $\gamma<2$.}
\end{figure}

Recall the definition of the Hausdorff distance between two sets $X,Y\subseteq \R^d$:
\begin{equation} \label{eq:Hausdorff}
d_H(X,Y) =  \inf\{\epsilon>0: X\subseteq Y_\epsilon\text{ and }Y\subseteq X_\epsilon\},
\end{equation}
where for $X\subseteq \R^d$ and  $\epsilon >0$,
\begin{equation}
X_\epsilon = \bigcup_{x\in X} \{z\in \R^d : \|x-z\|<\epsilon \}. \label{eq:fatten}
\end{equation}

\new{
\begin{theorem}[Shape theorem] \label{th:shape}
For $\delta, \gamma, \lambda, R >0$ and $p:\Z^d\to [0,1]$ satisfying~\eqref{eq:p_assumptions}, 
there exist $\kappa=\kappa(\delta,\gamma, p)>0$ and $N_0=N_0(\delta,\gamma,p,\lambda,R)>0$ such that the following holds.
Suppose $\Lambda :\Z^d \to [0,\infty)$ with $\sum_{x\in \Z^d}\Lambda(x) \in (0,\infty)$ and
\begin{equation} \label{eq:Lambda_assumptions0}
    \begin{aligned}\setcounter{mysubequations}{0}
    \mysubnumber\quad &\sum_{x\in \Z^d}\Lambda (x) \le \frac 1 {N_0},\\
      \mysubnumber\quad &\frac{\Lambda(0)}{\sum_{x\in \Z^d}\Lambda (x)} \ge \lambda,\\
      \mysubnumber\quad &\frac{\Lambda(x)}{\sum_{y\in \Z^d}\Lambda (y)} \le e^{-\kappa \|x\|} \quad \forall x\in \Z^d \text{ with }\|x\|\ge R.\\      
    \end{aligned}
\end{equation}
Let $(\xi_t)_{t\ge 0}$ be a BRWNLC with jump rate $\gamma$, jump kernel $p$ and competition kernel $\Lambda$, as defined in Section~\ref{subsec:modeldefn}.
Then
\[
\psub{\delta_0}{\limsup_{t\to\infty} d_H\left(\frac 1 t \{\xi_t > 0\}, \mathcal I_1(\gamma,p) \right) < \delta} > 1-\delta.
\]
\end{theorem}
}

We remark that an analogue of Theorem~\ref{th:shape} has been proven for branching random walk (BRW) \emph{without} competition by Biggins~\cite{Biggins1978}. Theorem~\ref{th:shape} thus shows that in the limit of large population density, the spreading speed of the BRWNLC is asymptotically the same as in BRW without competition.

\new{
We also remark that as in Theorem~\ref{th:survival}, when $\delta>0$ is chosen to be small, the condition (i) on $\Lambda$ in~\eqref{eq:Lambda_assumptions0} ensures that $\sum_{x\in \Z^d}\Lambda(x)$ is very small, and so the killing rate given in~\eqref{eq:Ktdefn} is very small until there are a large number of particles in the system. A consequence of this is that the lower bound on the probability in the theorem can be close to 1, even though the system is started with a single particle.
}

\subsection{Discussion and comparison with the literature}

\paragraph{Previous works on the BPDL model.}
The BPDL model, studied initially by Bolker and Pacala \cite{Bolker1997} as well as Law and Dieckmann \cite{Law1999}, is a popular individual-based model in population dynamics. It has been studied under various guises in the mathematics, ecology and physics literature. Questions of interest concern global and local survival, asymptotic spread, equilibrium states, and the description of ancestral lineages. The methods used to study the model include the following:
\begin{enumerate}
    \item moment equations and approximation by scaling limits, see e.g.~\cite{Bolker1997,Law1999,Fournier2004,Ovaskainen2006,Finkelshtein2009,Savov2015};
    \item duality, see e.g.~\cite{Hutzenthaler2007,Birkner2016,Birkner2019};
    \item comparison with particle systems and percolation models, see e.g.~\cite{Etheridge2004,Bertacchi2007,Birkner2007,Blath2007}.
\end{enumerate}
The first method is a powerful tool allowing in particular the derivation of precise numerical estimates of various quantities of interest \cite{Cornell2019} but, to our knowledge, has not yet been used to rigorously study the asymptotic behaviour of the process as time goes to infinity. The second method, duality, is a powerful tool, in particular for studying ancestral lineages and the equilibrium distribution, but it is restricted to certain special cases and seems not to be applicable for our model.

The third method is well suited for treating questions concerning survival, asymptotic spread and ergodicity. The main technique is to compare the model with simpler models, such as oriented percolation, by means of a renormalization procedure. This method was introduced by Bramson and Durrett for analysing the contact process \cite{Bramson1988} and has been applied in many contexts since. However, such a comparison (typically done through coupling arguments) is more straightforward if the particle system satisfies a property called \emph{monotonicity}. This property states in particular that two copies $(\xi_t)_{t\ge0}$ and $(\xi'_t)_{t\ge0}$ of the system, starting from two initial configurations $\xi_0$ and $\xi_0'$ with $\xi_0(x) \ge \xi_0'(x)$ for all $x\in\Z^d$, can be coupled in such a way that $\xi_t(x) \ge \xi_t'(x)$ for all $t\ge0$ and $x\in\Z^d$. Unfortunately, the BPDL model is monotone if and only if the competition is on-site only, i.e.~if $\Lambda = \Lambda(0)\delta_0$. 

In order to get around this problem, several authors have introduced additional assumptions, in particular on the jump kernel $p$ and the competition kernel $\Lambda$. For example, Etheridge \cite{Etheridge2004} assumes (for a variant of the model) a condition analogous to $p \ge c\Lambda$ for some $c>0$, which allows her to obtain a certain monotonicity for a truncated version of the model. Birkner and Depperschmidt \cite{Birkner2007} assume that the model evolves in discrete time and that both $p$ and $\Lambda$ have finite range (and that $\Lambda$ is a small perturbation of $\Lambda(0)\delta_0$). While discrete time induces additional complications due to large jumps in the numbers of particles and the chaotic behaviour of the logistic map, it ensures that the dependence between the particles is of finite range only, which allows them to apply a comparison with so-called $k$-dependent oriented percolation. 
(We will discuss these models and other examples of non-monotone particle systems in more detail later in this section.)

In the current article, we allow for $\Lambda$ to be of arbitrary, even infinite range, and we work in continuous time. This makes the particle system non-monotone and of infinite-range dependence. Instead of comparing the process to an oriented percolation, we implement a contour argument tailored to our process. Much work is devoted to dealing with the infinite-range dependence \emph{in space}. In order not to be burdened with the dependence \emph{in time}, we have introduced assumption~(ii) in~\eqref{eq:Lambda_assumptions}. \new{This assumption says that on-site competition is comparable to total competition with other sites, and is a weaker version of Birkner and Depperschmidt's assumption that $\Lambda$ is a small perturbation of $\Lambda(0)\delta_0$. Our proof uses this assumption to} allow us to effectively treat the system as being of finite-range dependence in time. For more details about the proof, see Section~\ref{sec:proof_outline}.

\paragraph{\new{The large population limit.}}
A natural approach to studying the BRWNLC \new{would be to consider the rescaled empirical measure of the particles in the large population limit, i.e.~to consider the limit of $(\xi_t/N)_{t\ge0}$ as $N\to\infty$} and consider the BRWNLC as a perturbation of its limit. This is indeed the underlying idea of the first method mentioned above. The \new{large population limit} of a related model has been rigorously shown to be a certain evolution equation with a quadratic non-linearity due to the competition term \cite{Fournier2004}. Maybe surprisingly, little is known about the long-time behaviour of solutions to this evolution equation. One might expect it to behave similarly to \new{an analogous partial differential equation}, known as the non-local Fisher-KPP equation. While the classical Fisher-KPP equation has been extensively studied since the 30's \cite{Fisher1937,Kolmogorov1937}, with many celebrated results such as Bramson's logarithmic correction to the front position \cite{Bramson1983}, its non-local counterpart has spurred interest only in recent years, see e.g.~\cite{Hamel2014,Penington2018,Bouin2020}. The non-local Fisher-KPP equation displays intriguing behavior, such as the existence of non-constant steady states if the competition kernel is `sufficiently non-local' \cite{Hamel2014}. More importantly for our purposes, the study of the non-local Fisher-KPP equation greatly suffers from the lack of a parabolic maximum principle, which is the basic technical tool for the study of semi-linear parabolic partial differential equations such as the Fisher-KPP equation. Indeed, the parabolic maximum principle is crucially used in order to compare the solution to the equation to simpler functions, chosen to be super- or subsolutions to the equation. It is the analytic analogue of the probabilistic concept of monotonicity mentioned above -- the lack of a parabolic maximum principle for the non-local Fisher-KPP equation is therefore a heritage of the non-monotonicity of the BRWNLC.

To circumvent this problem, the authors of \cite{Hamel2014,Penington2018,Bouin2020} rely on other techniques, such as:
\begin{itemize}
    \item Focusing on the regions where the solution is small, and comparing it to the solution of the linearised equation.
    \item Bootstrapping: starting from `crude' global bounds, and using these bounds to obtain bounds on the regularity of the solution (for example through a certain Harnack-type inequality \cite{Bouin2020}), and obtaining improved lower bounds from crude upper bounds and vice versa (for example using a Feynman-Kac formula \cite{Penington2018}). 
\end{itemize}
Our proof is partly influenced by these ideas, but the stochasticity of the model adds additional difficulties. For example, the lack of deterministic global bounds requires us to handle situations where the particle density is much larger than usual, which a priori might lead to extinction in neighbouring (or more distant) regions. See Section~\ref{sec:proof_outline} for more details.

\paragraph{Non-monotone particle systems.}
Here we give an overview of previous work on non-monotone interacting particle systems, beyond the work already discussed earlier in this section on the BPDL model and closely related models.

Several works have studied branching and annihilating processes on $\Z^d$; in these processes, if at any time there is more than one particle at a single site $x\in \Z^d$, then all the particles at $x$ immediately `annihilate' (all the particles at $x$ disappear). One of the earliest appearances of such models seems to be in~\cite{Griffeath1979}.
Bramson and Gray~\cite{BramsonGray85} considered a branching annihilating random walk in which particles give birth to a new particle at a neighbouring site at rate 1, and jump to a neighbouring site at rate $\delta$; using a hands-on contour method approach, they proved survival with positive probability for $\delta$ sufficiently small.
Bramson, Ding and Durrett~\cite{BramsonDingDurrett91} studied a closely related annihilating branching process in which particles produce offspring particles at neighbouring sites at rate 1, and die at rate $\delta$; they proved survival with positive probability for $\delta$ sufficiently small (and also convergence to a stationary distribution).

The method used in~\cite{BramsonDingDurrett91} is a comparison with oriented percolation using a renormalization procedure as first used by Bramson and Durrett in~\cite{Bramson1988}; the power of this technique is demonstrated for several different examples in~\cite{Durrett91} and~\cite{Durrett10lectures}.
The key to using this technique to prove survival is to choose suitable space-time boxes and a `good' event for each box depending only on the behaviour of particles inside (or not too far away from) the box (or, more precisely, depending only on some graphical representation of the particle system inside or near the box) such that each good event has probability at least $p^*$ for some sufficiently large constant $p^*$, and such that the existence of an infinite path of boxes whose good events occur implies survival for the particle system.
In particular, on the good event for a box, it must be impossible for `far away' particles to kill all of the particles inside the box.
In the case of branching and annihilating processes on $\Z^d$ with finite-range jumps and finite-range interactions between particles, because there is at most one particle at each site, there is a maximum rate at which particles inside a box interact with particles outside, making it feasible to implement this method if suitable results can be shown for the `local' behaviour inside a box.

The Bramson-Durrett method was subsequently applied to several other non-monotone particle systems; see for example work of Bramson and Neuhauser~\cite{BramsonNeuhauser94} on cellular automata, work of Neuhauser and Pacala~\cite{NeuhauserPacala99} on a particle system based on a Lotka-Volterra model with two competing species, work of Etheridge~\cite{Etheridge2004} and Birkner and Depperschmidt~\cite{Birkner2007} on locally regulated population models (as already mentioned earlier in this section), work of Blath, Etheridge and Meredith~\cite{Blath2007} on two models for a pair of competing populations (one of which is dual to a branching annihilating random walk),
and work of Sturm and Swart~\cite{SturmSwart08} on variants of the voter model (also dual to branching annihilating random walks).
In~\cite{BramsonNeuhauser94,NeuhauserPacala99,SturmSwart08}, as in~\cite{BramsonGray85,BramsonDingDurrett91}, the number of particles at a single site is bounded and the interactions have finite range, enabling comparison with oriented percolation.
In~\cite{Etheridge2004,Blath2007}, for a given site $x$, by ignoring times at which the population size at $x$ is higher than a suitable threshold, the net effect of the population at other sites on the population size at $x$ is positive, giving a form of monotonicity which allows comparison with oriented percolation. (As mentioned earlier in this section, this requires an assumption of the form $p>c\Lambda$ for some constant $c>0$.)
In~\cite{Birkner2007}, the model is in discrete time, interactions have finite range, and the on-site competition is substantially stronger than total competition with other sites, meaning that a sufficiently large population at a single site is guaranteed to die out due to on-site competition before it can kill many particles at other sites.

Sudbury and Neuhauser proved results about branching and annihilating systems without using comparison with oriented percolation but using more hands-on approaches and martingale techniques, as well as self-duality and the so-called Holley-Liggett method~\cite{Sudbury90,NeuhauserSudbury93,Sudbury97,Sudbury99,Sudbury00}; again the proofs rely on finite-range interactions and a bounded number of particles at each site.

Blath and Kurt~\cite{BlathKurt11} determined survival and extinction regimes for a non-monotone particle system called a caring double-branching annihilating random walk; their survival result is the simpler part of the proof and is established using a comparison with a supercritical branching process. Athreya and Swart~\cite{AthreyaSwart12} use duality to show that a system of branching and annihilating particles can be seen as a thinning of a particle system without annihilation (but this can only allow initial laws that are Poisson with random intensity).
See also~\cite{Swart17} for a review of the interacting particle systems literature (in particular, Section~5 therein contains examples of non-monotone particle systems).

In our model, in contrast to all the work discussed in this section, we have to deal with the possibility that for any given site, an unbounded number of far away particles could kill all the particles at the site, meaning that a new approach is needed.

The approach developed here involves bounding the probability that the particles originating from a given site might cause the killing rate at sites at a distance $r$ to be too large during a fixed time interval, for every $r$, see Section~\ref{sec:proof_outline}. This is reminiscent of \emph{multi-scale analysis of particle systems}, see~\cite{StaufferLN}. In that context, multi-scale analysis is usually stated using a tessellation of space into boxes at various scales. In our approach, we do not use the language of boxes, but we do bound the probability that sites interact over arbitrarily large distances. It would be interesting to explore in more detail the relation between the two approaches. 

\paragraph{Other branching particle systems with competition.}
We finish this section with a (biased) review of some other branching particle systems with local or non-local competition. Closely related to the BPDL model is the \emph{branching Brownian motion with decay of mass}, where the competition between particles leads to a decay of their mass rather than a reduction in their numbers \cite{ABPe2017}. While this system admits the non-local Fisher-KPP equation as its \new{large population limit} \cite{Addario-Berry2019}, it differs from the BPDL model in that the population density (in terms of particle numbers) is not prescribed by a parameter $N$, but grows with time. 

A popular model of branching random walk with competition is the so-called $N$-BRW, which is a one-dimensional branching random walk in which after each branching step only the $N$ particles at the maximal positions are kept. This model was introduced by Brunet and Derrida \cite{Brunet1997} as a toy model to study finite-$N$ corrections to the speed of travelling wave fronts, and has seen significant interest since in both the physics and the mathematics literature, see e.g. \cite{Brunet2006,Berard2010,nbbm}. The \new{large population limit} of a closely related model, the $N$-BBM, has recently been shown to be a certain free boundary problem \cite{DeMasi2017,Berestycki2018}. The genealogy of the $N$-BBM (and $N$-BRW with light-tailed jump distribution) is conjectured to converge to the Bolthausen-Sznitman coalescent on the time-scale $(\log N)^3$, a fact that has been proven for certain related models \cite{Brunet2007,Berestycki2010}.

One can define a one-dimensional branching Brownian motion with local competition, based on the intersection local time of two Brownian paths. This system is dual to a certain stochastic partial differential equation, the Fisher-KPP equation with Wright-Fisher noise \cite{Shiga1988}. This duality is used in \cite{BarnesMytnikSunComingDown} to study the `coming down from infinity' property of the process.

Maillard, Raoul and Tourniaire \cite{popheterogeneous} study a BRW with local competition in an environment which is space- and time-heterogeneous over a macroscopic scale $1/\epsilon$ \new{in both space and time}. In contrast to the homogeneous case, the authors find that the spreading speed, even in the limit of large population size $N\to\infty$, may differ from its hydrodynamic limit \new{obtained after rescaling time and space}, i.e.~the limits $N\to\infty$ and $\epsilon\to 0$ do not commute in general.

Finally, branching particle systems with long-range dispersal and competition have been considered as well in the literature, see e.g.~\cite{Berard2013,Bezborodov2018,Penington2021}.

\subsection{Proof outline}
\label{sec:proof_outline}

\new{In this section, we provide an overview of the main ideas in the proof, which we hope some readers might find helpful. However, nothing here is required for the proofs in the rest of the article.}

\paragraph{Renormalization grid.}
In order to prove our results, we will take a large constant $T$ and define a renormalization grid of edges of the form $e=((x,kT),(x+y,(k+1)T))$ for some $x,y\in \Z^d$ and $k\in \N_0$, with $x$ in a suitable one-dimensional lattice and $y\in \{y_-,y_+\}$ where $\|y_{\pm}\|=\mathcal O(T)$.
\new{See Figure~\ref{fig:heuristic_grid} for a schematic representation of the grid.
Recall the definition of $N$ in~\eqref{eq:Ndefn}.}
For each edge \new{$e=((x,kT),(x+y,(k+1)T))$}, we will say that the edge is `closed' if an event $\mathcal C_e$ occurs \new{and `open' otherwise}; this event will be defined in such a way that on the event $(\mathcal C_e)^c$, if there are at least $J=\lfloor N^{1/3}\rfloor$ particles at some site near $x$ at time $kT$ then there will be at least $J$ particles at some site near $x+y$ at time $(k+1)T$.
(We could have chosen any $J$ such that $1\ll J \ll N^{1/2},$ but for definiteness, we \new{always take} $J=\lfloor N^{1/3}\rfloor$.)
\new{The idea here is that if the edge is open, having a large number of particles near the start point $(x,kT)$ of the edge guarantees a large number of particles near the end point $(x+y,(k+1)T)$.
Therefore if there exists an infinite path of open edges with a large number of particles near the start point of the first edge in the path, then the particle system must survive.}
See Section~\ref{sec:proof_thm} for precise definitions \new{of the renormalization grid and the events $\mathcal C_e$}.

\paragraph{\new{Main intermediate result and contour argument.}}
The main intermediate step in the proof of Theorems~\ref{th:survival} and~\ref{th:shape} will be to show that for any \newnew{$b\in(0,1)$}, if $N$ is large enough then for any $n\in \N$ and for a collection of edges $e_1,\ldots , e_n$ \new{in our renormalization grid} and a suitable initial \new{particle} configuration $\xi$,
\begin{equation} \label{eq:closededgesk}
\psub{\xi}{\bigcap_{i=1}^n \mathcal C_{e_i}}\le b^n ;
\end{equation}
see Proposition~\ref{prop:bound_closed_edges} below.

\new{In order to apply~\eqref{eq:closededgesk}, we will show that if $N$ is large, if the number of particles in the system ever hits 1 (say at a time $\tau_1$), then with high probability, at a time $\tau_2$ with $\tau_2-\tau_1=\mathcal O(\log J)$, there is a site containing at least $J$ particles.
Indeed, this will be straightforward to prove because for $N$ large, when all sites have less than $J$ particles, the killing rate for each particle is small (recalling the definition of the killing rate in~\eqref{eq:Ktdefn}).
Then by applying the strong Markov property at time $\tau_2$, in order to prove our global survival result it suffices to show that with high probability, for any suitable initial particle configuration in which some site $z$ has at least $J$ particles, there exists an infinite path of open edges such that the first edge in the path has start point $(z,0)$. 
}

Using~\eqref{eq:closededgesk}, we will be able to establish \new{this} using \new{standard} contour arguments.
\new{The idea of the contour argument is that if there is no infinite path of open edges from $(z,0)$, then the `open cluster' (set of vertices in the renormalization grid that can be reached by a path of open edges from $(z,0)$) must be surrounded by a cycle of edges in the dual lattice such that at least half of the dual edges in the cycle cross a closed edge in the renormalization grid. The probability that such a cycle of dual edges exists can be shown to be small using~\eqref{eq:closededgesk} and a union bound.

This argument establishes the global survival result; by choosing suitable renormalization grids and using the same contour argument to imply that there is an infinite path of open edges with high probability, we can also prove the shape theorem result.
The proofs of Theorems~\ref{th:survival} and~\ref{th:shape} using Proposition~\ref{prop:bound_closed_edges} are in Section~\ref{sec:proof_thm}.

In the remainder of this section, we describe the proof of~\eqref{eq:closededgesk}.
}

\begin{figure}
\centering
\def\svgwidth{0.9\columnwidth}
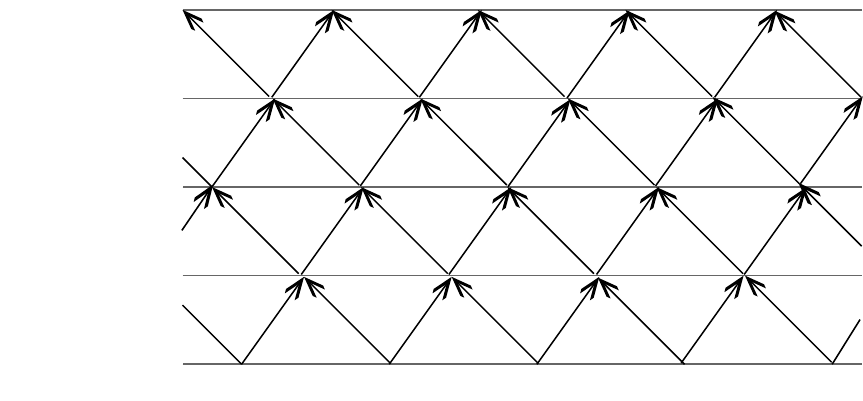
\caption{\label{fig:heuristic_grid} \new{Schematic representation of the renormalization grid described at the start of Section~\ref{sec:proof_outline}. The arrows represent the edges $e$ of the grid, each with a start point $(x,kT)$ and end point $(x+y,(k+1)T)$ in space-time.}}
\end{figure}

\paragraph{\new{Proof of main intermediate result:} dependence in time.}
We first deal with the dependence \emph{in time} of the events $\mathcal C_{e_i}$. Suppose \new{$N$ is large and} we have a subset of edges $e_{i_1},\ldots,e_{i_\ell}$ of the form $((x_j,kT),(x_j+y_j,(k+1)T))$ for some $x_j,y_j\in\Z^d$ and some $k\ge 1$, i.e.~the time-coordinates are the same for all edges in the subset. 
\new{Write $(\F_t)_{t\ge 0}$ for the natural filtration of the process $(\xi_t)_{t \ge 0}$.
We will be able to prove that for \textit{any} particle configuration at time $(k-1)T$, we have
\begin{equation} \label{eq:heur_timedep}
\mathbb P_{\xi}\left( \left. \bigcap_{j=1}^{\ell} \mathcal C_{e_{i_j}} \right| \F_{(k-1)T} \right) \le b^\ell.
\end{equation}}
In other words, we will be able to bound the probability in \eqref{eq:closededgesk} \newnew{as if the following held for the events $\mathcal C_{e_1},\ldots,\mathcal C_{e_n}$: if $\mathcal E_1$ and $\mathcal E_2$ are two subsets of the edges $e_1,\ldots,e_n$ and if there exists $k \ge 2$ such that each $e\in \mathcal E_1$ is of the form $e = ((x,\ell T),(y,(\ell+1)T))$ for some $\ell \le k-2$ and each $e\in \mathcal E_2$ is of the same form, but with $\ell \ge k$, then the two families of events $\{\mathcal C_e:e\in\mathcal E_1\}$ and $\{\mathcal C_e:e\in\mathcal E_2\}$ are mutually independent.}

\new{The proof of~\eqref{eq:heur_timedep} can also be used to show that for a subset of edges $e_{i_1},\ldots,e_{i_\ell}$ of the form $((x_j,0),(x_j+y_j,T))$ for some $x_j,y_j\in\Z^d$, for suitable $\xi$ we have
\begin{equation} \label{eq:heur_timedep0}
\mathbb P_{\xi}\left( \bigcap_{j=1}^{\ell} \mathcal C_{e_{i_j}}  \right) \le b^\ell.
\end{equation}
Using~\eqref{eq:heur_timedep0} together with~\eqref{eq:heur_timedep}, by conditioning on $\F_{(k^*-1)T}$, then $\F_{(k^*-3)T}, \ldots$ successively for a suitably chosen $k^*$, we can prove~\eqref{eq:closededgesk}; see the proof of Proposition~\ref{prop:bound_closed_edges} in Section~\ref{sec:propproof} for details.
}

\new{
From now on, we describe the proof of~\eqref{eq:heur_timedep}.

\paragraph{Stochastic upper bound on particle configuration.}
We will show that for large $N$, the particle configuration at time $kT$, conditional on time $(k-1)T$, has a stochastic upper bound, uniformly over all configurations at time $(k-1)T$.
This stochastic upper bound can be written as follows:
\begin{equation} \label{eq:heur_stoch_upper}
\xi_{kT}=\zeta + \sum_{x\in \Z^d}\zeta^{(x)},
\end{equation}
where $\zeta, \zeta^{(x)}\in \N_0^{\Z^d}$ for $x\in \Z^d$ are particle configurations such that for a large constant $K_0$, we have $\zeta(y)\le K_0 N$ $\forall y\in \Z^d$. Moreover, we can take a collection of i.i.d.~random variables $(Z^{(x)})_{x\in \Z^d}$ such that $Z^{(x)}$ takes value 0 with high probability, and for some small constant $a>0$, the tail is bounded by $\p{Z^{(x)}\ge r}\le e^{-ar\log (r+1)}$ for large $r$, and such that for each $x$, the configuration $\zeta^{(x)}$ is supported on the open ball with centre $x$ and radius $Z^{(x)}$, and $\zeta^{(x)}$ is uniformly bounded by $(Z^{(x)})^A K_0 N$ for a constant $A$.
This result is stated in Proposition~\ref{prop:upperbound}.}
One may call this a `coming down from infinity' property of the particle system.\footnote{The term `coming down from infinity' is often used in coalescent theory and in the study of one-dimensional diffusions and birth-and-death processes, and has a precise meaning in these settings, see e.g.~\cite{Pitman1999,Cattiaux2009,bansaye2016}. For a particle system, one could define it to mean that the system can be defined for an infinite initial configuration as a unique limit when starting from an increasing sequence of finite initial configurations approaching the initial configuration in a certain sense. See Theorem~2 in Hutzenthaler and Wakolbinger~\cite{Hutzenthaler2007}, where this is shown for a certain system of interacting Feller diffusions with logistic growth.
The BRWNLC is in general not monotone, and we do not prove the existence of a unique limit, but we do prove a form of local boundedness uniformly in the initial configuration, which is why we still use the term in a loose sense.}

\new{Before we describe the proof of the stochastic upper bound, we note the following consequence of the on-site competition assumption (condition~(ii) in~\eqref{eq:Lambda_assumptions}).
If for some $r>0$ there are $r N$ particles at a site $x$, then since $\Lambda(0)\ge \lambda N^{-1}$ by~\eqref{eq:Ndefn} and~(ii) in~\eqref{eq:Lambda_assumptions}, the total rate at which particles at $x$ are killed by particles at the same site $x$ is $\Lambda(0)(rN)^2\ge \lambda r^2 N$.
Since new particles are born at $x$ at rate $rN$, this means that if $r$ is large, the number of particles staying at $x$ (and their descendants at $x$) should be decreasing at rate roughly $(\lambda r^2 -r)N\approx \lambda r^2 N$. Hence for a constant $c\in (0,1)$, the number of these particles should decrease to $c rN$ in time of order $r^{-1}$ with high probability.

We use the following strategy to prove the stochastic upper bound~\eqref{eq:heur_stoch_upper}.
We take a small constant $c_0>0$ and split the time interval $[kT-c_0,kT]$ into subintervals $[s_{\overline{m}}, s_{\overline{m}-1}]$, $[s_{\overline{m}-1}, s_{\overline{m}-2}], \ldots , [s_1,s_0]$, where the interval lengths double successively, i.e.~$s_{m-1}-s_{m}=2(s_m-s_{m+1})$ for each $m$.
The value $\overline{m}$ is chosen depending on the (finite) particle configuration at time $kT-c_0$ in such a way that the maximum number of particles at a single site at time $kT-c_0$ is at most $2^{\overline{m}+1}K_1 N$, for a large constant $K_1$.

On the time interval $[kT-c_0,kT]$, particles are coloured either blue or red; at time $s_{\overline m}=kT-c_0$, all particles are blue.
Since $s_{\overline{m}-1}-s_{\overline{m}}$ is of order $2^{-\overline{m}}$, if the constant $K_1$ is large enough then the above heuristics suggest that for each site $x$, the number of particles at $x$ at time $s_{\overline{m}}$ (and their descendants) that stay at $x$ until time $s_{\overline{m}-1}$ should be at most $c_1 2^{\overline{m}}K_1 N$ with high probability, where $c_1$ is a small constant.
At time $s_{\overline{m}-1}$, for each site $x$, if there are more than $c_1 2^{\overline{m}}K_1 N$ particles that have stayed at $x$ since time $s_{\overline{m}}$, these particles are coloured red; also if too many particles at $x$ at time $s_{\overline{m}}$ (or their descendants) have spread too far to other sites by time $s_{\overline{m}-1}$, these particles are coloured red.
The colouring rule is chosen in such a way that at time $s_{\overline{m}-1}$, the number of blue particles at each site is at most $2^{\overline{m}}K_1 N$.
We then repeat this procedure on successive time subintervals, in such a way that for each $m$, the number of blue particles at each site $x$ at time $s_m$ is bounded by $2^{m+1}K_1 N$, and if there are more than $c_1 2^{m}K_1 N$ particles that stay at a site $x$ during time $[s_m,s_{m-1}]$, or if too many particles spread too far from $x$, then these `badly behaved' particles are coloured red.
On each time subinterval $[s_m,s_{m-1}]$, we will be able to show that for a given site $x$, with high probability no particles from $x$ at time $s_m$ are turned red at time $s_{m-1}$.

At time $s_0=kT$, we have at most $2K_1N$ blue particles at each site, and (possibly) some red particles.
This gives us the stochastic upper bound in~\eqref{eq:heur_stoch_upper}: $\zeta$ roughly corresponds to the configuration of blue particles, and for each $x$, $\zeta^{(x)}$ roughly corresponds to the configuration of red particles whose ancestor turned red at the end of some subinterval $[s_m,s_{m-1}]$ and was at $x$ at the start of this subinterval (we think of such particles as red particles `from $x$').
Our bounds on the probability of red particles appearing allow us to prove the conditions on $Z^{(x)}$ and the upper bound on $\zeta^{(x)}$ stated after~\eqref{eq:heur_stoch_upper}. In particular, the tail bound $\p{Z^{(x)}\ge r}\le e^{-ar\log (r+1)}$ comes from the tail of the Poisson distribution: for red particles `from $x$' to reach distance $r$ away from $x$, a continuous-time random walk with finite range jump kernel must move more than distance $r$, and so a Poisson random variable must have value at least $c' r$ for some constant $c'>0$.
}

\new{This proof strategy is carried out in Section~\ref{sec:propupper}. We now describe how~\eqref{eq:heur_stoch_upper} can be used to establish~\eqref{eq:heur_timedep}.}

\paragraph{\new{Sketch proof of~\eqref{eq:heur_timedep}:} dependence in space.}
\new{From now on, we consider edges $e_1,\ldots ,e_n$ in the renormalization grid such that}
the start time for edge $e_i$ is the same $kT$ for each $i$.
\new{To establish~\eqref{eq:heur_timedep}, we want to show that for large $N$,
for any particle configuration at time $(k-1)T$, 
\begin{equation} \label{eq:heur_timedep2}
\mathbb P_{\xi}\left( \left. \bigcap_{i=1}^{n} \mathcal C_{e_{i}} \right| \F_{(k-1)T} \right) \le b^n.
\end{equation}
We can assume that none of the edges $e_i$ are too close together in space (because by removing at most a constant proportion of the edges $e_i$, we can ensure that this condition is satisfied, and so if~\eqref{eq:heur_timedep2} holds when this condition is satisfied, then the general case follows with $b$ replaced by $b^{1/A}$ for some constant $A$).}

\new{We now explain heuristically why, for an edge $e=((x_0,kT),(x_0+y_0,(k+1)T))$, the event $\mathcal C_e$ is unlikely to occur; these heuristics will then be used in our sketch proof of~\eqref{eq:heur_timedep2}.}
Recall that a particle at site $x$ at time $t$ is killed at rate $K_t(x)$.
Let $\epsilon>0$ be a small constant, and let $\mathcal T_e$ denote a tube from $(x_0,kT)$ to $(x_0+y_0,(k+1)T)$ with radius $\epsilon T$, i.e.
$$
\mathcal T_e := \{(z,kT+s): z\in \Z^d, s\in [0,T], \|z- (x_0+y_0 s/T)\|<\epsilon T\}.
$$
We now consider three possible cases for the values of $K_t(x)$ in the tube $\mathcal T_e$.
\new{See Figure~\ref{fig:heuristic_cases} for an illustration.}
Let $c>0$ be a small constant and let $C>0$ be a large constant.

\textbf{Case 1:} $K_t(x)\le c$ for all $(x,t)\in \mathcal T_e$. In this case, the killing rate inside the tube is very small, \new{and particles are branching at rate 1,} so for suitable $y_0$, we can use large deviation results to show that if there are $J$ particles at a site near $x_0$ at time $kT$, they are likely to have more than $J$ descendants that stay inside the tube $\mathcal T_e$, are not killed by competition, and are at a site near $x_0+y_0$ at time $(k+1)T$. \new{Therefore $\mathcal C_e$ is unlikely to occur.}

\textbf{Case 2:} $K_{t^*}(x^*)>c$ for some $(x^*,t^*)\in \mathcal T_e$,
but $K_t(x)\le C$ \new{and $\sum_{\|y\|>\epsilon T}\Lambda(y)\xi_t(x-y)\le c/2$} for all $(x,t)$ close to $\mathcal T_e$.
In this case, at time $t^*$ there will be some site $y^*$ \new{within distance $\epsilon T$ of $x^*$ at which there are at least $N^{1/2}$ particles. Indeed, if we had $\xi_{t^*}(y)< N^{1/2}$ for all $y$ within distance $\epsilon T$ of $x^*$, then by~\eqref{eq:Ktdefn} and~\eqref{eq:Ndefn},
\[
K_{t^*}(x^*) =\sum_{y\in \Z^d}\xi_{t^*}(x^*-y)\Lambda (y)\le N^{1/2}\sum_{y\in \Z^d}\Lambda (y)+\sum_{\|y\|>\epsilon T}\Lambda(y)\xi_{t^*}(x^*-y)\le N^{-1/2}+\tfrac 12 c<c
\]
for large $N$, which is a contradiction.}
Since $N^{1/2} \gg J$, and since the killing rate of particles near $\mathcal T_e$ is at most $C$, we can show that these $N^{1/2}$ particles are likely to have at least $J$ surviving descendants at a site near $x_0+y_0$ at time $(k+1)T$, \new{meaning that $\mathcal C_e$ is unlikely to occur.}

\textbf{Case 3:} $K_{t'}(x')>C$ \new{or $\sum_{\|y\|>\epsilon T}\Lambda(y)\xi_{t'}(x'-y)>c/2$} for some $(x',t')$ near $\mathcal T_e$. 
\new{Using the stochastic upper bound in~\eqref{eq:heur_stoch_upper}, and using the exponential decay of $\Lambda$ (i.e.~condition~(iii) in~\eqref{eq:Lambda_assumptions}), we can show that if $C$ and $T$ are large enough then Case~3 is unlikely to occur.
}

\begin{figure}
\centering
\def\svgwidth{0.9\columnwidth}
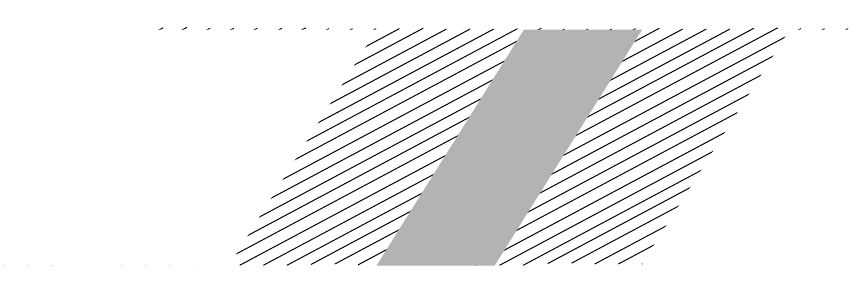
\caption{\label{fig:heuristic_cases} \new{Illustration of Cases 1-3 introduced in the sketch proof of~\eqref{eq:heur_timedep}. The shaded space-time region is the tube $\mathcal T_e$.
In case 1, the killing rate $K_t(x)$ is small (at most $c$) inside the shaded region $\mathcal T_e$.
In case 2, the killing rate is greater than $c$ at some point $(x^*,t^*)$ in the shaded region $\mathcal T_e$, but the killing rate is not too large (at most $C$) throughout the hatched and shaded space-time regions, and for each point $(x,t)$ in the hatched and shaded space-time regions, the contribution to the killing rate from particles distance $\epsilon T$ away from $x$ is small (at most $c/2$).
In case 3, there is a point in the hatched or shaded region where either the killing rate is large (greater than $C$) or the contribution to the killing rate from particles distance $\epsilon T$ away is greater than $c/2$.}}
\end{figure}

The heuristics for the three cases above suggest that the event $\mathcal C_e$ is unlikely to occur.
\new{In order to prove~\eqref{eq:heur_timedep2} using these heuristics, 
we will define `bad events' such that if $\mathcal C_e$ occurs then one of a collection of bad events must occur.}

\new{A useful tool will be a construction of} the process $(\xi_t)_{t\ge 0}$ using independent families of decorated BRW trees; each particle in the BRW trees will be assigned an independent Exp(1) random variable that will determine the time at which it may be killed by competition (as a function of the killing rate on its trajectory).
\new{We will set out this construction in Section~\ref{sec:construction}.}

\paragraph{\new{Bad events. }}
\new{We now describe our construction of the process on the time interval $[kT,(k+1)T]$ that we use to prove~\eqref{eq:heur_timedep2}. This construction will be set out in Section~\ref{sec:propclosedinit}.
We also describe the definition of `bad events' for an edge $e=((x_0,kT),(x_0+y_0,(k+1)T))$.
The bad events are all defined in such a way that they are independent of $\F_{(k-1)T}$.}

\new{Recall our stochastic upper bound in~\eqref{eq:heur_stoch_upper}.}
In our construction, at time $kT$, \new{for each site $x$, we colour particles from the configuration $\zeta$ at the} site \textit{blue} \new{(there are at most $K_0 N$ of these)}, and \new{any} remaining particles at the site \new{are} coloured \textit{red} \new{(with an exception to ensure that if $\xi_{kT}(x)\ge J$ then there are at least $J$ blue particles at $x$)}.
\new{For $y\in \Z^d$, red particles that are from the configuration $\zeta^{(y)}$ are given label $y$.}
 The descendants of each particle \new{are then} constructed using independent decorated BRW trees.
 
Using the heuristic in Case 1 above, we define a `bad event' $B_e$ depending only on the \new{decorated} BRW trees \new{that encode descendants of} blue particles near $x_0$ \new{at time $kT$}. \new{The event is defined in such a way} that on the event $(B_e)^c$, if Case 1 happens \new{and if there are at least $J$ blue particles near $x_0$ at time $kT$}, then there are at least $J$ blue particles near $x_0+y_0$ at time $(k+1)T$, \new{i.e.~}$\mathcal C_e$ cannot occur; moreover, the bad event $B_e$ has low probability if $N$ is large.

We will also define \new{a} stopping \new{time} $\tau(z)$ for each site $z$, given by the first time after time $kT$ at which there are at least $N^{1/2}$ red and blue particles at $z$. \new{In our construction, }at a time $\tau(z)$, \new{if $\tau(z)<(k+1)T$,} we turn $\lfloor N^{1/2}\rfloor $ red and blue particles at $z$ into \textit{yellow} particles, and their descendants will be constructed using another independent family of  \new{decorated} BRW trees.
Using the heuristic in Case~2 above, we define a `bad event' $Y_e$ depending only on the  \new{decorated} BRW trees for yellow particles that appear near the tube $\mathcal T_e$. \new{The event is defined in such a way} that on the event $(Y_e)^c$, if Case 2 happens then there are at least $J$ yellow particles near $x_0+y_0$ at time $(k+1)T$, which means that $\mathcal C_e$ cannot occur; moreover, the bad event $Y_e$ has low probability if $N$ is large.

Finally, we need \new{to define a bad event that occurs if} Case 3 occurs.
We will define bad events $P_{z,r}$ for \new{each} site $z$ and `radius' $r\in \N_0$, which heuristically say that too many descendants of \new{blue particles at $z$ at time $kT$, red particles with label $z$, or yellow particles that appear at $z$} spread to distance $r$ from $z$ at some time in $[kT,(k+1)T]$.
The events will be defined in such a way that for $z'\in \Z^d$, 
\new{if $P_{z, \lfloor \|z-z'\|\rfloor}$ does not occur for every $z\in \Z^d$, i.e.~}on the event $\cap_{z\in \Z^d}(P_{z, \lfloor \|z-z'\|\rfloor})^c$, we have $K_t(z')\le C$ 
\new{and $\sum_{\|y\|>\epsilon T}\Lambda(y)\xi_{t}(z'-y)\le c/2$} for all $t\in [kT,(k+1)T]$.
Therefore if Case~3 occurs then $\cup_{z\in \Z^d}P_{z, \lfloor \|z-z'\|\rfloor}$ must occur for some $z'\in \Z^d$ close to $\mathcal T_e$.
The events $P_{z,r}$ will be independent for different values of $z$
\new{(roughly speaking, this is because the event $P_{z,r}$ only depends on the descendants of particles at $z$, and these are constructed from independent decorated BRW trees for different values of $z$)}.
We will \new{also} be able to show that \new{if $N$ is large}, $P_{z,r}$ has low probability for each $r$, and moreover, for a small constant $a>0$, for large $r$,
\begin{equation} \label{eq:Pheur}
\p{P_{z,r}}\le e^{-a r \log (r+1)}.
\end{equation}
(The proof of~\eqref{eq:Pheur}
\new{comes from the tail bound on $Z^{(x)}$ in the stochastic upper bound in~\eqref{eq:heur_stoch_upper}, and the fact that for a particle to move distance $r$ from $z$, a continuous-time random walk with finite range jump kernel must move distance $r$; in particular,~\eqref{eq:Pheur} relies}
on the assumption that the jump kernel $p$ has finite range.)
Take a small constant $a'\in (0,a/(d+1))$; we can take $T$ in the definition of our renormalization grid to be large enough that for any large $r$, for any fixed $z_0\in \Z^d$, there are at most $a' r$ edges in our grid of the form $((x,kT),(x+y,(k+1)T))$ with $\|x-z_0\|\le r$. (Recall \new{Figure~\ref{fig:heuristic_grid} and} the description of our renormalization grid at the start of this section.)

Write $L_e:=B_e\cup Y_e$; this is a `local' bad event for the edge $e$.
Our definitions of the events $B_e$ and $Y_e$ will ensure that for our collection of edges $e_1,\ldots , e_n$ \new{in~\eqref{eq:heur_timedep2}}, the events $(L_{e_i})_{i=1}^n$ are independent (\new{roughly speaking, this works because the events $B_e$ and $Y_e$ only depend on the decorated BRW trees controlling particles near the edge $e$, and we use} our assumption that the edges are not too close together \new{in space}), and if $N$ is large enough, 
\begin{equation} \label{eq:problocal**}
\p{L_{e_i}}<b/2 \quad \forall i\in \{1,\ldots , n\}.
\end{equation}
\new{In the previous three paragraphs, we described how for an edge $e$, if Case~1 happens then $\mathcal C_e \cap (B_e)^c$ cannot occur, if Case~2 happens then $\mathcal C_e \cap (Y_e)^c$ cannot occur, and if Case~3 happens then 
$\cup_{z\in \Z^d}P_{z, \lfloor \|z-z'\|\rfloor}$ occurs for some $z'\in \Z^d$ close to $\mathcal T_e$.}
\new{Therefore} we can show that if $N$ is large enough, for some large constant $A>0$, for each edge $e_i=((x_i,kT),(x_i+y_i,(k+1)T))$,
\begin{equation} \label{eq:Ceiheuristic*}
\mathcal C_{e_i} \subseteq L_{e_i}\cup \bigcup_{\{y\in \Z^d:\|y-x_i\|\le A\}}\bigcup_{z\in \Z^d}P_{z,\lfloor \|z-y\|\rfloor}.
\end{equation}
\new{The definitions of our bad events will ensure that} the events $P_{z,r}$ are independent for different values of $z$, \new{and }the events $P_{z,r}$ and $L_{e_i}$ are independent if $\|x_i-z\|\ge r+A$ (\new{roughly speaking, this works because the event $P_{z,r}$ only depends on the behaviour of particles inside the ball with centre $z$ and radius $r$, and the event $L_{e_i}$ only depends on the behaviour of particles near the edge $e_i$}).
\new{Moreover, using~\eqref{eq:Pheur} and the comment above~\eqref{eq:Pheur}, and since we chose $a'<a/(d+1)$, we can show}
\begin{equation} \label{eq:Pzrheurbd1}
\p{P_{z,r}}\le \varphi(r) \quad \forall z\in \Z^d \text{ and }r\in \N_0,
\end{equation}
 where $\varphi:[0,\infty)\to [0,1]$ satisfies 
\begin{equation} \label{eq:Pzrheurbd2}
\sum_{\{z\in \Z^d,r\in \N_0:\|z\|\le r+A+1\}}\varphi (r)^{\frac 1{a'(r+A+1)}}<b/2.
\end{equation}
\new{Precise versions of the} claims \new{in this paragraph} are proved in Lemmas~\ref{lem:bad_blue} and~\ref{lem:Ce_inclusion} in Section~\ref{sec:propclosedinit}.

\paragraph{\new{Proof of~\eqref{eq:heur_timedep2} using bad events. }}
We now give a sketch proof that \new{the claims in~\eqref{eq:problocal**}-\eqref{eq:Pzrheurbd2} are} enough to deduce~\eqref{eq:heur_timedep2}; a rigorous version of this part of the proof can be found in Section~\ref{sec:propclosedinit} (in the proof of Proposition~\ref{prop:closedinitcond}).

First note that by~\eqref{eq:Ceiheuristic*}, if the event $\cap_{i=1}^n \mathcal C_{e_i}$ occurs then there must be some subset $S\subseteq\{1,\ldots,n\}$ of the edges such that $L_{e_i}$ occurs for each $i\in S$, and for each $i\in \{1,\ldots ,n\}\setminus S$, an event $P_{z_i,r_i}$ occurs for some $z_i\in \Z^d$ and $r_i\in \N_0$, with $r_i+1\ge \|z_i-x_i\|-A$.
\new{(Indeed, if $P_{z_i,\lfloor \|z_i-y\|\rfloor }$ occurs for some $y$ with $\|y-x_i\|\le A$ then $\|z_i-y\|\ge \|z_i-x_i\|-A$ by the triangle inequality.)}
Therefore, by removing from $S$ any $i$ such that $\|x_i-z_j\|\le r_j+A+1$ for some $j\in \{1,\ldots , n\}\setminus S$, and removing from $\{(z_j,r_j)\}_{j\in \{1,\ldots ,n\}\setminus S}$ any $(z_j,r_j)$ such that $z_j=z_i$ and $r_j<r_i$ for some $i\in \{1,\ldots ,n\}\setminus S$, and then \new{finally} relabelling the $(z_i,r_i)$, if $\cap_{i=1}^n \mathcal C_{e_i}$ occurs then there must exist $S\subseteq \{1,\ldots ,n\}$, $\ell \le n$ and $(z_1,r_1),\ldots ,(z_\ell,r_\ell)\in \Z^d\times \N_0$, with $z_1,\ldots , z_\ell$ distinct, such that $L_{e_i}$ occurs for each $i\in S$, and $P_{z_j,r_j}$ occurs for each $j\le \ell$, and for each $i\in \{1,\ldots ,n\}\setminus S$, there exists $j(i)\le \ell$ such that $\|x_i-z_{j(i)}\|\le r_{j(i)}+A+1$, and for each $i\in S$, $\|x_i-z_j\|>r_j+A+1$ $\forall j\le \ell$.
Hence by a union bound, and then in the second line using independence, and finally in the third line using~\eqref{eq:Pzrheurbd1},
\begin{align*}
\mathbb P_{\xi}\left( \left. \bigcap_{i=1}^{n} \mathcal C_{e_{i}} \right| \F_{(k-1)T} \right)
&\le \sum_{\substack{S\subseteq \{1,\ldots ,n\}, \, \ell \le n, \, \{(z_1,r_1),\ldots , (z_\ell ,r_\ell)\}
\text{ with }z_j \text{ distinct s.t. }\\
\forall i\in \{1,\ldots ,n\}\setminus S, \, \exists j(i)\le \ell \, \text{ s.t. }\|x_i-z_{j(i)}\|\le r_{j(i)}+A+1,\\
\{j(i)\}_{i\in \{1,\ldots , n\}\setminus S}=\{1,\ldots , \ell\}\text{ and }\forall i\in S, \, \|x_i-z_j\|>r_j +A+1 \,  \forall j\le \ell
}
}
\p{\bigcap _{i\in S}L_{e_i}\cap \bigcap_{j=1}^\ell P_{z_j,r_j}}\\
&= \sum_{\substack{S\subseteq \{1,\ldots ,n\}, \, \ell \le n, \, \{(z_1,r_1),\ldots , (z_\ell ,r_\ell)\}
\text{ with }z_j \text{ distinct s.t. }\\
\forall i\in \{1,\ldots ,n\}\setminus S, \, \exists j(i)\le \ell \, \text{ s.t. }\|x_i-z_{j(i)}\|\le r_{j(i)}+A+1,\\
\{j(i)\}_{i\in \{1,\ldots , n\}\setminus S}=\{1,\ldots , \ell\}\text{ and }\forall i\in S, \, \|x_i-z_j\|>r_j +A+1 \,  \forall j\le \ell
}
}
\prod _{i\in S}\p{L_{e_i}} \prod_{j=1}^\ell \p{P_{z_j,r_j}}\\
&\le \sum_{\substack{S\subseteq \{1,\ldots ,n\}, \, \ell \le n, \, \{(z_1,r_1),\ldots , (z_\ell ,r_\ell)\}
\text{ with }z_j \text{ distinct s.t. }\\
\forall i\in \{1,\ldots ,n\}\setminus S, \, \exists j(i)\le \ell \, \text{ s.t. }\|x_i-z_{j(i)}\|\le r_{j(i)}+A+1\\
\text{and }\{j(i)\}_{i\in \{1,\ldots , n\}\setminus S}=\{1,\ldots , \ell\}
}
}
\prod _{i\in S}\p{L_{e_i}} \prod_{j=1}^\ell \varphi( r_j).
\end{align*}
We now claim that 
\begin{align} \label{eq:CprobheurStar}
&\sum_{\substack{S\subseteq \{1,\ldots ,n\}, \, \ell \le n, \, \{(z_1,r_1),\ldots , (z_\ell ,r_\ell)\}
\text{ with }z_j \text{ distinct s.t. }\\
\forall i\in \{1,\ldots ,n\}\setminus S, \, \exists j(i)\le \ell \, \text{ s.t. }\|x_i-z_{j(i)}\|\le r_{j(i)}+A+1\\
\text{and }\{j(i)\}_{i\in \{1,\ldots , n\}\setminus S}=\{1,\ldots , \ell\}
}
}
\prod _{i\in S}\p{L_{e_i}} \prod_{j=1}^\ell \varphi( r_j) \notag \\
&\hspace{3cm} \le \prod_{i=1}^n \bigg( \p{L_{e_i}}+\sum_{\{z\in \Z^d, r\in \N_0:\|z-x_i\|\le r+A+1\}}\varphi(r)^{\frac 1 {a'(r+A+1)}}\bigg).
\end{align}
\new{Indeed, 
we can rewrite the right hand side of~\eqref{eq:CprobheurStar} as
\begin{align} \label{eq:CprobheurStar2}
&\prod_{i=1}^n \bigg( \p{L_{e_i}}+\sum_{\{z\in \Z^d, r\in \N_0:\|z-x_i\|\le r+A+1\}}\varphi(r)^{\frac 1 {a'(r+A+1)}}\bigg) \notag\\
&= 
\sum_{\substack{S\subseteq \{1,\ldots ,n\},  \, (z_i,r_i)_{i\in \{1,\ldots ,n\}\setminus S}
\text{ s.t. }\\
\|x_i-z_{i}\|\le r_{i}+A+1 \, \forall i\in \{1,\ldots , n\}\setminus S
}
}
\prod _{i\in S}\p{L_{e_i}} \prod_{i\in \{1,\ldots , n\}\setminus S} \varphi(r_i)^{\frac 1 {a'(r_i+A+1)}}.
\end{align}
Then}
for each $S\subseteq \{1,\ldots , n\}$, $\ell \le n$ and $\{(z_1,r_1),\ldots ,(z_\ell, r_\ell)\}$ corresponding to a term in the sum on the left hand side of~\eqref{eq:CprobheurStar}, 
\new{there is a term in the sum on the right hand side of~\eqref{eq:CprobheurStar2} corresponding to $S$, $(z_{j(i)},r_{j(i)})_{i\in \{1,\ldots , n\}\setminus S}$, and which is given by
}
\begin{align*}
\prod_{i\in S}\p{L_{e_i}}\prod_{i\in \{1,\ldots ,n\}\setminus S}\varphi(r_{j(i)})^{\frac 1 {a'(r_{j(i)}+A+1)}}
&=\prod_{i\in S}\p{L_{e_i}}\prod_{j=1}^\ell \prod_{\{i\in \{1,\ldots ,n\}\setminus S:j(i)=j\}}\varphi(r_{j})^{\frac 1 {a'(r_{j}+A+1)}}\\
&\ge \prod_{i\in S}\p{L_{e_i}}\prod_{j=1}^\ell \varphi(r_{j}),
\end{align*}
where the last inequality follows because \new{for each $j$,} if $j(i)=j$ then $\|x_i-z_j\|\le r_j+A+1$, and \new{after~\eqref{eq:Pheur}} we chose our renormalization grid in such a way that \new{for any $z$,} the total number of edges $e=((x,kT),(x+y,(k+1)T))$ \new{in the grid with $\|x-z\|\le r$} is at most $a'r$ for large $r$, and so \new{in particular}
$$
\# \{i\in \{1,\ldots ,n\}\setminus S:j(i)=j\}\le a' (r_j+A+1).
$$
This proves the claim~\eqref{eq:CprobheurStar}; by~\eqref{eq:problocal**} and~\eqref{eq:Pzrheurbd2},~\eqref{eq:heur_timedep2} then follows from~\eqref{eq:CprobheurStar}.

Note that the bound \eqref{eq:Pheur} is sharp for this argument to work, since otherwise the sum on the right hand side of \eqref{eq:CprobheurStar} diverges. In particular, this means that we are currently not able to weaken the assumption that the jump kernel has finite range.

\subsection{Overview of the article}

The remainder of the article is organised as follows. In Section~\ref{sec:construction}, we construct the BRWNLC from a collection of BRW trees decorated with `resiliences'. 
Section~\ref{sec:proof_thm} contains the contour argument used to prove the main results, relying on Proposition~\ref{prop:bound_closed_edges}. Section~\ref{sec:propproof} contains the proof of Proposition~\ref{prop:bound_closed_edges}. Finally, Section~\ref{sec:badbluepf} contains certain estimates for sums of independent random variables, which are the building blocks of the proof of Proposition~\ref{prop:bound_closed_edges}.

\subsection{Notation}
Throughout the article, we write $\llbracket m \rrbracket := \{1,2,\ldots, \lfloor m \rfloor \}$ for  any $m\ge 1$. Furthermore, we write $\N = \{1,2,\ldots\}$ and $\N_0 = \{0,1,2,\ldots\}$.

As mentioned in Section~\ref{subsec:modeldefn}, we write $\|\cdot\|$ for the $\ell^2$ or Euclidean norm.
For $x\in \R^d$ and $r\ge 0$, we let
\begin{equation} \label{eq:ballrdefn}
\mathcal B_r(x):=\{z\in \Z^d:\|x-z\|<r\}.
\end{equation}
For $x=(x_1,\ldots,x_d)\in \R^d$, we write
\begin{equation} \label{eq:intpartdefn}
\lfloor x \rfloor :=(\lfloor x_1 \rfloor , \ldots , \lfloor x_d \rfloor )\in \Z^d.
\end{equation}

\subsection*{Acknowledgements}

This work was initiated at the Centre de recherches mathématiques (CRM), Montreal, during the Montreal Summer Workshop on Probability and Mathematical Physics organised by Louigi Addario-Berry, Omer Angel and Alex Fribergh in July 2018. We warmly thank the organisers for making the event possible and the CRM for its hospitality.

PM warmly thanks Hugo Duminil-Copin for several stimulating discussions during the initial phase of the project.

PM acknowledges partial support from a Simons CRM fellowship and ANR grant ANR-20-CE92-0010-01.
SP is supported by a Royal Society University Research Fellowship, and acknowledges partial support from a Simons CRM fellowship.

\new{Both authors would like to thank the anonymous referee for their careful reading of the article and helpful comments and suggestions.}

\section{Families of branching random walks with resiliences}
\label{sec:construction}

In this section, we introduce a useful construction of the process $(\xi_t)_{t\ge 0}$ from a collection of decorated branching random walk trees; versions of this construction will be used throughout the proofs.

\subsection{Branching random walks}
\label{sec:brw}
The branching random walks (BRW) we consider here are continuous-time BRW on $\Z^d$, started with a single particle at $0$, where each particle branches into two child particles at rate $1$, located at the position of their parent, and furthermore, each particle jumps at rate $\gamma$ according to the jump kernel $p$. 
(When a particle branches, we say the parent particle `dies' and the two child particles are `born'.)
For such a branching random walk, we introduce notation as follows:
\begin{itemize}
\item Particles are given labels from the set of labels $\mathcal U=\cup_{n=0}^\infty \{1,2\}^n$ according to Ulam-Harris labelling, i.e.~the label 12 corresponds to the second child of the first child of the initial particle (ordering of the children is arbitrary). We write $u\prec v$ for $u,v\in \mathcal U$ if $v$ is a descendant of $u$ (including $u$ itself) in the tree $\mathcal U$.
\item For $t\geq 0$, let $\mathcal N_t\subset \mathcal U$ denote the set of labels of particles alive at time $t$.
\item Let $(X_t(u),u\in \mathcal N_t)$ denote the locations in $\Z^d$ of the particles at time $t$.
\item Let $(\alpha(u),u\in \mathcal U)$ and $(\beta(u),u\in \mathcal U)$ denote the birth and death times of particles respectively.
\item For $u\in \mathcal U$, for $t < \alpha(u)$, we write $X_t(u)$ to denote the location of the ancestor of particle $u$ at time $t$.
\end{itemize}

The existence and formal construction of the process is standard and can easily be obtained by recurrence over the generations, see e.g.~\cite{Jagers1989,Hardy2009}. This way, the trajectory of each particle between branch points is a continuous-time random walk with jump kernel $p$ and jump rate $\gamma$ until a finite time given by the life length of the particle, and the trajectories, independent over all particles, are `glued together' at the branch points. Equivalently, one could construct the BRW following a more modern approach using random trees: Start with a Yule tree, i.e.~a binary tree where each edge has a length given by an Exp(1)-distributed random variable. Consider a Poisson process on the tree with intensity measure equal to $\gamma$ times the length measure of the tree. Now define a random process indexed by the tree that jumps at the times given by the Poisson process according to the jump kernel $p$. See Section~3 in Duquesne and Winkel \cite{Duquesne2007} for a construction of the Yule tree as a random metric space and the notion of Poisson processes on trees.

The \emph{many-to-one} lemma for branching Markov processes allows us to calculate additive functionals of the branching Markov process in terms of a single Markov process in a potential. See for example Section~8 in Hardy and Harris \cite{Hardy2009} for a modern presentation using change of measure techniques. Here we will use the following version, which is a special case of Corollary~8.6 in that article.

\begin{lemma}[Many-to-one lemma \cite{Hardy2009}]
\label{lem:many-to-one}
Let $F$ be a non-negative path functional. Let $(X_t)_{t\ge0}$ be a continuous-time random walk started at $0$ with jump kernel $p$ and jump rate $\gamma$. Then
\[
\E{\sum_{u\in\mathcal N_t} F((X_s(u))_{s\in[0,t]})} = e^t \E{F((X_s)_{s\in[0,t]})}.
\]
\end{lemma}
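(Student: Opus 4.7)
The plan is a direct first-moment calculation that exploits the fact, visible in the ``Yule tree plus independent spatial motions'' construction, that the genealogical tree is independent of the spatial displacements. Since $F\ge 0$, apply Tonelli to rewrite the left-hand side as
\[
\E{\sum_{u\in\mathcal N_t} F((X_s(u))_{s\in[0,t]})} = \sum_{u\in\mathcal U} \E{\1_{u\in\mathcal N_t}\,F((X_s(u))_{s\in[0,t]})}.
\]
Fix $u\in\mathcal U$ and consider the ancestral line $\emptyset = v_0 \prec v_1 \prec \cdots \prec v_n = u$ with $n=|u|$. The event $\{u\in\mathcal N_t\}$ is measurable with respect to the birth–death data $(\alpha(v),\beta(v))_{v\in\mathcal U}$ alone, so it is independent of the spatial part of the construction.

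For the trajectory, the key observation is that $(X_s(u))_{s\in[0,t]}$ is, by definition, the concatenation of the independent random-walk segments carried by $v_0,\ldots,v_{n-1}$ on the intervals $[\alpha(v_i),\beta(v_i))$ and by $u$ itself on $[\alpha(u),t]$, each segment being a continuous-time random walk with jump kernel $p$ and jump rate $\gamma$. Applying the (strong) Markov property of this random walk at each of the finitely many branch points $\beta(v_0),\ldots,\beta(v_{n-1})$, and using the independence of the walks from one another and from the tree, one sees that this concatenation, conditional on $u\in\mathcal N_t$, has the law of $(X_s)_{s\in[0,t]}$ and is independent of the tree. Therefore
\[
\E{\1_{u\in\mathcal N_t}\,F((X_s(u))_{s\in[0,t]})} = \p{u\in\mathcal N_t}\cdot\E{F((X_s)_{s\in[0,t]})}.
\]

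Summing over $u\in\mathcal U$ and interchanging sum and expectation gives
\[
\E{\sum_{u\in\mathcal N_t} F((X_s(u))_{s\in[0,t]})} = \E{|\mathcal N_t|}\cdot \E{F((X_s)_{s\in[0,t]})},
\]
so it remains to show $\E{|\mathcal N_t|}=e^t$. This is the standard Yule-process computation: $m(t):=\E{|\mathcal N_t|}$ satisfies, by a one-step branching argument at the first splitting time $T_1\sim\mathrm{Exp}(1)$, the integral equation $m(t)=e^{-t}+\int_0^t 2m(t-s)e^{-s}\,ds$, equivalently $m'=m$ with $m(0)=1$, hence $m(t)=e^t$. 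The only step that requires genuine care is the Markov-concatenation argument for the ancestral trajectory of $u$; this is entirely standard, and indeed the statement is a special case (potential $V\equiv 0$, unit branching rate) of Corollary~8.6 of Hardy–Harris~\cite{Hardy2009}, which may be invoked directly.
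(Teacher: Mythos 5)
Your proof is correct. Note that the paper does not actually prove this lemma — it cites it as a special case of Corollary~8.6 in Hardy and Harris~\cite{Hardy2009}, where it is obtained via a general change-of-measure (spine) technique for branching Markov processes in a potential. Your direct first-moment argument is the elementary alternative, and it goes through cleanly in the Yule-tree-plus-Poisson-jumps construction that the paper itself describes in Section~2.1: the event $\{u\in\mathcal N_t\}$ is measurable with respect to the Yule tree alone, and conditionally on the tree the jumps along the ancestral line of $u$ form a rate-$\gamma$ Poisson process independent of the tree, so $(X_s(u))_{s\in[0,t]}$ given $\{u\in\mathcal N_t\}$ has exactly the law of $(X_s)_{s\in[0,t]}$; the factorisation and the computation $\E{|\mathcal N_t|}=e^t$ (which also follows immediately from the paper's Lemma~\ref{lem:yule}) finish the job. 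The spine approach buys generality (martingale change of measure, non-trivial potentials, higher moments), while yours is shorter and self-contained for this specific identity; either is a legitimate way to dispose of the lemma.
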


The following result will also be needed. Note that $|\mathcal N_t|$, the number of particles at time $t$, is a Yule process, i.e.~a continuous-time Galton-Watson process with binary offspring distribution. This implies the following classical result:
\begin{lemma} [\cite{Harris1963}, Section V.8]
\label{lem:yule}
Let $t\ge0$. Then $|\mathcal N_t|\sim \operatorname{Geom}(e^{-t})$, i.e.
\[
\P(|\mathcal N_t| = n) = e^{-t}(1-e^{-t})^{n-1},\quad n\ge1.
\]
\end{lemma}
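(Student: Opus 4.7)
The plan is to exploit that $|\mathcal N_t|$ is a continuous-time pure birth process on $\{1,2,\ldots\}$ with birth rate $n$ at state $n$: when $n$ particles are alive, each independently branches at rate $1$, so the next birth occurs at total rate $n$. Invoking the strong Markov property at each birth time, the successive inter-birth times $\tau_1, \tau_2, \ldots$ are then independent with $\tau_k \sim \operatorname{Exp}(k)$. (Equivalently, from the Yule-tree construction of Section~\ref{sec:brw}: each of the $k$ newly revealed edge-lengths above the currently alive particles is an independent $\operatorname{Exp}(1)$, and the minimum of $k$ i.i.d.\ $\operatorname{Exp}(1)$ random variables is $\operatorname{Exp}(k)$.) Since $\{|\mathcal N_t|\ge n\} = \{\tau_1+\cdots+\tau_{n-1}\le t\}$, it suffices to show $\P(S_{n-1}\le t) = (1-e^{-t})^{n-1}$ for $n\ge 1$, where $S_{n-1}:=\tau_1+\cdots+\tau_{n-1}$ (with $S_0 := 0$); differencing in $n$ then yields the claimed geometric mass function.

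To identify the law of $S_{n-1}$, I would use the following order-statistics trick. Let $Y_1, \ldots, Y_{n-1}$ be i.i.d.\ $\operatorname{Exp}(1)$ and set $M := \max_{1\le k \le n-1} Y_k$, so that $\P(M\le t) = (1-e^{-t})^{n-1}$ directly from the CDF. On the other hand, writing $M$ as a telescoping sum of the spacings $Y_{(k)} - Y_{(k-1)}$ of the order statistics (with $Y_{(0)}:=0$), the memoryless property of the exponential distribution shows these spacings are independent with $Y_{(k)} - Y_{(k-1)} \sim \operatorname{Exp}(n-k)$ for $k=1,\ldots,n-1$. Hence $M$ has the same distribution as a sum of independent $\operatorname{Exp}(1), \operatorname{Exp}(2), \ldots, \operatorname{Exp}(n-1)$ variables, which is precisely the law of $S_{n-1}$. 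Therefore $\P(S_{n-1}\le t) = (1-e^{-t})^{n-1}$, finishing the proof.

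As a sanity check, one could alternatively solve the Kolmogorov forward equations $p_n'(t) = (n-1)\, p_{n-1}(t) - n\, p_n(t)$ with initial data $p_1(0)=1$ and $p_n(0)=0$ for $n\ge 2$: direct substitution shows that $p_n(t) = e^{-t}(1-e^{-t})^{n-1}$ solves this triangular linear system, and uniqueness is standard. There is no genuine obstacle here, as this is a classical computation; the only point deserving mild care is the passage from the Yule-tree description of the BRW to the Markovian pure-birth dynamics for the particle count, which is immediate from the construction in Section~\ref{sec:brw}.
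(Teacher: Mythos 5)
The paper does not prove this lemma at all: it is stated as a citation to Harris's book (Section V.8), with only the remark that $|\mathcal N_t|$ is a continuous-time binary Galton--Watson process. So there is no in-paper proof to compare against. Your derivation is correct and is in fact the standard one. The passage from the Yule-tree/BRW construction to the pure-birth Markov dynamics is immediate (as you note), the inter-birth times $\tau_k\sim\operatorname{Exp}(k)$ are independent by the strong Markov property, and the Rényi order-statistics identity correctly shows that $S_{n-1}=\tau_1+\cdots+\tau_{n-1}$ has the law of $\max_{1\le k\le n-1} Y_k$ for $Y_k$ i.i.d. $\operatorname{Exp}(1)$, giving $\P(|\mathcal N_t|\ge n)=(1-e^{-t})^{n-1}$ and hence the geometric mass function by differencing. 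The Kolmogorov forward-equation check at the end is also correct and is the computation one finds in Harris and in most textbooks; either route would serve as the cited proof.
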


\subsection{Resilience and BRW with non-local competition}
Let a BRW tree be defined as in the previous section. Let $(\rho(u),u\in \mathcal U)$ be i.i.d.~Exp(1) random variables. We call $\rho(u)$ the \emph{resilience} of the particle $u$.
We denote by 
\begin{equation} \label{eq:calTdefn}
\mathcal T = ((\mathcal N_t)_{t\ge0},(X_t(u),t\ge0,u\in \mathcal N_t),(\alpha(u),u\in \mathcal U),(\beta(u),u\in \mathcal U),(\rho(u),u\in \mathcal U))
\end{equation}
the tuple representing the BRW tree with resiliences. Such a tree will be used to encode the evolution of the descendants of a given particle in the process $(\xi_t)_{t\ge 0}$, represented by the root of the tree. %

For $u\in \mathcal U$, a BRW tree with resiliences $\mathcal T = ((\mathcal N_t)_{t\ge0},(X_t(u),t\ge0,u\in \mathcal N_t),(\alpha(u),u\in \mathcal U),(\beta(u),u\in \mathcal U),(\rho(u),u\in \mathcal U))$, a function $\kappa = (\kappa_t(y))_{t\ge0,y\in\Z^d}$ and a location $x\in \Z^d$, define
\[
\delta(u; \mathcal T,\kappa,x) = \inf\left\{t\in [0,\beta(u)-\alpha(u)]: \int_0^{t} \kappa_{\alpha(u)+s}(x+X_{\alpha(u)+s}(u))\,ds > \rho(u)\right\},
\]
with the convention that $\inf \emptyset =\infty$.
This quantity will be interpreted as the length of time between the birth of the particle $u$ and its death by competition (if this occurs before its `death by branching'), given that $\kappa_t(y)$ is the killing rate experienced by particles at position $y$ at time $t$ and that the root particle of the BRW tree $\mathcal T$ is at position $x$ at time 0.

The process $(\xi_t)_{t\ge 0}$ can now be defined as a deterministic function of a collection of BRW trees with resiliences. Take a collection $\mathcal T^1,\ldots,\mathcal T^n$ of i.i.d.~copies of $\mathcal T$, representing the descendants of $n$ particles at positions $x_1,\ldots,x_n$ at time $0$. For each $i$, let $\mathcal T^i = ((\mathcal N^i_t)_{t\ge0},(X^i_t(u),t\ge0,u\in \mathcal N^i_t),(\alpha^i(u),u\in \mathcal U),(\beta^i(u),u\in \mathcal U),(\rho^i(u),u\in \mathcal U))$. First set
\[
\xi'_t(x) = \sum_{i=1}^n \sum_{u\in\mathcal N^i_t} \1_{x = x_i + X^i_t(u)} \quad \text{for }t\ge 0, \, x\in \Z^d.
\]
Define $K'_t(x) = \xi'_t * \Lambda(x)$. Now let
\[
\sigma_1 = \min\left\{\alpha^i(u) + \delta(u;\mathcal T^i,K',x_i ): i\in \{1,\ldots,n\},\ u\in \mathcal U\right\}
\]
and denote by $(i^*,u^*)$ the minimiser of this quantity. The time $\sigma_1$ is interpreted as the first time a particle is killed by competition. We set 
\[
\xi_t = \xi'_t \quad \text{for }t<\sigma_1.
\]
We then iterate this process, but ignoring the particle $u^*$ in $\mathcal T^{i^*}$ and its descendants from time $\sigma_1$ onwards. Explicitly, write $(i,u) \succ (i^*,u^*)$ if $i = i^*$ and $u \succ u^*$. Define 
\begin{align*}
\xi''_t(x) &= 
\begin{cases}
\xi'_t(x) \quad & \text{for }t<\sigma_1 \\
\sum_{i=1}^n \sum_{\{u\in\mathcal N^i_t: (i,u) \not\succ (i^*,u^*)\}} \1_{x = x_i + X^i_t(u)} \quad  & \text{for }t\geq \sigma_1,
\end{cases}\\
K''_t(x) &=  \xi''_t * \Lambda(x) \qquad \text{for }t\ge 0,\\
\sigma_2 &= \min\left\{\alpha^i(u) + \delta(u;\mathcal T^i,K'',x_i): i\in \{1,\ldots,n\},\ u\in \mathcal U,\ (i,u)  \not\succ (i^*,u^*)\right\},\\
\xi_t &= \xi''_t \quad \text{for }\sigma_1\le t<\sigma_2.
\end{align*}
We continue like this to define $\xi_t$ at all times.
Then $(\xi_t(x),x\in \Z^d)_{t\geq 0}$ is a BRWNLC
with $\xi_0(x)=\sum_{i=1}^n \1_{x_i=x}.$

We write $(\F_{t})_{t\ge 0}$ for the natural filtration of the process $(\xi_t)_{t\geq 0}$.

\section{Proof of Theorems~\ref{th:survival} and \ref{th:shape} (contour arguments)}
\label{sec:proof_thm}

Recall the definition of the rate function $I(v)\new{=I(v;\gamma,p)}$ in~\eqref{eq:Idefn}. Define its \emph{domain} by
\[
\operatorname{dom}(I) = \{x\in\R^d: I(x) < \infty\}.
\]
The following lemma gathers a few properties of $I$ and $\dom(I)$. 
Recall the definition of $\mu$ in~\eqref{eq:mudefn}.
\begin{lemma} \label{lem:basicLambda}
\new{Suppose $p$ satisfies~\eqref{eq:p_assumptions}, and take $\gamma>0$.}
The rate function $I$ satisfies the following properties:
\begin{enumerate}
\item The closure of $\dom(I)$ is the closed cone generated by the convex hull of the support of the jump kernel $p$; in other words, it is the set of linear combinations with non-negative coefficients of points $x\in\Z^d$ such that $p(x) > 0$.
\item The interior of $\dom(I)$, denoted $\dom(I)^\circ$, is non-empty and contains $\mu$. Furthermore, $I$ attains its minimum at $\mu$ where $I(\mu) = 0$.
\item $I$ is continuous on $\dom(I)^\circ$.
\end{enumerate}
\end{lemma}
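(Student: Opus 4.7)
The plan is to treat the three claims via convex analysis applied to the log-moment generating function $\Lambda^*(u) := \log\E{e^{\langle u, X_1\rangle}} = \gamma\sum_{x\in\Z^d} p(x)(e^{\langle u,x\rangle}-1)$, of which $I$ is the Fenchel--Legendre conjugate. Write $S := \supp(p)$ (which is finite by the finite-range assumption on $p$), $K$ for the convex cone generated by $S$, i.e.\ the set of nonnegative linear combinations of elements of $S$, and $K^\circ := \{\theta\in\R^d : \langle \theta,x\rangle \le 0 \text{ for all } x\in S\}$ for its polar cone. Since $S$ is finite, $K$ is a closed polyhedral cone; the spanning assumption on $S$ implies $K$ has nonempty interior, and the bipolar theorem gives $K = K^{\circ\circ}$, both facts being used repeatedly below.

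For part (1), I would establish the two inclusions directly from the definition of $I$. To show $\dom(I) \subseteq K$, suppose $v \notin K = K^{\circ\circ}$; then some $\theta\in K^\circ$ satisfies $\langle v,\theta\rangle > 0$. Along the ray $u = t\theta$ with $t\ge 0$, the condition $\theta\in K^\circ$ forces $e^{\langle t\theta, x\rangle}\le 1$ and hence $\Lambda^*(t\theta)\le 0$, so $\langle v, t\theta\rangle - \Lambda^*(t\theta) \to +\infty$ and $I(v) = +\infty$. To show $K \subseteq \overline{\dom(I)}$, it is enough to show that the interior of $K$ lies in $\dom(I)$. Fix $v$ in the interior of $K$ and assume for contradiction that some sequence $(u_n)$ satisfies $g(u_n) := \langle v,u_n\rangle - \Lambda^*(u_n) \to +\infty$. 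Since $g$ is continuous with $g(0)=0$, necessarily $\|u_n\| \to \infty$; write $u_n = t_n\theta_n$ with $\|\theta_n\|=1$, and pass to a subsequence so that $\theta_n\to\theta$. If $\max_{x\in S}\langle\theta,x\rangle > 0$, then $\Lambda^*(u_n)$ grows exponentially in $t_n$ while $\langle v,u_n\rangle$ grows at most linearly, contradicting $g(u_n)\to+\infty$. Otherwise $\theta\in K^\circ$ with $\|\theta\|=1$; since $v$ is interior to $K$, the point $v+\epsilon\theta$ lies in $K$ for small $\epsilon>0$, and $\langle\theta,v+\epsilon\theta\rangle\le 0$ yields $\langle v,\theta\rangle \le -\epsilon < 0$. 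Then $\langle v,u_n\rangle$ diverges to $-\infty$ linearly in $t_n$, while $\Lambda^*(u_n) \ge -\gamma$ (uniformly, since $e^s - 1\ge -1$), again contradicting $g(u_n)\to+\infty$.

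For part (2), the spanning assumption yields $d$ linearly independent elements of $S$; a small-perturbation argument along this basis shows that any combination $\sum_{y\in S} c_y y$ with all $c_y > 0$ lies in the interior of $K$. In particular $\mu = \sum_{y\in S}(\gamma p(y))\,y$ has all coefficients positive and therefore lies in $\dom(I)^\circ$, which by part (1) coincides with the interior of $K$. To evaluate $I(\mu)$, observe that the concave function $u\mapsto \langle\mu,u\rangle - \Lambda^*(u)$ has gradient $\mu - \nabla\Lambda^*(u)$, which vanishes at $u=0$ because $\nabla\Lambda^*(0) = \gamma\sum_x p(x)\,x = \mu$; its supremum is thus attained at $u=0$ with value $0$, so $I(\mu)=0$. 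Evaluating the supremum in \eqref{eq:Idefn} at $u=0$ also shows $I(v)\ge 0$ for every $v\in\R^d$, so $\mu$ is a global minimizer.

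Part (3) follows from the classical fact that a proper convex function is continuous on the interior of its effective domain, applied to $I$, which is convex as a supremum of affine functions of $v$ and finite on the open set $\dom(I)^\circ$. The main obstacle in the whole argument is the compactness-plus-separation step in the lower bound for part (1); once that is in place, parts (2) and (3) reduce respectively to a short gradient computation at $u=0$ and to a standard invocation of convex analysis.
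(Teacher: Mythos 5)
Your proof is correct, but it takes a genuinely different and more self-contained route than the paper. For part (1), the paper invokes Cram\'er's theorem to identify $I$ as the rate function in the LDP for $X_n/n$ and then cites the literature (Lanford, Petit, Biggins) for the fact that the closure of $\dom(I)$ equals the closed convex hull of the support of $X_1$, followed by a support computation for a compound Poisson variable. You instead prove both inclusions directly from the definition of the Legendre transform: $\dom(I)\subseteq K$ via separation ($\theta\in K^\circ$ with $\langle v,\theta\rangle>0$ forces $I(v)=\infty$ along the ray $t\theta$), and $\operatorname{int}(K)\subseteq\dom(I)$ via a compactness argument with a dichotomy on $\max_{x\in S}\langle\theta,x\rangle$. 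This buys self-containment and avoids LDP machinery at the cost of slightly more work. For part (2), the paper places $\mu$ in $\dom(I)^\circ$ through Rockafellar's relative-interior results and gets $I(\mu)\le 0$ by Jensen; you use the elementary fact that a cone-combination of generators with all positive coefficients is interior to the cone, and compute $I(\mu)=0$ from the first-order condition $\nabla\Lambda^*(0)=\mu$. Both are valid, and your computation is perhaps cleaner. Part (3) is essentially the same in both. One small point worth tightening in your write-up: in the case $\max_{x\in S}\langle\theta,x\rangle>0$ of part (1), the maximizing $x^*\in S$ may depend on $n$ along the subsequence; since $S$ is finite you can pass to a further subsequence, or simply lower-bound $\Lambda^*(u_n)\ge \gamma(\min_{x\in S}p(x))\bigl(e^{c t_n}-1\bigr)-\gamma$ for large $n$, which still grows exponentially. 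You should also note explicitly that $\overline{\operatorname{int}(K)}=K$ (which holds because $K$ is a closed convex set with nonempty interior) to pass from $\operatorname{int}(K)\subseteq\dom(I)$ to $K\subseteq\overline{\dom(I)}$.
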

\begin{proof}
We provide a proof for completeness. 
Recall that $(X_t)_{t\ge 0}$ is a continuous-time random walk started at $0$ with jump rate $\gamma$ and jump kernel $p$.
By Cramér's theorem, $I$ is the good convex rate function in the large deviation principle for $X_n/n$ \cite[Theorem 2.2.30]{DemboZeitouni} and therefore the closure of its domain equals the closed convex hull of the support of $X_1$, see e.g.~\cite{Lanford1973,Petit2018}, see also \cite[Lemma~1]{Biggins1978}. Now, $X_1$ is the sum of a Poisson($\gamma$) number of i.i.d.~random variables with law $p$, and therefore the convex hull of its support is easily shown to be the closed cone generated by the convex hull of the support of $p$, see for example Section~6 in \cite{Biggins1978}. This proves the first point.

For the second point, recall from our assumptions on $p$ \new{in~\eqref{eq:p_assumptions}} that
the set $\mathcal S(p,\gamma):=\{\gamma x: x\in \Z^d, p(x)>0\}$
 is a spanning set of the vector space $\R^d$. Hence, since $\dom(I)$ includes this set as well as the origin $0\in\R^d$, its affine hull is the whole space $\R^d$. It is therefore a convex set of dimension $d$ with non-empty interior $\dom(I)^\circ$ (see e.g.~\cite[Section 2]{Rockafellar1970} for basic properties of convex sets). Now we have that $\mu$, which is a convex combination of points in $\mathcal S(p,\gamma)$, is contained in the relative interior of the convex hull of $\mathcal S(p,\gamma)$ \cite[Theorem~6.9]{Rockafellar1970}, and hence in the relative interior of $\dom(I)$, by the first part of the lemma and the fact that the relative interior of a convex set is contained in the relative interior of the cone it generates \cite[Corollary 6.8.1]{Rockafellar1970}. Since $\dom(I)$ is of dimension $d$, it follows that $\mu\in\dom(I)^\circ$. Furthermore, $I(\mu) \le 0$ by the definition of $I$ in~\eqref{eq:Idefn} and Jensen's inequality, and $I(v) \ge0$ for all $v\in\R^d$, using $u=0$ in~\eqref{eq:Idefn}. This proves the second point.

The third point is a direct consequence of convexity, see \cite[Theorem~10.1]{Rockafellar1970}.
\end{proof}

\paragraph{Renormalization grid.} \new{Take $\gamma>0$ and $p$ satisfying~\eqref{eq:p_assumptions}.} Recall the definition of $\mathcal I_1$ in~\eqref{eq:I1defn}.
For $v\in \R^d\backslash\{0\}$, let
\begin{equation}  \label{eq:a0defn}
a_v = \sup\{a\ge 0:I(\mu+av)\le 1\} = \sup\{a\ge 0:\mu+av\in \mathcal I_1\}.
\end{equation}
Note that $a_v>0$, because by Lemma~\ref{lem:basicLambda} we have that $\mu\in\dom(I)^\circ$, $I(\mu)=0$ and $I$ is continuous on $\dom(I)^\circ$. Furthermore, $a_v<\infty$, because $\mathcal I_1$ is compact. Furthermore, we have $\mu+a_v v\in \mathcal I_1$, since $\mathcal I_1$ is compact, and hence closed. The function $v\mapsto 1/a_v$ is also called the \emph{gauge function} of the convex set $\mathcal I_1-\mu$ \cite[Section~4]{Rockafellar1970}.

\begin{figure}
\centering
\def\svgwidth{1.1\columnwidth}
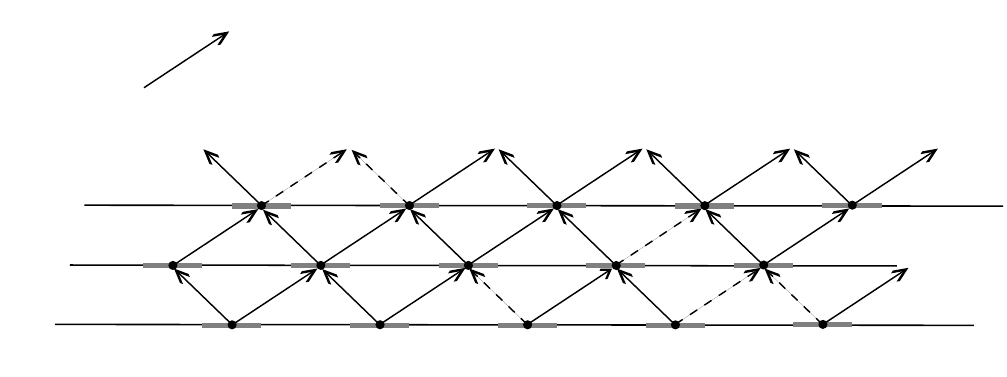
\caption{\label{fig:grid} Schematic representation of the renormalization grid defined in Section~\ref{sec:proof_thm}. The arrows represent the edges $e$ of the grid. Open edges are drawn with solid lines, closed edges with dashed lines \new{(an arbitrary choice has been made for a possible configuration of open and closed edges)}. An edge from $(x,kT)$ to $(y,(k+1)T)$ is closed if there are at least $J$ particles at some site $x'\in\ball_{R'}(x)$ at time $kT$ but \new{either} there are less than $J$ particles at every site $y'\in\ball_{R'}(y)$ at time $(k+1)T$ \new{or there is a time during $[kT,(k+1)T]$ when no particles are in the ball $\ball_{R'}(x)$}.}
\end{figure}

Recall the definition of $\lfloor x \rfloor$ for $x\in \R^d$ from~\eqref{eq:intpartdefn}, and recall the definition of $\mathcal B_r(x)$ from~\eqref{eq:ballrdefn}.
Take $R',$ $T>1$ large positive constants to be fixed later, set $J=\lfloor N^{1/3}\rfloor$ and take $v_0\in \R^d\backslash\{0\}$ and $\aminus,\aplus\ge0$. We introduce a renormalization grid, see Figure~\ref{fig:grid} for a schematic illustration.
Define sets of vertices by letting 
\begin{align*}
&V_{T,\aminus,\aplus ,v_0}(k)
=\{\ell_1 \lfloor (\mu+\aminus v_0) T \rfloor + \ell_2 \lfloor (\mu +\aplus v_0)T \rfloor : \ell_1, \ell_2 \in \Z, \ell_1+ \ell_2=k\}
\quad \text{for }k\in \N_0.
\end{align*}
Then define the sets of directed edges for $k\in \N_0$,
\begin{align} \label{eq:ETkdefn}
&E_{T,\aminus,\aplus ,v_0}(k) \notag \\
&\quad =\left\{
((x,kT),(x+y,(k+1)T)):
x\in V_{T,\aminus,\aplus ,v_0}(k), y\in \{\lfloor (\mu+\aminus v_0) T \rfloor,\lfloor (\mu+ \aplus v_0)T \rfloor\}
\right\}, \notag \\
&E_{T,\aminus ,\aplus ,v_0} = \bigcup_{k\in\N_0} E_{T,\aminus ,\aplus ,v_0}(k).
\end{align}
For an edge $e = ((x,s),(y,t)) \in E_{T,\aminus ,\aplus ,v_0}$, define the event
\begin{align}
\mathcal C_e&=\{\exists x'\in \ball_{R'}(x) \text{ s.t. } \xi_s(x')\geq J\} \nonumber \\
&\qquad \cap \Big[\{\forall y'\in \ball_{R'}(y),\  \xi_t(y')<J\} 
\cup \{\exists u\in [s,t]\text{ s.t. }\xi_u(x'')=0\ \forall x''\in \new{\ball_{R'}(x)} \}\Big]. \label{eq:Cdefn}
\end{align}
(At first reading, the reader can safely ignore the third event in the definition of $\mathcal C_e$ and concentrate on the first two events; the third event will only be relevant at the end of the proof of Theorem~\ref{th:shape}.)
We say that $e$ is \emph{closed} if $\mathcal C_e$ occurs and $e$ is \emph{open} otherwise. We want to show by a contour argument (also known as a Peierls argument~\new{\cite{Peierls1936}}) that there exists an infinite cluster of open edges with high probability when starting from a `good' initial configuration. The key to this argument is the following proposition.

\begin{prop}
\label{prop:bound_closed_edges}
For \new{$\gamma>0$, $p$ satisfying~\eqref{eq:p_assumptions},} $v_0\in \R^d\backslash\{0\}$ and $0\le \aminus <\aplus <a_{v_0}$, there exists $\kappa=\kappa(\gamma,p,v_0,a_-,a_+)>0$ such that the following holds.
For $T>1$ sufficiently large and \new{$\lambda, R>0$},
there exists $R'>1$ such that  for
every $b\in (0,1)$, there exists $N_0>1$ such that the following holds:
\new{Suppose $\Lambda$ satisfies~\eqref{eq:Lambda_assumptions}, take} $n\in\N$ and $e_1,e_2,\ldots ,e_n\in E_{T,\aminus ,\aplus ,v_0}$, and let $\xi$ be a starting configuration consisting of a finite number of particles.
\new{Recall the definition of $N$ in~\eqref{eq:Ndefn}, and let $J=\lfloor N^{1/3}\rfloor$.}
 Assume one of the following conditions:
\begin{enumerate}
\item $e_1,e_2,\ldots ,e_n\in E_{T,\aminus ,\aplus ,v_0}(1)$ and $\xi$ arbitrary, or
\item $e_1,e_2,\ldots ,e_n\in E_{T,\aminus ,\aplus ,v_0}$ arbitrary and  $\xi(x) \le N$ for every $x\in\Z^d$.
\end{enumerate}
\new{Let $(\xi_t)_{t\ge 0}$ be a BRWNLC with jump rate $\gamma$, jump kernel $p$ and competition kernel $\Lambda$, as defined in Section~\ref{subsec:modeldefn}.}
Then \new{recalling the definition of $\mathcal C_e$ in~\eqref{eq:Cdefn}},
$$
\psub{\xi}{\bigcap_{i=1}^n \mathcal C_{e_i}} \le b^n.
$$
\end{prop}

\begin{remark}
\new{The proof of the above proposition under condition~2 uses the result under condition~1.}
\end{remark}
We will prove Proposition~\ref{prop:bound_closed_edges} in Sections~\ref{sec:propproof}-\ref{sec:badbluepf}; in the remainder of this section, we use Proposition~\ref{prop:bound_closed_edges} and the construction in Section~\ref{sec:construction} to prove Theorem~\ref{th:survival} and Theorem~\ref{th:shape}.
\begin{proof}[Proof of Theorem~\ref{th:survival}]
Suppose $N\ge 1$ (and so $J=\lfloor N^{1/3}\rfloor \ge 1$).
Define the stopping times
\begin{align}
\tau_1 &= \inf\{t\ge 0 : \sum_{x\in \Z^d}\xi_t(x)=1\} \notag \\
\text{and }\quad \tau_2 &= \inf\{t\ge 0 : \sup_{x\in \Z^d}\xi_t(x)\ge J\}. \label{eq:tau2defn}
\end{align}
For $\xi\in \N_0^{\Z^d}$ consisting of a finite number of particles with $\xi \not\equiv 0$, by conditioning on $\F_{\tau_1}$ and applying
the strong Markov property,
\begin{align} \label{eq:tau1cond}
\psub{\xi}{\xi_t \equiv 0 \text{ for some }t>0}
&=\Esub{\xi}{\psub{\xi_{\tau_1}}{\xi_t \equiv 0 \text{ for some }t>0}\1_{\tau_1<\infty}} \notag \\
&= \psub{\xi}{\tau_1<\infty}\psub{\delta_0}{\xi_t \equiv 0 \text{ for some }t>0},
\end{align}
where the last line follows by translational symmetry.
Then by conditioning on $\F_{\tau_2}$ and applying the strong Markov property again,
\begin{align} \label{eq:tau2cond}
\psub{\delta_0}{\xi_t \equiv 0 \text{ for some }t>0}
&\le \Esub{\delta_0}{\psub{\xi_{\tau_2}}{\xi_t \equiv 0 \text{ for some }t>0}\1_{\tau_2<\infty}}
+\psub{\delta_0}{\tau_2=\infty}.
\end{align}
\new{The remainder of the proof consists of the following two steps, which we now sketch before giving the complete proof:
\begin{itemize}
\item Step 1: We show that the second term on the right hand side of~\eqref{eq:tau2cond} is small. The idea here is that until there is a site with at least $J$ particles, the killing rate at every site is very small (at most $J/N$), and this will allow us to show that $\tau_2\le \tfrac 32 \log J$ with high probability.
\item Step 2: We show that the first term on the right hand side of~\eqref{eq:tau2cond} is small.
By the definition of the stopping time $\tau_2$, it suffices to show that $(\xi_t)_{t\ge 0}$ is likely to survive if the initial particle configuration is bounded by $J$ and if there is a site $z$ with at least $J$ particles. By translational invariance we can assume $z=0$. Then if there is an infinite path of open edges from $(0,0)$ in our renormalization grid, the particle system survives for all time. We prove that such an infinite path is likely to exist using a contour argument and Proposition~\ref{prop:bound_closed_edges}.
\end{itemize}
}

\paragraph{\new{\textbf{Step 1:}}}
Construct the BRWNLC process $(\xi_t)_{t\ge 0}$ with initial configuration $\xi_0=\delta_0$ as in Section~\ref{sec:construction}, using the BRW tree with resiliences $\mathcal T = ((\mathcal N_t)_{t\ge0},(X_t(u),t\ge0,u\in \mathcal N_t),(\alpha(u),u\in \mathcal U),(\beta(u),u\in \mathcal U),(\rho(u),u\in \mathcal U))$.
\new{Recall from Section~\ref{sec:brw} that for $u\in \mathcal U$, for $t < \alpha(u)$, we write $X_t(u)$ to denote the location of the ancestor of particle $u$ at time $t$.}
Recall that $J=\lfloor N^{1/3}\rfloor$, and define the event
\begin{align}  \label{eq:Adefmainthm}
A &= \{ |\mathcal N_{\frac 32 \log J}|\ge J |\ball_{(\log J)^2}(0)|,
\new{|X_{t}(u)|< (\log J)^2 \, \forall u \in \mathcal N_{\frac 32 \log J},t\le \tfrac 32 \log J} \notag \\
&\hspace{2cm} \text{ and }
\rho(u)> \tfrac J N \min(\tfrac 32 \log J-\alpha (u),\beta(u)-\alpha(u)) \; \forall u \in \cup_{t\le \frac 32 \log J}\mathcal N_{t}\}.
\end{align}
We claim that if $\tau_2>\frac 32 \log J$ then $A^c$ occurs.
Indeed, suppose (aiming for a contradiction) that $A$ occurs and $\tau_2>\frac 32 \log J$.
Then for $x\in \Z^d$ and $t\in [0, \frac 32 \log J]$, we have
$$
K_t(x) = \sum_{y\in \Z^d}\xi_t(x-y)\Lambda(y)\le J N^{-1},
$$
by the definition of $\tau_2$ in~\eqref{eq:tau2defn} and since $\sum_{z\in \Z^d} \Lambda(z)=N^{-1}$.
Therefore, by our construction in Section~\ref{sec:construction}, and since $\rho(u)> \tfrac J N \min(\tfrac 32 \log J-\alpha (u),\beta(u)-\alpha(u))$ $\forall u \in \cup_{t\le \frac 32 \log J}\mathcal N_{t}$, we have
$\xi_t(x)=\sum_{u\in \mathcal N_t}\1_{x=X_t(u)}$ for each $x\in \Z^d$ and $t\in [0, \frac 32 \log J]$.
By the definition of the event $A$, it follows that 
$$
\sum_{x\in \ball_{(\log J)^2}(0)}\xi_{\frac 32 \log J}(x)\ge |\mathcal N_{\frac 32 \log J}|\ge J |\ball_{(\log J)^2}(0)|,
$$
which implies that $\tau_2 \le \frac 32 \log J$ and gives us a contradiction,
proving that the claim holds.

We now establish an upper bound on $\p{A^c}$.
Since $|\mathcal N_t|\sim \text{Geom}(e^{-t})$ for $t\ge 0$ by Lemma~\ref{lem:yule}, we have
\begin{equation} \label{eq:pAc1}
\p{ |\mathcal N_{\frac 32 \log J}|< J |\ball_{(\log J)^2}(0)|}\le J |\ball_{(\log J)^2}(0)|\cdot J^{-3/2}.
\end{equation}
Let $(N^{(\gamma)}_t)_{t\ge 0}$ denote a Poisson process with rate $\gamma$.
By the many-to-one lemma (Lemma~\ref{lem:many-to-one}), and since the jump kernel $p$ is supported on $\ball_{R_1}(0)$,
\begin{align} \label{eq:pAc2}
\p{\exists u \in \mathcal N_{\frac 32 \log J}: \new{\sup_{t\le \frac 32 \log J}|X_{t}(u)|}\ge (\log J)^2}
&\le J^{3/2} \p{N^{(\gamma)}_{\frac 32 \log J}\ge (\log J)^2/ R_1} \notag \\
&\le J^{3/2} e^{-(\log J)^2/R_1}e^{\frac 32 \gamma \log J(e-1)},
\end{align}
where the last line follows by Markov's inequality.

\new{
Let $\F^*$ denote the $\sigma$-algebra generated by $ ((\mathcal N_t)_{t\ge0},(X_t(u),t\ge0,u\in \mathcal N_t),(\alpha(u),u\in \mathcal U),(\beta(u),u\in \mathcal U))$, i.e.~the BRW tree \textit{without} the resiliences.
Then first using that $(\rho(u),u\in \mathcal U)$ are i.i.d.~with $\rho(u)\sim\text{Exp}(1)$ for each $u\in \mathcal U$, and then using our notation from Section~\ref{sec:brw} that
$u\prec v$ for $u,v\in \mathcal U$ if $v$ is a descendant of $u$ (including $u$ itself) in the tree $\mathcal U$, and using that each $u\in \cup_{t\le \frac 32 \log J}\mathcal N_{t}$ has some descendant $v\in \mathcal N_{ \frac 32 \log J}$, we have
\begin{align} \label{eq:rhoJN1}
&\p{ \left. \rho(u)> \tfrac J N \min(\tfrac 32 \log J-\alpha(u),\beta(u)-\alpha(u))\, \forall u \in \cup_{t\le \frac 32 \log J}\mathcal N_{t}\right| \F^*} \notag \\
&= \prod_{u\in \cup_{t\le \frac 32 \log J}\mathcal N_{t}}\exp\left(-\tfrac J N \min(\tfrac 32 \log J-\alpha(u),\beta(u)-\alpha(u))\right) \notag \\
&\ge \exp\left(-\tfrac J N \sum_{v\in \mathcal N_{ \frac 32 \log J}}\sum_{u\prec v} \min(\tfrac 32 \log J-\alpha(u),\beta(u)-\alpha(u))\right).
\end{align}
For each $v\in \mathcal N_{ \frac 32 \log J}$ we have
\begin{equation} \label{eq:rhoJN2}
\sum_{u\prec v} \min(\tfrac 32 \log J-\alpha(u),\beta(u)-\alpha(u))=\tfrac 32 \log J,
\end{equation}
and therefore, first by conditioning on $\F^*$, then using~\eqref{eq:rhoJN1} and~\eqref{eq:rhoJN2}, and then using that $1-e^{-y}\le y$ for $y\ge 0$, 
\begin{align} \label{eq:pAc3}
&\p{\exists u \in \cup_{t\le \frac 32 \log J}\mathcal N_{t}: \rho(u)\le \tfrac J N \min(\tfrac 32 \log J-\alpha(u),\beta(u)-\alpha(u))} \notag \\
&=\E{\p{\left. \exists u \in \cup_{t\le \frac 32 \log J}\mathcal N_{t}: \rho(u)\le \tfrac J N \min(\tfrac 32 \log J-\alpha(u),\beta(u)-\alpha(u))\right| \F^*}} \notag \\
&\le \E{1-\exp\left(-\tfrac 32  \tfrac J N \log J |\mathcal N_{ \frac 32 \log J}| \right) } \notag \\
&\le \E{\tfrac 32  \tfrac J N \log J |\mathcal N_{ \frac 32 \log J}|} \notag \\
&= J^{3/2} \cdot \tfrac 32 \tfrac J N \log J,
\end{align}
where the last line follows since $|\mathcal N_t|\sim \text{Geom}(e^{-t})$ for $t\ge 0$ by Lemma~\ref{lem:yule}.
}

Hence by~\eqref{eq:pAc1},~\eqref{eq:pAc2} and~\eqref{eq:pAc3} and a union bound,
\begin{align} \label{eq:Abound}
\psub{\delta_0}{\tau_2 >\tfrac 32 \log J}
\le \p{A^c}
&\le  |\ball_{(\log J)^2}(0)|J^{-1/2}+J^{3/2} e^{-(\log J)^2/R_1}e^{\frac 32 \gamma \log J(e-1)}
+\tfrac 32 \tfrac {J^{5/2}} N \log J \notag \\
&\le N^{-1/7}
\end{align}
for $N$ sufficiently large, since $J=\lfloor N^{1/3}\rfloor$. 
\new{This completes Step~1.}

\paragraph{\new{\textbf{Step 2:}}}
By~\eqref{eq:tau1cond},~\eqref{eq:tau2cond} and~\eqref{eq:Abound}, the proof now reduces to showing that there exists $\kappa>0$ such that \new{for $\delta,\lambda,R>0$, if $N_0$ is sufficiently large and $\Lambda$ satisfies~\eqref{eq:Lambda_assumptions}}, then for any $\xi'\in \N_0^{\Z^d}$ consisting of a finite number of particles with $\xi'(x)\le J$ $\forall x\in \Z^d$ and $\xi' (x^*)=J$ for some $x^*\in \Z^d$, 
\begin{equation} \label{eq:thmclaim}
\psub{\xi'}{\xi_t\equiv 0 \text{ for some }t>0}\le \delta/2.
\end{equation}
By translational symmetry, we may assume that $x^*=0$.
We will now prove~\eqref{eq:thmclaim} under this assumption using Proposition~\ref{prop:bound_closed_edges} and a contour argument.
This will be a standard application of the contour argument, as can be found in, for example, the proof of Theorem~A.1 in~\cite{Durrett10lectures}, but we include the argument here for completeness.

\begin{figure}
\centering
\def\svgwidth{\columnwidth}
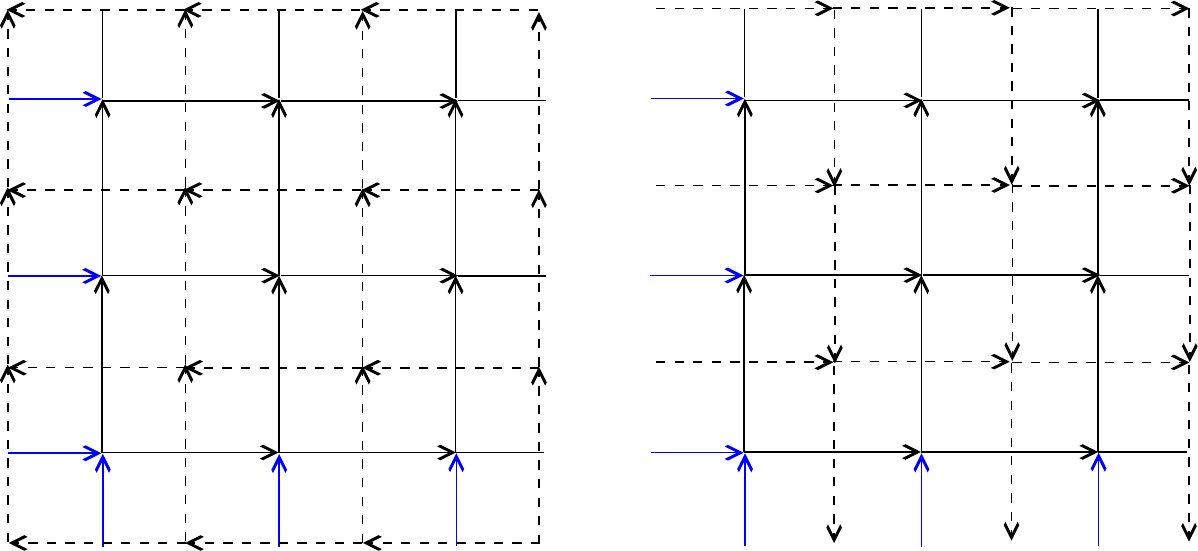
\caption{\label{fig:Enw_Ese} \new{Illustration of $E$, $E^0$ and $E^*$ introduced in Step 2 of the proof of Theorem~\ref{th:survival}.
In both the left and right figures, the black arrows represent edges in $E^0$ and the blue arrows represent edges in $E\setminus E^0$.
In the left figure, the dashed arrows represent edges in $E^{*,\mathrm{nw}}$, and the points marked with green crosses represent $n_v$ for the point $v$ shown.
In the right figure, the dashed arrows represent edges in $E^{*,\mathrm{se}}$.
In each figure, we show an example of a dual edge $e^*$ and corresponding edge $c(e^*)$ and vertices $l(e^*)$ and $r(e^*)$.
}
}
\end{figure}

For $T,\aminus ,\aplus \ge0$ and $v_0\in \R^d$, for $k\in \N_0$, let
\begin{align*}
V^0_{T,\aminus ,\aplus ,v_0}(k)&=\{\ell_1 \lfloor (\mu+\aminus v_0) T \rfloor + \ell_2 \lfloor (\mu +\aplus v_0)T \rfloor : \ell_1, \ell_2 \in \N_0, \ell_1+ \ell_2=k\}\\
 \text{and }\quad 
E^0_{T,\aminus ,\aplus ,v_0}(k)
&=\{
((x,kT),(x+y,(k+1)T)):\\
&\hspace{2cm}
 x\in V^0_{T,\aminus ,\aplus ,v_0}(k), y\in \{\lfloor (\mu+\aminus v_0) T \rfloor , \lfloor (\mu+\aplus v_0)T\rfloor \} \},
\end{align*}
and let 
$E^0_{T,\aminus ,\aplus ,v_0} = \bigcup_{k\in\N_0} E^0_{T,\aminus ,\aplus ,v_0}(k).$
Define sets of directed edges in the lattice $\Z^2$ by letting
\begin{align*}
E&=\{((\ell_1,\ell_2),(\ell_1+1,\ell_2)):\ell_1,\ell_2\in \Z\}\cup \{((\ell_1,\ell_2),(\ell_1,\ell_2+1)):\ell_1,\ell_2\in \Z\}\\
\text{and }\quad E^0 &=\{((\ell_1,\ell_2),(\ell_1+1,\ell_2)):\ell_1,\ell_2\in \N_0\}\cup \{((\ell_1,\ell_2),(\ell_1,\ell_2+1)):\ell_1,\ell_2\in \N_0\}.
\end{align*}
Suppose $\aminus \neq \aplus $, $v_0\neq 0$ and $T$ is sufficiently large that $\lfloor (\mu+\aminus v_0) T \rfloor \neq \lfloor (\mu+\aplus v_0)T\rfloor$.
Then, letting $v_1=\lfloor (\mu+\aminus v_0) T \rfloor$ and $v_2= \lfloor (\mu+\aplus v_0)T\rfloor$, define a bijective map
\begin{align*}
P_{T,\aminus ,\aplus ,v_0}: E^0_{T,\aminus ,\aplus ,v_0} &\to E^0\\
((\ell_1 v_1+\ell_2 v_2, (\ell_1+\ell_2)T),(\ell'_1 v_1+\ell'_2 v_2, (\ell'_1+\ell'_2)T))
&\mapsto ((\ell_1,\ell_2),(\ell'_1,\ell'_2)).
\end{align*}
Now set $v_0=(1,0,\ldots ,0)\in \R^d$, $\aminus  = 0$ and $\aplus  = \frac 12 a_{v_0} \in (0,a_{v_0})$.
Take $\kappa>0$ as in Proposition~\ref{prop:bound_closed_edges}; fix $\lambda, R>0$.
Take $T>1$ sufficiently large that Proposition~\ref{prop:bound_closed_edges} holds and $\lfloor (\mu+\aminus v_0) T \rfloor \neq \lfloor (\mu+\aplus v_0)T\rfloor$, and take $R'>1$ as in Proposition~\ref{prop:bound_closed_edges}.
\new{Take $\delta>0$, then take $b\in (0,1)$ sufficiently small that $\sum_{\ell=4}^\infty 3^{\ell-1} b^{\ell/2}<\delta/2$, and then take $N_0$ sufficiently large that Proposition~\ref{prop:bound_closed_edges} holds with this choice of $b$. Suppose $\Lambda$ satisfies~\eqref{eq:Lambda_assumptions}.}
Take $\xi'\in \N_0^{\Z^d}$ consisting of a finite number of particles with $\xi'(x)\le J$ $\forall x\in \Z^d$ and $\xi' (0)=J$,
and \new{let $(\xi_t)_{t\ge 0}$ be a BRWNLC with jump rate $\gamma$, jump kernel $p$ and competition kernel $\Lambda$, and} with $\xi_0=\xi'$.

For a directed edge $e\in E^0$, we say that the edge $e$ is closed if the event $\mathcal C_{P^{-1}_{T,\aminus ,\aplus ,v_0}(e)}$ occurs (recall~\eqref{eq:Cdefn}), and the edge $e$ is open otherwise.
For $u,v\in \N_0^2$, write $u\to v$ if there exists a path of open directed edges in $E^0$ from $u$ to $v$, and define the open cluster $\mathcal A:=\{v\in \N_0^2 :(0,0)\to v\}\cup \{(0,0)\}$.
Note that if $\xi_t\equiv 0$ for some $t>0$, then by the definition of the event $\mathcal C_e$ in~\eqref{eq:Cdefn}, and since $\xi_0(0)\ge J$, we must have $|\mathcal A|<\infty$.

\begin{figure}
\centering
\def\svgwidth{0.5\columnwidth}
\begingroup%
  \makeatletter%
  \providecommand\color[2][]{%
    \errmessage{(Inkscape) Color is used for the text in Inkscape, but the package 'color.sty' is not loaded}%
    \renewcommand\color[2][]{}%
  }%
  \providecommand\transparent[1]{%
    \errmessage{(Inkscape) Transparency is used (non-zero) for the text in Inkscape, but the package 'transparent.sty' is not loaded}%
    \renewcommand\transparent[1]{}%
  }%
  \providecommand\rotatebox[2]{#2}%
  \newcommand*\fsize{\dimexpr\f@size pt\relax}%
  \newcommand*\lineheight[1]{\fontsize{\fsize}{#1\fsize}\selectfont}%
  \ifx\svgwidth\undefined%
    \setlength{\unitlength}{217.14708944bp}%
    \ifx\svgscale\undefined%
      \relax%
    \else%
      \setlength{\unitlength}{\unitlength * \real{\svgscale}}%
    \fi%
  \else%
    \setlength{\unitlength}{\svgwidth}%
  \fi%
  \global\let\svgwidth\undefined%
  \global\let\svgscale\undefined%
  \makeatother%
  \begin{picture}(1,0.94162494)%
    \lineheight{1}%
    \setlength\tabcolsep{0pt}%
    \put(0,0){\includegraphics[width=\unitlength,page=1]{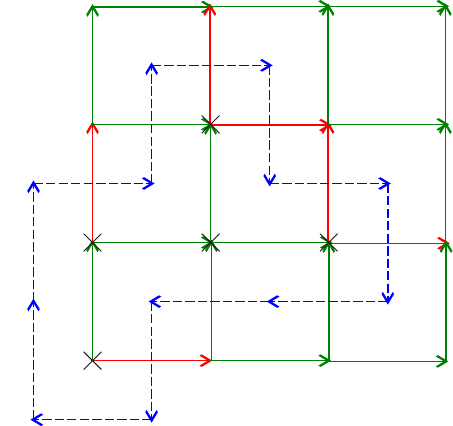}}%
    \put(-0.00140717,0.11768313){\color[rgb]{0.10196078,0.10196078,0.10196078}\makebox(0,0)[lt]{\lineheight{1.25}\smash{\begin{tabular}[t]{l}$e^*_0$\end{tabular}}}}%
    \put(0.2121874,0.17179789){\color[rgb]{0.10196078,0.10196078,0.10196078}\makebox(0,0)[lt]{\lineheight{1.25}\smash{\begin{tabular}[t]{l}$(0,0)$\end{tabular}}}}%
    \put(0.21760115,0.4327026){\color[rgb]{0.10196078,0.10196078,0.10196078}\makebox(0,0)[lt]{\lineheight{1.25}\smash{\begin{tabular}[t]{l}$(0,1)$\end{tabular}}}}%
    \put(0.47096446,0.16841552){\color[rgb]{0.10196078,0.10196078,0.10196078}\makebox(0,0)[lt]{\lineheight{1.25}\smash{\begin{tabular}[t]{l}$(1,0)$\end{tabular}}}}%
  \end{picture}%
\endgroup%

\caption{\label{fig:contour} \new{A possible realisation of the open cluster $\mathcal A$ and the cycle of dual edges $\Gamma$ in Step 2 of the proof of Theorem~\ref{th:survival}.
Green arrows represent open edges in $E^0$ and red arrows represent closed edges in $E^0$.
The points in $\mathcal A$ are indicated with crosses.
The blue dashed arrows give the cycle of dual edges $\Gamma$ containing $e_0^*$ such that for each edge $e^*\in \Gamma$, $r(e^*)\in \mathcal A$ and $l(e^*)\notin \mathcal A$.
}
}
\end{figure}

Define the set of directed edges in the dual lattice by letting $E^*=E^{*,\mathrm{nw}}\cup E^{*,\mathrm{se}}$, where
\begin{align*}
E^{*,\mathrm{nw}}&= \{((\ell-\tfrac 12 , k - \tfrac 12 ),(\ell-\tfrac 12 , k + \tfrac 12 )):\ell, k \in \N_0\}
\cup  \{((\ell+\tfrac 12 , k - \tfrac 12 ),(\ell-\tfrac 12 , k - \tfrac 12 )):\ell, k \in \N_0\},\\
E^{*,\mathrm{se}}&= \{((\ell + \tfrac 12 , k + \tfrac 12 ),(\ell + \tfrac 12 , k - \tfrac 12 )):\ell, k \in \N_0\}
\cup  \{((\ell - \tfrac 12 , k + \tfrac 12 ),(\ell + \tfrac 12 , k + \tfrac 12 )):\ell, k \in \N_0\}.
\end{align*}
\new{See Figure~\ref{fig:Enw_Ese} for an illustration.}
Note that each dual edge $e^*\in E^*$ crosses exactly one edge in $E$; write $c(e^*)$ for this edge.
When travelling along $e^*$, either the start vertex or the end vertex of $c(e^*)$ is on the right; call this vertex $r(e^*)\in \Z^2$, and call the vertex on the left $l(e^*)\in \Z^2$.
For a vertex $v=(\ell-\frac 12, k - \frac 12)$ in the dual lattice with $\ell, k\in \N_0$, let
$n_v=\{w\in \Z^2:\|v-w\|=1/\sqrt 2\}$ denote the set of the four closest vertices in $\Z^2$ to $v$.
Suppose $1\le |n_v \cap \mathcal A |\le 3$; then
\begin{align*}
1 &\le |\{v':(v,v')\in E^* \text{ with }r((v,v'))\in \mathcal A\text{ and }l((v,v'))\notin \mathcal A\}|\\
&=  |\{v'' :(v'',v)\in E^* \text{ with }r((v'',v))\in \mathcal A\text{ and }l((v'',v))\notin \mathcal A\}|.
\end{align*}
Therefore, if $|\mathcal A|<\infty$, then by following a path of distinct edges in $E^*$ starting with the edge $e^*_0:=((-\frac 12, -\frac 12),(-\frac 12, \frac 12))$ such that for each edge $e^*$ in the path, $r(e^*)\in \mathcal A$ and $l(e^*)\notin \mathcal A$,
we can see that there exists a cycle $\Gamma$ of edges in $E^*$ containing $e^*_0$ such that for each edge $e^*\in \Gamma \cap E^{*,\mathrm{se}}$, the edge in $E$ that it crosses is closed (i.e.~$\mathcal C_{P^{-1}_{T,\aminus ,\aplus ,v_0}(c(e^*))}$ occurs).
\new{See Figure~\ref{fig:contour}.}

Let $\mathcal G$ denote the set of cycles of edges in $E^*$ containing $e^*_0$.
Note that each $\Gamma\in \mathcal G$ must have length at least 4, and for $\ell\ge 4$, the number of cycles in $\mathcal G$ with length $\ell$ is at most $3^{\ell-1}$. Moreover, if $\Gamma \in \mathcal G$ has length $\ell$, then
$|\Gamma \cap E^{*,\mathrm{se}}|= \ell/2$.
Therefore by a union bound,
\begin{align*}
\psub{\xi'}{\xi_t \equiv 0\text{ for some }t>0}\le \sum_{\Gamma \in \mathcal G}\psub{\xi'}{\bigcap_{e^*\in \Gamma \cap E^{*,\mathrm{se}}}\mathcal C_{P_{T,\aminus ,\aplus ,v_0}^{-1}(c(e^*))}}
&\le \sum_{\Gamma \in \mathcal G} b^{|\Gamma \cap E^{*,\mathrm{se}}|}\\
&\le \sum_{\ell=4}^\infty 3^{\ell-1} b^{\ell/2}<\delta/2,
\end{align*}
where the second inequality follows by Proposition~\ref{prop:bound_closed_edges} under condition~\new{2}, \new{and the last inequality follows from our choice of $b$}. This establishes~\eqref{eq:thmclaim}, and completes the proof.
\end{proof}
\begin{proof}[Proof of Theorem~\ref{th:shape}]
Recall from~\eqref{eq:fatten} that for $X\subseteq \R^d$, the set $X_\epsilon$ denotes the $\epsilon$-fattening of $X$.
We begin by proving the upper bound, i.e.~we show that for every $\epsilon>0,$ we have
\begin{equation}
\label{eq:shape_upper}
\psub{\delta_0}{\exists t_0<\infty: \frac 1 t \{\xi_t > 0\} \subseteq (\mathcal I_1)_\epsilon\text{ for all $t\ge t_0$}} = 1.
\end{equation}
This follows from classical results on branching random walks:
Let $\mathcal T = ((\mathcal N_t)_{t\ge0},(X_t(u),t\ge0,u\in \mathcal N_t),(\alpha(u),u\in \mathcal U),(\beta(u),u\in \mathcal U),(\rho(u),u\in \mathcal U))$ be a BRW tree with resiliences as in Section~\ref{sec:construction}.
By our construction in Section~\ref{sec:construction}, for $t\ge 0$,
\[
\{\xi_t > 0\} \subseteq \{X_t(u): u\in\mathcal N_t\}.
\]
But for the BRW, it was shown by Biggins~\cite{Biggins1978} (see page 79 of that paper) that for every $\epsilon>0$,
\[
\p{\exists t_0<\infty: \left\{\frac 1 t X_t(u): u\in\mathcal N_t\right\} \subseteq (\mathcal I_1)_\epsilon \text{ for all }t\ge t_0} = 1.
\]
This proves the upper bound \eqref{eq:shape_upper}.

We now prove the lower bound, i.e.~we want to show that for every $\epsilon>0,$ there exists $\kappa>0$ such that for \new{$\lambda, R>0$, if $N_0$ is sufficiently large and $\Lambda$ satisfies~\eqref{eq:Lambda_assumptions0},}
we have
\[
\psub{\delta_0}{\exists t_0<\infty: \mathcal I_1\subseteq \left(\frac 1 t S_t\right)_\epsilon\text{ for all $t\ge t_0$}} \ge 1-\epsilon,
\]
where $S_t = \{\xi_t > 0\}$. 
\new{Construct the BRWNLC process $(\xi_t)_{t\ge 0}$ from the BRW tree with resiliences $\mathcal T$ as at the start of Step~1 of the proof of Theorem~\ref{th:survival}.
Recall the definition of the event $A$ in~\eqref{eq:Adefmainthm} and the definition of $\tau_2$ in~\eqref{eq:tau2defn}, and recall that we showed after~\eqref{eq:Adefmainthm} that if $A$ occurs then $\tau_2\le \frac 32 \log J$.
By the strong Markov property at time $\tau_2$, we can write
\begin{align*}
&\psub{\delta_0}{\exists t_0<\infty: \mathcal I_1\subseteq \left(\frac 1 t S_t\right)_\epsilon\text{ for all $t\ge t_0$}} \\
&\quad \ge \Esub{\delta_0}{\psub{\xi_{\tau_2}}{\exists t_0<\infty: \mathcal I_1\subseteq \left(\frac 1 {t+\tau_2} S_t\right)_\epsilon\text{ for all $t\ge t_0$}}\1_{\tau_2\le \frac 32 \log J}}.
\end{align*}
Since $\p{A^c}\le N^{-1/7}$ by~\eqref{eq:Abound} 
in the proof of Theorem~\ref{th:survival}, and since on the event $A$, at time $\tau_2$, all particles are in $\mathcal B_{(\log J)^2}(0)$, it now suffices to prove the following:
}
For every $\epsilon>0$, there exists $\kappa>0$ such that for \new{$\lambda, R>0$, if $N_0$ is sufficiently large and $\Lambda$ satisfies~\eqref{eq:Lambda_assumptions0},} for any
initial condition $\xi \in \N_0^{\Z^d}$ consisting of a finite number of particles with $\xi(y)\le J$ $\forall y \in \Z^d$ and $\xi(0)=J$, 
and any $x^*\in \mathcal B_{(\log J)^2}(0)$ and $t^*\in [0,\frac 32 \log J]$,
\[
\psub{\xi}{\exists t_0<\infty: \mathcal I_1 \subseteq \left(\frac 1 {t+t^*} (S_t+x^*)\right)_\epsilon\text{ for all $t\ge t_0$}} \ge 1-\epsilon.
\]
Recall the definition of $a_v$ in~\eqref{eq:a0defn}, and recall that we observed in~\eqref{eq:I1defn}
that $\mathcal I_1$ is convex and compact.
It follows that $\mathcal I_1=\{\mu+av:v\in \R^d, \|v\|=1, a\in [0,a_v]\}$ and hence 
by a covering argument, it is enough to show that for every $v_0\in\R^d$ with $\|v_0\|=1$, every $a\in (0,a_{v_0})$ and every $\delta,\epsilon>0$, 
there exists $\kappa>0$ such that for \new{$\lambda, R>0$, if $N_0$ is sufficiently large and $\Lambda$ satisfies~\eqref{eq:Lambda_assumptions0},} for any initial condition $\xi$ as above,
\begin{equation}
\label{eq:shape_lower_point}
    \psub{\xi}{\exists t_0<\infty: S_t \cap \ball_{\epsilon t}(t(\mu + av_0)) \ne \emptyset\text{ for all $t\ge t_0$}} \ge 1-\delta.
\end{equation}
But this is done using the exact same contour argument as in the proof of Theorem~\ref{th:survival}, except that we take $\aminus  = a - \epsilon '$ and $\aplus  = a + \epsilon '$, for $0<\epsilon ' < \epsilon$ sufficiently small that $0\le \aminus  < \aplus  < a_{v_0}$, and take $T$ sufficiently large that $d^{1/2}T^{-1}+\epsilon '<\epsilon$. Here, the last event in the definition of the event $\mathcal C_e$ from \eqref{eq:Cdefn} is used to obtain \eqref{eq:shape_lower_point} for arbitrary $t\ge t_0$ instead of only for multiples of $T$.
\end{proof}

\section{Proof of Proposition~\ref{prop:bound_closed_edges}} \label{sec:propproof}

We will use the following notation throughout Sections~\ref{sec:propproof} and~\ref{sec:badbluepf}.
\new{Fix $R_1>0$.}
For $r\ge 0$, let 
\begin{align} 
f(r)&=\exp(-\tfrac r {9R_1} \log (r+1)) \label{eq:fdefn} \\
 \label{eq:gdefn}
\text{and } \quad g(r)&=1\wedge r^{-6d-2}.
\end{align}
The function $f$ will be used to bound the probability of some `bad events' which describe the spread of particles over distances of order $r$. The function $g$ on the other hand is used to control the number of such particles in these bad events.

We will prove Proposition~\ref{prop:bound_closed_edges} using the following two results.
The first result gives us a stochastic upper bound on the particle system at time $1$ that holds for any finite initial particle configuration. One may phrase this as a `coming down from infinity' property of the process. Its proof heavily relies on the presence of some competition between particles on the same site, i.e.~on \new{assumption~(ii) in~\eqref{eq:Lambda_assumptions}.}

For $\ep>0$, let $Z_\ep$ be a random variable taking values in $\N_0$, with
\begin{equation} \label{eq:Zepsdefn}
 \p{Z_\ep \ge r}=\min (f(r+\ep^{-1}),\ep)\; \text{ for }r\in \N.
\end{equation}
Let $(Z^{(x)}_\ep )_{x\in \Z^d}$ be i.i.d., with $Z^{(x)}_\ep \stackrel{d}{=}Z_\ep $ for each $x\in \Z^d$.
\new{Recall the definition of $N=N(\Lambda)$ in~\eqref{eq:Ndefn}.}
\begin{prop}[Coming down from infinity]\label{prop:upperbound}
\new{For $\gamma,\lambda>0$,}
there exists $K_0=K_0(d,\gamma,R_1,\lambda)>1$ such that the following holds.
For $\ep>0$, there exists $N_1>0$ such that 
\new{if $\Lambda$ satisfies conditions~(i) and~(ii) in~\eqref{eq:Lambda_assumptions} with $N_0= N_1$, if $p$ satisfies~\eqref{eq:p_assumptions} with $p(x)=0$ $\forall x\notin \mathcal B_{R_1}(0)$, and if}
$\xi$ is a starting configuration consisting of a finite number of particles,
then there exists a coupling between \new{a BRWNLC $(\xi_t)_{t\ge 0}$ with jump rate $\gamma$, jump kernel $p$ and competition kernel $\Lambda$},   and $(Z_\ep^{(x)})_{x\in \Z^d}$ such that under the coupling,
$\xi_0=\xi$ and
$$
\xi_1 = \zeta + \sum_{x\in \Z^d}\zeta^{(x)},
$$
where $\zeta$, $\zeta^{(x)}\in \N_0^{\Z^d}$ with $\zeta(y)\le K_0 N$ $\forall y\in \Z^d$ and $\zeta^{(x)}(y)\le g(Z_\ep^{(x)})^{-1}K_0 N \1_{\|y-x\|<Z_\ep^{(x)}}$ $\forall y\in \Z^d.$
\end{prop}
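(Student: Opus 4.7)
My proof would combine two complementary control mechanisms on the same probability space: a local density bound $\zeta(y)\le K_0 N$ obtained from the logistic competition (Assumption \eqref{eq:Lambda_lambda}), and a spatial spread bound for the excess $\zeta^{(x)}$ obtained from the finite-range BRW domination and the construction of Section~\ref{sec:construction}.

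\textbf{Step 1 (Logistic coming-down from infinity).} Assumption \eqref{eq:Lambda_lambda} implies that at a site $y$ with density $\xi_t(y)=aN$, local competition alone produces a per-particle killing rate at least $\Lambda(0)\cdot aN\ge \lambda a$, so the total killing rate at $y$ is at least $\lambda a^2 N$, outweighing the birth rate $aN$ once $a>1/\lambda$. This is the stochastic analogue of the logistic ODE $\dot u = u-\lambda u^2$, whose solutions come down from $+\infty$ to $O(1/\lambda)$ in bounded time. The upshot, once made rigorous by absorbing the influx from neighbouring sites and jump-away rates into a supermartingale or moment estimate, is the bound $\xi_1(y)\le K_0 N$ with high probability for $K_0=K_0(d,\gamma,R_1,\lambda)$, \emph{uniformly} over every finite initial configuration $\xi$.

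\textbf{Step 2 (BRW upper bound and spread).} I would construct $(\xi_t)_{t\ge 0}$ from the BRW trees with resiliences of Section~\ref{sec:construction}: for each initial particle at $x$ we have an independent tree $\mathcal T^{x,i}$, and $\xi_t$ is pointwise dominated by the pure BRW that ignores deaths by competition. The maximum displacement by time $1$ of a single BRW descendant is the displacement of a compound Poisson walk with jumps bounded by $R_1$; combining Lemma~\ref{lem:many-to-one} with Markov's inequality for the Poisson$(\gamma)$ jump count gives $\mathbb{P}(\exists u\in\mathcal N_1:\|X_1(u)\|>r)\le e\cdot e^{-cr\log r}$, which matches the function $f$ from \eqref{eq:fdefn} up to the constant $9R_1$.

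\textbf{Step 3 (Coupling and decomposition).} I would enlarge the probability space by an independent family of uniform $[0,1]$ variables $(U^{(x)})_{x\in\Z^d}$ and define each $Z_\ep^{(x)}$ as the appropriate quantile of $U^{(x)}$ to match \eqref{eq:Zepsdefn}. Coupling $U^{(x)}$ monotonically with the spread event at $x$ ensures $\{Z_\ep^{(x)}\ge r\}\supseteq\{\text{some BRW descendant from }x\text{ reaches distance }r\text{ by time }1\}$, which is possible since the latter event has probability at most $f(r)$ by Step 2. Then I set $\zeta(y):=\min(\xi_1(y),K_0 N)$; the cap holds (with high probability, which one absorbs into $\zeta^{(x)}$) by Step 1, and the excess $\xi_1(y)-\zeta(y)$ is distributed among the $\zeta^{(x)}$'s by assigning each unit of excess at $y$ to the site $x$ from which its BRW ancestor came. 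By construction $\zeta^{(x)}$ is supported in $\mathcal B_{Z_\ep^{(x)}}(x)$, and the density bound $\zeta^{(x)}(y)\le g(Z_\ep^{(x)})^{-1}K_0 N$ follows from the geometric tail of $|\mathcal N_1|$ in Lemma~\ref{lem:yule} together with a union bound over the $O((Z_\ep^{(x)})^d)$ sites in the ball; the exponent $6d+2$ in the definition \eqref{eq:gdefn} of $g$ provides the safety margin to absorb these union bounds.

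\textbf{Main obstacle.} The central difficulty is the non-monotonicity of the BRWNLC: the map $\xi_0\mapsto\xi_1$ does not respect the pointwise order on $\N_0^{\Z^d}$, so we cannot simply dominate the process by a cleaner auxiliary one. Both bounds above must therefore be realised simultaneously on a single probability space, even though they exploit opposite features of the dynamics---Step~1 relies crucially on the quadratic death term, while Step~2 discards it entirely. I expect the quantitative coming-down estimate in Step~1 to be the main technical hurdle: unlike the pure logistic ODE, the stochastic system receives (random) influx from neighbouring sites and must be controlled uniformly over arbitrary, possibly enormous, initial densities $\xi_0(x)\gg N$, which forces one to work with moment or martingale estimates rather than a direct comparison argument.
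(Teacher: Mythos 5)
Your proposal captures the two high-level ingredients of the paper's proof (logistic decay at dense sites for the local bound, spatial spread control for the excess), but the decomposition and coupling you describe in Step 3 break down precisely at the point you flag as the ``main obstacle,'' and the gap is structural rather than merely technical.

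The fatal issue is with the excess particles $\zeta^{(x)}$. You propose to set $\zeta(y)=\min(\xi_1(y),K_0N)$ and assign the excess at $y$ to the site $x$ of the time-$0$ ancestor, then control the mass and support of $\zeta^{(x)}$ by BRW domination (ignoring competition) together with the geometric tail of $|\mathcal N_1|$. But the initial number of particles at a single site $x$ can be \emph{any} finite number $\xi_0(x)\gg N$. The BRW domination then gives a sum of $\xi_0(x)$ i.i.d.\ geometrics for the descendant count of $x$, which has mean $\xi_0(x)e$ and scales with $\xi_0(x)$, not with $K_0N$; similarly, the probability that \emph{some} descendant of $x$ reaches distance $r$ is roughly $1-(1-f(r))^{\xi_0(x)}$, which for large $\xi_0(x)$ is close to $1$ and cannot be matched to $\mathbb P(Z^{(x)}_\ep\ge r)=\min(f(r+\ep^{-1}),\ep)$. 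Thus your coupling in Step~3 cannot be constructed. To get a bound on the excess that is \emph{uniform over arbitrarily dense initial configurations}, you must use the competition even when controlling the spread, which your Step~2 explicitly discards. This is exactly why the paper does \emph{not} separate the argument into ``logistic for the bulk'' plus ``BRW for the excess'': both bounds have to be obtained from the same mechanism.

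The paper's actual device is a time-discretized coloring scheme on $[1-c_0,1]$ with geometrically shrinking time-steps $t_k-t_{k+1}=2(t_{k+1}-t_{k+2})$ and a \emph{deterministic} threshold $\thr(r,k)=c_12^kg(r)^{1/3}K_1N$ decreasing in time. All particles start blue at $t_{\kmax}=1-c_0$; at each step the descendants of a blue particle at $x$ that exceed the threshold at a site $y$ are recolored red. By construction the blue population is then bounded by $2^{k+1}K_1N$ per site \emph{deterministically}, and the logistic argument appears in Lemma~\ref{lem:no_red}: if the blue count at $x$ exceeds the threshold, then the local killing rate exceeds $\lambda c_12^{k-2}K_1$, so the surviving particles would need exceptionally large resiliences---an event with small probability after tuning $K_1$ via~\eqref{eq:K0defn}. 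This argument also automatically handles influx from neighbouring sites, since it only uses a \emph{lower} bound on $K_t(x)$ from the on-site population, and it does not require any moment or supermartingale estimate of the type you invoke. The red particles (rather than ``excess over a cap'') are the $\eta^{(x)}$'s, and both their number and spread are controlled by the event $R_{x,r}$ whose probability is bounded by $\min(\ep,f(r+1+\ep^{-1}))$ in Lemma~\ref{lem:bad_red}, which is what makes the coupling with $Z^{(x)}_\ep$ possible. You should also note that the paper's $\zeta$ is not a pointwise truncation $\min(\xi_1,K_0N)$ but $\zeta(y)=\xi^{\mathrm{blue},0}(y)+\sum_z\eta^{(z)}(y)\1_{\|y-z\|\ge Z^{(z)}_\ep}$, i.e.\ the blue particles plus the red particles far from their origin; this is what ensures the bound $\zeta(y)\le K_0N$ holds deterministically rather than with high probability.
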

Proposition~\ref{prop:upperbound} will be proved in Section~\ref{sec:propupper}.
The next result gives us an upper bound on the probability that a collection of edges are all closed, which holds if the (random) initial particle configuration is bounded above in the same way as $\xi_1$ is bounded above in Proposition~\ref{prop:upperbound}.
\begin{prop} \label{prop:closedinitcond}
For \new{$\gamma>0$, $p$ satisfying~\eqref{eq:p_assumptions} with $p(x)=0$ $\forall x\notin \mathcal B_{R_1}(0)$,} $v_0\in \R^d \backslash \{0\}$ and $0\le \aminus <\aplus <a_{v_0}$, there exists $\kappa=\kappa(\gamma,p,v_0,a_-,a_+)>0$ such that the following holds.
For $T>1$ sufficiently large and \new{$\lambda, R>0$},
there exists $R'>1$ such that for
every $b\in (0,1)$, there exist $\ep>0$ and $N_2>1$ such that the following holds:
\new{Suppose $\Lambda$ satisfies~\eqref{eq:Lambda_assumptions} with $N_0=N_2$, and take}
$n\in\N$ and $e_1,e_2,\ldots ,e_n\in E_{T,\aminus ,\aplus ,v_0}(0)$. 
Suppose $\xi$ is a (random) configuration of a finite number of particles, and suppose there exists a coupling between $\xi$ and $(Z_\ep^{(x)})_{x\in \Z^d}$ such that under the coupling,
\begin{equation} \label{eq:coupling_upper}
\xi = \zeta + \sum_{x\in \Z^d}\zeta^{(x)},
\end{equation}
where $\zeta$, $\zeta^{(x)}$ are as in Proposition~\ref{prop:upperbound}.
Then, \new{letting $(\xi_t)_{t\ge 0}$ be a BRWNLC with jump rate $\gamma$, jump kernel $p$ and competition kernel $\Lambda$, and recalling the definition of $\mathcal C_e$ in~\eqref{eq:Cdefn} and letting $J=\lfloor N^{1/3}\rfloor$,}
$$
\E{\psub{\xi}{\bigcap_{i=1}^n \mathcal C_{e_i}}} \le b^n.
$$
\end{prop}
This result will be proved in Section~\ref{sec:propclosedinit} below, using the strategy outlined in Section~\ref{sec:proof_outline}.
We now show how Propositions~\ref{prop:upperbound} and~\ref{prop:closedinitcond} can be used to prove Proposition~\ref{prop:bound_closed_edges}.

\begin{proof}[Proof of Proposition~\ref{prop:bound_closed_edges}]
\new{Take $\gamma>0$ and $p$ satisfying~\eqref{eq:p_assumptions}.}
For $v_0\in\R^d \backslash \{0\}$ and $0\le \aminus <\aplus <a_{v_0}$, take $\kappa>0$ and $T>1$ such that Proposition~\ref{prop:closedinitcond} holds. Take $\new{\lambda, }R>0$ and then take $R'$ as in Proposition~\ref{prop:closedinitcond}. Take $b\in (0,1)$, and then take $\ep>0$ as in Proposition~\ref{prop:closedinitcond}, and assume \new{$\Lambda$ satisfies~\eqref{eq:Lambda_assumptions} with $N_0$} sufficiently large that Propositions~\ref{prop:upperbound} and~\ref{prop:closedinitcond} hold.

\new{We begin by proving the result under condition~1, and then use this and a recursive argument using the Markov property to prove the result under condition~2.}

\paragraph{\new{Condition~1:}}
The result under condition 1 follows from Propositions~\ref{prop:upperbound} and~\ref{prop:closedinitcond}.
Indeed, take $e_1,\ldots , e_n\in E_{T,\aminus ,\aplus ,v_0}(1)$ with $e_i=((x_i,T),(y_i,2T))$, let $\xi$ be an arbitrary particle configuration consisting of a finite number of particles, and take a BRWNLC $(\xi_t)_{t\ge 0}$ with $\xi_0=\xi$.
Since $\xi_{T-1}$ consists of a finite number of particles almost surely, by Proposition~\ref{prop:upperbound} there exists a coupling between $(\xi_t)_{t\ge 0}$ and $(Z^{(x)}_\ep)_{x\in \Z^d}$ such that under the coupling,
$$
\xi_T =\zeta +\sum_{x\in \Z^d}\zeta^{(x)},
$$
where $\zeta$, $\zeta^{(x)}\in \N_0^{\Z^d}$ with $\zeta(y)\le K_0 N$ $\forall y\in \Z^d$ and $\zeta^{(x)}(y)\le g(Z_\ep^{(x)})^{-1}K_0 N \1_{\|y-x\|<Z_\ep^{(x)}}$ $\forall y\in \Z^d.$
Hence letting $e'_i=((x_i-\lfloor (\mu +\aminus v_0)T \rfloor,0),(y_i-\lfloor (\mu +\aminus v_0) T \rfloor,T))\in E_{T,\aminus,\aplus,v_0}(0)$ for $i=1,\ldots,n$,
 by Proposition~\ref{prop:closedinitcond} we have
$$
\psub{\xi}{\bigcap_{i=1}^n \mathcal C_{e_i}}=\Esub{\xi}{\psub{\xi_T(\cdot +\lfloor (\mu +\aminus v_0)T \rfloor)}{\bigcap_{i=1}^n \mathcal C_{e'_i}}} \le b^n.
$$
This implies the result under condition~1.

\paragraph{\new{Condition~2:}}
\new{We begin by proving that the following special case follows directly from Proposition~\ref{prop:closedinitcond}.
Take $e_1,\ldots , e_n\in E_{T,\aminus ,\aplus ,v_0}(0)$ and suppose $\xi\in \N_0^{\Z^d}$ consists of a finite number of particles with $\xi(x)\le N$ $\forall x\in \Z^d$.
Then we can set $\zeta:=\xi$ and $\zeta^{(x)}:=0$ $\forall x\in \Z^d$, and then~\eqref{eq:coupling_upper} holds with $\zeta,\zeta^{(x)}$ satisfying the bounds in Proposition~\ref{prop:upperbound} (since $K_0>1$).
Therefore, by Proposition~\ref{prop:closedinitcond},
\begin{equation} \label{eq:Cei_initial}
\psub{\xi}{\bigcap_{i=1}^n \mathcal C_{e_i}} \le b^n.
\end{equation}
}

Now take $e_1,\ldots,e_n \in E_{T,\aminus,\aplus,v_0}$ arbitrary, and suppose $\xi \in \N_0^{\Z^d}$ consists of a finite number of particles with $\xi(x)\le N$ $\forall x\in \Z^d$.
We first divide the edges into two sets:
\[
E_{\text{odd}} = \{e_1,\ldots,e_n\} \cap \bigcup_{\text{$k$ odd}}E_{T,\aminus ,\aplus ,v_0}(k),\quad  E_{\text{even}} = \{e_1,\ldots,e_n\} \backslash E_{\text{odd}}.
\]
\new{Also, for $k\in \N_0$, let
\[
E_k=\{e_1,\ldots , e_n\}\cap E_{T,\aminus ,\aplus ,v_0}(k).
\]
}
We distinguish between two cases:

\textbf{Case 1:} $|E_{\text{odd}}| \ge n/2$. Let $k_m$ be the largest odd $k$ such that $\new{E_k} \ne \emptyset$. 
\new{Write $E_{k_m}=\{e_{i_j} : j\le |E_{k_m}|\}$ with $e_{i_j}=((x_j,k_m T), (y_j,(k_m+1)T))$ for each $j$.}
We then write
\begin{align} \label{eq:Ce_recurs}
\psub{\xi}{\bigcap_{i=1}^n \mathcal C_{e_i}} \le \mathbb P_{\xi}\bigg(\bigcap_{e\in E_{\text{odd}}} \mathcal C_e\bigg)&= \new{\mathbb E_{\xi}\bigg[\mathbb P\bigg( \bigcap_{e\in E_{\text{odd}}} \mathcal C_e\bigg| \F_{(k_m-1)T}\bigg)  \bigg]} \notag \\
&= \new{\mathbb E_{\xi}\bigg[\1_{\bigcap_{e\in E_{\text{odd}}\setminus E_{k_m}} \mathcal C_e} \; \mathbb P\bigg( \bigcap_{e\in E_{k_m}} \mathcal C_e\bigg| \F_{(k_m-1)T}\bigg)  \bigg]} \notag \\
&= \new{\mathbb E_{\xi}\bigg[\1_{\bigcap_{e\in E_{\text{odd}}\setminus E_{k_m}} \mathcal C_e} \; \mathbb P_{\xi_{(k_m-1)T}}\bigg( \bigcap_{j=1}^{|E_{k_m}|} \mathcal C_{e_{i_j}'}\bigg)  \bigg],}
\end{align}
\new{where $e_{i_j}'=((x_j,T), (y_j,2T))$ for each $j$, and the second line holds since for each $k$ odd with $k<k_m$, for $e\in E_k$ the event $\mathcal C_e$ is $\F_{(k+1)T}$-measurable and therefore $\F_{(k_m-1)T}$-measurable, and the last line follows by the Markov property at time $(k_m-1)T$.
Write for each $j\le |E_{k_m}|$,
\[e_{i_j}'':=((x_j -(k_m-1)\lfloor (\mu+a_-v_0)T\rfloor,T), (y_j-(k_m-1)\lfloor (\mu+a_-v_0)T\rfloor,2T))
\in E_{T,\aminus ,\aplus ,v_0}(1)
\]
(recall the definition of $E_{T,\aminus ,\aplus ,v_0}(k)$ in~\eqref{eq:ETkdefn}).
Then
\[
\mathbb P_{\xi_{(k_m-1)T}}\bigg( \bigcap_{j=1}^{|E_{k_m}|} \mathcal C_{e_{i_j}'}\bigg) 
=\mathbb P_{\xi_{(k_m-1)T}(\cdot + (k_m-1)\lfloor (\mu+a_-v_0)T\rfloor )}\bigg( \bigcap_{j=1}^{|E_{k_m}|} \mathcal C_{e_{i_j}''}\bigg) 
\le b^{|E_{k_m}|}
\]
by Proposition~\ref{prop:bound_closed_edges} under condition~1 (since $\xi_{(k_m-1)T}$ consists of a finite number of particles almost surely).
Therefore, substituting into~\eqref{eq:Ce_recurs},
\begin{equation} \label{eq:Ce_recurs2}
\psub{\xi}{\bigcap_{i=1}^n \mathcal C_{e_i}} \le b^{|E_{k_m}|} \, \mathbb P_{\xi}\bigg(\bigcap_{e\in E_{\text{odd}}\setminus E_{k_m}} \mathcal C_e\bigg).
\end{equation}
By applying the above argument recursively with $k_m-2,k_m-4,\ldots ,1$ in place of $k_m$, it follows that 
\[
\psub{\xi}{\bigcap_{i=1}^n \mathcal C_{e_i}} \le \prod_{k\text{ odd},\, 1\le k\le k_m} b^{|E_{k}|}=b^{|E_{\text{odd}}|}\le b^{n/2}
\]
by our assumption in Case~1. Since $b\in (0,1)$ was arbitrary, the result follows.
}

\textbf{Case 2:} $|E_{\text{odd}}| < n/2$. This is similar to the previous case, using the edges from $E_{\text{even}}$ instead \new{of the edges from $E_{\text{odd}}$.}
\new{Indeed, let $k_m'$ be the largest even $k$ such that $\new{E_k} \ne \emptyset$.
If $k_m'>0$, then by exactly the same argument as for~\eqref{eq:Ce_recurs2}, replacing $k_m$ with $k'_m$ and replacing $E_{\text{odd}}$ with $E_{\text{even}}$ throughout, we have 
\[
\psub{\xi}{\bigcap_{i=1}^n \mathcal C_{e_i}} \le b^{|E_{k'_m}|} \, \mathbb P_{\xi}\bigg(\bigcap_{e\in E_{\text{even}}\setminus E_{k'_m}} \mathcal C_e\bigg).
\]
Applying the argument recursively with $k'_m-2,k'_m-4,\ldots ,2$ in place of $k'_m$, it follows that 
\[
\psub{\xi}{\bigcap_{i=1}^n \mathcal C_{e_i}} \le \prod_{k\text{ even},\, 2\le k\le k'_m} b^{|E_{k}|}\,
\mathbb P_{\xi}\bigg(\bigcap_{e\in E_0 } \mathcal C_e\bigg)
\le b^{|E_{\text{even}}|}\le b^{n/2},
\]
where the second inequality follows from~\eqref{eq:Cei_initial}, using that $\xi(x)\le N$ $\forall x\in \Z^d$,
and the last inequality follows by our assumption in Case~2. Again, since $b\in (0,1)$ was arbitrary, the result follows.
}
\end{proof}

\subsection{Proof of Proposition~\ref{prop:upperbound} (coming down from infinity)} \label{sec:propupper}

Take a particle configuration $\xi$ consisting of a finite number of particles. 
We first introduce a convenient construction of the BRWNLC process \new{(with some jump rate $\gamma$, jump kernel $p$ and competition kernel $\Lambda$)} on the time interval $[0,1]$, building on the construction using the BRW trees with resiliences in Section~\ref{sec:construction}. 

Take $K_1>0$ a large constant and $c_0\in (0,1)$ a small constant to be fixed later.
We let $\xi_0=\xi$, and on the time interval $[0,1-c_0]$, we simply define the BRWNLC process $(\xi_t)_{t\in [0,1-c_0]}$ using (for example) the construction in Section~\ref{sec:construction}. From time $1-c_0$ onwards, the details of the construction will be important and we will split the particles into blue and red particles.

Initially, at time $1-c_0$, all particles will be blue.
The heuristic to have in mind for the construction below is that `well-behaved' particles remain blue, but if there are too many blue particles that have come from some site $x$ to some site $y$, these `badly-behaved' particles at $y$ will be coloured red.
Our construction will ensure that by time 1, there will be at most $2K_1N$ blue particles at each site, and by establishing an upper bound on the probability of particles from $x$ being coloured red at $y$ (and, in particular, using the decay of the upper bound in terms of $\|x-y\|$), we will be able to prove Proposition~\ref{prop:upperbound}.
We now give the details of the construction.

For $x\in \Z^d$, $y\in \Z^d$, $k\in \N$ and $j\in \N$, let $\mathcal T^{\mathrm{blue},x,k,j}$ and $\mathcal T^{\mathrm{red},x,y,k,j}$ be  i.i.d.~copies of the BRW tree with resiliences $\mathcal T$ (as defined in~\eqref{eq:calTdefn}). The corresponding entries will be denoted by $\mathcal N^*$, $X^*$, $\alpha^*$, $\beta^*$, $\rho^*$, where $*$ is replaced by the corresponding superscripts.
The process $(\xi_t)_{t\in [1-c_0,1]}$ 
will be constructed as a deterministic function of $\xi_{1-c_0}$ and these BRW trees with resiliences; the collections of trees $(\mathcal T^{\mathrm{blue},x,k,j})_{x,k,j}$ and $(\mathcal T^{\mathrm{red},x,y,k,j})_{x,y,k,j}$ will encode the behaviour of blue and red particles respectively.

Note that since $\xi$ consists of a finite number of particles, 
$\sup_{x\in \Z^d}\xi_{1-c_0}(x)$ is a.s.~finite.
Let 
\begin{equation} \label{eq:kmaxdefn}
\kmax=\left \lfloor \log _2 \Big(\max\Big(\frac 1 {K_1 N}\sup_{x\in \Z^d}{\xi_{1-c_0}(x)},2\Big) \Big) \right \rfloor \in \N,
\end{equation}
so that $\xi_{1-c_0}(x)\leq 2^{\kmax+1}K_1 N$ $\forall x\in \Z^d$.
Define times $t_0 > \cdots > t_{\kmax}$ such that $t_0 = 1$, $t_{\kmax} = 1-c_0$ and $t_k - t_{k+1} = 2(t_{k+1}-t_{k+2})$ for $k=0,\ldots,\kmax-2$. Explicitly, set
\begin{equation} \label{eq:tkdefn}
t_k = 1 - c_0\frac{1-2^{-k}}{1-2^{-\kmax}} \quad \text{for }0\le k \le \kmax.
\end{equation}
\new{See Figure~\ref{fig:tk} for an illustration.} Note that 
\begin{equation} \label{eq:tkdiff}
t_{k-1}-t_k =c_0 (1-2^{-\kmax})^{-1}2^{-k}\in (c_0 2^{-k},c_0 2^{1-k}] \quad \text{for }k\in\llbracket \kmax\rrbracket.
\end{equation}

\begin{figure}
\centering
\def\svgwidth{\columnwidth}
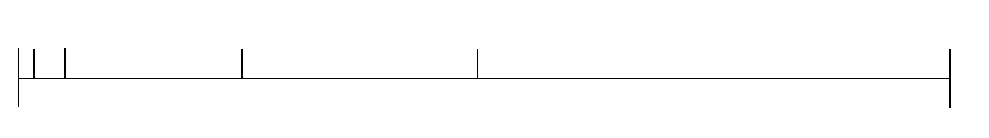
\caption{\label{fig:tk} \new{Illustration of the times $t_k$ defined in~\eqref{eq:tkdefn}.
}
}
\end{figure}

For $k\in\llbracket \kmax\rrbracket$, we successively construct 
the particle system on the time intervals $(t_k,t_{k-1}]$, starting with $k=\kmax$. The particles will be split into blue and red particles. 
Whenever a particle branches, both offspring particles inherit the colour of the parent.
At time $t_{\kmax} = 1-c_0$, all particles are blue. 
Blue particles may become red at times $t_{k-1}$, for $k\in\llbracket \kmax\rrbracket$, according to a rule that we will specify below. Red particles and their descendants remain red until time $1$.
For $0\le k \le \kmax$ and $x\in \Z^d$, we denote by $\xi^{\mathrm{blue},k}(x)$ the number of blue particles at $x$ at time $t_k$.  For $0\le k \le \kmax -1$, we need to keep track of the (blue) ancestor at time $t_{k+1}$ of the particles that turn red at time $t_{k}$. We therefore denote by $\xi^{\mathrm{red},x,k}(y)$ the number of particles at $y\in \Z^d$ which turn red at time $t_{k}$ and are descendants of a blue particle positioned at $x\in \Z^d$ at time $t_{k+1}$.

We start by defining the particle system on the time interval $(t_{\kmax},t_{\kmax-1}]$. At time $t_{\kmax}$, all particles are blue. The particle system is then constructed as in Section~\ref{sec:construction} from the BRW trees with resiliences $\mathcal T^{\mathrm{blue},x,\kmax,j}$ for $x\in\Z^d$, $j \in \llbracket \xi^{\mathrm{blue},\kmax}(x)\rrbracket$, with the root of $\mathcal T^{\mathrm{blue},x,\kmax,j}$ positioned at $x$ and with time starting at time $t_{\kmax}$. \new{(In particular, when particles are killed by competition, they and their descendants are removed from the trees.)} Recall~\eqref{eq:gdefn} and define a `threshold function'
\begin{equation} \label{eq:thrdefn}
\thr(r,k) \coloneqq c_1 2^{k}g(r)^{1/3} K_1 N \quad \text{for }r\ge 0 \text{ and }k\in \N_0,
\end{equation}
where $c_1\in (0,1)$ is chosen sufficiently small that
\begin{equation} \label{eq:c1defn}
\sum_{x\in \Z^d} c_1 g( \|x\|)^{1/3} \le 1.
\end{equation}
At time $t_{\kmax-1}$, some particles may be coloured red by the following rule: 
\emph{For each $x,y\in \Z^d$, if the blue particles at $x$ at time $t_{\kmax}$ have more than $\thr(\|x-y\|,\kmax)$ descendants at $y$ at time $t_{\kmax-1}$, then turn these descendants red at time $t_{\kmax-1}$.}

On the subsequent time intervals, the construction is similar, but takes into account the red particles created in the previous steps. Specifically, take $k\in \{1,2,\ldots , \kmax-1\}$. The particles living between times $t_k$ and $t_{k-1}$ will be subsets of the particles in the following BRW trees with resiliences:
\begin{itemize}
\item The descendants of the blue particles at time $t_k$ will come from the BRW trees with resiliences $\mathcal T^{\mathrm{blue},x,k,j}$ for $x\in\Z^d$, $j\in\llbracket \xi^{\mathrm{blue},k}(x)\rrbracket$, with the root of $\mathcal T^{\mathrm{blue},x,k,j}$ positioned at $x$ at time $t_k$. 
\item The descendants of particles which have turned red at a time $t_{k'}$, $k'\ge k$, will come from the BRW trees with resiliences $\mathcal T^{\mathrm{red},x,y,k',j}$ for $x,y\in\Z^d$ and $j\in\llbracket  \xi^{\mathrm{red},x,k'}(y)\rrbracket$, with the root of $\mathcal T^{\mathrm{red},x,y,k',j}$ positioned at $y$ at time $t_{k'}$. 
\end{itemize}
Using these BRW trees with resiliences, the particle system is constructed on the time interval $(t_k,t_{k-1}]$ by an obvious extension of the construction from  Section~\ref{sec:construction}, taking into account the different times at which the particles corresponding to the roots of the trees appear, and removing from the trees $\mathcal T^{\mathrm{red},x,y,k',j}$ with $k'>k$ those particles that have been killed by competition before time $t_k$ (and their descendants). 
\new{In particular, as in the construction in Section~\ref{sec:construction}, between times $t_k$ and $t_{k-1}$, when particles  in the trees $\mathcal T^{\mathrm{blue},x,k,j}$ and $\mathcal T^{\mathrm{red},x,y,k',j}$ are killed by competition, they and their descendants are removed.}
At time $t_{k-1}$, some blue particles may be coloured red by the same rule as above: \emph{For each $x,y\in \Z^d$, if the blue particles at $x$ at time $t_k$ have more than $\thr(\|x-y\|,k)$ descendants at $y$ at time $t_{k-1}$, then turn these descendants red at time $t_{k-1}$.} 
This completes the construction of the particle system up to time $t_0=1$.

Note that 
by the definition of $\kmax$ in~\eqref{eq:kmaxdefn}, we have $\xi^{\mathrm{blue},\kmax}(x)\leq 2^{\kmax+1}K_1N$ $\forall x\in \Z^d$. Moreover,
for $1\le k < \kmax$ and $x\in \Z^d$, by our construction,
\begin{equation*}
\xi^{\mathrm{blue},k}(x)\le \sum_{y\in\Z^d} \thr(\|y-x\|,k+1) = \sum_{y\in \Z^d} c_1 2^{k+1} g(\|y\|)^{1/3} K_1 N \le 2^{k+1}K_1 N
\end{equation*}
where the second inequality follows by~\eqref{eq:c1defn}. Summarising,  we have the following deterministic bound on the number of blue particles:
\begin{equation} \label{eq:xibluekbd}
\xi^{\mathrm{blue},k}(x)\le 2^{k+1}K_1 N \quad \forall x\in\Z^d,\ k\in\llbracket \kmax\rrbracket .
\end{equation}
It remains to control the number of red particles created during the process.

\paragraph{Controlling the creation of red particles.}

For $x,y\in \Z^d$, let $\eta^{(x)}(y)$ denote the number of red particles at $y$ at time $1$ whose ancestor turned red at time $t_{k-1}$ for some $1\le k \le \kmax$ and whose time-$t_k$ blue ancestor was at $x$.
In order to control the creation of red particles, we define a family of `bad events' $(R_{x,r})_{x\in\Z^d,\,r\in\N_0}$ which have the following properties:
\begin{itemize}
\item If $R_{x,r}$ does not occur, then at time $1$ there are at most $O(r^dg(r)^{-1}N)$ red particles whose ancestor turned red at some time $t_{k-1}$ and whose time-$t_k$ blue ancestor was at $x$, and at most $O(r^{d-1}g(r)N)$ of those particles are outside $\ball_{r}(x)$ (see Lemma~\ref{lem:ZRxr}).
\item The probability of $R_{x,r}$ is small and decays like $f(r)$ when $r\to\infty$ (see Lemma~\ref{lem:bad_red}).
\item For every $x\in\Z^d$, the events $(R_{x,r})_{r\ge0}$ only depend on the BRW trees with resiliences $(\mathcal T^{\mathrm{blue},x,k,j})_{k,j}$ and $(\mathcal T^{\mathrm{red},x,y,k,j})_{y,k,j}$. In particular, if $x_1,\ldots,x_n\in\Z^d$ are pairwise distinct and $r_1,\ldots,r_n\in \N_0$, then $R_{x_1,r_1},\ldots,R_{x_n,r_n}$ are independent.
\end{itemize}

We note that these events will also depend on the constants $K_1>0$, $c_0\in (0,1)$ and $c_1>0$.

The events $R_{x,r}$ will be defined as a union of several other bad events. The formal definitions are given below. Informally, the role of these events is as follows:
\begin{itemize}
\item $(R^{\mathrm{cr}}_{x,k})_{x\in\Z^d,k\in\N}$:  \new{for $k\le \kmax,$}  if the event $R^{\mathrm{cr}}_{x,k}$ does not occur, then no descendants of blue particles at $x$ at time $t_k$ are turned red at time $t_{k-1}$ (the abbreviation `cr' stands for `creation').
\item $(R^{\mathrm{num}}_{x,k,r})_{x\in\Z^d,k,r\in\N}$: \new{for $k\le \kmax,$} if the event $R^{\mathrm{num}}_{x,k,r}$ does not occur, then at most $g(r)^{-1/5} 2^{k+2}K_1 N$ descendants of blue particles at $x$ at time $t_k$ turn red at time $t_{k-1}$, and none of the descendants turn red outside $\ball_{r/2}(x)$  (the abbreviation `num' stands for `number').
\item $(R_{x,y,k,r})_{x,y\in\Z^d,k,r\in\N}$: \new{for $k\le \kmax,$} if the event $R_{x,y,k,r}$ does not occur and $\xi^{\mathrm{red},x,k}(y) \le g(r)^{-1/5} 2^{k+3}K_1 N$, then these $ \xi^{\mathrm{red},x,k}(y)$ red particles at $y$ at time $t_{k}$ will have at most $r^{-d+o(1)}g(r)^{-1}N$ descendants and at most $(r')^{-d+o(1)}g(2r')N$ of them spread further than $r'$, for every $r' \ge r/2$.
\end{itemize}
\new{Note that even though the \textit{roles} of the events described above only make sense for $k\le \kmax$, we will define the events $R^{\mathrm{cr}}_{x,k}$, $R^{\mathrm{num}}_{x,k,r}$ and $R_{x,y,k,r}$ themselves in such a way that they are well-defined for all $k\in \N$. 
We do this because we want an upper bound on $\xi_1$ that is uniform in $\kmax$ (in order to ensure that it is uniform in the initial configuration $\xi$), and so we want our bad events to only depend on the decorated BRW trees $(\mathcal T^{\mathrm{blue},x,k,j})_{x,k,j}$ and $(\mathcal T^{\mathrm{red},x,y,k,j})_{x,y,k,j}$ and deterministic quantities.
}

We now give the formal definition of these events, which the reader might skip at first reading. 
\new{Take $\lambda >0$; the events below will depend on $\lambda,N,K_1,c_0$ and $c_1$ (we will later assume that the competition kernel $\Lambda$ satisfies~(i) and (ii)~in~\eqref{eq:Lambda_assumptions} with $N_0$ sufficiently large).}
 For $x\in \Z^d$ and $k\in \N$, set
$$
R^{\mathrm{cr}}_{x,k} = \bigcup_{i=1}^4 R^{\mathrm{cr},i}_{x,k},
$$
where 
\begin{align*}
R^{\mathrm{cr},1}_{x,k} &= \left\{\#\{j\in\llbracket 2^{k+1}K_1 N \rrbracket : \rho^{\mathrm{blue},x,k,j}(\emptyset) \ge \tfrac 14 \lambda K_1 c_0 c_1\} > 
c_1 2^{k-2}K_1N\right\},\\
R^{\mathrm{cr},2}_{x,k} &= \left\{\#\{(j,u): j\in\llbracket 2^{k+1}K_1 N \rrbracket,\,u\in \mathcal N^{\mathrm{blue},x,k,j}_{c_0 2^{1-k}},u\neq \emptyset\} > c_1 2^{k-2}K_1 N\right\},\\
R^{\mathrm{cr},3}_{x,k} &= \left\{\#\Big\{j\in\llbracket 2^{k+1}K_1 N\rrbracket: \sup_{t \in [0, (c_0 2^{1-k})\wedge \beta^{\mathrm{blue},x,k,j}(\emptyset)]}\| X^{\mathrm{blue},x,k,j}_t(\emptyset)\|> 0\Big\} > c_1 2^{k-2}K_1 N\right\},\\
R^{\mathrm{cr},4}_{x,k} &= \Big\{\exists r\ge1: \#\{(j,u): j\in\llbracket 2^{k+1}K_1 N \rrbracket,\,u\in \mathcal N^{\mathrm{blue},x,k,j}_{c_0 2^{1-k}},\\
&\hspace{4.5cm}\sup_{s\in [0,c_0 2^{1-k}]}\|X^{\mathrm{blue},x,k,j}_{s}(u)\| \ge r\} > \thr(r,k)\Big\}.
\end{align*}
Next, for $x\in \Z^d$, $k\in \N$ and $r\ge 1$, set
\begin{align*}
R^{\mathrm{num}}_{x,k,r} 
&= \left\{\#\{(j,u): j\in \llbracket 2^{k+1}K_1 N\rrbracket,\,u\in \mathcal N^{\mathrm{blue},x,k,j}_{c_0 2^{1-k}}\} >  g(r)^{-1/5} 2^{k+2}K_1 N\right\}\\
&\qquad \cup \Big\{\exists r'\geq r/2:\#\{(j,u): j\in \llbracket 2^{k+1}K_1 N\rrbracket,\,u\in \mathcal N^{\mathrm{blue},x,k,j}_{c_0 2^{1-k}},\\
&\hspace{6 cm} \sup_{s\in [0,c_0 2^{1-k}]} \|X^{\mathrm{blue},x,k,j}_{s}(u)\|\ge r'\} > \thr(r',k)\Big\}.
\end{align*}
Now, set
\begin{equation} \label{eq:krdefn}
k_r = \lfloor (2d+3)\log_2 (r+1)\rfloor \in \N \quad \text{for } r\ge 1.
\end{equation}
For $x,y\in \Z^d$, $k\in \N$ and $r\in \N$, set
\begin{align*}
R_{x,y,k,r} 
&= \Big\{\exists r'\ge r/2: \#\{(j,u): j\in \llbracket g(r)^{-1/5} 2^{k+3}K_1 N\rrbracket,\,u\in \mathcal N^{\mathrm{red},x,y,k,j}_{c_0},\\
&\hspace{5cm}  \sup_{t\in[0,c_0]}\|X^{\mathrm{red},x,y,k,j}_t(u)\|\ge r' \} > k^{-1}_{2 r'} |\ball_{r'}(0)|^{-1} g(2r') K_1^2 N\Big\}\\
&\quad \cup \{\#\{(j,u): j\in \llbracket g(r)^{-1/5} 2^{k+3}K_1 N\rrbracket,\,u\in \mathcal N^{\mathrm{red},x,y,k,j}_{c_0}\}\ge  \tfrac 12 k_r^{-1} |\ball_{r/2}(0)|^{-1} g(r)^{-1}K_1^2 N\}.
\end{align*}
Finally, for $x\in \Z^d$ and $r\in\N_0$, we set
\begin{align}
R_{x,0}&=\bigcup_{k=1}^{\infty} R^{\mathrm{cr}}_{x,k} \label{eq:Rx0defn} \\
\text{and }\quad R_{x,r} &= \bigcup_{k=k_r+1}^{\infty} R^{\mathrm{cr}}_{x,k} \cup \bigcup_{k=1}^{k_r} R^{\mathrm{num}}_{x,k,r} \cup \bigcup_{y\in \ball_{r/2}(x)}\bigcup_{k=1}^{k_r-1} R_{x,y,k,r} \quad \text{for }r\in \N. \label{eq:Rxrdefn}
\end{align}

We now choose the constants $c_0$ and $K_1$, depending on the parameters $d,\gamma,R_1$ and $\lambda$. 
(Recall that we already chose $c_1$ in~\eqref{eq:c1defn}.)
This can also be skipped at first reading.
Take $c_0\in (0, \min(e^{-1}, (R_1 \gamma e)^{-1}, (2e^{2R_1+1}R_1\gamma )^{-1}))$ sufficiently small that
\begin{equation} \label{eq:c0defn}
c_0e^2/(1-c_0e)\le c_1/16, \;
1-ec_0 \ge e^{-c_1/32}, \;
1-e^{-\gamma c_0}\le c_1/16,
\end{equation}
and 
\begin{equation} \label{eq:c0defn2}
e^{c_0-\frac r {R_1}\log \left(\frac r {R_1 \gamma c_0 e}\right)}
\le \tfrac 14 c_1 g(r+1)^{1/3} \quad \forall r\ge 1,
\end{equation}
and
\begin{equation} \label{eq:c0defn3}
e(1-ec_0)^{-1} (ec_0)^{2r-1}+(2e^{2R_1+1}R_1 c_0 \gamma)^{r/R_1}\le \tfrac 14 c_1 g(r+1)^{1/3} \quad \forall r\ge 1.
\end{equation}
\new{Before choosing $K_1$, we make the following observations.
Note that by the definition of $k_r$ in~\eqref{eq:krdefn} and the definition of $g$ in~\eqref{eq:gdefn}, 
and since $\mathcal B_{r/2}(0)\subseteq \{(x_1,\ldots ,x_d)\in \Z^d:|x_i|<r/2\, \forall i\}$,
we have that for $r\ge 1$,
\begin{align*}
2^{k_r+4}\cdot 4k_r |\mathcal B_{r/2}(0)|g(r)^{4/5}
&\le 2^6 (2d+3)\log_2 (r+1)\cdot (r+1)^{2d+3}(r+1)^d r^{-\frac 45 (6d+2)}\\
&\le 2^{3d+9}(2d+3)\log_2(r+1) r^{-\frac 95 d+\frac 75},
\end{align*}
since $r+1\le 2r$.
Therefore, since $d\ge 1$ and so $-\frac 95 d+\frac 75<0$, we have
\begin{equation} \label{eq:K11}
K_1^{(1)}:=\sup_{r\ge 1}\left( 2^{k_r+4}\cdot 4k_r |\mathcal B_{r/2}(0)|g(r)^{4/5} \right)<\infty.
\end{equation}
Moreover, because $2^{k_{2(r+1)}+3} e^{c_0} \cdot 2 k_{2(r+1)} |\ball_{r+1}(0)| g(2(r+1))^{-6/5} $ is bounded above by a polynomial in $r$ for $r\ge 1$, we have that 
\begin{equation} \label{eq:K12}
K_1^{(2)}:=\sup_{r\ge 1}\left( 2^{k_{2(r+1)}+3} e^{c_0} \cdot 2 k_{2(r+1)} |\ball_{r+1}(0)| g(2(r+1))^{-6/5} e^{-\frac{r}{R_1}\log \left( \frac{r}{R_1 \gamma c_0 e}\right)} \right)<\infty.
\end{equation}
}

\new{We can now choose $K_1$:} take $K_1>\new{\max(16, K_1^{(1)}, K_1^{(2)})}$ sufficiently large that
\begin{equation} \label{eq:K0defn}
16 e^{-\frac 14 \lambda K_1 c_0 c_1 }<c_1.
\end{equation}
\new{Note that by~\eqref{eq:K11} and since $K_1>K_1^{(1)}$ we have}
\begin{equation} \label{eq:K0defn2}
\tfrac 14 k_r^{-1} |\ball_{r/2}(0)|^{-1} g(r)^{-4/5}K_1 >2^{k_r+4} \quad \forall r\ge 1,
\end{equation}
and \new{by~\eqref{eq:K12} and since $K_1>K_1^{(2)}$ we have}
\begin{equation} \label{eq:K0defn3}
2^{k_{2(r+1)}+3} e^{c_0} e^{-\frac{r}{R_1}\log \left( \frac{r}{R_1 \gamma c_0 e}\right)}\le \tfrac 12 k_{2(r+1)}^{-1} |\ball_{r+1}(0)|^{-1} g(2(r+1))^{6/5}K_1 \quad \forall r\ge 1.
\end{equation}

Now we can state the two main intermediate lemmas in the proof of Proposition~\ref{prop:upperbound}.
Recall that for $x,y\in \Z^d$, we let $\eta^{(x)}(y)$ denote the number of red particles at $y$ at time 1 whose ancestor turned red at some time $t_{k-1}$ and whose time-$t_k$ blue ancestor was at $x$.
The first lemma gives a deterministic bound on $\eta^{(x)}$ if the bad event $R_{x,r}$ does not occur.
\begin{lemma} \label{lem:ZRxr}
\new{Suppose the competition kernel $\Lambda$ satisfies~(ii) in~\eqref{eq:Lambda_assumptions}. Then}
for $x\in \Z^d$ and $r\in \N_0$,
$$
(R_{x,r})^c \subseteq \{\eta^{(x)}(y)\le g(r)^{-1} K_1^2 N \1_{\|y-x\|<r} +g(\|y-x\|) K_1^2 N \1_{\|y-x\|\ge r}\; \forall y\in \Z^d\}.
$$
\end{lemma}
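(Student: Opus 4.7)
The strategy is to consider the two cases $r=0$ and $r\ge 1$ separately, and for the latter to split further by whether $\|y-x\|<r$ or $\|y-x\|\ge r$. Throughout I decompose
\[
\eta^{(x)}(y) = \sum_{k=0}^{\kmax-1}\sum_{y'\in\Z^d} d_k(y',y),
\]
where $d_k(y',y)$ denotes the number of descendants at $y$ at time $1$ of the $\xi^{\mathrm{red},x,k}(y')$ red particles born at $y'$ at time $t_k$, and observe that $d_0(y',y)=\xi^{\mathrm{red},x,0}(y')\1_{y'=y}$ while for $k\ge 1$ the quantity $d_k(y',y)$ is dominated by the corresponding count in the tree $\mathcal T^{\mathrm{red},x,y',k,j}$ run up to time $1-t_k\le c_0$.

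For $r=0$, I will argue that on $\bigcap_{k\ge 1}(R^{\mathrm{cr}}_{x,k})^c$ no red particles are ever created from blue particles at $x$. To this end, I unpack the four subevents in $R^{\mathrm{cr}}_{x,k}$: the event $R^{\mathrm{cr},4}$ directly bounds by $\thr(\|y-x\|,k)$ the number of descendants at each $y\ne x$ at time $t_{k-1}$ of the $\xi^{\mathrm{blue},k}(x)\le 2^{k+1}K_1 N$ blue particles, while $R^{\mathrm{cr},1}$ (competition survivors), $R^{\mathrm{cr},2}$ (branching count) and $R^{\mathrm{cr},3}$ (jumping count), combined with the constants fixed in \eqref{eq:c1defn}--\eqref{eq:K0defn}, control the on-site count at $y=x$ by $\thr(0,k)=c_1 2^k K_1 N$. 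Hence no red particles are created, $\eta^{(x)}\equiv 0$, and the bound is trivial.

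For $r\ge 1$, the absence of $R^{\mathrm{cr}}_{x,k}$ for $k\ge k_r+1$ again rules out red creation in those steps by the previous argument, and the absence of $R^{\mathrm{num}}_{x,k,r}$ for $k\in \llbracket k_r\rrbracket$ confines the newly created red particles to $\ball_{r/2}(x)$ and yields $\xi^{\mathrm{red},x,k}(y')\le g(r)^{-1/5}2^{k+3}K_1 N$ for $0\le k\le k_r-1$. When $\|y-x\|<r$, the $k=0$ term contributes at most $8g(r)^{-1/5}K_1 N$, while for $k\in \llbracket k_r-1\rrbracket$ the second clause of $(R_{x,y',k,r})^c$ bounds the total descendants per $(k,y')$-pair by $\tfrac12 k_r^{-1}|\ball_{r/2}(0)|^{-1}g(r)^{-1}K_1^2 N$; summing over $y'\in\ball_{r/2}(x)$ and $k\in\llbracket k_r-1\rrbracket$ and using $K_1\ge 16$ to absorb the $k=0$ contribution yields $\eta^{(x)}(y)\le g(r)^{-1}K_1^2 N$.

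When $\|y-x\|\ge r$, the $k=0$ term vanishes since $y\notin\ball_{r/2}(x)$. For $y'\in\ball_{r/2}(x)$ the distance $s:=\|y-y'\|$ satisfies $s\ge \|y-x\|-r/2\ge \|y-x\|/2\ge r/2$, so the first clause of $(R_{x,y',k,r})^c$ applied with $r'=s$ yields $d_k(y',y)\le k_{2s}^{-1}|\ball_s(0)|^{-1}g(2s)K_1^2 N$. The monotonicity of $k_\cdot$ and $|\ball_\cdot(0)|$ and the decay of $g$ let me bound this uniformly by $k_{\|y-x\|}^{-1}|\ball_{\|y-x\|/2}(0)|^{-1}g(\|y-x\|)K_1^2 N$, and summing over $y'\in\ball_{r/2}(x)$ and $k\in\llbracket k_r-1\rrbracket$ introduces a prefactor $k_r|\ball_{r/2}(0)|$ which is $\le k_{\|y-x\|}|\ball_{\|y-x\|/2}(0)|$ since $\|y-x\|\ge r$, delivering $\eta^{(x)}(y)\le g(\|y-x\|)K_1^2 N$. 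The main obstacle I anticipate is the $r=0$ case: translating the four subevents of $R^{\mathrm{cr}}_{x,k}$ into the clean per-site bound $\thr(\|y-x\|,k)$ requires a careful case analysis of the possible fates (competition kill, jump, or branch) of each blue particle at $x$ on $[t_k,t_{k-1}]$ and uses all three constants $c_0,c_1,K_1$ chosen in \eqref{eq:c0defn}--\eqref{eq:K0defn}.
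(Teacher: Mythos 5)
Your treatment of the case $r\ge 1$ is correct and follows essentially the same route as the paper: for $\|y-x\|<r$ you combine the second clause of $(R_{x,y',k,r})^c$ with the $k_r-1$ values of $k$ and $|\ball_{r/2}(0)|$ sources $y'$, and for $\|y-x\|\ge r$ you apply the first clause with $r'=\|y-y'\|$ and use monotonicity of $k_\cdot$, $|\ball_\cdot(0)|$ and $g$ to obtain a uniform bound. The paper instead applies the first clause with $r'=\tfrac12\|x-y\|$ after observing $\|x-y\|<2\|y-y'\|$, but the two are equivalent in effect.

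The gap is in the $r=0$ case, which you yourself flag as ``the main obstacle'' but then only sketch. The assertion that $\bigcap_{k\ge1}(R^{\mathrm{cr}}_{x,k})^c$ implies no red creation (and is needed again for $k>k_r$ when $r\ge 1$) is exactly Lemma~\ref{lem:no_red} of the paper, and it is \emph{not} a matter of ``carefully translating the four subevents into a per-site bound.'' The subevent $R^{\mathrm{cr},1}_{x,k}$ only bounds the number of root particles with resilience $\ge\tfrac14\lambda K_1 c_0 c_1$; by itself this says nothing about how many descendants end up at $x$ at time $t_{k-1}$. The missing idea is the self-reinforcing competition mechanism: one defines $S^{x,k}_t$ as the number of roots that remain at $x$ without branching, jumping, or being killed up to time $t_k+t$, and argues that if $S^{x,k}_{t_{k-1}-t_k}>c_1 2^{k-2}K_1 N$, then $K_t(x)\ge\Lambda(0)\xi_t(x)\ge\lambda N^{-1}S^{x,k}_{t-t_k}>\lambda c_1 2^{k-2}K_1$ for all $t\in[t_k,t_{k-1}]$; consequently each such surviving root must have resilience $\ge\lambda c_1 2^{k-2}K_1(t_{k-1}-t_k)\ge\tfrac14\lambda K_1 c_0 c_1$ (using $2^{k-2}(t_{k-1}-t_k)\ge c_0/4$ from~\eqref{eq:tkdiff}), contradicting $(R^{\mathrm{cr},1}_{x,k})^c$ together with $\xi^{\mathrm{blue},k}(x)\le 2^{k+1}K_1N$. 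Only after this bootstrap does one add the counts controlled by $(R^{\mathrm{cr},2})^c$ and $(R^{\mathrm{cr},3})^c$ to conclude that the total at $x$ at time $t_{k-1}$ is $<3c_1 2^{k-2}K_1 N<\thr(0,k)$. Without writing out this argument, the proof of the $r=0$ case (and hence of the lemma) is incomplete.
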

We will prove Lemma~\ref{lem:ZRxr} at the end of this section.
The second lemma bounds the probability of the bad event $R_{x,r}$; we will prove this result in Section~\ref{sec:badbluepf}.
\begin{lemma}
\label{lem:bad_red}
For every $\ep>0$, there exists $N_3>0$ such that if $N\geq N_3$,
\new{and if $p$ satisfies~\eqref{eq:p_assumptions} with $p(x)=0$ $\forall x\notin \mathcal B_{R_1}(0)$,}
$$\P(R_{x,r}) \le \min(\ep, f(r+1+\ep ^{-1}))\quad  \forall x\in \Z^d, r\in \N_0.$$
\end{lemma}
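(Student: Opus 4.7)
The plan is to bound $\P(R_{x,r})$ by a union bound over its constituent bad events, each of which is expressed as a sum of i.i.d.\ quantities coming from the independent BRW trees $\mathcal T^{\mathrm{blue},x,k,j}$ and $\mathcal T^{\mathrm{red},x,y,k,j}$. For each such sum the expected value can be computed using the many-to-one lemma (Lemma~\ref{lem:many-to-one}) and the Yule estimate (Lemma~\ref{lem:yule}); the constants $c_0,c_1,K_1$ were chosen in \eqref{eq:c0defn}--\eqref{eq:K0defn3} precisely so that these expectations sit comfortably below the thresholds defining the bad events. A Chernoff/Bernstein-type concentration bound then produces exponential decay in $N$ for each piece, and the number of pieces is polynomial in $r+1$ so the union bound is harmless.

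More concretely, for the ``creation'' events $R^{\mathrm{cr},i}_{x,k}$ the counted quantities are: resiliences exceeding $\tfrac14\lambda K_1 c_0 c_1$, which are Bernoulli$(e^{-\tfrac14\lambda K_1 c_0 c_1})$ and whose success probability is made small by \eqref{eq:K0defn}; non-root descendants of the roots at time $c_0 2^{1-k}$, with mean $O(c_0 2^{1-k})$ per tree by Lemma~\ref{lem:yule}; and roots that have moved by time $c_0 2^{1-k}$, with mean $1-e^{-\gamma c_0 2^{1-k}}$ per tree, made small by \eqref{eq:c0defn}. In each case Chernoff produces a bound of order $e^{-c\, 2^k N}$. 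For $R^{\mathrm{cr},4}_{x,k}$ there is an additional union over $r\ge 1$: for each $r$, Lemma~\ref{lem:many-to-one} together with the standard Poisson tail
\[
\P\!\left(\sup_{s\in[0,c_0 2^{1-k}]}\|X_s\|\ge r\right) \le e^{c_0 2^{1-k}}\cdot e^{-(r/R_1)\log(r/(R_1\gamma c_0 e))}
\]
shows, via \eqref{eq:c0defn2}, that the expected count is at most half of $\thr(r,k)$, and Chernoff closes the bound with an extra factor capturing the Poisson-tail decay in $r$.

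The events $R^{\mathrm{num}}_{x,k,r}$ and $R_{x,y,k,r}$ follow the same pattern: total-mass contributions are controlled by Lemma~\ref{lem:yule} and spatial-spread contributions by the Poisson tail, and the constants were chosen in \eqref{eq:K0defn2} and \eqref{eq:K0defn3} so that the respective expectations are at most half of the relevant thresholds. Summing the bounds,
\[
\P(R_{x,r})\le \sum_{k>k_r}\P(R^{\mathrm{cr}}_{x,k}) + \sum_{k=1}^{k_r}\P(R^{\mathrm{num}}_{x,k,r}) + \sum_{y\in\ball_{r/2}(x)}\sum_{k=1}^{k_r-1}\P(R_{x,y,k,r}),
\]
one obtains a union-bound with $O(r^{d+1}\log r)$ terms, each at most $e^{-cN}$ times a Poisson-tail factor of order $f(r+1)$ or smaller. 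Taking $N_3$ large depending on $\ep$ makes the total at most $\ep$ uniformly in $r$, while the Poisson-tail factor provides the decay $f(r+1+\ep^{-1})$ once $r\gtrsim \ep^{-1}$, yielding the stated $\min(\ep, f(r+1+\ep^{-1}))$ after a harmless relabelling.

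The main obstacle is the simultaneous balancing of scales: $\thr(r,k)=c_1 2^k g(r)^{1/3} K_1 N$ must dominate both the Yule-expected growth over the interval of length $c_0 2^{1-k}$ and the large-deviation mass at spatial distance $\ge r$, uniformly over $k\le k_r\sim (2d+3)\log_2(r+1)$ and $r\ge 1$. The hand-picked conditions \eqref{eq:c0defn}--\eqref{eq:K0defn3} were tailored so that every expectation is beaten by at least a factor of two; once that is in place, the Chernoff bounds for sums of i.i.d.\ non-negative random variables with light upper tails (which the branching structure supplies via Lemma~\ref{lem:yule}) do the rest, and the bulk of the proof is a careful but essentially routine verification.
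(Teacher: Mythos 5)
Your overall plan---union bound, many-to-one, Yule estimates, concentration, and the hand-picked constants---is the same as the paper's, which proves the result via three intermediate lemmas (Lemmas~\ref{lem:Rno},~\ref{lem:Rnum},~\ref{lem:Rxykr}) plus the decomposition \eqref{eq:Pxrsum} that you also write down. However, two places in your sketch claim more than your tools actually deliver, and these are exactly where the paper has to do something less routine.

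First, you assert that each term is ``at most $e^{-cN}$ times a Poisson-tail factor of order $f(r+1)$''---a \emph{product} of an $N$-decay and an $r$-decay. That bound is not what the paper proves and is not easily obtainable: the paper's Lemmas~\ref{lem:Rnum} and~\ref{lem:Rxykr} give $\min(f(r)^2, N^{-1/2})$, and Lemma~\ref{lem:Rno} gives only $e^{-(2^k N)^{1/(2d+2)}}$ with no $r$-decay at all (indeed $R^{\mathrm{cr}}_{x,k}$ includes a union over all $r\ge1$, so an $r$-decaying bound on it would make no sense). The $r$-decay can only enter through the \emph{number} of constituent events $k_r$, $|\ball_{r/2}(0)|$, etc.\ and through the min-structure of Lemmas~\ref{lem:Rnum},~\ref{lem:Rxykr}, not as a factor multiplying the $N$-decay. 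Your min-vs-product confusion also shows up in the final paragraph: the end goal $\min(\ep, f(r+1+\ep^{-1}))$ is a minimum, not a product.

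Second, for the spatial-spread events (the $R^{\mathrm{cr},4}_{x,k}$ union and the analogous parts of $R^{\mathrm{num}}$, $R_{x,y,k,r}$) you write that ``Chernoff closes the bound with an extra factor capturing the Poisson-tail decay in $r$.'' This does not work as stated, for two reasons. (i) The random variables being summed are not bounded: a particle that travels far still contributes its full Yule-tree size, which is geometric, so a vanilla Chernoff for bounded increments is inapplicable; the paper instead computes a tailored MGF $\E{e^{G\1_{Z\ge r/R_1}}}$ by coupling the geometric tree-size $G$ with the Poisson jump count $Z$ conditional on the Yule tree (the block starting ``For $j,r\in \N$, let $N_{j,r}= \cdots$'' in the proof of Lemma~\ref{lem:Rno}). (ii) Because $\thr(r,k)=c_1 2^k g(r)^{1/3}K_1N$ \emph{decreases} in $r$, the Chernoff/MGF bound $e^{-c_1 g(r+1)^{1/3}2^{k-1}K_1N}$ actually gets \emph{worse} as $r$ grows, so it cannot supply the $r$-decay; the $r$-decay must come from the first-moment (Markov) bound, and the paper has to take a minimum of the two and split the sum over $r$ at the crossover scale $r\approx (2^kN)^{1/(2d+5/3)}$, which is what produces the exponent $(2^kN)^{1/(2d+2)}$ in Lemma~\ref{lem:Rno} rather than your claimed $e^{-c2^kN}$. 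Fixing both points is feasible and leads to the paper's proof, but they are genuine gaps in what you wrote, not merely omitted details.
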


We now show how Lemmas~\ref{lem:ZRxr} and~\ref{lem:bad_red} can be used to prove Proposition~\ref{prop:upperbound}.
\begin{proof}[Proof of Proposition~\ref{prop:upperbound}]
Take $K_0>1$ sufficiently large that
\begin{equation} \label{eq:chooseK0}
K_0 >\max \Big(K_1^2 , 2K_1 +\sum_{x\in \Z^d}g(\|x\|) K_1^2 \Big),
\end{equation}
and fix $\ep>0$.
\new{Take $N_3$ as in Lemma~\ref{lem:bad_red}. Suppose $\Lambda$ satisfies conditions~(i) and~(ii) in~\eqref{eq:Lambda_assumptions} with $N_0= N_3$, and $p$ satisfies~\eqref{eq:p_assumptions} with $p(x)=0$ $\forall x\notin \mathcal B_{R_1}(0)$, and construct the BRWNLC with jump rate $\gamma$, jump kernel $p$ and competition kernel $\Lambda$ as at the beginning of this section.}
Recall that we let $\xi^{\mathrm{blue},0}(x)$ denote the number of blue particles at $x\in \Z^d$ at time $t_0=1$.
Then for $y\in \Z^d$, we have
\begin{equation} \label{eq:xi1blueeta}
\xi_1(y)=\xi^{\mathrm{blue},0}(y)+\sum_{x\in \Z^d}\eta^{(x)}(y).
\end{equation}
For $y\in \Z^d$, by our construction, and then using~\eqref{eq:thrdefn},
\begin{equation} \label{eq:xiblue0bound}
\xi^{\mathrm{blue},0}(y)\le \sum_{x\in \Z^d} \thr(\|x-y\|,1) = \sum_{x\in \Z^d} c_1 2 g(\|x\|)^{1/3} K_1 N \le 2 K_1 N
\end{equation}
by~\eqref{eq:c1defn}.
Note that for $x\in \Z^d$, the events $(R_{x,r})_{r\in \N_0}$ depend only on the BRW trees $(\mathcal T^{\mathrm{blue},x,k,j})_{k,j\in \N}$ and $(\mathcal T^{\mathrm{red},x,y,k,j})_{y\in \Z^d,k,j\in \N}$.
By Lemma~\ref{lem:bad_red}, we have that \new{since $N=N(\Lambda)\ge N_3$}, for $x\in \Z^d$ and $r\in \N_0$,
$$
\p{\bigcap_{r'=0}^r R_{x,r'} }\le \p{R_{x,r}}\le \min(\ep,f(r+1+\ep^{-1})) = \p{Z_\ep^{(x)}\ge r+1},
$$
where the last equality follows by~\eqref{eq:Zepsdefn}.
Therefore we can couple $(\mathcal T^{\mathrm{blue},x,k,j})_{x\in \Z^d, k,j\in \N}$ and $(\mathcal T^{\mathrm{red},x,y,k,j})_{x,y\in \Z^d, k,j\in \N}$
with $(Z^{(x)}_\ep)_{x\in \Z^d}$ in such a way that 
for each $x\in \Z^d$, for $r\in \N_0$,
if $Z_\ep^{(x)}\le  r$ then $\cup_{r'=0}^r (R_{x,r'})^c$ occurs.
\new{In particular, for each $x\in \Z^d$, since $Z^{(x)}_\ep \in \N_0$, by the previous sentence with $r=Z^{(x)}_\ep$ we have that $\cup_{r'=0}^{Z^{(x)}_\ep} (R_{x,r'})^c$ occurs. Therefore}
under this coupling, for each $x\in \Z^d$, there exists $r_x\in \N_0 \cap [0,Z^{(x)}_\ep]$ such that $(R_{x,r_x})^c$ occurs, and so
by Lemma~\ref{lem:ZRxr}, for $y\in \Z^d$,
\begin{align} \label{eq:etaxyZbd}
\eta^{(x)}(y)&\le g(r_x)^{-1} K_1^2 N \1_{\|y-x\|<r_x}+g(\|y-x\|) K_1^2 N \1_{\|y-x\|\ge r_x}
\notag \\
&\le g(Z^{(x)}_\ep)^{-1} K_1^2 N \1_{\|y-x\|<Z^{(x)}_\ep }+g(\|y-x\|) K_1^2 N \1_{\|y-x\|\ge Z^{(x)}_\ep},
\end{align}
where the second line follows since $r\mapsto g(r)^{-1}$ is non-decreasing and $g(r)\le 1$ for $r\ge 0$ by~\eqref{eq:gdefn}.
By~\eqref{eq:xi1blueeta}, for $y\in \Z^d$ we can write 
$$
\xi_1(y)=\Big( \xi^{\mathrm{blue},0}(y)+\sum_{x\in \Z^d}\eta^{(x)}(y)\1_{\|y-x\|\ge Z^{(x)}_\ep}\Big)+ \sum_{x\in \Z^d}\eta^{(x)}(y)\1_{\|y-x\|<Z^{(x)}_\ep }.
$$
For $x,y\in \Z^d$, let
$$
\zeta(y)=\xi^{\mathrm{blue},0}(y)+\sum_{z\in \Z^d}\eta^{(z)}(y)\1_{\|y-z\|\ge Z^{(z)}_\ep}
\quad \text{and}\quad
\zeta^{(x)}(y)=\eta^{(x)}(y)\1_{\|y-x\|<Z_\ep^{(x)}}.
$$
Then $\xi_1=\zeta +\sum_{x\in \Z^d}\zeta^{(x)}$, and by~\eqref{eq:etaxyZbd} and~\eqref{eq:chooseK0}, for $x,y\in \Z^d$,
$$\zeta^{(x)}(y)\le g(Z^{(x)}_\ep)^{-1} K_0 N \1_{\|y-x\|<Z^{(x)}_\ep }.$$
Moreover,
by~\eqref{eq:xiblue0bound} and~\eqref{eq:etaxyZbd},
$$
\zeta(y)
\le 2K_1 N+\sum_{z\in \Z^d}g(\|y-z\|) K_1^2 N \le K_0 N,
$$
where the second inequality follows by~\eqref{eq:chooseK0}.
\end{proof}
We now finish this section by proving Lemma~\ref{lem:ZRxr}. We first prove that on the event $(R^{\mathrm{cr}}_{x,k})^c$, no descendants of blue particles at $x$ at time $t_k$ are turned red at time $t_{k-1}$:
\begin{lemma}
\label{lem:no_red} 
\new{Suppose the competition kernel $\Lambda$ satisfies~(ii) in~\eqref{eq:Lambda_assumptions}. Then}
for $x\in \Z^d$ and $k\in \llbracket \kmax\rrbracket$, 
\[
(R^{\mathrm{cr}}_{x,k})^c \subseteq \Big\{\xi^{\mathrm{red},x,k-1}(y)=0 \, \forall y \in \Z^d\Big\}.
\]
\end{lemma}
\begin{proof}[Proof of Lemma~\ref{lem:no_red}]
We first explain the heuristics behind the argument. We differentiate between the particles turning red \new{at $x$ and those which turn red at some site $y\ne x$.} 

\new{The proof consists of the following steps:
\begin{itemize}
\item Step 1: We suppose the event $R^{\mathrm{cr},1}_{x,k}$ does not occur, and we consider the number of particles that stay at the site $x$ until time $t_k +t$ without branching or jumping, and are not killed by competition (we will call this quantity $S^{x,k}_t$ for $t\le t_{k-1}-t_k$).
If the number of particles surviving at $x$ until time $t_{k-1}$ is large then the on-site competition assumption ((ii) in~\eqref{eq:Lambda_assumptions}) implies that only particles with high resilience can survive. But on the event $(R^{\mathrm{cr},1}_{x,k})^c$, only a small number of the blue particles starting at $x$ have high resilience, which tells us that the number of particles surviving at $x$ until time $t_{k-1}$ (without branching or jumping) must be small.
\item Step 2: We suppose the event $R^{\mathrm{cr}}_{x,k}$ does not occur.
We now consider the total number of particles at $x$ at time $t_{k-1}$ descended from blue particles at $x$ at time $t_k$; such particles must be either particles that survive at $x$ without branching or jumping, or particles that branch before time $t_{k-1}$, or particles that jump before time $t_{k-1}$. Using Step 1, the definition of the event $(R^{\mathrm{cr},2}_{x,k})^c$ and the definition of the event $(R^{\mathrm{cr},3}_{x,k})^c$ respectively, we can bound the number of each of these sets of particles. This allows us to show that the total number of particles at $x$ at time $t_{k-1}$ descended from blue particles at $x$ at time $t_k$ is less than the threshold, and so none of these particles turn red at $x$.
\item Step 3: We suppose the event $R^{\mathrm{cr},4}_{x,k}$ does not occur. For each $y\ne x$, on the event $(R^{\mathrm{cr},4}_{x,k})^c$, the number of descendants at $y$ at time $t_{k-1}$ of the blue particles at $x$ does not exceed the threshold, and so none are coloured red.
\end{itemize}}

We now get to the formal details. Let $x\in \Z^d$ and $k\in \llbracket \kmax\rrbracket$. Recall that $\xi^{\mathrm{blue},k}(x) \le 2^{k+1}K_1N$ by \eqref{eq:xibluekbd}. The descendants on the time interval $[t_k,t_{k-1}]$ of the blue particles at $x$ at time $t_k$ are encoded by the BRW trees with resiliences $(\mathcal T^{\mathrm{blue},x,k,j})_{j\leq \xi^{\mathrm{blue},k}(x)}.$ For $j\leq \xi^{\mathrm{blue},k}(x)$ and $t\leq t_{k-1}-t_k$, write
\begin{equation} \label{eq:calAdefn}
\mathcal A_t^{\mathrm{blue},x,k,j}:=
\Big\{u \in \mathcal N_t^{\mathrm{blue},x,k,j}: \rho^{\mathrm{blue},x,k,j}(v) \ge \int_{\alpha^{\mathrm{blue},x,k,j}(v)}^{\beta^{\mathrm{blue},x,k,j}(v)\wedge t}K_{t_k +s}(x+X^{\mathrm{blue},x,k,j}_s(v))\, ds \; \forall v\prec u \Big\}
\end{equation} 
for the set of particles at time $t_k+t$ from the BRW tree $\mathcal T^{\mathrm{blue},x,k,j}$ which have not been killed by competition, and whose ancestors were not killed by competition.

\paragraph{\new{\textbf{Step~1:}}}
Suppose that the event $(R^{\mathrm{cr},1}_{x,k})^c$ occurs.
For $t\in [0,t_{k-1}-t_k]$, let
\begin{equation} \label{eq:Sxktdefn}
S^{x,k}_t :=\#\{j\leq \xi^{\mathrm{blue},k}(x):\emptyset \in \mathcal A_t^{\mathrm{blue},x,k,j}, \sup_{s\in [0,t]} \|X_s^{\mathrm{blue},x,k,j}(\emptyset)\|=0 \}.
\end{equation}
Then $S^{x,k}_t$ is non-increasing in $t$.
Note that $\xi_t(x)\ge S^{x,k}_{t-t_k}$ for $t\in [t_k,t_{k-1}]$.
Suppose (aiming for a contradiction) that
$S^{x,k}_{t_{k-1}-t_k}> c_1 2^{k-2}K_1N$.
Then for $t\in [t_k,t_{k-1}]$, \new{using~(ii) in~\eqref{eq:Lambda_assumptions}} in the second inequality,
$$K_t(x)
\geq \Lambda(0)\xi_t(x)
\geq \lambda N^{-1}S^{x,k}_{t-t_k}
> \lambda c_1 2^{k-2}K_1 
.$$
It follows by~\eqref{eq:calAdefn} that for $j\leq \xi^{\mathrm{blue},k}(x)$, if $\emptyset \in \mathcal A ^{\mathrm{blue},x,k,j}_{t_{k-1}-t_k}$ with $X_s^{\mathrm{blue},x,k,j}(\emptyset)=0$ $ \forall s\leq t_{k-1}-t_k$ then $\rho^{\mathrm{blue},x,k,j}(\emptyset)\geq \lambda c_1 2^{k-2}K_1 (t_{k-1}-t_k)$.
Note that 
$2^{k-2}(t_{k-1}-t_k)\geq \frac 14 c_0$ by~\eqref{eq:tkdiff},
and so it follows by~\eqref{eq:Sxktdefn} that
$$
S^{x,k}_{t_{k-1}-t_k}\le \#\{j\leq \xi^{\mathrm{blue},k}(x):\rho^{\mathrm{blue},x,k,j}(\emptyset)\ge \tfrac 14 \lambda  K_1 c_0 c_1\}.
$$
By the definition of the event $(R^{\mathrm{cr},1}_{x,k})^c$ and by~\eqref{eq:xibluekbd} we have that 
$$
\#\{j\leq \xi^{\mathrm{blue},k}(x):\rho^{\mathrm{blue},x,k,j}(\emptyset)\ge \tfrac 14 \lambda  K_1 c_0 c_1\} \le c_1 2^{k-2}K_1 N,
$$
which contradicts the assumption that $S^{x,k}_{t_{k-1}-t_k}> c_1 2^{k-2}K_1N$. Therefore $S^{x,k}_{t_{k-1}-t_k}\leq c_1 2^{k-2}K_1N$.

\paragraph{\new{\textbf{Step~2:}}}
Now suppose that the event $(R^{\mathrm{cr}}_{x,k})^c$ occurs.
Then since $\mathcal A_{t_{k-1}-t_k}^{\mathrm{blue},x,k,j}\subseteq \mathcal N_{t_{k-1}-t_k}^{\mathrm{blue},x,k,j}$ for each $j\le \xi^{\mathrm{blue},k}(x)$, and by the definition of $S_t^{x,k}$ in~\eqref{eq:Sxktdefn},
\begin{align*}
&\#\{(j,u):j\leq \xi^{\mathrm{blue},k}(x),u \in \mathcal A^{\mathrm{blue},x,k,j}_{t_{k-1}-t_k}\}\\
&\quad \leq S^{x,k}_{t_{k-1}-t_k}
 +\#\{(j,u):j\leq \xi^{\mathrm{blue},k}(x),u \in \mathcal N^{\mathrm{blue},x,k,j}_{t_{k-1}-t_k}, u\neq \emptyset\}\\
&\qquad\quad + \#\{j\leq \xi^{\mathrm{blue},k}(x): \emptyset \in \mathcal N_{t_{k-1}-t_k}^{\mathrm{blue},x,k,j}, \sup_{t\in [0,t_{k-1}-t_k]}\|X^{\mathrm{blue},x,k,j}_t(\emptyset)\|> 0\}\\
&\quad \leq c_1 2^{k-2}K_1 N +c_1 2^{k-2}K_1 N +c_1 2^{k-2}K_1 N\\
&\quad < c_1 2^{k}K_1 N,
\end{align*}
where the second inequality follows by the definition of the events $(R^{\mathrm{cr},2}_{x,k})^c$ and $(R^{\mathrm{cr},3}_{x,k})^c$ and since $\xi^{\mathrm{blue},k}(x)\le 2^{k+1}K_1 N$ by~\eqref{eq:xibluekbd} and $t_{k-1}-t_k\le c_0 2^{1-k}$ by~\eqref{eq:tkdiff}.
Hence, in particular, the blue particles at $x$ at time $t_k$ have less than $c_1 2^k K_1 N$ descendants at $x$ at time $t_{k-1}$, and so,
since $g(0)=1$ and therefore $\mathrm{thr}(0,k)=c_12^kK_1 N$, by our construction we have $\xi^{\mathrm{red},x,k-1}(x)=0$.

\paragraph{\new{\textbf{Step~3:}}}
\new{Suppose that the event $(R^{\mathrm{cr},4}_{x,k})^c$ occurs.}
By the definition of the event $(R^{\mathrm{cr},4}_{x,k})^c$, and since $\xi^{\mathrm{blue},k}(x)\le 2^{k+1}K_1 N$ by~\eqref{eq:xibluekbd} and $t_{k-1}-t_k\le c_0 2^{1-k}$ by~\eqref{eq:tkdiff},  for $r\geq 1$, we have
\begin{align*}
\#\{(j,u):j\leq \xi^{\mathrm{blue},k}(x),u\in \mathcal N^{\mathrm{blue},x,k,j}_{t_{k-1}-t_k},\|X^{\mathrm{blue},x,k,j}_{t_{k-1}-t_k}(u)\|\geq r\}
\leq \thr(r,k).
\end{align*}
In particular,
for any $y\neq x$, the blue particles at $x$ at time $t_k$ have at most $\thr(\|x-y\|,k)$ descendants at $y$ at time $t_{k-1}$. Hence
by our construction, $\xi^{\mathrm{red},x,k-1}(y)=0$ $\forall y \neq x$.

We have now established that on the event $(R^{\mathrm{cr}}_{x,k})^c$, we have
 $\xi^{\mathrm{red},x,k-1}(y)=0$ $\forall y \in \Z^d$. This proves the lemma.
 \end{proof}
 
 We now finally prove Lemma~\ref{lem:ZRxr}.
 
 \begin{proof}[Proof of Lemma~\ref{lem:ZRxr}]
Take $x\in \Z^d$ and $r\in \N_0$, and suppose the event $(R_{x,r})^c$ occurs. 
 We begin by considering the case $r=0$, \new{and then consider the case $r\ge 1$.}
 
\paragraph{\new{\textbf{Case $r=0$:}}} In this case, \new{by the definition of $R_{x,0}$ in~\eqref{eq:Rx0defn}, we have that} $(R^{\mathrm{cr}}_{x,k})^c$ occurs for each $k\ge 1$. \new{Hence} $\xi^{\mathrm{red},x,k-1}(y)=0$ $\forall y \in \Z^d$, $k \in\llbracket\kmax\rrbracket$, by Lemma~\ref{lem:no_red}.
 Therefore $\eta^{(x)}(y)=0$ $\forall y \in \Z^d$ and the statement of the lemma holds.

\paragraph{\new{\textbf{Case $r\ge 1$:}}} From now on we assume $r\ge 1$. 
\new{Recall the definition of the event $R_{x,r}$ in~\eqref{eq:Rxrdefn}.
The proof in this case consists of the following steps:
\begin{itemize}
\item Step 1: We bound $\xi^{\mathrm{red},x,k-1}(y)$ for each $k\ge 1$ and $y\in \Z^d$, using Lemma~\ref{lem:no_red} and the definition of the events $(R^{\mathrm{num}}_{x,k,r})^c$. 
\item Step 2: We bound $\eta^{(x)}(y')$ for $y'\notin \ball_r(x)$, using the bounds established in Step~1 and the definition of the events $(R_{x,y,k,r})^c$.
\item Step 3: We establish another bound on $\eta^{(x)}(y')$ that holds for all $y'\in \Z^d$, using the bounds established in Step~1 and the definition of the events $(R_{x,y,k,r})^c$.
\end{itemize}
We then combine the bounds from Steps~2 and~3 to conclude.
We now give the details of the proof.
}

\paragraph{\new{\textbf{Step 1:}}}
Recall the definition of $k_r$ in~\eqref{eq:krdefn}.
 For $k>k_r$,
since $(R^{\mathrm{cr}}_{x,k})^c$ occurs we have that
 $\xi^{\mathrm{red},x,k-1}(y)=0$ $\forall y \in \Z^d$, by Lemma~\ref{lem:no_red}.
For $k \in \llbracket k_r\rrbracket$, by the definition of the event $(R^{\mathrm{num}}_{x,k,r})^c$ and since $\xi^{\mathrm{blue},k}(x)\le 2^{k+1}K_1 N$ by~\eqref{eq:xibluekbd} and $t_{k-1}-t_k\le c_0 2^{1-k}$ by~\eqref{eq:tkdiff},
\begin{equation} \label{eq:lem2*}
\xi^{\mathrm{red},x,k-1}(y)\leq g(r)^{-1/5} 2^{k+2}K_1 N \quad \forall y \in \Z^d.
\end{equation}
Moreover, for $y\in \Z^d$ with $\|x-y\|\ge r/2$, 
\new{again by the definition of the event $(R^{\mathrm{num}}_{x,k,r})^c$ and since $\xi^{\mathrm{blue},k}(x)\le 2^{k+1}K_1 N$ and $t_{k-1}-t_k\le c_0 2^{1-k}$,}
the blue particles at $x$ at time $t_k$ have at most $\thr(\|y-x\|,k)$ descendants at $y$ at time $t_{k-1}$, so by our construction,
\begin{equation} \label{eq:xiredoutside}
\xi^{\mathrm{red},x,k-1}(y)=0\quad  \forall y \notin \ball_{r/2}(x).
\end{equation}
\new{To summarise step~1,} we have now established that no blue particles at $x$ at a time $t_k$ with $k>k_r$ have descendants that turn red at time $t_{k-1}$, and no blue particles at $x$ at any time $t_k$ with $k\ge 1$ have descendants that turn red at time $t_{k-1}$ outside $\ball_{r/2}(x)$. \new{Moreover,~\eqref{eq:lem2*} gives us a bound on the number of descendants of blue particles at $x$ at  time $t_k$ that turn red at time $t_{k-1}$ at $y$  for any $y$ and any $k\le k_r$.}

\paragraph{\new{\textbf{Step 2:}}}
Note \new{first} that for $y\in \ball_{r/2}(x)$ and $y'\notin \ball_r(x)$, we have 
$\|y'-y\|\ge \|x-y'\|-\|x-y\|> r/2> \|x-y\|$, which implies
\begin{equation} \label{eq:triangleineq}
\|x-y'\|\le \|x-y\|+\|y-y'\|< 2\|y'-y\|.
\end{equation}
For $y'\notin \ball_r(x)$, by summing over the possible times $t_k$ and locations $y$ at which descendants of blue particles at $x$ may turn red, and then by~\eqref{eq:lem2*} and~\eqref{eq:triangleineq},
and using that $1-t_0=0$ and $y'\notin \mathcal B_{r/2}(x)$ to remove the $k=0$ term from the sum,
\begin{align} \label{eq:xiredbound}
&\eta^{(x)}(y') \notag \\
&\le \sum_{k=0}^{k_r-1} \sum_{y \in \ball_{r/2}(x)} \#\{(j,u): j \le \xi^{\mathrm{red},x,k}(y), u\in \mathcal N_{1-t_k}^{\mathrm{red},x,y,k,j}, X_{1-t_k}^{\mathrm{red},x,y,k,j}(u)+y=y'\} \notag \\
&\le \sum_{k=1}^{k_r-1} \sum_{y \in \ball_{r/2}(x)} \#\{(j,u): j \le g(r)^{-1/5}2^{k+3}K_1 N, u\in \mathcal N_{1-t_k}^{\mathrm{red},x,y,k,j}, \|X_{1-t_k}^{\mathrm{red},x,y,k,j}(u)\|>\tfrac 12 \|x-y'\|\} \notag \\
& \leq \sum_{k=1}^{k_r-1 }\sum_{y\in \ball_{r/2}(x)} k_{\|x-y'\|}^{-1} |\ball_{\|x-y'\|/2}(0)|^{-1} g(\|x-y'\|) K_1^2 N \notag \\
&=g(\|x-y'\|) K_1^2 N,
\end{align}
where the third inequality follows by the definition of the events $(R_{x,y,k,r})^c$ and since $1-t_k \le c_0$ by~\eqref{eq:tkdefn} and $\|x-y'\|\ge r$, and the last inequality follows since $\|x-y'\|\ge r$.

\paragraph{\new{\textbf{Step 3:}}}
For any $y'\in \Z^d$, 
again by summing over possible times $t_k$ and locations $y$ at which descendants of blue particles at $x$ may turn red, and then
by~\eqref{eq:lem2*} and the definition of $(R_{x,y,k,r})^c$ for $y\in \ball_{r/2}(x)$ and $1\le k <k_r$,
\begin{align} \label{eq:xirednum}
\eta^{(x)}(y')
&\le \sum_{k=1}^{k_r-1} \sum_{y \in \ball_{r/2}(x)} \#\{(j,u): j \le \xi^{\mathrm{red},x,k}(y), u\in \mathcal N_{1-t_k}^{\mathrm{red},x,y,k,j}\} +\xi^{\mathrm{red},x,0}(y')\notag \\
& \leq \sum_{k=1}^{k_r-1 }\sum_{y\in \ball_{r/2}(x)} \tfrac 12 k_r^{-1}|\ball_{r/2}(0)|^{-1} g(r)^{-1} K_1^2 N
+g(r)^{-1/5}8K_1 N \notag \\
&\le K_1^2 g(r)^{-1} N,
\end{align}
where the last line follows since we chose $K_1>16$ (before~\eqref{eq:K0defn}) and so $\frac 12 K_1^2 +8K_1\le K_1^2$.

\new{Therefore, for any $y'\in \Z^d$, using~\eqref{eq:xirednum} in the case $\|y'-x\|<r$ and~\eqref{eq:xiredbound} in the case $\|y'-x\|\ge r$ we have
\[
\eta^{(x)}(y')\le g(r)^{-1} K_1^2 N \1_{\|y'-x\|<r} +g(\|y'-x\|) K_1^2 N \1_{\|y'-x\|\ge r},
\]
which completes the proof.
}
\end{proof}

\subsection{Proof of Proposition~\ref{prop:closedinitcond}} \label{sec:propclosedinit}

Take $T>1$. We now construct the BRWNLC particle system \new{(with some jump rate $\gamma$, jump kernel $p$ and competition kernel $\Lambda$)} on the time interval $[0,T]$, using a construction that is different from the one used in Section~\ref{sec:propupper}, but still based on the BRW trees with resiliences construction in Section~\ref{sec:construction}.
Take a (random) particle configuration $\xi$ consisting of a finite number of particles with $\xi=\zeta+\sum_{x\in \Z^d}\zeta^{(x)}$, where $\zeta,\zeta^{(x)}\in \N_0^{\Z^d}$ are \new{coupled with $(Z_\ep^{(x)})_{x\in \Z^d}$} as in Proposition~\ref{prop:closedinitcond} (for some $\varepsilon>0$ to be specified later).
We construct $(\xi_t)_{t\in [0,T]}$ with $\xi_0=\xi$.

The particles will be split into blue, red and yellow particles.
The reader is referred to Section~\ref{sec:proof_outline} for a heuristic description of the roles of the blue, red and yellow particles in the construction.

For $x,y\in \Z^d$ and $j\in \N$, let $\mathcal T^{\mathrm{blue},x,j}$, $\mathcal T^{\mathrm{red},x,y,j}$ and $\mathcal T^{\mathrm{yellow},x,j}$ be i.i.d.~copies of the BRW tree with resiliences $\mathcal T$ (defined in~\eqref{eq:calTdefn}).
The corresponding entries will be denoted by $\mathcal N^*$, $X^*$, $\alpha^*$, $\beta^*$, $\rho^*$, where $*$ is replaced by the corresponding superscripts.
The process $(\xi_t)_{t\in [0,T]}$ will be constructed as a deterministic function of $\zeta$, $(\zeta^{(x)})_{x\in \Z^d}$ and the BRW trees with resiliences 
$(\mathcal T^{\mathrm{blue},x,j})_{x\in \Z^d,j\in \N}$, $(\mathcal T^{\mathrm{red},x,y,j})_{x,y\in \Z^d,j\in \N}$ and $(\mathcal T^{\mathrm{yellow},x,j})_{x\in \Z^d,j\in \N}$, which will encode the behaviour of the blue, red and yellow particles respectively.

Particles at time $0$ are initially coloured either blue or red, and given a label $x\in \Z^d$, according to the following rule:
For each $y\in \Z^d$, each particle at $y$ that contributes to $\zeta$ is coloured blue and given label $y$, and each particle at $y$ that contributes to $\zeta^{(x)}$ for some $x\in \Z^d$ is coloured red, and given label $x$.
We then recolour and relabel some particles according to the following rule:
If $\xi(y)\ge J$, and if $\zeta(y)<J$, then choose $J-\zeta(y)$ red particles at $y$ according to some arbitrary rule, and recolour them blue with label $y$, so that there are exactly $J$ blue particles at $y$.
\new{(Recall that we set $J=\lfloor N^{1/3} \rfloor$.)}

As mentioned above, during the time interval $[0,T]$, particles will be coloured blue, red or yellow. Whenever a particle branches, both offspring particles inherit the colour and label of the parent particle.
For $t\in [0,T]$ and $x,y\in \Z^d$, we let $\xi^{\mathrm{blue},x}_t(y)$ (resp.~$\xi^{\mathrm{red},x}_t(y)$, $\xi^{\mathrm{yellow},x}_t(y)$) denote the number of blue (resp.~red, yellow) particles with label $x$ at location $y$ at time $t$.
Similarly, for $t\in [0,T]$ and $y\in \Z^d$, we let $\xi^{\mathrm{blue}}_t(y)$ (resp.~$\xi^{\mathrm{red}}_t(y)$, $\xi^{\mathrm{yellow}}_t(y)$) denote the total number of blue (resp.~red, yellow) particles at location $y$ at time $t$.

For every site $x\in\Z^d$, there may be some time $\tau(x)\in [0,T)$ at which blue or red particles at that site are coloured yellow. 
These are the only times in the time interval $(0,T)$ at which particles change colour.
The particles living between times $0$ and $T$ will be subsets of the particles in the following BRW trees with resiliences:
\begin{itemize}
\item The descendants of the blue particles at $x$ at time $0$ will come from the BRW trees with resiliences $\mathcal T^{\mathrm{blue},x,j}$ for $1\leq j \leq \xi^{\mathrm{blue}}_0(x)$, with the root of $\mathcal T^{\mathrm{blue},x,j}$ positioned at $x$ at time $0$. 
\item The descendants of red particles with label $x$ at $y$  at time $0$ will come from the BRW trees with resiliences $\mathcal T^{\mathrm{red},x,y,j}$ for $1\leq j \leq \xi^{\mathrm{red},x}_0(y)$, with the root of $\mathcal T^{\mathrm{red},x,y,j}$ positioned at $y$ at time $0$.
\item The descendants of particles which turn yellow at $x$ at time $\tau(x)$ will come from the BRW trees with resiliences $\mathcal T^{\mathrm{yellow},x,j}$, $j\ge 1$, with the root of $\mathcal T^{\mathrm{yellow},x,j}$ positioned at $x$ at time $\tau(x)$.
\end{itemize}
The process $(\xi_t)_{t\in[0,T]}$ is then constructed from these BRW trees with resiliences as in Section~\ref{sec:construction}, 
removing the particles that turn yellow at times $\tau(x)$ (and their descendants) from their original trees  $\mathcal T^{\mathrm{blue},x',j}$ or $\mathcal T^{\mathrm{red},x',y,j}$ from time $\tau(x)$ onwards.

Having explained the idea of the construction, we can now specify the times $\tau(x)$, $x\in\Z^d$. 
We let
\begin{equation} \label{eq:tauxdefn}
\tau(x) =\inf\{t\in [0,T]: \xi^{\mathrm{red}}_t(x)+\xi^{\mathrm{blue}}_t(x)\geq \lfloor N^{1/2} \rfloor \},
\end{equation}
with the convention that $\inf \emptyset = \infty$. At time $\tau(x)$, if $\tau(x) < T$, choose $\lfloor N^{1/2} \rfloor$ blue or red particles at $x$ according to some arbitrary rule and turn them yellow with label $x$.
This completes the construction of $(\xi_t)_{t\in [0,T]}$.

\paragraph{Bad events.}

We now define some `bad events' for the BRW trees with resiliences $\mathcal T^{\mathrm{blue},x,j}$, $\mathcal T^{\mathrm{red},x,y,j}$ and $\mathcal T^{\mathrm{yellow},x,j}$ which control the blue, red and yellow particles respectively.
We will give informal descriptions of what happens on the complements of these bad events; these descriptions will be made precise in the proofs of Lemmas~\ref{lem:Ce_inclusion} and~\ref{lem:Ktbds} below.
Let $L>0$ and $K_2>0$ be large constants and \new{let }$c_2>0$ \new{and $\epsilon_1$} be small constants to be chosen later.
\new{Fix $\lambda >0$ and recall that $K_0=K_0(d,\gamma,R_1,\lambda)>1$ is defined in Proposition~\ref{prop:upperbound}; the events below will depend on $T,L,K_2,c_2,\epsilon_1,K_0,J$ and $N$ (we will later assume that the competition kernel $\Lambda$ satisfies~\eqref{eq:Lambda_assumptions} with $N_0$ sufficiently large and with some $R$ and a suitable $\kappa$, and that the jump kernel $p$ satisfies~\eqref{eq:p_assumptions} with $p(x)=0$ $\forall x\notin \mathcal B_{R_1}(0)$).}

The first two events make sure that particles issued from a point $x$ will arrive close to a point $y$, as long as the killing rate is not `too large' along their trajectory. 

For $x,y\in\Z^d$, set
\begin{align} \label{eq:Bxydefn}
B_{x,y} &= \Big\{ \#\Big\{ (j,u): j\in\llbracket J\rrbracket,\, u\in \mathcal N_T^{\mathrm{blue},x,j},\,
 \rho^{\mathrm{blue},x,j}(v) \ge c_2 T\,\, \forall v \prec u, \notag \\
&\hspace{4cm}X^{\mathrm{blue},x,j}_t(u) \in \ball _L((y-x)t/T)\,\forall t\in[0,T]\Big\} < J |\ball _L(0)| \Big\} \notag \\
&\quad \new{\cup \Big\{ \#\Big\{ j\in\llbracket J\rrbracket : \beta^{\mathrm{blue},x,j}(\emptyset) > T,
 \rho^{\mathrm{blue},x,j}(\emptyset) \ge K_2 T, X^{\mathrm{blue},x,j}_t(\emptyset) =0\,\forall t\in[0,T]\Big\} =0\Big\}.}
\end{align}
On the event $(B_{x,y})^c$, if there are at least $J$ blue particles at $x$ at time $0$, and if the killing rate is at most $c_2$ in a tube from $(x,0)$ to $(y,T)$ with radius $L$, and no particles are turned yellow in this tube, then there will be at least $J$ descendants of the blue particles at some site $y' \in \ball_L(y)$ at time $T$.
\new{Moreover, if there are at least $J$ blue particles at $x$ at time $0$, and if the killing rate at $x$ is at most $K_2$ on the time interval $[0,t]$ for some $t\le T$, and no particles are turned yellow at $x$ before time $t$, then there will be at least one particle at $x$ at time $t$.}

Now for $x,y\in \Z^d$, set
\begin{align} \label{eq:Yxydefn}
Y_{x,y} = &\Big\{ \#\Big\{ j \in\llbracket N^{1/2} \rrbracket:
\beta^{\mathrm{yellow},x,j}(\emptyset) > T,\, \rho^{\mathrm{yellow},x,j}(\emptyset) \ge K_2 T, \notag \\
&\hspace{2cm}X^{\mathrm{yellow},x,j}_t(\emptyset) \in \ball_L((y-x)(\tfrac t{\epsilon_1 T}\wedge 1))\,\forall t\in[0,T]
\Big\} < J|\ball_{L}(0)| \Big\} \notag \\
&\cup \new{\{\#\{j\in \llbracket N^{1/2}\rrbracket :\beta^{\mathrm{yellow},x,j}(\emptyset)>T, \rho^{\mathrm{yellow},x,j}(\emptyset)\ge K_2 T, X_t^{\mathrm{yellow},x,j}(\emptyset)=0\,\forall t\in [0,T]\}=0\}.}
\end{align}
On the event $(Y_{x,y})^c$, roughly speaking, if the time $\tau(x)$ is triggered, a sufficiently large fraction of yellow particles created at $x$ will quickly move close to $y$ (within a time $\epsilon_1 T$) and then stay in the vicinity of $y$. More precisely, if $\tau(x)\le (1-\epsilon_1)T$ and the killing rate is at most $K_2$ in both a tube from $(x,\tau(x))$ to $(y,\tau(x)+\epsilon_1 T)$ of radius $L$ and a tube from $(y,\tau(x)+\epsilon_1 T)$ to $(y,T)$ of radius $L$, then there will be at least $J$ descendants of the yellow particles from $x$ at some site $y' \in \ball_L(y)$ at time $T$.
\new{Moreover, if the time $\tau(x)$ is triggered, and if the killing rate at $x$ is at most $K_2$ on the time interval $[\tau(x),t]$ for some $t\le T$, then there will be at least one particle at $x$ at time $t$.}

In order to make sure the killing rate is not `too large', we have to control the growth and spread of the particles. 
For $x\in \Z^d$ and $r\ge 0$, set
\begin{align*}
B^{\mathrm{spread}}_{x,r}
&=\Big\{\sup_{t\in [0,T]}\#\{(j,u):j\in \llbracket K_0 N \rrbracket,u\in \mathcal N^{\mathrm{blue},x,j}_t, \|X^{\mathrm{blue},x,j}_t (u)\|\geq r/2 \}\geq e^{3T} g(r)K_0 N
\Big\}\\
&\hspace{1cm}\cup \{ \#\{(j,u):j\in \llbracket K_0 N \rrbracket,u\in \mathcal N^{\mathrm{blue},x,j}_T\}\ge e^{3T} g(r)^{-2} K_0 N
\}.
\end{align*}
Since $J\le N <K_0 N$ and $\zeta(x)\le K_0 N$,
by our construction we have
$\xi^{\mathrm{blue}}_0(x)\le K_0 N$, and so, on the event $(B^{\mathrm{spread}}_{x,r})^c$, the blue particles at $x$ at time $0$ have fewer than $e^{3T} g(r) K_0 N$ descendants outside $\ball_{r/2}(x)$, and fewer than $e^{3T} g(r)^{-2}K_0 N$ descendants in total, at all times in $[0,T]$.

For $x\in \Z^d$ and $r\ge 0$, set 
\begin{align*}
Y^{\mathrm{spread}}_{x,r}
&=\Big\{\sup_{t\in [0,T]}\#\{(j,u):j\in \llbracket N^{1/2} \rrbracket,u\in \mathcal N^{\mathrm{yellow},x,j}_t, \|X^{\mathrm{yellow},x,j}_t (u)\|\geq r/2 \}\geq e^{3T} g(r) N^{1/2}
\Big\}\\
&\hspace{1cm} \cup \{ \#\{(j,u):j\in \llbracket N^{1/2} \rrbracket,u\in \mathcal N^{\mathrm{yellow},x,j}_T\}\ge e^{3T} g(r)^{-2} N^{1/2}
\}.
\end{align*}
On the event $(Y^{\mathrm{spread}}_{x,r})^c$, any yellow particles created at $x$ at time $\tau(x)$ have fewer than $e^{3T} g(r) N^{1/2}$ descendants outside $\ball_{r/2}(x)$, and fewer than $e^{3T} g(r)^{-2}N^{1/2}$ descendants in total, at all times in $[\tau(x),T]$.

Finally, the next event controls the spread of the red particles with label $x$ in a ball around $x$.
For $x\in \Z^d$ and $r\ge 0$, set
\begin{align*}
R_{x,r}^{\mathrm{spread}}
&= \Big\{\sup_{t\in[0,T]}\#\{(y,j,u): y\in \ball_{(r/4)\vee 1}(x),j\in \llbracket g(r/4)^{-1} K_0 N\rrbracket,\,u\in \mathcal N^{\mathrm{red},x,y,j}_t,\\
&\hspace{7.5cm} \|X^{\mathrm{red},x,y,j}_t(u)\|\ge r/4\} \ge e^{3T} g(r) K_0 N\Big\}\\
&\quad \cup \{\#\{(y,j,u): y\in \ball_{(r/4)\vee 1}(x),j\in \llbracket g(r/4)^{-1} K_0 N\rrbracket,\,u\in \mathcal N^{\mathrm{red},x,y,j}_{T}\}\ge e^{3T} g(r)^{-2} K_0 N\}.
\end{align*}

\new{Recall that the initial condition $\xi$ is coupled with $(Z_\ep^{(x)})_{x\in \Z^d}$ for some $\varepsilon>0$.}
For $x\in \Z^d$, $r\ge 0$ and $\ep>0$,
define a bad event involving particles of all colours by letting
\begin{align} \label{eq:Pxrep_defn}
P_{x,r,\ep} = \{Z^{(x)}_\ep> r/4\} \cup R^{\mathrm{spread}}_{x,r}  \cup B^{\mathrm{spread}}_{x,r} \cup Y^{\mathrm{spread}}_{x,r}.
\end{align}
We will see in the proof of Lemma~\ref{lem:Ktbds} below that on the complement of this event, for any $y\notin \ball_{r/2}(x)$, blue and red particles with label $x$ contribute at most $2e^{3T} g(r) K_0N$ particles at $y$ at any time in $[0,T]$.
Moreover, yellow particles with label $x$ contribute at most $e^{3T} g(r)N^{1/2}$ particles at $y$ at any time in $[0,T]$.

\new{From now on, fix $\gamma>0$ and $p$ satisfying~\eqref{eq:p_assumptions} with $p(x)=0$ $\forall x\notin \mathcal B_{R_1}(0)$.}
Let $(X_t)_{t\ge 0}$ denote a continuous-time random walk started at 0 with jump rate $\gamma$ and jump kernel $p$.
We need the following result, which will be deduced from a large deviation principle for the process $(X_t)_{t\ge0}$ in Section~\ref{sec:badbluepf}.
This result will allow us to bound the probabilities of the bad events $B_{x,y}$ and $Y_{x,y}$ for suitable $x$ and $y$.
\begin{lemma} \label{lem:blue1}
Let $v_0\in\R^d\backslash \{0\}$ and let $a\in [0,a_{v_0})$. There exists $\delta_0=\delta_0(\new{\gamma,p}, v_0,a) \in  \new{(0,1/3)}$ such that for $\epsilon>0$, there exists $t_0=t_0(\epsilon,\new{\gamma,p}, v_0,a)>1$ such that the following holds. For $t\ge t_0$ and $x,y \in \R^d$ with $\|y-x-(\mu +av_0)t\|\le \delta_0 t$,
$$
\p{X_s \in \ball_{\epsilon t}((y-x)s/t ) \, \, \forall s\in [0,t]}\ge e^{-(1-3\delta_0)t}.
$$
\end{lemma}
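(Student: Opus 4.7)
\textbf{Proof plan for Lemma~\ref{lem:blue1}.} This is a Cramér-type lower bound: the cost of forcing the walk to follow a straight line of slope $v := (y-x)/t$ is $tI(v)$, and this rate is smaller than $1$ by a definite amount because $a<a_{v_0}$. I would proceed by exponential tilting.

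First, I would set up $\delta_0$. Since $a<a_{v_0}$, pick any $a'\in(a,a_{v_0})$, so $I(\mu+a'v_0)\le 1$. Writing $\mu+av_0$ as a proper convex combination of $\mu$ (where $I=0$) and $\mu+a'v_0$, strict convexity-type inequality for $I$ gives $I(\mu+av_0)<1$; call this value $1-5\delta_1$ with $\delta_1>0$. The same convex-combination argument shows $\mu+av_0$ lies in $\dom(I)^\circ$ (it is in the open segment between $\mu\in\dom(I)^\circ$ and $\mu+a'v_0\in\dom(I)$). By continuity of $I$ on $\dom(I)^\circ$ (Lemma~\ref{lem:basicLambda}), I can choose $\delta_0\in(0,\delta_1)$ small enough that the closed ball $\overline{\ball_{\delta_0}}(\mu+av_0)$ lies in $\dom(I)^\circ$ and $I(v')\le 1-4\delta_0$ for all $v'$ in this ball. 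Since the supremum defining $I$ in~\eqref{eq:Idefn} is attained in $\dom(I)^\circ$ by a unique smooth optimizer $u^*(v')$ (standard convex duality, using that the log-MGF is smooth and strictly convex on $\R^d$ because $X_1$ has compact support), the map $v'\mapsto u^*(v')$ is bounded on this compact ball by some constant $U=U(\delta_0,v_0,a)$.

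Second, given $\epsilon>0$, by monotonicity in $\epsilon$ of the tube event, it suffices to handle $\epsilon\le\delta_0/(2U)$; for larger $\epsilon$ the inclusion of tube events gives the bound for free. Fix $t\ge t_0$ and $x,y$ as in the hypothesis, set $v=(y-x)/t$, and $u^*=u^*(v)$. Define the tilted probability $\tilde\P$ on $\F_t$ by
\[
\frac{d\tilde\P}{d\P}\bigg|_{\F_t}=\exp\bigl(\langle u^*,X_t\rangle -t\Lambda(u^*)\bigr),\qquad \Lambda(u):=\log\E{e^{\langle u,X_1\rangle}}.
\]
Under $\tilde\P$, $(X_s)_{s\ge0}$ is again a continuous-time random walk with finite-range jump kernel $\tilde p(x)\propto p(x)e^{\langle u^*,x\rangle}$ and drift $\nabla\Lambda(u^*)=v$. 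Using $I(v)=\langle v,u^*\rangle-\Lambda(u^*)$ and $\|X_t-vt\|<\epsilon t$ on the tube event,
\[
\p{X_s\in\ball_{\epsilon t}(vs)\;\forall s\in[0,t]}=\tilde\E{e^{-\langle u^*,X_t-vt\rangle-tI(v)}\1_{\mathrm{tube}}}\ge e^{-tI(v)-U\epsilon t}\,\tilde\P\bigl(\mathrm{tube}\bigr).
\]

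Third, under $\tilde\P$ the process $X_s-vs$ is a martingale with bounded jumps whose one-step variance is bounded uniformly in $v$ over the compact ball considered (again by compactness and smoothness of $\Lambda$). Doob's $L^2$-maximal inequality gives
\[
\tilde\P\Bigl(\sup_{s\le t}\|X_s-vs\|\ge \epsilon t\Bigr)\le \frac{C(\delta_0,v_0,a)}{\epsilon^2 t},
\]
which is $\le 1/2$ for $t\ge t_0(\epsilon)$. Combining, and using $I(v)\le 1-4\delta_0$ together with $U\epsilon\le \delta_0/2$,
\[
\p{X_s\in\ball_{\epsilon t}(vs)\;\forall s\in[0,t]}\ge \tfrac12 e^{-t(1-4\delta_0+U\epsilon)}\ge \tfrac12 e^{-t(1-\frac 72\delta_0)}\ge e^{-(1-3\delta_0)t}
\]
for $t\ge t_0$ large enough (absorbing the $\tfrac12$ into $e^{\delta_0 t/2}$).

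The delicate points are essentially bookkeeping: ensuring $\mu+av_0\in\dom(I)^\circ$ and that the optimizer $u^*(v)$ remains uniformly bounded as $v$ ranges over the prescribed ball, and reducing to small $\epsilon$ so that the linear error $U\epsilon t$ in the exponent is dominated by $\delta_0 t$. Both are handled by the compact convexity arguments in Lemma~\ref{lem:basicLambda} combined with standard properties of the Cramér transform of a finite-range random walk.
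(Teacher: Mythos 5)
Your proof is correct, but it takes a genuinely different route from the paper. The paper sets up $\delta_0$ the same way (convexity gives $I(\mu+av_0)<1$, then $\mu+av_0\in\dom(I)^\circ$ and continuity of $I$ there), but then it invokes a sample-path large deviation lower bound (Lemma~\ref{lem:largedev}, quoted from de Acosta) to get $\liminf_t \frac1t\log\p{\text{tube}}\ge -I(v)$ for each fixed direction $v$, and passes to uniformity over the ball $V$ by taking a finite $\epsilon/2$-mesh $V_\epsilon\subset V$ and enlarging the tube radius from $\epsilon/2$ to $\epsilon$. You instead give a self-contained Cramér tilting argument: change to the measure under which the walk has drift $v$, bound the Radon--Nikodym factor on the tube by $e^{-tI(v)-U\epsilon t}$ using the uniform bound $\|u^*(v)\|\le U$, and show the tilted probability of staying in the tube is $\ge 1/2$ by Doob's $L^2$-maximal inequality. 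Your version avoids citing the external path-LDP theorem at the cost of a little extra convex-analytic bookkeeping (existence, uniqueness and local boundedness of the Legendre optimizer $u^*(v)$ on compacts in $\dom(I)^\circ$), while the paper's version is shorter modulo that citation. Either works, and both handle uniformity over $v\in V$ by compactness.

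One small slip: you justify smoothness and strict convexity of $\Lambda$ ``because $X_1$ has compact support.'' That is false --- $X_1$ is a compound Poisson sum, so its support is unbounded. What is true, and what you actually need, is that the \emph{jump kernel} $p$ has finite range, which makes $\Lambda(u)=\gamma\sum_x p(x)(e^{\langle u,x\rangle}-1)$ a finite sum of exponentials, hence smooth and (since $\supp p$ spans $\R^d$) strictly convex with positive-definite Hessian. This is a wording error, not a gap; the rest of the tilting argument, the bound on $\tilde\gamma$ and the variance of $X_t-vt$, and the Doob step all go through with that correction.
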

From now on, fix $v_0\in \R^d\backslash \{0\}$, $0\le \aminus <\aplus <a_{v_0}$ and $R>0$.
We now choose the constants $\kappa$, $c_2$, $R'$, $L$, $T$, $\epsilon_1$  and $K_2$.
First let $\delta_0=\min(\delta_0(\new{\gamma,p}, v_0,\aminus ),\delta_0(\new{\gamma,p}, v_0,\aplus ))>0$ as in Lemma~\ref{lem:blue1}, and \new{recalling the definition of $\mu$ in~\eqref{eq:mudefn},} let
\begin{align} 
\epsilon_2 &= \min(\tfrac 13 \delta_0 , \tfrac 19 \max(\|\mu+\aminus v_0\|,\|\mu+\aplus v_0\|))>0, \notag \\
\epsilon_1 &=\tfrac 14 \epsilon_2 (\max(\|\mu+\aminus v_0\|,\|\mu+\aplus v_0\|))^{-1}>0,\label{eq:epsilon1defn}\\
\text{ and }\qquad 
\epsilon_0 &= \min(\tfrac 17 \epsilon_1 \delta_0, \tfrac 1 {10} \epsilon_2)>0. \notag 
\end{align}
Note that, in particular, $\epsilon_1\le 1/36$.
Let $\kappa =4\epsilon_0^{-1}$ and let $t_0=\max(t_0(\epsilon_0,\new{\gamma,p,}v_0,\aminus ),t_0(\epsilon_0,\new{\gamma,p,}v_0,\aplus ))$ as in Lemma~\ref{lem:blue1}.
 We now choose $T>1$; we will take $T$ sufficiently large that several conditions hold. 
For $t\ge 0$, let $D_t= \max(\|\lfloor (\mu+\aminus v_0)t\rfloor \|,\|\lfloor (\mu+\aplus v_0)t\rfloor \|)$.
 First, take $T$ sufficiently large that
\begin{equation} \label{eq:Tdefn}
\begin{aligned}
&T\ge t_0, \;
 \epsilon_0 T \ge R,\;
 \epsilon_1 D_T<\tfrac 13 \epsilon_2 T,\;
 \epsilon_2 T\le \tfrac 12 \delta_0 T-d^{1/2}, \; 4\epsilon_2 T +1 <\tfrac 12 D_T, \\
& D_T>60 R_1
  \quad  \text{ and }\quad 
 T\ge \delta_0^{-1} \log (2|\ball _{\epsilon_0 T}(0)|)+1.
 \end{aligned}
\end{equation}
Also take $T$ sufficiently large that $\lfloor (\mu+a_-v_0)T\rfloor \neq \lfloor (\mu+a_+ v_0)T\rfloor$.
Moreover, (by considering the cases $r>8R_1\gamma Te$ and $r\le 8R_1\gamma Te$ separately) suppose $T$ is sufficiently large that
\begin{equation} \label{eq:C1defn}
|\ball_{r/4}(0)|g(r/4)^{-1} \min\left(1, e^{-\frac r {4R_1} \log \left( \frac r {4R_1 \gamma  T e}\right)}\right)\le \tfrac 12 e^{2T} g(r) \quad \forall r\ge 1 \quad \text{ and } \quad 2 \le  e^{2T}.
\end{equation}
Also take $T$ sufficiently large that
$e^{\frac 12 e^{-T}}\le 1+e^{-T}\le 2$, and so $(1-e^{-T})e^{\frac 12 e^{-T}}\le 1-e^{-2T}<1$ and
$\log(e^{\frac 12 e^{-T}}/(1-(1-e^{-T})e^{\frac 12 e^{-T}}))\le 2T +\log 2$. Then
take $T$ sufficiently large that
\begin{equation} \label{eq:C1defn2}
\tfrac 14 e^{2T}g(r)^{-2}\ge |\ball_{(r/4)\vee 1}(0)|g(r/4)^{-1}(2T+\log 2) \quad \forall r\ge 0,
\end{equation}
which is possible since $g(r)=r^{-6d-2}$ for $r\ge 1$, by~\eqref{eq:gdefn}.
Finally, letting
\begin{equation} \label{eq:K2defn}
K_3=\sum_{z\in \Z^d}(2K_0+1)g(\lfloor \|z\|\rfloor )<\infty,
\end{equation}
and using that $\epsilon_0 \kappa=4$ by our choice of $\kappa$ after~\eqref{eq:epsilon1defn}, take $T$ sufficiently large that
\begin{equation} \label{eq:kappadefn}
e^{3T}\sum_{z\in \Z^d,\|z\|\ge \epsilon_0 T} e^{-\kappa \|z\|}
\Big( K_3 + 
|\ball_{\|z\|}(0)|(2K_0+1)g(2\|z\|)^{-2}\Big)
<\tfrac 13 \delta_0 T^{-1}.
\end{equation}
We can now define the remaining constants. Let
\begin{equation} \label{eq:R'Ldefn}
c_2 = T^{-1}\delta_0 , \quad 
R' =\epsilon_2 T \quad \text{and} \quad L=\epsilon_0T .
\end{equation}
Take $K_2$ sufficiently large that 
\begin{equation} \label{eq:K1defn}
K_2>2e^{3T} \sum_{z\in \Z^d}(2K_0+1)g(\lfloor \|z\|\rfloor)
\end{equation}
and
\begin{equation} \label{eq:K1defn2}
K_2>2e^{3T} \sum_{z\in \Z^d}(\1_{\|z\|<R} +e^{-\kappa \|z\|})|\ball_{\|z\|}(0)|(2K_0+1)g(2 \|z\|)^{-2}.
\end{equation}

Now we can state the two main intermediate lemmas in the proof of Proposition~\ref{prop:closedinitcond}.
The first lemma bounds the probabilities of the bad events; we will prove this result in Section~\ref{sec:badbluepf}.
Let $D:=D_T=\max(\|\lfloor (\mu+\aminus v_0)T\rfloor \|,\|\lfloor (\mu+\aplus v_0)T\rfloor \|)>0$.
\begin{lemma}
\label{lem:bad_blue}
For every $\ep>0$ sufficiently small, there exists $N_0>0$ such that for $N\geq N_0$, the following holds for $a\in \{\aminus ,\aplus \}$:
\begin{enumerate}
\item $\P(B_{x,y}) < \ep$ $\forall x,y \in \Z^d$ with $\|y-x-(\mu +av_0)T\|<\delta_0 T$.
\item $\P(Y_{x,y}) < \ep$ $\forall x,y \in \Z^d$ with $\|y-x-r(\mu +av_0)T\|<\delta_0 r T$ for some $r\in [\epsilon_1,1]$ or $x=y$.
\item $\P(P_{x,r,\ep}) \le \min(2\ep, f(r+\ep ^{-1})^{1/5}/|\ball_{r+3D/2}(0)|)$ $\forall r\in \N_0, x\in \Z^d$.
\end{enumerate}
\end{lemma}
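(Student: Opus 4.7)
The plan is to apply the many-to-one lemma (Lemma~\ref{lem:many-to-one}) to reduce each statement to a bound involving a single random walk $(X_t)$, combined either with Lemma~\ref{lem:blue1} (for trajectory-confinement events) or Markov's inequality on a Poisson jump count (for spread events); the one-tree estimates are then turned into concentration bounds via Chebyshev's or Chernoff's inequality, exploiting the independence of the $J$ (respectively $\lfloor N^{1/2}\rfloor$, respectively $\lfloor K_0 N\rfloor$) independent trees. For Part~1, prune each tree $\mathcal T^{\mathrm{blue},x,j}$ by discarding every particle whose resilience is less than $c_2T=\delta_0$ together with all its descendants. Since the resiliences are i.i.d.\ $\mathrm{Exp}(1)$ and independent of the walks and branching times, the pruned tree is itself a continuous-time branching process with branching rate $1$ and $\mathrm{Bin}(2,e^{-\delta_0})$ offspring distribution, whose root survives with probability $e^{-\delta_0}$. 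Applying the many-to-one lemma to this pruned process together with Lemma~\ref{lem:blue1} gives
\[
\mathbb{E}[\text{good count in one tree}]\;\ge\;e^{-\delta_0}\,e^{(2e^{-\delta_0}-1)T}\cdot e^{-(1-3\delta_0)T}\;\ge\;e^{-\delta_0}\,e^{\delta_0 T},
\]
where the last inequality uses the elementary bound $2e^{-x}-2+3x\ge x$ for $x\ge 0$. Condition~\eqref{eq:Tdefn} ensures this quantity exceeds $2|\ball_L(0)|$, so the expected total count over the $J$ trees is at least $2J|\ball_L(0)|$; Chebyshev's inequality applied to the sum over independent trees then gives $\P(B_{x,y})\le C/J<\epsilon$ for $N$ large, provided a standard many-to-two computation on the pruned BRW yields a uniform-in-$T$ bound $\mathrm{Var}(M_j)\le C'\,\mathbb{E}[M_j]^2$ on the single-tree count.

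For Part~2, only the root of each of the $\lfloor N^{1/2}\rfloor$ yellow trees is involved, so the problem reduces to $\lfloor N^{1/2}\rfloor$ i.i.d.\ Bernoulli trials with success probability $q=e^{-T}\,e^{-K_2T}\,\P(X_t\in\ball_L((y-x)(t/(\epsilon_1T)\wedge 1))\,\forall t\in[0,T])$, using the independence of the lifetime $\beta(\emptyset)$, resilience $\rho(\emptyset)$ and walk. The walk probability is strictly positive: for $x=y$, the walk making no jumps has probability $e^{-\gamma T}>0$ and stays in $\ball_L(0)$; for $x\neq y$, the spanning assumption on the support of $p$ yields a positive-probability finite sequence of jumps carrying the walk from $x$ to a neighbourhood of $y$ within time $\epsilon_1 T$, followed by no further jumps on $[\epsilon_1 T,T]$. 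Hence $q>0$ depends only on $T$ and the other fixed parameters (not on $N$), so $\lfloor N^{1/2}\rfloor q$ dominates $J|\ball_L(0)|=\lfloor N^{1/3}\rfloor|\ball_L(0)|$ for $N$ sufficiently large, and a Chernoff bound on the resulting binomial sum yields $\P(Y_{x,y})<\epsilon$.

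Part~3 is handled by a union bound over the four events composing $P_{x,r,\ep}$. The bound $\P(Z^{(x)}_\ep>r/4)\le\min(f(r/4+\ep^{-1}),\ep)$ is immediate from~\eqref{eq:Zepsdefn} since $f$ is non-increasing. For $B^{\mathrm{spread}}_{x,r}$, $Y^{\mathrm{spread}}_{x,r}$ and $R^{\mathrm{spread}}_{x,r}$, use the monotonicity
\[
\sup_{t\le T}\#\{u\in\mathcal N_t:\|X_t(u)\|\ge r/2\}\;\le\;\#\{u\in\mathcal N_T:\sup_{s\le T}\|X_s(u)\|\ge r/2\}
\]
(valid because $X_s(u)$ for $s<\alpha(u)$ denotes the ancestor's position, so the right-hand side is monotone in $t$), apply the many-to-one lemma to the right-hand side, and bound $\P(\sup_{s\le T}\|X_s\|\ge r/2)\le\P(N^{(\gamma)}_T\ge r/(2R_1))$ by Markov's inequality on the Poisson random variable; the resulting super-exponential decay $\exp(-(r/(2R_1))\log(r/(2R_1\gamma Te)))$ absorbs the polynomial factors $g(r)^{-1}$ and $|\ball_{r+3D/2}(0)|$, and conditions~\eqref{eq:C1defn}--\eqref{eq:C1defn2} are designed precisely to make this comparison explicit. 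The total-count parts of the spread events are controlled by Markov's inequality applied to $|\mathcal N_T|\sim\mathrm{Geom}(e^{-T})$. The $2\ep$ bound follows by choosing $N$ large enough that each of the four terms is at most $\ep/2$. The main obstacle in the whole program is the second-moment control needed in Part~1: since a supercritical BRW can have variance-to-mean$^2$ ratios that grow with $T$, one must carefully carry out a many-to-two computation on the pruned BRW (tracking the most recent common ancestor time of two distinct particles and exploiting the conditional independence of their post-divergence evolutions) to ensure this ratio remains bounded uniformly in $T$, crucially using that the effective branching rate $2e^{-\delta_0}-1$ is bounded away from zero for fixed $\delta_0$.
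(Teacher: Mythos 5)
Your overall decomposition matches the paper's: prune by resiliences and use many-to-one plus Lemma~\ref{lem:blue1} for Part~1, a Bernoulli-root argument for Part~2, and a union bound over the constituent spread events with many-to-one and a Poisson tail for Part~3.

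The key misconception is in Part~1. You identify ``the main obstacle in the whole program'' as needing a uniform-in-$T$ second-moment bound $\mathrm{Var}(M_j)\le C'\,\mathbb{E}[M_j]^2$ via a many-to-two computation. No such bound is needed. Since $T$ is chosen \emph{before} $N$ and then held fixed, the paper simply uses the crude estimate
\[
\mathrm{Var}(A^j_T)\;\le\;\mathbb{E}\big[(A^j_T)^2\big]\;\le\;\mathbb{E}\big[|\mathcal N^j_T|^2\big]\;\le\;2e^{2T},
\]
and Chebyshev over the $J$ independent trees gives $\P(B_{x,y})\le 2e^{2T}J^{-1}$. This tends to $0$ as $N\to\infty$ (recall $J=\lfloor N^{1/3}\rfloor$) for each fixed $T$, which is all the lemma requires. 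The many-to-two machinery you propose would be a substantial detour, and the claimed bound on the variance-to-mean-squared ratio is not actually what is exploited; what matters is just that the variance is bounded by a quantity that is $o(J)$.

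In Part~2 your high-level conclusion is right---the single-root success probability is a positive constant depending only on $T,\epsilon_0,\epsilon_1,v_0,a,K_2,\gamma$, hence $N^{1/2}q\gg J|\ball_L(0)|$---but the appeal to ``the spanning assumption on the support of $p$'' is not the right justification when $x\ne y$. The spanning assumption concerns the \emph{vector space} generated by $\supp(p)$ and allows negative coefficients, whereas a walk path can only accumulate non-negative combinations of jumps. What makes $y-x$ reachable is the hypothesis $\|y-x-r(\mu+av_0)T\|<\delta_0 rT$ with $a<a_{v_0}$, which forces $r^{-1}(y-x)$ into (a neighbourhood of) $\dom(I)^\circ$, the cone generated by $\supp(p)$. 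The paper exploits this via Lemma~\ref{lem:blue1} directly, together with a thinning/time-change of the walk to handle the rescaled time interval $[0,\epsilon_1 T]$; see Lemma~\ref{lem:yellow}. The reachability is a consequence of the cone structure of $\dom(I)$, not of the vector-space spanning assumption per se. Part~3 as you describe it is in line with the paper, via Lemma~\ref{lem:movefar} and Lemma~\ref{lem:geomsum}.
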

The second lemma says that if an edge $e$ in the renormalization grid is closed, i.e.~if the event $\mathcal C_e$ defined in \eqref{eq:Cdefn} occurs, then one of a collection of bad events must occur; we will prove this result at the end of this section.
\begin{lemma} \label{lem:Ce_inclusion}
For any $\ep>0$, there exists $N_0>0$ such that \new{if $\Lambda$ satisfies~\eqref{eq:Lambda_assumptions}}, the following holds.
If the coupling in~\eqref{eq:coupling_upper} holds for a (random) particle configuration $\xi\in \N_0^{\Z^d}$ consisting of a finite number of particles, 
then under the construction of $(\xi_t)_{t\in [0,T]}$ with $\xi_0=\xi$ at the start of Section~\ref{sec:propclosedinit} \new{with jump rate $\gamma$, jump kernel $p$ and competition kernel $\Lambda$},
for $e = ((x_0,0),(y_0,T))\in E_{T,\aminus ,\aplus ,v_0}(0)$,
\begin{align}
\label{eq:Ce_inclusion}
\mathcal C_e \subseteq \bigcup_{x \in \ball_{R'}(x_0)} B_{x,y_0} \cup \bigcup_{t\in [0,(1-\epsilon_1)T],x \in H_t} Y_{x,y_0} \cup \bigcup_{t\in [0,T],x \in H_t} Y_{x,x} 
\cup \bigcup_{t\in [0,T], x\in H_t} \bigcup_{y\in\Z^d} P_{y,\lfloor \|y-x\|\rfloor,\ep },
\end{align}
where $H_t = \ball_{\frac{T-t}T R'+3L}(x_0+\frac{t}T(y_0-x_0))$ for $t\in [0,T]$.
\end{lemma}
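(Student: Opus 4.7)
The plan is to prove the lemma by contraposition. Fix $e = ((x_0, 0),(y_0, T))\in E_{T, \aminus, \aplus, v_0}(0)$, and assume that $\mathcal C_e$ occurs while none of the events on the right-hand side do. Since $\mathcal C_e$ requires some $x_0' \in \ball_{R'}(x_0)$ with $\xi_0(x_0')\ge J$, the recolouring rule at time $0$ places at least $J$ blue particles with label $x_0'$ at $x_0'$. The goal is then to show that, under these assumptions, there are at least $J$ particles at some $y'\in\ball_{R'}(y_0)$ at time $T$, and that the cylinder $\bigcup_{r\in[0,1]}\ball_{2R'}(rx_0+(1-r)y_0)$ is never emptied during $[0,T]$, contradicting both clauses of $\mathcal C_e$.

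The first step is a uniform killing-rate bound: $K_t(x)\le K_2$ for all $t\in[0,T]$ and all $x$ in a neighbourhood of the segment $[x_0,y_0]$ containing the blue and yellow trajectories we will use. This is the content of (the forthcoming) Lemma~\ref{lem:Ktbds}: writing $K_t(x)=\sum_z \Lambda(x-z)\xi_t(z)$ and decomposing $\xi_t(z)$ by the label $w$ of each contributing particle, one uses the complement of the $P$-events: for each label $w$ with $r=\lfloor\|w-z\|\rfloor$, blue and red particles with label $w$ contribute at most $2e^{3T}g(r)K_0N$ to $\xi_t(z)$ and yellow particles at most $e^{3T}g(r)N^{1/2}$. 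Combined with $\Lambda(x-z)\le e^{-\kappa\|x-z\|}N^{-1}$ when $\|x-z\|\ge R$ (from $R_2(\kappa)\le R$) and the choice of $K_2$ through \eqref{eq:K1defn}--\eqref{eq:K1defn2}, this yields $K_t(x)\le K_2$.

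Next, set $c_t:=x_0+(t/T)(y_0-x_0)$ and $H'_t:=\ball_{(T-t)R'/T+L}(c_t)$, and define $\tau^*=\inf\{t\in[0,T]:\exists y^*\in H_t\text{ with }\tau(y^*)\le t\}$, the first time a yellow spawning happens inside $H_t$. The central claim is that $K_t(x)\le c_2$ for every $t<\tau^*$ and every $x\in H'_t$. Indeed, if $K_t(x)>c_2$ for some such $(t,x)$, the label decomposition above (now combined with $\Lambda(0)\ge \lambda N^{-1}$ to handle the contribution at $x$ itself) forces some site $z$ with $\|z-x\|<L\le R$ to satisfy $\xi_t(z)\ge N^{1/2}$, so that $\tau(z)\le t$; but then $\|z-c_t\|\le L+(T-t)R'/T+L\le (T-t)R'/T+3L$, so $z\in H_t$, contradicting $t<\tau^*$.

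The argument now splits according to $\tau^*$. If $\tau^*=\infty$, then $K\le c_2$ on $H'_t$ throughout $[0,T]$, and on $(B_{x_0',y_0})^c$ there are at least $J|\ball_L(0)|$ blue trajectories from $x_0'$ of resilience $\ge c_2 T$ whose positions stay in $\ball_L(x_0'+(y_0-x_0')t/T)\subset H'_t$; each survives since $\int K\le c_2T$, pigeonhole gives $\ge J$ descendants at some $y'\in\ball_L(y_0)\subset\ball_{R'}(y_0)$ at time $T$, and they populate the cylinder at every intermediate time. If $\tau^*<\infty$, fix $y^*\in H_{\tau^*}$ with $\tau(y^*)=\tau^*$: when $\tau^*\le(1-\epsilon_1)T$, the event $(Y_{y^*,y_0})^c$ yields $\ge J|\ball_L(0)|$ yellow trajectories that move into $\ball_L(y_0)$ by real time $\tau^*+\epsilon_1 T$ and stay there until $T$, surviving by the resilience $\ge K_2 T$ together with $K\le K_2$; when $\tau^*>(1-\epsilon_1)T$, $(Y_{y^*,y^*})^c$ keeps yellow trajectories inside $\ball_L(y^*)$ throughout $[\tau^*,T]$. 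Before $\tau^*$ the cylinder is populated by blue particles as in the first case (applied on $[0,\tau^*)$), and the parameter choices \eqref{eq:epsilon1defn} and \eqref{eq:R'Ldefn} ensure every particle we produce stays within $2R'$ of $[x_0,y_0]$ at every time. The main technical obstacle is the $K_t\le K_2$ bound of Lemma~\ref{lem:Ktbds}, whose proof requires careful label-by-label accounting afforded by the $P$-events; the rest of the argument is essentially geometric, matching the heuristics of Section~\ref{sec:proof_outline}.
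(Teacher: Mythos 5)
Your proposal follows essentially the same approach as the paper: recolour at time $0$ to obtain at least $J$ blue particles at some $x_0'\in\ball_{R'}(x_0)$, invoke the $P$-event complements (via the forthcoming Lemma~\ref{lem:Ktbds}) for the uniform $K\le K_2$ bound and, before any yellow spawning, for the stronger $K\le c_2$ bound near the segment, split into a ``no yellow spawned'' case using $(B_{x_0',y_0})^c$ and a ``yellow spawned at $(x^*,\tau^*)$'' case using $(Y_{x^*,y_0})^c$ or $(Y_{x^*,x^*})^c$, and finally verify the cylinder-nonemptiness clause of $\mathcal C_e$. This matches the paper's structure, including the handling of the third event in~\eqref{eq:Cdefn}.

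A few details need repair. You define $\tau^*$ as the first time a yellow spawning occurs inside $H_t$, which has radius $(T-t)R'/T+3L$; but in the yellow-spawning case you then need $K_s(x)<K_2$ for $x\in\ball_L(y^*)$, which requires $\ball_L(y^*)\subseteq\mathcal H$, and this fails if $y^*$ lies at the outer rim of $H_{\tau^*}$. The paper splits cases at the $(T-t)R'/T+2L$ tube precisely so that $\ball_L(x^*)\subseteq H_{t^*}\subseteq\mathcal H$. Also, the inequality $L\le R$ is reversed: by~\eqref{eq:Tdefn} and~\eqref{eq:R'Ldefn} we have $R\le\epsilon_0 T=L$ (though this does not affect the geometric conclusion, which only uses that $z$ lies in the appropriate tube). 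Finally, the appeal to $\Lambda(0)\ge\lambda N^{-1}$ in the $K\le c_2$ step is a red herring: what drives the argument is $\sum_z\Lambda(z)=N^{-1}$ together with the bounds from Lemma~\ref{lem:Ktbds} items 2 and 3, so that a contribution exceeding $c_2/3$ from red and blue particles inside $\ball_L(x)$ already forces some site there with more than $c_2 N/3\gg N^{1/2}$ such particles; Assumption~\eqref{eq:Lambda_lambda} is used only in the proof of Proposition~\ref{prop:upperbound}.
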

Before proving Lemma~\ref{lem:Ce_inclusion}, we now show how we can deduce Proposition~\ref{prop:closedinitcond} from Lemma~\ref{lem:bad_blue} and Lemma~\ref{lem:Ce_inclusion}.
\begin{proof}[Proof of Proposition~\ref{prop:closedinitcond}]
\new{We begin by choosing $\varepsilon$ and $N_2$.}
Recall that  $D=\max(\|\lfloor (\mu+\aminus v_0)T\rfloor \|,$ $\| \lfloor  (\mu+\aplus v_0) T \rfloor \|)$,
and let $D':=\|\lfloor (\mu+\aminus v_0)T\rfloor - \lfloor  (\mu+\aplus v_0) T \rfloor \|>0$.
Recall from~\eqref{eq:Tdefn} that $D >60 R_1$.
Take $b\in (0,1)$, and then
take $\ep >0$ sufficiently small that Lemma~\ref{lem:bad_blue} holds and that $\ep^{-1} \ge 3D/2$, $(\ep^{-1}+1)^{-D/(60R_1)}<b/2$, $\ep < b/(10|\ball_{R' +3L+D}(0)|)$
and 
\begin{equation} \label{eq:sumb2}
\sum_{r=1}^\infty (r+\ep^{-1}+1)^{-\frac D {60 R_1}} \le b/2. 
\end{equation}
Then take \new{$N_2$} sufficiently large that Lemmas~\ref{lem:bad_blue} and~\ref{lem:Ce_inclusion} hold \new{with $N_0=N_2$} for this choice of $\ep$.
Suppose \new{$\Lambda$ satisfies~\eqref{eq:Lambda_assumptions} with $N_0=N_2$} and suppose the coupling in~\eqref{eq:coupling_upper} holds for a random particle configuration $\xi$ consisting of a finite number of particles; construct $(\xi_t)_{t\in [0,T]}$ with $\xi_0=\xi$ as at the start of Section~\ref{sec:propclosedinit}.

 Let $e_1,e_2,\ldots ,e_n\in E_{T,\aminus ,\aplus ,v_0}(0)$. Write  $e_i = ((x_i,0),(y_i,T))$ for each $i$. 
 We can suppose without loss of generality that 
 \begin{equation} \label{eq:xixjnottooclose}
 \|x_i - x_j\| \ge 3D\quad \forall i\ne j;
 \end{equation}
 otherwise we choose a subset of edges of size at least $ n/(6\lceil D/D'\rceil )$ such that this is the case and replace $b$ by $b^{1/(6\lceil D/D'\rceil)}$. 
 
\new{We will now define a `local' bad event and a `non-local' bad event for each edge $e_i$;
these events will be defined in such a way that (using Lemma~\ref{lem:Ce_inclusion}) if $\mathcal C_{e_i}$ occurs then either the local or the non-local bad event for the edge $e_i$ must occur, and also the local events for different edges $e_i$ will be independent.}
 For $i=1,\ldots,n$ the `local' bad event for the edge $e_i$ \new{is defined as follows}:
\begin{equation} \label{eq:Lidefn}
L_i = \bigcup_{x \in \ball_{R'}(x_i)} B_{x,y_i} \cup \bigcup_{t\in [0,(1-\epsilon_1)T],x \in H^i_t} Y_{x,y_i} \cup \bigcup_{t\in [0,T],x \in H^i_t} Y_{x,x} 
\cup \bigcup_{t\in [0,T], x\in H^i_t}  P_{x,0,\ep },
\end{equation}
where $H^i_t = \ball_{\frac{T-t}T R'+3L}(x_i+\frac{t}T(y_i-x_i))$ for $t\in [0,T]$.

\new{We now use Lemma~\ref{lem:bad_blue} to bound the probability of the event $L_i$.} Note \new{first} that for $x\in \ball_{R'}(x_i)$, by~\eqref{eq:Tdefn} and~\eqref{eq:R'Ldefn} we have 
$$
\|y_i-x-(y_i-x_i)\|<R'<\delta_0 T-d^{1/2},
$$
and so $\p{B_{x,y_i}}<\ep$ by Lemma~\ref{lem:bad_blue}.
For $t\in [0,(1-\epsilon_1)T]$ and $x\in H^i_t$, by~\eqref{eq:Tdefn} and~\eqref{eq:R'Ldefn} again, and since by~\eqref{eq:epsilon1defn}, $3L=3\epsilon_0 T\le \frac 3 7 \epsilon_1 \delta_0 T<\frac 12 \delta_0 T\cdot \frac{T-t}T$, we have
$$
\|y_i-x-\tfrac{T-t}T (y_i-x_i)\|<\tfrac{T-t}T R'+3L <\tfrac{T-t}T (\delta_0 T -d^{1/2}),
$$
and so $\p{Y_{x,y_i}}<\ep$ by Lemma~\ref{lem:bad_blue}.
For $t\in [0,T]$ and $x\in H^i_t$, we have $\|x-x_i\|<R'+3L+D$.
Hence, since we chose $\ep < b/(10|\ball_{R' +3L+D}(0)|)$, by Lemma~\ref{lem:bad_blue} and a union bound we have $$\P(L_i) \le b/2 \quad \text{for }i=1,\ldots, n.$$
 Furthermore, since $\epsilon_0<\epsilon_2$ by~\eqref{eq:epsilon1defn}, and so
  by~\eqref{eq:R'Ldefn} and~\eqref{eq:Tdefn}
 \begin{equation} \label{eq:R'D}
     R'+3L<4\epsilon_2 T<D/2 -1,
 \end{equation}
we have $\ball_{R'  +3L+D}(x_i) \cap  \ball_{R'  +3L+D}(x_j) =\emptyset$ for $i\neq j$,
and therefore  the events $L_i$ for $i=1,\ldots , n$ are independent by definition. 
  
Now define a `non-local' bad event for the edge $e_i$ as
\begin{equation} \label{eq:NLdefn}
N\! L_i =  \bigcup_{y\in\Z^d}  \bigcup_{r \in \N,\ |r - \|y-x_i\|| < 3D /2} P_{y,r,\ep}.
\end{equation}
Since $R'+3L+D+1< 3D/2$ by~\eqref{eq:R'D}, and since for $x,y \in \Z^d$,
$|\lfloor \|y-x\|\rfloor - \|y-x_i\| |\le 1 + \|x_i-x\|$,
we have
\begin{equation} \label{eq:inNL}
\bigcup_{y\in \Z^d}\bigcup_{x \in \ball_{R'+3L+D}(x_i),\; x \neq y}P_{y,\lfloor \|y-x\|\rfloor,\ep } \subseteq N\! L_{i}.
\end{equation}
We summarise: By Lemma~\ref{lem:Ce_inclusion},~\eqref{eq:Lidefn} and~\eqref{eq:inNL},
and since $H^i_t \subseteq \ball_{R'+3L+D}(x_i)$ $\forall t\in [0,T]$,
\[
\mathcal C_{e_i} \subseteq L_i \cup N\! L_i \quad \text{for }i=1,\ldots,n,
\]
and the events $L_1,\ldots,L_n$ are independent with $\p{L_i}\le b/2$ for each $i$. 
Hence,
\begin{align}
\label{eq:first}
\bigcap_{i=1}^n \mathcal C_{e_i} \subseteq \bigcup_{S\subseteq \llbracket n \rrbracket} \left(\bigcap_{i\not\in S} L_i \cap \bigcap_{i\in S} N\! L_i \right).
\end{align}
Recalling~\eqref{eq:NLdefn}, we rewrite the last intersection as
\begin{align*}
\bigcap_{i\in S} N\! L_i = \bigcap_{i\in S} \bigcup_{r\in\N} \bigcup_{\{y \in\Z^d: |r-\|y-x_i\||<3D /2\}} P_{y,r,\ep} \subseteq \bigcup_{(r_i)_{i\in S}\in\N^{S}} \bigcap_{i\in S} \bigcup_{y\in \ball_{r_i + 3D /2}(x_i)} P_{y,r_i,\ep}.
\end{align*}
For $i=1,\ldots , n$, let
\begin{equation} \label{eq:NLirdefn} N\! L_{i,r} = \bigcup_{y\in \ball_{r + 3D /2}(x_i)} P_{y,r,\ep} \quad \text{for }r\in \N, \quad \text{ and } 
\quad N\! L_{i,0} =L_i,
\end{equation}
so that 
$\cap_{i\in S} N\! L_i \subseteq \cup_{(r_i)_{i\in S}\in \N^{S}}\cap_{i\in S} N\! L_{i,r_i}$.
Then by~\eqref{eq:first}, letting $r_i=0$ for $i\notin S$,
\begin{align}
\label{eq:second}
\bigcap_{i=1}^n \mathcal C_{e_i}\subseteq
\bigcup_{(r_i)_{i=1}^n\in \N^n_0}\bigcap_{i=1}^n N\! L_{i,r_i}.
\end{align}

We now claim that for each $(r_i)_{i=1}^n \in\N^n_0$,
\begin{equation}
\label{eq:claim}
\p{ \bigcap_{i=1}^n N\! L_{i,r_i}} \le \prod_{i=1}^n \left(f(r_i +\ep ^{-1})^{\frac 3 {20}D (r_i+\ep ^{-1})^{-1}}\1_{r_i>0}+\frac b 2 \1_{r_i=0}\right).
\end{equation}
\new{Fix $(r_i)_{i=1}^n \in\N^n_0$. In order to prove~\eqref{eq:claim}, we will prove that for any subset $A\subseteq \llbracket n \rrbracket$, we have
\begin{equation}
\label{eq:claimA}
\p{ \bigcap_{i\in A} N\! L_{i,r_i}} \le \prod_{i\in A} \left(f(r_i +\ep ^{-1})^{\frac 3 {20}D (r_i+\ep ^{-1})^{-1}}\1_{r_i>0}+\frac b 2 \1_{r_i=0}\right).
\end{equation}
We will prove this by induction on $|A|$ as follows.
First note that~\eqref{eq:claimA} trivially holds for $A=\emptyset$.
Now take $k\in \{0,1,\ldots, n-1\}$ and suppose that~\eqref{eq:claimA} holds for all $A\subseteq \llbracket n \rrbracket$ with $|A|\le k$.
Take $A^*\subseteq \llbracket n \rrbracket$ with $|A^*|= k+1$; we now show that~\eqref{eq:claimA} holds with $A=A^*$ by considering two cases.}

\noindent \new{\textbf{Case 1:} $r_i=0$ $\forall i\in A^*$. }
\new{In this case we have $N\! L_{i,r_i}=L_i$ for each $i\in A^*$} \new{by~\eqref{eq:NLirdefn}}, and since the events $L_i$ are independent with $\p{L_i}\le b/2$ \new{for each $i$}, we \new{have
\[
\p{ \bigcap_{i\in A^*} N\! L_{i,r_i}} =\prod_{i\in A^*} \p{L_i} \le \left( \frac b 2 \right)^{|A^*|},
\]
which gives us~\eqref{eq:claimA} with $A=A^*$.}

\noindent \new{\textbf{Case 2:} $r_i>0$ for some $i\in A^*$. 
In this case, take $i^*\in A^*$ such that $r_{i^*}=\max_{i\in A^*}r_i >0$ (making an arbitrary choice if there is more than one possible choice of $i^*$).
Let
\begin{equation} \label{eq:S*defn}
S^*:= \{i\in A^* : x_i \in \ball_{2r_{i^*} + 3D }(x_{i^*})\}.
\end{equation}
}
Note that \new{$i^* \in S^*$}, and for every $i\in \new{A^* \setminus S^*}$, we have $r_i \le r_{i^*}$ and so $\ball_{r_i+3D /2}(x_i) \cap \ball_{r_{i^*}+3D /2}(x_{i^*}) = \emptyset$. 

\new{We now claim that
the family of events $(N\! L_{i,r_i})_{i\in A^* \setminus S^*}$ is independent of $N\! L_{i^*,r_{i^*}}$.
Indeed, for $i\in \llbracket n \rrbracket$, by~\eqref{eq:NLirdefn} and~\eqref{eq:Lidefn} we have that the event $N\! L_{i,0}$ is defined in terms of events $P_{y,0,\ep}$, $B_{y,y'}$ and $Y_{y,y'}$ with $y'\in \Z^d$ and $y\in H^i_t$ for some $t\in [0,T]$ (using that $\mathcal B_{R'}(x_i)\subseteq H^i_0$).
We have that}
$H^i_t \subseteq \ball_{R'+3L+D}(x_i)$ $\forall t\in [0,T]$, and $R'+3L+D<3D/2$ by~\eqref{eq:R'D}, and
\new{so for any $r\in \N_0$, (using~\eqref{eq:NLirdefn} again for the case $r>0$) the event $N\! L_{i,r}$ is defined in terms of events $P_{y,r,\ep}$, $B_{y,y'}$ and $Y_{y,y'}$ with $y'\in \Z^d$ and $y\in \ball_{r+3D/2}(x_i)$. Since}
 the families of events $((P_{y,r,\ep})_{r\in \N_0},(B_{y,y'})_{y'\in \Z^d},(Y_{y,y'})_{y'\in \Z^d})$ are independent for different $y$,
 \new{it follows that $(N\! L_{i,r_i})_{i\in A^* \setminus S^*}$ is independent of $N\! L_{i^*,r_{i^*}}$.}
Therefore
\begin{equation} \label{eq:NLi*}
\p{ \bigcap_{i\in \new{A^*}} N\! L_{i,r_i}}  \le \p{N\! L_{i^*,r_{i^*}} \cap \bigcap_{\new{i\in A^* \backslash S^*}}N\! L_{i,r_i}} = \p{N\! L_{i^*,r_{i^*}}}\p{\bigcap_{\new{i\in A^* \setminus S^*}} N\! L_{i,r_i}}.
\end{equation}
\new{Since $i^*\in S^*$, and therefore $|A^* \setminus S^*|<|A^*|$, we have by our induction hypothesis that~\eqref{eq:claimA} holds with $A=A^* \setminus S^*$, i.e.
\begin{equation} \label{eq:NLA*minusS*}
\p{\bigcap_{\new{i\in A^* \setminus S^*}} N\! L_{i,r_i}}
\le \prod_{i\in A^* \setminus S^*} \left(f(r_i +\ep ^{-1})^{\frac 3 {20}D (r_i+\ep ^{-1})^{-1}}\1_{r_i>0}+\frac b 2 \1_{r_i=0}\right).
\end{equation}
Since $r_{i^*}>0$, by~\eqref{eq:NLirdefn}, item~3 in Lemma \ref{lem:bad_blue}, and a simple union bound we have
\begin{equation} \label{eq:NLi*r*bound1}
\p{N\! L_{i^*,r_{i^*}}} \le f(r_{i^*}+\ep^{-1})^{1/5}.
\end{equation}
Furthermore, by the definition of $S^*$ in~\eqref{eq:S*defn},}
since $x_i \in (\lfloor (\mu+\aminus v_0) T \rfloor - \lfloor (\mu+\aplus v_0)T \rfloor)  \Z$ $\forall i\in \llbracket n \rrbracket$, and \new{since} $\|x_i - x_{i'}\| \ge 3D$ for all $i\ne i'$ \new{by~\eqref{eq:xixjnottooclose}, we have}
\begin{equation} \label{eq:S*bound}
|S^*| \le \frac{2}{3D}(2 r_{i^*}+3D)\le \frac 4 {3D}(r_{i^*}+\ep^{-1}),
\end{equation}
\new{where the second inequality follows since at the start of the proof,}
we chose $\ep$ sufficiently small that $\ep^{-1} \ge 3D/2$. Hence,
\new{using~\eqref{eq:S*bound} and then~\eqref{eq:NLi*r*bound1},}
\begin{align} \label{eq:pNLest}
\p{N\! L_{i^*,r_{i^*}}} 
&\le \p{N\! L_{i^*,r_{i^*}}}^{|S^*|/(\frac 4 {3D}(r_{i^*}+\ep^{-1}))} \notag \\
&\le \left(f(r_{i^*}+\ep^{-1})^{\frac 3 {20}D(r_{i^*}+\ep^{-1})^{-1}}\right)^{|S^*|} \notag \\
&\le \prod_{i\in S^*} \left(f(r_i +\ep^{-1})^{\frac 3{20} D(r_{i}+\ep^{-1})^{-1}}\1_{r_i>0}+\frac b 2 \1_{r_i=0}\right),
\end{align}
where the last inequality follows \new{since $r_i\le r_{i^*}$ $\forall i\in S^*$ and}
using the fact that, by~\eqref{eq:fdefn}, the function $r\mapsto f(r+\ep^{-1})^{(r+\ep^{-1})^{-1}}=\exp(-\frac 1 {9R_1}\log (r+\ep^{-1} +1))=(r+\ep^{-1}+1)^{-1/(9R_1)}$ is decreasing in $r$, and since we chose $\ep$ sufficiently small that $f(\ep^{-1})^{\frac 3 {20}D\ep}=(\ep^{-1}+1)^{-D/(60R_1)}<b/2$. 
\new{By combining~\eqref{eq:NLi*},~\eqref{eq:NLA*minusS*} and~\eqref{eq:pNLest}, it follows that~\eqref{eq:claimA} holds with $A=A^*$.}

\new{We have now established that~\eqref{eq:claimA} holds with $A=A^*$ in each of Cases~1 and~2. Hence by induction,~\eqref{eq:claimA} holds for any $A\subseteq \llbracket n \rrbracket$, and so in particular the claim~\eqref{eq:claim} holds.}

\new{We can now complete the proof:} using \eqref{eq:second} and \eqref{eq:claim}, we get
\[
\E{\psub{\xi}{\bigcap_{i=1}^n \mathcal C_{e_i} }}  
\le \sum_{(r_i)_{i=1}^n\in \N_0^n}\p{\bigcap_{i=1}^n N\! L_{i,r_i}}
\le \left(\sum_{r=1}^\infty f(r +\ep^{-1})^{\frac 3{20} D(r+\ep^{-1})^{-1}}+\frac b 2 \right)^n.
\]
Then by the definition of $f$ in~\eqref{eq:fdefn},
\[
\sum_{r=1}^\infty f(r +\ep^{-1})^{\frac 3{20}D(r+\ep^{-1})^{-1}}
=\sum_{r=1}^\infty e^{-\frac 1{60 R_1}D \log (r+\ep^{-1}+1)}
=\sum_{r=1}^\infty (r+\ep^{-1}+1)^{-\frac D {60 R_1} } \le \frac{b}{2}
\]
by our choice of $\ep$ in~\eqref{eq:sumb2}.
It follows that  
\[
\E{\psub{\xi}{\bigcap_{i=1}^n \mathcal C_{e_i}}} \le b^n,
\]
which was to be proven.
\end{proof}
In the remainder of this section, we will prove Lemma~\ref{lem:Ce_inclusion};
we will use the following result in the proof.
\begin{lemma} \label{lem:Ktbds}
\new{There exists $N_0>0$ such that if $\Lambda$ satisfies~\eqref{eq:Lambda_assumptions}}, for any $\ep>0$ and $y\in \Z^d$, if the coupling in~\eqref{eq:coupling_upper} holds for a random particle configuration $\xi\in \N_0^{\Z^d}$ consisting of a finite number of particles, then under the construction of $(\xi_t)_{t\in [0,T]}$ with $\xi_0=\xi$ at the start of Section~\ref{sec:propclosedinit} \new{with jump rate $\gamma$, jump kernel $p$ and competition kernel $\Lambda$}, if $\bigcap_{x\in \Z^d}(P_{x, \lfloor \|x-y\|\rfloor, \ep})^c$ occurs then
\begin{enumerate}
\item $K_t(y)< K_2 \; \forall t\in [0,T]$,
\item $\sum_{y'\in \Z^d}\Lambda(y-y')\xi^{\mathrm{yellow}}_t(y')\le c_2/3 \; \forall t\in [0,T]$,
\item $\sum_{y'\in \Z^d, \|y'-y\|\ge L}\Lambda(y-y')\xi_t(y')\le c_2/3 \; \forall t\in [0,T]$.
\end{enumerate}
\end{lemma}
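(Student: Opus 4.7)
The plan is to decompose $\xi_t(z) = \sum_x \eta^x_t(z)$, where $\eta^x_t(z) := \xi^{\mathrm{blue},x}_t(z) + \xi^{\mathrm{red},x}_t(z) + \xi^{\mathrm{yellow},x}_t(z)$, and to bound each label's contribution to $K_t(y)$ and to the sums in (2)--(3) separately. Setting $r_x := \lfloor\|x-y\|\rfloor$, the hypothesis that $(P_{x,r_x,\ep})^c$ holds for every $x$ supplies---via $Z^{(x)}_\ep\le r_x/4$ (so initial red particles with label $x$ lie in $\ball_{(r_x/4)\vee 1}(x)$ with at most $g(r_x/4)^{-1}K_0N$ per site and are covered by the tree indices appearing in $R^{\mathrm{spread}}_{x,r_x}$), the three spread events, and the monotonicity in time of the BRW tree populations (so the bound at time $T$ transfers to all $t\le T$)---the uniform bounds
\[
\sum_z \eta^x_t(z) \le e^{3T}(2K_0+1)g(r_x)^{-2}N, \quad \sum_{z\notin\ball_{r_x/2}(x)}\eta^x_t(z) \le e^{3T}(2K_0+1)g(r_x)N,
\]
valid for every $t\in[0,T]$ (for $N$ large enough that $N^{1/2}\le K_0N$), with analogous bounds for $\xi^{\mathrm{yellow},x}_t$ alone in which $(2K_0+1)N$ is replaced by $N^{1/2}$. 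Combined with $L = \epsilon_0T\ge R$ from \eqref{eq:Tdefn} and $R_2(\kappa)\le R$, we also have $\Lambda(w)\le N^{-1}e^{-\kappa\|w\|}$ for $\|w\|\ge R$.

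For (1), split $K_t(y) = \sum_x(A_x+B_x)$ according to whether $z\in\ball_{r_x/2}(x)$. Bounding $\Lambda(y-z)\le N^{-1}$ on the outside piece gives $B_x\le e^{3T}(2K_0+1)g(r_x)$, which after reindexing $z=x-y$ yields $\sum_x B_x\le e^{3T}K_3<K_2/2$ by \eqref{eq:K2defn} and \eqref{eq:K1defn}. For the inside piece, use $\|y-z\|\ge\|y-x\|-r_x/2\ge\|y-x\|/2$ together with $R_2(\kappa)\le R$ to bound $\Lambda(y-z)$ on $\ball_{r_x/2}(x)$ by $N^{-1}(\1_{\|y-x\|<2R}+e^{-\kappa\|y-x\|/2})$ (or, equivalently, by summing $\Lambda$ over the ball and using the trivial pointwise bound on $\eta^x_t$, producing the extra $|\ball_{\|z\|}(0)|$ factor that matches \eqref{eq:K1defn2}); multiplying by the total count bound and summing in $x$, the polynomial growth of $g(r_x)^{-2}$ is dominated by the exponential decay of $\Lambda$ for $\|y-x\|$ large while the near-origin contribution is a finite sum, and \eqref{eq:K1defn2} guarantees $\sum_x A_x<K_2/2$. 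Part (2) uses the identical decomposition applied to $\sum_{y'}\Lambda(y-y')\xi^{\mathrm{yellow}}_t(y')$: the $N^{1/2}$ scaling of the yellow bounds produces an overall $O(N^{-1/2})$ prefactor times a finite sum, hence is below $c_2/3$ for $N$ sufficiently large.

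Part (3) is the main obstacle: one must absorb both the polynomial growth $g(r_x)^{-2}\sim\|x-y\|^{12d+4}$ coming from the coming-down-from-infinity allowance and the $e^{3T}$ factor from the spread events into the exponential decay of $\Lambda$. Since $\|y-y'\|\ge L\ge R$, one can use $\Lambda(y-y')\le N^{-1}e^{-\kappa\|y-y'\|}$ throughout. Decompose by label and split at $\ball_{r_x/2}(x)$: the outside piece inherits $\|y-y'\|\ge L$, contributing $e^{-\kappa L}e^{3T}K_3$ after summing in $x$; the inside piece uses $\|y-y'\|\ge\max(L,\|y-x\|-r_x/2)$ with the total count bound, producing precisely the $(\1_{\|z\|<R}+e^{-\kappa\|z\|})|\ball_{\|z\|}(0)|g(2\|z\|)^{-2}$-type summand on the right-hand side of \eqref{eq:kappadefn} after loose pointwise majorisations. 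Because $\kappa = 4\epsilon_0^{-1}$ gives $\kappa L = 4T$, each term carries a factor $e^{-\kappa L + 3T} = e^{-T}$ times polynomial in $T$, and \eqref{eq:kappadefn} supplies the precise bound by $c_2/3 = \delta_0/(3T)$. The essential point is the relation $\kappa L > 3T$ built into the parameter choices preceding the lemma.
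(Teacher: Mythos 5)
Your proposal follows the same decomposition as the paper (split $\xi_t$ by label $x$, then split each $K_t(y)$-type sum at $\ball_{\lfloor\|x-y\|\rfloor/2}(x)$, pull bounds from the spread events and the $Z^{(x)}_\ep$ coupling, and close each of the three items via \eqref{eq:K1defn}, \eqref{eq:K1defn2}, and \eqref{eq:kappadefn}), so it is essentially the paper's proof. One caveat worth stating precisely: the two versions you offer for the inside piece are not quite interchangeable. Your first version — bounding $\Lambda(y-z)\le N^{-1}(\1_{\|y-x\|<2R}+e^{-\kappa\|y-x\|/2})$ via $\|y-z\|\ge\|y-x\|/2$ — produces a sum $\sum_z(\1_{\|z\|<2R}+e^{-\kappa\|z\|/2})g(\lfloor\|z\|\rfloor)^{-2}$ that does not sit below the specific constant chosen in \eqref{eq:K1defn2} (the $\|z\|\in[R,2R)$ range and the halved exponent give a genuinely different finite sum); it would need $K_2$ to be enlarged. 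Your alternative phrasing is the one the paper actually uses, and the $|\ball_{\|z\|}(0)|$ factor there is not "trivial": it arises from the reindexing observation \eqref{eq:insideball} (if $\|x-z\|<\lfloor\|x-y\|\rfloor/2$ then $x\in\ball_{\|z-y\|}(z)$), which lets one swap the order of summation over $x$ and $z$ and bound the inner sum over $x$ by $|\ball_{\|z-y\|}(0)|\,g(2\|z-y\|)^{-2}$. The same remark applies to the inside piece of item~3 (where the relevant condition is \eqref{eq:kappadefn}, which has no indicator term since $\|z\|\ge L\ge R$ there — you cite the \eqref{eq:K1defn2} form, a small slip). Everything else, including the monotonicity-in-time reduction of the total-count bound to time $T$ and the constant check $\kappa L=4T$, $c_2=\delta_0/T$, matches the paper.
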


\begin{proof}
Recall that for $x,y\in \Z^d$ and $t\in [0,T]$, we write $\xi^{\mathrm{blue},x}_t(y)$ (resp.~$\xi^{\mathrm{red},x}_t(y)$, $\xi^{\mathrm{yellow},x}_t(y)$) to denote the number of blue (resp.~red, yellow) particles at $y$ at time $t$ which have label $x$.
\new{The proof consists of the following steps:
\begin{itemize}
\item Step 1: We use the definition of the event $P_{x,r,\ep}$ in~\eqref{eq:Pxrep_defn} to give bounds on $\xi^{\mathrm{blue},x}_t$, $\xi^{\mathrm{yellow},x}_t$ and $\xi^{\mathrm{red},x}_t$ that hold when the event $(P_{x,r,\ep})^c$ occurs. These bounds will be used throughout the rest of the proof.
\item Step 2: We prove item 1 in the statement of the lemma.
\item Step 3: We prove item 2 in the statement of the lemma.
\item Step 4: We prove item 3 in the statement of the lemma.
\end{itemize}
We now give the details of the proof.
}

\paragraph{\new{Step 1:}}
Take $x\in \Z^d$ and $r\ge 0$, and suppose the event $(P_{x,r,\ep})^c$ occurs.
Note that by our construction, since $\zeta(x)\le K_0 N$ and $J<K_0N$ we have 
$
\xi^{\mathrm{blue}}_0(x)\le K_0 N.
$
Hence by the definitions of the events $(B^{\mathrm{spread}}_{x,r})^c$ and $(Y^{\mathrm{spread}}_{x,r})^c$, 
for $t\in [0,T]$ and $y\notin \ball_{r/2}(x)$, 
\begin{equation} \label{eq:blueyellowbd}
\xi^{\mathrm{blue},x}_t(y)\leq e^{3T} g(r) K_0 N\quad\text{ and }\quad
\xi^{\mathrm{yellow},x}_t(y)\leq e^{3T} g(r)  N^{1/2},
\end{equation}
and for any $t\in [0,T]$ and $y\in \Z^d$,
\begin{equation} \label{eq:blueyellownum}
 \xi^{\mathrm{blue},x}_t(y)\leq e^{3T}g(r)^{-2} K_0 N\quad\text{ and }\quad
\xi^{\mathrm{yellow},x}_t(y)\leq e^{3T}g(r)^{-2} N^{1/2}.
\end{equation}
Moreover, since $Z^{(x)}_\ep \le r/4$, 
by the definition of the event $(P_{x,r,\varepsilon})^c$
we have $$\zeta^{(x)}(y)\le g(r/4)^{-1}K_0 N \1_{\|y-x\|<r/4}\quad \forall y\in \Z^d,$$ and so
by the definition of the event $(R^{\mathrm{spread}}_{x,r})^c$, for $t\in [0,T]$ and $y\notin \ball_{r/2}(x)$,
\begin{equation} \label{eq:redbdfar}
\xi^{\mathrm{red},x}_t(y)\leq e^{3T} g(r) K_0 N
\end{equation}
and for any $t\in [0,T]$ and $y\in \Z^d$,
\begin{equation} \label{eq:redbdclose}
 \xi^{\mathrm{red},x}_t(y)\leq e^{3T}g(r)^{-2} K_0 N.
\end{equation}

\paragraph{\new{Step 2: Proof of item 1.}}
Take $y\in \Z^d$ and suppose the event $\bigcap_{x\in \Z^d}(P_{x,\lfloor \|x-y\|\rfloor,\ep})^c$ occurs. For $t\in [0,T]$,
by~\eqref{eq:Ktdefn},
\begin{align}  \label{eq:Ktsumbd}
K_t(y)&=\sum_{y'\in \Z^d}\Lambda(y-y')\xi_t(y') \notag \\
&=\sum_{x,y' \in \Z^d}\Lambda(y-y')(\xi^{\mathrm{red},x}_t(y')+\xi^{\mathrm{blue},x}_t(y')+\xi^{\mathrm{yellow},x}_t(y')).
\end{align}
We will split the sum on the right hand side of~\eqref{eq:Ktsumbd} according to whether $y'\notin \ball_{\lfloor \|x-y\|\rfloor /2}(x)$ or $y'\in \ball_{\lfloor \|x-y\|\rfloor /2}(x)$.
First, by~\eqref{eq:blueyellowbd} and~\eqref{eq:redbdfar}, for $N$ sufficiently large,
\begin{align} \label{eq:Ktfivestar}
&\sum_{x,y' \in \Z^d, y'\notin \ball_{\lfloor \|x-y\|\rfloor /2}(x)}\Lambda(y-y')(\xi^{\mathrm{red},x}_t(y')+\xi^{\mathrm{blue},x}_t(y')+\xi^{\mathrm{yellow},x}_t(y')) \notag \\
&\quad \le 
 \sum_{x,y' \in \Z^d, y' \notin \ball_{\lfloor \|x-y \|\rfloor /2}(x)}\Lambda(y-y')e^{3T}
g(\lfloor \|x-y\|\rfloor) (2K_0 N +N^{1/2}) \notag \\ 
&\quad \le 
 \sum_{y' \in \Z^d}\Lambda(y-y')\sum_{x\in \Z^d}
e^{3T} g(\lfloor \|x-y\|\rfloor) (2K_0  +1) N \notag \\ 
&\quad < N^{-1} \cdot \tfrac 12 K_2 N \notag \\
&\quad =\tfrac 12 K_2,
\end{align}
where the third inequality follows by the definition of $K_2$ in~\eqref{eq:K1defn} and since $\sum_{z\in \Z^d} \Lambda(z)=N^{-1}$.
To bound the other terms in the sum on the right hand side of~\eqref{eq:Ktsumbd}, 
by~\eqref{eq:blueyellownum} and~\eqref{eq:redbdclose}, we have
\begin{align} \label{eq:sumyinball}
&\sum_{x,y' \in \Z^d, y'\in \ball_{\lfloor \|x-y\|\rfloor /2}(x)}\Lambda(y-y')(\xi^{\mathrm{red},x}_t(y')+\xi^{\mathrm{blue},x}_t(y')+\xi^{\mathrm{yellow},x}_t(y')) \notag \\
&\quad \le  \sum_{x,y' \in \Z^d, \|x-y'\|<\lfloor \|x-y\|\rfloor /2}\Lambda(y-y')e^{3T}
g(\lfloor \|x-y\|\rfloor )^{-2}(2K_0 N+N^{1/2}).
\end{align}
Now note that for
$x,y' \in \Z^d$ with $\|x-y'\|<\lfloor \|x-y\|\rfloor /2$ we have 
$$
\|y'-y\|\ge \|x-y\|-\|x-y'\|>\|x-y'\|.
$$
It follows that if $\|x-y'\|<\lfloor \|x-y\|\rfloor /2$ then
\begin{equation} \label{eq:insideball}
x\in  \ball_{\|y'-y\|}(y').
\end{equation}
Therefore by~\eqref{eq:sumyinball},
\begin{align*}
&\sum_{x,y' \in \Z^d, y'\in \ball_{\lfloor \|x-y\|\rfloor /2}(x)}\Lambda(y-y')(\xi^{\mathrm{red},x}_t(y')+\xi^{\mathrm{blue},x}_t(y')+\xi^{\mathrm{yellow},x}_t(y'))\\
&\quad \le e^{3T} \sum_{y'\in \Z^d}\Lambda(y-y')\sum_{x \in \ball _{\|y'-y\|}(y')}
g(\lfloor \|x-y\|\rfloor)^{-2} (2K_0 N+N^{1/2})
\\
&\quad \le e^{3T} \sum_{y'\in \Z^d}\Lambda(y-y')|\ball _{\|y'-y\|}(0)|
(2K_0+1)g(2\|y'-y\|)^{-2} N\\
&\quad < \tfrac 12 K_2,
\end{align*}
where the second inequality follows since $\|x-y\|\le \|x-y'\|+\|y'-y\|< 2\|y'-y\|$ for $x\in \ball_{\|y'-y\|}(y')$, and since $g(r)^{-2}$ is non-decreasing in $r$, and the last inequality follows
by the definition of $K_2$ in~\eqref{eq:K1defn2}
and since $N\Lambda(z)\le \1_{\|z\|<R}+e^{-\kappa \|z\|}$ $\forall z\in \Z^d$ 
\new{because $\Lambda$ satisfies~\eqref{eq:Lambda_assumptions}.}
By~\eqref{eq:Ktsumbd} and~\eqref{eq:Ktfivestar}, item~1 now follows.

\paragraph{\new{Step 3: Proof of item 2.}} Again suppose $\cap_{x\in \Z^d}(P_{x,\lfloor \|x-y\|\rfloor,\ep})^c$ occurs, and take $t\in [0,T]$. Then by~\eqref{eq:blueyellowbd} and~\eqref{eq:blueyellownum},
\begin{align}  \label{eq:yellowbdinit}
\sum_{y' \in \Z^d}\Lambda(y-y')\xi^{\mathrm{yellow}}_t(y')&=\sum_{x,y' \in \Z^d}\Lambda(y-y')\xi^{\mathrm{yellow},x}_t(y') \notag \\
&\le \sum_{x,y' \in \Z^d, y'\notin \ball_{\lfloor \|x-y \|\rfloor /2}(x)}\Lambda(y-y')
e^{3T} g(\lfloor \|x-y\|\rfloor)N^{1/2} \notag \\
&\qquad +\sum_{x,y' \in \Z^d, y' \in \ball_{\lfloor \|x-y \|\rfloor /2}(x)}\Lambda(y-y')
e^{3T}g(\lfloor  \|x-y\|\rfloor)^{-2}N^{1/2}.
\end{align}
For the first sum on the right hand side of~\eqref{eq:yellowbdinit}, we have
\begin{align} \label{eq:yellowbd2}
& \sum_{x,y' \in \Z^d, y'\notin \ball_{\lfloor \|x-y \|\rfloor /2}(x)}\Lambda(y-y')
e^{3T} g(\lfloor \|x-y\|\rfloor)N^{1/2} \notag \\
&\quad \le  \sum_{y' \in \Z^d}\Lambda(y-y')
e^{3T} \sum_{x\in \Z^d}g(\lfloor \|x-y\|\rfloor)N^{1/2} \notag \\
&\quad = N^{-1/2}e^{3T}\sum_{z\in \Z^d} g(\lfloor \|z\|\rfloor),
\end{align}
where the last line follows since $\sum_{z\in \Z^d}\Lambda(z)=N^{-1}$.
For the second sum on the right hand side of~\eqref{eq:yellowbdinit},
by~\eqref{eq:insideball}, and then since $g(r)^{-2}$ is non-decreasing in $r$ and $\|x-y\|\le \|x-y'\|+\|y'-y\|< 2\|y'-y\|$ for $x\in \ball_{\|y'-y\|}(y')$,
we have
\begin{align} \label{eq:yellowbd3}
&\sum_{x,y' \in \Z^d, y' \in \ball_{\lfloor \|x-y \|\rfloor /2}(x)}\Lambda(y-y')
e^{3T}g(\lfloor  \|x-y\|\rfloor)^{-2}N^{1/2} \notag \\
&\quad \le \sum_{y' \in \Z^d}\Lambda(y-y')
e^{3T}\sum_{x\in \ball_{\|y'-y\|}(y')}g(\lfloor  \|x-y\|\rfloor)^{-2}N^{1/2} \notag \\
&\quad \le \sum_{y' \in \Z^d}\Lambda(y-y') e^{3T} |\ball_{\|y'-y\|}(0)| g(2\|y'-y\|)^{-2} N^{1/2} \notag \\
&\quad \le \sum_{z\in \Z^d} (\1_{\|z\|<R}+e^{-\kappa \|z\|}) e^{3T} |\ball_{\|z\|}(0)| g(2\|z\|)^{-2} N^{-1/2},
\end{align}
where the last line follows since $N\Lambda(z)\le \1_{\|z\|<R}+e^{-\kappa\|z\|}$ $\forall z\in \Z^d$ \new{because $\Lambda$ satisfies~\eqref{eq:Lambda_assumptions}.}
Taking $N$ sufficiently large, item~2 follows from~\eqref{eq:yellowbdinit},~\eqref{eq:yellowbd2} and~\eqref{eq:yellowbd3}.

\paragraph{\new{Step 4: Proof of item 3.}}
Once again suppose $\cap_{x\in \Z^d}(P_{x,\lfloor \|x-y\|\rfloor,\ep})^c$ occurs; then for $t\in [0,T]$,
\begin{align*} 
&\sum_{y'\in \Z^d, \|y'-y\|\ge L}\Lambda(y-y')\xi_t(y') \notag \\
&\quad =\sum_{y',x \in \Z^d, \|y'-y\|\ge L}\Lambda(y-y')(\xi^{\mathrm{red},x}_t(y')+\xi^{\mathrm{blue},x}_t(y')+\xi^{\mathrm{yellow},x}_t(y')) \notag \\
&\quad \le \sum_{y',x \in \Z^d, \|y'-y\|\ge L, y'\notin \ball_{\lfloor \|x-y \|\rfloor /2}(x)}\Lambda(y-y')e^{3T}
g(\lfloor \|x-y\|\rfloor )(2K_0 N +N^{1/2})
 \notag \\
&\qquad +\sum_{y',x \in \Z^d, \|y'-y\|\ge L, y'\in \ball_{\lfloor \|x-y \|\rfloor /2}(x)}\Lambda(y-y')e^{3T}
g(\lfloor \|x-y\|\rfloor )^{-2} (2K_0 N+N^{1/2}),
\end{align*}
where the inequality follows from~\eqref{eq:blueyellowbd},~\eqref{eq:blueyellownum},~\eqref{eq:redbdfar} and~\eqref{eq:redbdclose}.
Therefore, using that $L\ge R$ by~\eqref{eq:Tdefn} and~\eqref{eq:R'Ldefn} \new{and using condition (iii) in~\eqref{eq:Lambda_assumptions}}, and (for the second sum) using~\eqref{eq:insideball} and that $\|x-y\|<2\|y'-y\|$ for $x\in \ball_{\|y'-y\|}(y')$,
\begin{align*} 
&\sum_{y'\in \Z^d, \|y'-y\|\ge L}\Lambda(y-y')\xi_t(y') \notag \\
&\quad \le \sum_{y' \in \Z^d, \|y'-y\|\ge L} N^{-1} e^{-\kappa \|y-y'\|}\cdot e^{3T}
\sum_{z\in \Z^d} g(\lfloor \|z\|\rfloor )(2K_0 N+N^{1/2})
 \notag \\
&\qquad +\sum_{y' \in \Z^d, \|y'-y\|\ge L}N^{-1} e^{-\kappa \|y-y'\|} 
|\ball_{\|y'-y\|}(0)| e^{3T}g(2\|y'-y\|)^{-2} (2K_0 N+N^{1/2}) \notag \\
&\quad \le e^{3T} \sum_{z\in \Z^d, \|z\|\ge L}e^{-\kappa \|z\|}(K_3+|\ball_{\|z\|}(0)| (2K_0+1)g(2\|z\|)^{-2}),
\end{align*}
where $K_3$ is defined in~\eqref{eq:K2defn}.
Hence by~\eqref{eq:kappadefn}, and since $L=\epsilon_0 T$ and $c_2 =\delta_0 T^{-1}$ by~\eqref{eq:R'Ldefn}, item~3 follows, which completes the proof.
\end{proof}

We finish this section by proving Lemma~\ref{lem:Ce_inclusion}.
\begin{proof}[Proof of Lemma~\ref{lem:Ce_inclusion}]
Suppose \new{$N_0$} is sufficiently large that Lemma~\ref{lem:Ktbds} holds, \new{and suppose $\Lambda$ satisfies~\eqref{eq:Lambda_assumptions}.} Recall from the statement of the lemma that
$$
H_t = \ball_{\frac{T-t}T R'+3L}(x_0+\tfrac{t}T(y_0-x_0)),\quad t\in [0,T],
$$
and define
$$
\mathcal H = \bigcup_{t\in[0,T]} H_t = \operatorname{conv}\left(\ball_{R'+3L}(x_0)\cup \ball_{3L}(y_0)\right),
$$
where $\operatorname{conv}(S)$ denotes the convex hull of the set $S$.
Define the event
\begin{equation} \label{eq:defn_Aevent}
A =\bigcup_{x \in \ball_{R'}(x_0)} B_{x,y_0} \cup \bigcup_{x\in \bigcup_{t\in [0,(1-\epsilon_1)T]} H_t} Y_{x,y_0} \cup \bigcup_{x \in \mathcal H} Y_{x,x} 
\cup \bigcup_{x\in\mathcal H} \bigcup_{y\in\Z^d} P_{y,\lfloor \|y-x\|\rfloor,\ep }.
\end{equation}
\new{To prove the result, we need to show that the event $A^c\cap \mathcal C_e$ cannot occur. By the definition of the event $\mathcal C_e$ in~\eqref{eq:Cdefn}, it suffices to show that if $A^c$ occurs and if $\xi_0(x)\ge J$ for some $x\in \ball_{R'}(x_0)$ then
\begin{enumerate}
\item there exists $y\in \ball_{R'}(y_0)$ with $\xi_T(y)\ge J$ \quad and
\item for each $t\in [0,T]$, there exists $x'\in \ball_{R'}(x_0)$ with $\xi_t(x')>0.$
\end{enumerate}
}

From now on suppose $A^c$ occurs.
Moreover, suppose $\xi_0(x_1)\ge J$ for some $x_1\in \ball_{R'}(x_0)$.
By our construction, this implies that $\xi_0^{\mathrm{blue}}(x_1)\ge J$.
\new{By~\eqref{eq:defn_Aevent}, for $x\in \mathcal H$, we have that
$\cap_{y\in \Z^d} (P_{y,\lfloor \|y-x\|\rfloor,\ep})^c$ occurs, and so by Lemma~\ref{lem:Ktbds} item 1, 
\begin{equation} \label{eq:Ktlarge}
K_s(x)
< K_2\quad \forall  x\in \mathcal H,\,s\in [0,T].
\end{equation}
We will show first that item~2 above holds; in particular we will show that
\begin{equation} \label{eq:xisustain}
\xi_t(x_1)>0 \quad \forall t\in [0,T].
\end{equation}
Indeed, for $t\le \tau(x_1)\wedge T$, by counting the blue particles with label $x_1$ that stay at $x_1$ until time $t$ without branching or being killed by competition,
and then by~\eqref{eq:Ktlarge} and since $x_1\in \ball_{R'}(x_0)\subseteq \mathcal H$, 
we have 
\begin{align*}
&\xi_t(x_1)\\
&\ge \#\Big\{ j\le \xi^{\mathrm{blue}}_0(x_1) : \beta^{\mathrm{blue},x_1,j}(\emptyset) > t,
 \rho^{\mathrm{blue},x_1,j}(\emptyset) > \int_{0}^{t}K_{s}(x_1)ds , X^{\mathrm{blue},x_1,j}_s(\emptyset) =0\,\forall s\in[0,t]\Big\}\\
 &\ge \#\Big\{ j\in \llbracket J \rrbracket : \beta^{\mathrm{blue},x_1,j}(\emptyset) > T,
 \rho^{\mathrm{blue},x_1,j}(\emptyset) \ge K_2 T , X^{\mathrm{blue},x_1,j}_s(\emptyset) =0\,\forall s\in[0,T]\Big\} .
\end{align*}
By the definition of the event $(B_{x_1,y_0})^c$ in~\eqref{eq:Bxydefn}, it follows that $\xi_t(x_1)>0$ $\forall t\in [0,\tau(x_1)\wedge T]$.

If $\tau(x_1)< T$, then for $t\in [ \tau(x_1),T]$, by counting the yellow particles with label $x_1$ that stay at $x_1$ until time $t$ without branching or being killed by competition, and then by~\eqref{eq:Ktlarge}, 
we have 
\begin{align*}
\xi_t(x_1)
&\ge \#\Big\{ j\in \llbracket N^{1/2} \rrbracket : \beta^{\mathrm{yellow},x_1,j}(\emptyset) > t-\tau(x_1),
 \rho^{\mathrm{yellow},x_1,j}(\emptyset) > \int_{0}^{t-\tau(x_1)}K_{\tau(x_1)+s}(x_1)ds , \\
 &\hspace{8cm} X^{\mathrm{yellow},x_1,j}_s(\emptyset) =0\,\forall s\in[0,t-\tau(x_1)]\Big\}\\
 &\ge \#\Big\{ j\in \llbracket N^{1/2} \rrbracket : \beta^{\mathrm{yellow},x_1,j}(\emptyset) > T,
 \rho^{\mathrm{yellow},x_1,j}(\emptyset) \ge K_2 T , X^{\mathrm{yellow},x_1,j}_s(\emptyset) =0\,\forall s\in[0,T]\Big\} .
\end{align*}
By the definition of the event $(Y_{x_1,x_1})^c$ in~\eqref{eq:Yxydefn}, it follows that $\xi_t(x_1)>0$ $\forall t\in [ \tau(x_1)\wedge T,T]$.
This completes the proof of the claim~\eqref{eq:xisustain}.
}

\new{It remains to show that item~1 above holds, i.e.~that} we must have $\xi_T(y)\ge J$ for some $y\in \ball_{R'}(y_0)$. We consider two cases. See Figure~\ref{fig:strategy_1} and Figure~\ref{fig:strategy_2} for a schematic illustration of the strategy in each of these cases.

\noindent \textbf{Case 1:}
Suppose first that $\tau(x)> t$ for all $t\in [0,T]$ and $x\in \ball_{\frac{T-t}T R'+2L}(x_0+\frac{t}T(y_0-x_0))$.

By the definition of $\tau(x)$ in~\eqref{eq:tauxdefn},
we have that 
\begin{equation} \label{eq:redbluebd}
\xi^{\mathrm{red}}_{t}(x)+\xi^{\mathrm{blue}}_{t}(x)< N^{1/2}\; \;\forall t\in [0,T], \, x\in \ball_{\frac{T-t}T R'+2L}(x_0+\tfrac{t}T(y_0-x_0)).
\end{equation}
\new{Since we are assuming that $A^c$ occurs,} for $t\in [0,T]$
 and $x\in \ball_{\frac{T-t}T R' +2L}(x_0+\frac t T (y_0-x_0))\subseteq H_t$,  \new{by~\eqref{eq:defn_Aevent}} the event
$\cap_{y\in \Z^d}(P_{y,\lfloor \|y-x\|\rfloor,\ep })^c$ occurs.
Therefore, for $t\in [0,T]$ and $x \in \ball_{\frac{T-t}T R'+L}(x_0+\frac t T (y_0-x_0))$, we have
 \begin{align} \label{eq:Ktbluebd}
 K_{t}(x)
 &=\sum_{y\in \Z^d} \Lambda (x-y)\xi_{t}(y) \notag \\
 &\le \sum_{y \in \Z^d} \Lambda (x-y) \xi^{\mathrm{yellow}}_{t}(y)
 +\sum_{y\in \Z^d, \|x-y\|<L} \Lambda (x-y) \max_{x'\in \ball_{L}(x)}(\xi^{\mathrm{red}}_{t}(x')+\xi^{\mathrm{blue}}_{t}(x')) \notag \\
&\qquad  +\sum_{y\in \Z^d, \|x-y\|\ge L}\Lambda(x-y)\xi_t(y) \notag \\
 &\le \tfrac 13 c_2 +\sum_{y\in \Z^d}\Lambda(x-y)N^{1/2}+\tfrac 13 c_2 \notag \\
 &<c_2
 \end{align}
 for $N$ sufficiently large, 
where the first inequality comes from considering the contributions from yellow particles, red and blue particles in $\ball_L(x)$, and particles outside $\ball_L(x)$ separately, and the second inequality follows 
 by Lemma~\ref{lem:Ktbds} item~2 for the first term,~\eqref{eq:redbluebd} (since $\ball_L(x) \subseteq \ball_{\frac{T-t}T R'+2L}(x_0 + \frac t T (y_0-x_0)))$ for the second term and by Lemma~\ref{lem:Ktbds} item~3 for the third term, and the last inequality since $\sum_{z\in \Z^d}\Lambda(z)=N^{-1}$.
 
Recall that
we have $\xi^{\mathrm{blue}}_0(x_1)\geq J$ for some $x_1\in \ball_{R'}(x_0)$.
 Therefore, for $t\in [0,T]$ and $y\in \ball_L(x_1+(y_0-x_1)\frac t T)\subseteq \ball_{\frac{T-t}T R' +L}(x_0+(y_0-x_0)\frac t T)$, by~\eqref{eq:Ktbluebd} we have $K_{t}(y)<c_2$, and by our assumption for Case~1, we have $\tau(y)>t$.
Then by counting blue particles with label $x_1$ which are not killed by competition or turned into yellow particles before time $T$, and which are in $\ball_L(y_0)$ at time $T$, we have
\begin{align*}
\sum_{y\in \ball_L(y_0)}\xi^{\mathrm{blue},x_1}_{T}(y)
&\ge \#\{(j,u):j\le \xi^{\mathrm{blue}}_0(x_1), u\in \mathcal N_T^{\mathrm{blue},x_1,j},\\
&\qquad \qquad \rho^{\mathrm{blue},x_1,j}(v)> \int_{\alpha^{\mathrm{blue},x_1,j}(v)}^{\beta^{\mathrm{blue},x_1,j}(v)\wedge T}K_{t}(x_1+X_t^{\mathrm{blue},x_1,j}(v))dt \; \forall v\prec u,\\
&\qquad \qquad X^{\mathrm{blue},x_1,j}_T(u)\in \ball_L(y_0-x_1),\;
\tau(x_1+X_t^{\mathrm{blue},x_1,j}(u))>t \; \forall t\in [0,T]
\}\\
&\ge \#\Big\{ (j,u): j\in\llbracket J\rrbracket,\, u\in \mathcal N_T^{\mathrm{blue},x_1,j},\,
 \rho^{\mathrm{blue},x_1,j}(v) \ge c_2 T\,\, \forall v \prec u,\\
&\hspace{5cm} X^{\mathrm{blue},x_1,j}_t(u) \in \ball _L((y_0-x_1)t/T)\,\forall t\in[0,T]\Big\}\\
&\ge J |\ball_L(0)|,
\end{align*}
where the last inequality follows by the definition of the event $(B_{x_1,y_0})^c$ \new{in~\eqref{eq:Bxydefn}}.
Note that by~\eqref{eq:R'Ldefn} and~\eqref{eq:epsilon1defn}, we have $L\le R'$.
Hence, by the pigeonhole principle, there exists $y\in \ball_L(y_0)\subseteq \ball_{R'}(y_0)$ such that $\xi_{T}(y)\geq J$. 
\\

\noindent \textbf{Case 2:}
Suppose instead that there exist $t^*\in [0,T]$ and
$
x^*\in \ball_{\frac{T-t^*}T R'+2L}(x_0+\frac{t^*}T(y_0-x_0))$ such that $\tau(x^*)\le t^*$. 

Note that $\ball_L(x^*) \subseteq H_{t^*}$; in particular, $x^*\in H_{t^*}$.
Let
$$
y^*=\begin{cases}
y_0, & \text{if }t^*\le(1-\epsilon_1)T\\
x^*, & \text{otherwise.}
\end{cases}
$$
Note that $(Y_{x^*,y^*})^c$ occurs, by the definition of the event $A^c$.
Also note that $\ball_L(y^*) \subseteq \mathcal H$ since  $\ball_L(x^*)\subseteq H_{t^*}$ and $\ball_L(y_0)\subseteq H_T$. Since $\mathcal H$ is a convex set, it follows that 
\begin{equation}
\label{eq:x*y*H}
\bigcup_{r\in[0,1]} \ball_L(x^*+r(y^*-x^*)) \subseteq \mathcal H.
\end{equation}
Using \eqref{eq:Ktlarge} and~\eqref{eq:x*y*H}, we get that
\begin{equation} \label{eq:Ktaux*}
K_{\tau(x^*)+s}(x)< K_2 \quad \forall s\in [0,T-\tau(x^*)],\, x\in \ball_L(x^*+(y^*-x^*)(\tfrac s {\epsilon_1 T}\wedge 1)).
\end{equation}

Note that $x^*+(y^*-x^*)(\frac{T -\tau(x^*)}{\epsilon_1 T}\wedge 1)=y^*$ by the definition of $y^*$.
At time $\tau(x^*)$, by our construction, $\lfloor N^{1/2} \rfloor$ red or blue particles at $x^*$ are turned yellow and given label $x^*$.
By counting the descendants of these particles which are not killed by competition before time $T$ and are in $\ball_L(y^*)$ at time $T$, we obtain
\begin{align*}
&\sum_{y\in \ball_L(y^*)} \xi^{\mathrm{yellow},x^*}_{T}(y)\\
&\ge \#\{(j,u):j\in \llbracket N^{1/2} \rrbracket, u \in \mathcal N^{\mathrm{yellow},x^*,j}_{ T -\tau(x^*)},\\
&\qquad \qquad \rho^{\mathrm{yellow},x^*,j}(v)> \int_{\alpha^{\mathrm{yellow},x^*,j}(v)}^{\beta^{\mathrm{yellow},x^*,j}(v)\wedge ( T -\tau(x^*)) }K_{\tau(x^*)+t}(x^*+X_t^{\mathrm{yellow},x^*,j}(v))dt \; \forall v\prec u,\\
&\hspace{8cm} X^{\mathrm{yellow},x^*,j}_{ T -\tau(x^*)}(u)\in \ball_L(y^*-x^*)\}\\
&\ge \#\Big\{ j \in\llbracket N^{1/2} \rrbracket:
\emptyset \in \mathcal N^{\mathrm{yellow},x^*,j}_T,\, \rho^{\mathrm{yellow},x^*,j}(\emptyset) \ge K_2 T,\\
&\hspace{3cm} X^{\mathrm{yellow},x^*,j}_t(\emptyset) \in \ball_L((y^*-x^*)(\tfrac t{\epsilon_1 T}\wedge 1))\,\forall t\in[0,T]
\Big\} \\
&\ge J |\ball_L(0)|,
\end{align*}
where the second inequality follows by~\eqref{eq:Ktaux*}, and the last inequality follows
by the definition of the event $(Y_{x^{*},y^*})^c$ \new{in~\eqref{eq:Yxydefn}}. Using the pigeonhole principle again, it follows that $\xi_{T}(y)\geq J$ for some $y\in \ball_L(y^*)$.

Now suppose $t^*\le (1-\epsilon_1)T$ and so $y^*=y_0$. Then since $L\le R'$ by~\eqref{eq:R'Ldefn} and~\eqref{eq:epsilon1defn}, we have $\xi_{T}(y)\geq J$ for some $y\in \ball_L(y_0)\subseteq \ball_{R'}(y_0)$.
Suppose instead that $t^*>(1-\epsilon_1)T$, and so $y^*=x^*$; then
\begin{align*}
\|x^*-y_0\|&<\tfrac{T-t^*}T R'+2L+\|\tfrac{T-t^*}T (y_0-x_0)\|\\
&<\epsilon_1 R'+2L +\epsilon_1 \max(\|\lfloor (\mu+\aminus v_0) T\rfloor \|, \|\lfloor (\mu+\aplus v_0) T\rfloor \|)\\
&<R'-L,
\end{align*}
where the last line follows since, by~\eqref{eq:epsilon1defn}, $\epsilon_1<1/3$ and $\epsilon_0<\frac 19 \epsilon_2$, and since $\epsilon_1 D_T <\frac 13 \epsilon_2 T$ by~\eqref{eq:Tdefn}, and $R'=\epsilon_2 T$, $L=\epsilon_0 T$ by~\eqref{eq:R'Ldefn}.
Therefore $\ball_L(y^*)\subseteq \ball_{R'}(y_0)$, and
 again we must have $\xi_{T}(y)\geq J$ for some $y\in \ball_{R'}(y_0)$.
 
\new{We have now established that} in both cases, we have $\xi_{T}(y)\geq J$ for some $y\in \ball_{R'}(y_0)$, \new{which completes the proof.}
\end{proof}

\begin{figure}
\centering
\includegraphics[scale=1]{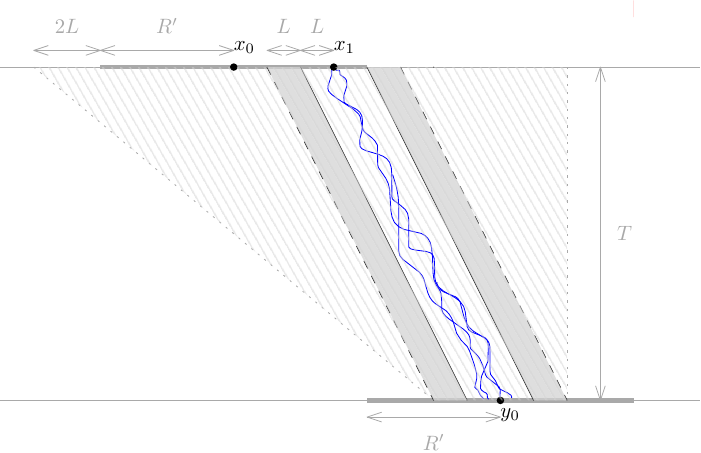}
\caption{\label{fig:strategy_1} 
\new{Illustration of the strategy for Case~1 in the proof of Lemma~\ref{lem:Ce_inclusion}.
We suppose that a good event $A^c$ occurs, and that at time $0$ there are at least $J=\lfloor N^{1/3}\rfloor$ blue particles at some $x_1\in\ball_{R'}(x_0)$.
We want to show that these particles will have at least $J$ descendants at some site in $\ball_L(y_0)\subseteq \ball_{R'}(y_0)$ at time $T$.
We illustrate the space regions $\ball_{R'}(x_0)$ at time $0$ and $\ball_{R'}(y_0)$ at time $T$ with thick grey lines.\\
The hatched space-time region in the figure (including the grey shaded region) is the set of $(x,t)$ such that $t\in [0,T]$ and $x\in \ball_{\frac{T-t'}T R'+2L}(x_0+\frac {t'}T (y_0-x_0))$ for some $t'\in [t,T]$.
In Case~1, we assume that $\tau(x)>t$ for all $t\in [0,T]$ and $x\in \ball_{\frac {T-t}T R' +2L}(x_0+\frac t T (y_0-x_0))$, i.e.~there is no space-time point of the form $(x,\tau(x))$ inside the hatched space-time region, or in other words, no yellow particles are created inside the hatched space-time region.
This implies that the number of blue and red particles in the hatched space-time region is small (less than $N^{1/2}$ at each site).\\
The hatched space-time region contains a tube of radius $L$ around the space-time line from $x_1$ at time $0$ to $y_0$ at time $T$ (this tube is the set of $(x,t)$ such that $t\in [0,T]$ and $x\in \ball_L(x_1+\frac t T(y_0-x_1))$ and is delimited by solid lines in the figure).
By the definition of the good event $A^c$, if the killing rate inside this tube is sufficiently small (less than the small constant $c_2$), then the blue particles at $x_1$ at time $0$ have at least $J$ descendants at some site in $\ball_L(y_0)$ at time $T$ whose trajectories stay inside the tube and which are not killed by competition or turned into yellow particles. (Here we use our assumption in Case~1 that no yellow particles are created inside the hatched space-time region.)
To ensure that the killing rate inside the tube is small, the hatched region also contains an additional `buffer' of width $L$ around the tube (the shaded region in the figure). 
Within the tube, by the definition of the good event $A^c$, the contribution to the killing rate from particles outside the buffer is bounded by $c_2/3$ by virtue of point 3 from Lemma~\ref{lem:Ktbds}. Furthermore, the contribution to the killing rate from particles inside the buffer is small because of point 2 from Lemma~\ref{lem:Ktbds} (for the yellow particles) and because the number of blue and red particles is small there by our assumption in Case~1. The total killing rate inside the tube is therefore small, as required.
}
}
\end{figure}

\begin{figure}
\centering
\includegraphics[scale=1]{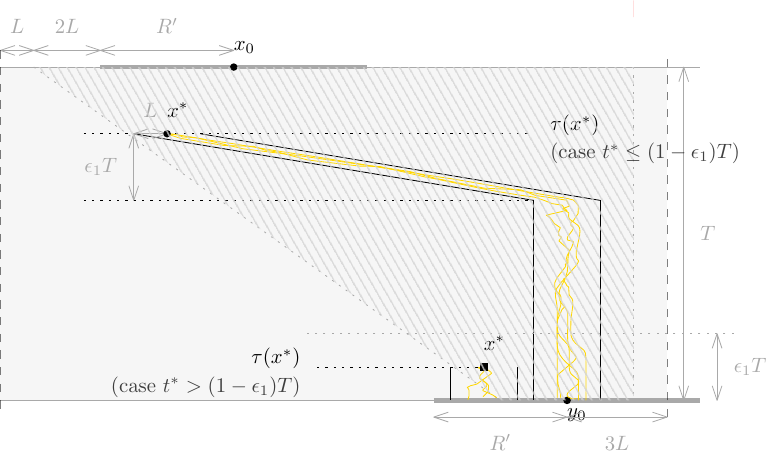}
\caption{\label{fig:strategy_2}
\new{Illustration of the strategy for Case~2 in the proof of Lemma~\ref{lem:Ce_inclusion}.
We suppose that a good event $A^c$ occurs, and we want to show that there are at least $J=\lfloor N^{1/3}\rfloor$ particles at some site in $\ball_{R'}(y_0)$ at time $T$.
We illustrate the space regions $\ball_{R'}(x_0)$ at time $0$ and $\ball_{R'}(y_0)$ at time $T$ with thick grey lines.\\
The hatched space-time region in the figure is the set of $(x,t)$ such that $t\in [0,T]$ and $x\in \ball_{\frac{T-t'}T R'+2L}(x_0+\frac {t'}T (y_0-x_0))$ for some $t'\in [t,T]$.
In Case~2, we assume that $\tau(x^*)\le t^*$ for some $t^*\in [0,T]$ and $x^*\in \ball_{\frac{T-t^*}T R'+2L}(x_0+\frac {t^*}T (y_0-x_0))$, i.e.~yellow particles appear at some point $(x^*,\tau(x^*))$ within the hatched space-time region.
The number of these yellow particles is $\lfloor N^{1/2}\rfloor$ by our construction.
We want to show that these particles have at least $J$ descendants at some site in $\ball_{R'}(y_0)$ at time $T$.
It is enough to consider the descendants whose trajectories up to time $T$ stay within distance $L$ of a certain space-time polyline. 
This polyline is chosen in such a way that the tube of radius $L$ around the polyline stays within the space region denoted by $\mathcal H$ in the proof. 
(The space-time region $\mathcal H \times [0,T]$ is delimited by dashed lines in the figure.)
In the space-time region $\mathcal H \times [0,T]$, the killing rate is bounded by a (large) constant $K_2$, by virtue of point 1 from Lemma~\ref{lem:Ktbds} and using the definition of the good event $A^c$. 
The good event $A^c$ then also ensures that at least $J$ descendants of the $\lfloor N^{1/2}\rfloor$ yellow particles travel along the tube without being killed by competition and reach some site in $\ball_{R'}(y_0)$ at time $T$.\\
The choice of the polyline depends on $t^*$. If $t^* > (1-\epsilon_1)T$, where $\epsilon_1$ is a small constant, then $\ball_L(x^*) \subseteq \ball_{R'}(y_0)$ and the polyline is simply the constant $x^*$. If $t^* \le (1-\epsilon_1)T$, the polyline moves from $x^*$ to $y_0$ within time $\epsilon_1 T$ and then stays there.
We illustrate these two possible cases for $t^*$ in the figure; in each case, the tube of radius $L$ around the polyline is delimited by solid lines in the figure.
}
}
\end{figure}

\section{Missing proofs of lemmas}
\label{sec:badbluepf}

In this section, we prove the remaining lemmas from Section~\ref{sec:propproof}. In Section~\ref{subsec:concentration}, we first recall a few concentration bounds on binomial and sums of independent geometric random variables, which are used in the following sections.  In Section~\ref{subsec:lembadred}, we prove Lemma~\ref{lem:bad_red}, which was used in Section~\ref{sec:propupper} to prove the `coming down from infinity' property of the BRWNLC (Proposition~\ref{prop:upperbound}). Finally, in Section~\ref{subsec:lembadblue}, we prove the lemmas missing for Proposition~\ref{prop:closedinitcond}, namely Lemmas~\ref{lem:blue1} and~\ref{lem:bad_blue}.

\new{Recall that we have fixed $R_1,\gamma, \lambda>0$ and $p$ satisfying~\eqref{eq:p_assumptions} with $p(x)=0$ $\forall x\notin \ball_{R_1}(0)$.}

\subsection{Some concentration inequalities}
\label{subsec:concentration}

For $n\in \N$ and $p_0\in [0,1]$, let $Y(n,p_0)$ be a random variable following the binomial distribution with parameters $n$ and $p_0$, i.e.,
\begin{equation} \label{eq:Ybin}
    Y(n,p_0)\sim \text{Bin}(n,p_0).
\end{equation}
The following classical concentration inequalities can be found e.g.~in Theorem~2.3 in McDiarmid~\cite{mcdiarmid}: for all $n\in\N$, $p_0\in[0,1]$ and $\varepsilon>0$,
\begin{align}
\label{eq:concen_upper}
\P(Y(n,p_0) \ge (1+\varepsilon)np_0) &\le e^{-\frac{\varepsilon^2 np_0}{2(1+\varepsilon/3)}}\\
\label{eq:concen_lower}
\text{and }\quad \P(Y(n,p_0) \le (1-\varepsilon)np_0) &\le e^{-\frac12 \varepsilon^2np_0}.
\end{align}
Let $\mathcal T^1$, $\mathcal T^2,\ldots$ be i.i.d.~copies of the BRW tree with resiliences $\mathcal T$ as defined in~\eqref{eq:calTdefn}, with $\mathcal T^i = ((\mathcal N^i_t)_{t\ge0},(X^i_t(u),t\ge0,u\in \mathcal N^i_t),(\alpha^i(u),u\in \mathcal U),(\beta^i(u),u\in \mathcal U),(\rho^i(u),u\in \mathcal U))$.
The next lemma uses Lemma~\ref{lem:yule} and bounds on sums of independent geometric random variables to give useful bounds on the size of certain sets of particles in the BRW trees.
\begin{lemma} \label{lem:geomsum}
Let $n\in\N$, $m\ge0$. Then
\begin{align}
\label{eq:geom1}
\forall s,t\ge0 \text{ with }(1-e^{-t})e^s < 1, \, \p{\#\{(j,u):j\in \llbracket n \rrbracket , u\in \mathcal N^j_t \}\ge m}
&\le \frac {e^{-s(m-n)}} {(1-(1-e^{-t})e^s)^n},\\
\label{eq:geom1a}
\forall t\ge0 \text{ with }te < 1, \, \p{\#\{(j,u):j\in \llbracket n \rrbracket , u\in \mathcal N^j_t \}\ge m}
&\le \frac {e^{-(m-n)}} {(1-te)^n},\\
\label{eq:geom2}
\forall t\ge0 \text{ with }te < 1, \, \p{\#\{(j,u):j\in \llbracket n \rrbracket , u\in \mathcal N^j_t, u\ne \emptyset \}\ge m}
&\le e^{(te^2/(1-te))n - m}.
\end{align}
\end{lemma}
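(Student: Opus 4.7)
The plan is to recognise that $\#\{(j,u):j\in \llbracket n\rrbracket, u\in \mathcal N^j_t\} = \sum_{j=1}^n |\mathcal N^j_t|$ is a sum of $n$ i.i.d.~$\mathrm{Geom}(e^{-t})$ random variables (by Lemma~\ref{lem:yule}), and then apply the standard Chernoff--Markov machinery. Concretely, for $G\sim \mathrm{Geom}(p)$ the moment generating function is
\[
\E{e^{sG}} = \frac{p e^s}{1-(1-p)e^s},
\]
valid whenever $(1-p)e^s < 1$. With $p=e^{-t}$, independence across trees, and Markov's inequality at level $m$, I would obtain
\[
\p{\textstyle\sum_{j=1}^n |\mathcal N^j_t|\ge m} \le e^{-sm}\Big(\frac{e^{-t}e^s}{1-(1-e^{-t})e^s}\Big)^{\!n} \le \frac{e^{-s(m-n)}}{(1-(1-e^{-t})e^s)^n},
\]
where in the last step I drop the $e^{-nt}\le 1$ factor after pulling out $e^{sn}$ from the numerator. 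This is precisely \eqref{eq:geom1}. For \eqref{eq:geom1a} I simply specialise to $s=1$ and use $1-e^{-t}\le t$ to replace $(1-e^{-t})e$ by the bound $te$ in the denominator, which is valid because the function $x\mapsto 1/(1-xe)$ is increasing on $[0,1/e)$.

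The only genuinely new step is \eqref{eq:geom2}. Here I observe that for each $j$,
\[
W_j := \#\{u\in \mathcal N^j_t : u\ne \emptyset\} = |\mathcal N^j_t|\,\1_{|\mathcal N^j_t|\ge 2},
\]
since the root $\emptyset$ is alive at time $t$ exactly when no branching has yet occurred, in which case $|\mathcal N^j_t|=1$. Thus $\p{W_j = 0} = e^{-t}$ and $\p{W_j = k} = e^{-t}(1-e^{-t})^{k-1}$ for $k\ge 2$. A direct computation, taking $s=1$, gives
\[
\E{e^{W_j}} = e^{-t} + \frac{e^{-t}(1-e^{-t})e^2}{1-(1-e^{-t})e},
\]
valid for $te<1$. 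Bounding $e^{-t}\le 1$, $(1-e^{-t})\le t$ in the numerator, and using again monotonicity of $x\mapsto \frac{xe^2}{1-xe}$ to replace the $1-e^{-t}$ in the denominator by $t$, yields
\[
\E{e^{W_j}} \le 1 + \frac{te^2}{1-te} \le \exp\!\left(\frac{te^2}{1-te}\right).
\]
Independence and Markov's inequality then give
\[
\p{\textstyle\sum_{j=1}^n W_j \ge m} \le e^{-m}\,\E{e^{W_1}}^n \le \exp\!\left(\frac{te^2}{1-te}\,n - m\right),
\]
which is \eqref{eq:geom2}.

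There is no real obstacle here; the only bookkeeping care is to verify that the convergence conditions for the MGFs are exactly the hypotheses stated in each part (namely $(1-e^{-t})e^s<1$ for \eqref{eq:geom1} and the stronger $te<1$ for \eqref{eq:geom1a}--\eqref{eq:geom2}), and to ensure the monotonicity replacements $1-e^{-t}\leadsto t$ in the denominators are carried out in the right direction. This can all be done in a few lines.
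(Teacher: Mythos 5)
Your proof is correct and follows essentially the same route as the paper: identify the count as a sum of i.i.d. $\mathrm{Geom}(e^{-t})$ variables, apply the Chernoff bound with the standard geometric MGF, specialise to $s=1$ with $1-e^{-t}\le t$ for the second bound, and for the third bound note that $W_j=|\mathcal N^j_t|\1_{|\mathcal N^j_t|\ge 2}$ and bound $\E{e^{W_j}}$ by $1+\frac{te^2}{1-te}\le\exp(\frac{te^2}{1-te})$. The only superficial difference is that you compute $\E{e^{W_j}}$ directly from the distribution rather than writing it as $\E{e^{G_1}}+(1-e)e^{-t}$, but the algebra is identical.
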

\begin{proof}
Take $t\ge 0$.
For $j\in \N$, let $G_j = |\mathcal N^j_t |$, so that the random variable on the left hand side of \eqref{eq:geom1} equals $\sum_{j=1}^n G_j$. Now, $G_1,G_2,\ldots $ are i.i.d.~with $G_1 \sim \text{Geom}(e^{-t})$, by Lemma~\ref{lem:yule}. Note that
for $s\ge 0$ such that $e^s(1-e^{-t})<1$,
\[
\E{e^{sG_1}} = \frac {e^{-t}e^s }{1-(1-e^{-t})e^s} \le \frac {e^s }{1-(1-e^{-t})e^s},
\]
using that $e^{-t} \le 1$. Hence by Markov's inequality (or a Chernoff bound),
\begin{align*}
\p{\sum_{j=1}^n G_j \ge m} \le e^{-sm} \left(\frac{e^s}{1-(1-e^{-t})e^s}\right)^n = \frac {e^{-s(m-n)}} {(1-(1-e^{-t})e^s)^n},
\end{align*}
which is the first inequality \eqref{eq:geom1}. The second inequality \eqref{eq:geom1a} readily follows by setting $s=1$ and using that $1-e^{-t} \le t$.

For the third inequality, we note that the random variable on the left hand side of \eqref{eq:geom2} equals $\sum_{j=1}^n G'_j$, where $G'_j = G_j\1_{(G_j \ge 2)}$. Using the above expression for $\E{e^{sG_1}}$ with $s=1$, and using that $1-e^{-t} \le t$, we have that
\begin{align*}
\E{e^{G'_1}}
=\E{e^{G_1}+(1-e)\1_{G_1=1}}
 = \E{e^{G_1}} + (1-e)e^{-t} &\le e^{-t}\left(1+e\left(\frac{1}{1-te} -1\right)\right) \\
 &= e^{-t}\left(1+\frac{te^2}{1-te}\right)\\
& \le \exp\left(\frac{te^2}{1-te}\right),
\end{align*}
using furthermore that $e^{-t}\le 1$ and $1+t'\le e^{t'}$ $\forall t'\in\R$ for the last inequality. Using Markov's inequality again, we get that
\[
\p{\sum_{j=1}^n G'_j \ge m} \le e^{(te^2/(1-te))n - m},
\]
which is the third inequality \eqref{eq:geom2}.
\end{proof}

\subsection{Proof of Lemma~\ref{lem:bad_red}} \label{subsec:lembadred}
As in Section~\ref{subsec:concentration},
let $\mathcal T^1$, $\mathcal T^2,\ldots$ be i.i.d.~copies of the BRW tree with resiliences $\mathcal T$ as defined in~\eqref{eq:calTdefn}, with $\mathcal T^i = ((\mathcal N^i_t)_{t\ge0},(X^i_t(u),t\ge0,u\in \mathcal N^i_t),(\alpha^i(u),u\in \mathcal U),(\beta^i(u),u\in \mathcal U),(\rho^i(u),u\in \mathcal U))$.
We start with a lemma that gives us two bounds on the number of particles in $(\mathcal T^i)_{i\le n}$ that move a large distance from the origin.
\begin{lemma} \label{lem:movefar}
For $n\in \N$ and $m,r,t>0$,
\begin{align} \label{eq:lemmovefarst1}
\p{\#\{(j,u):j\in \llbracket n \rrbracket , u\in \mathcal N^j_t , \sup_{s\in [0,t]}\|X^j_s(u)\|\ge r\}\ge m}
\le m^{-1} n e^t e^{-\frac r {R_1} \log \left( \frac r {R_1\gamma t e}\right)}.
\end{align}
For $n\in \N$, $m,t>0$ and $r\ge 0$, if either $r>0$ and $n e^t e^{-\frac r {R_1} \log \left( \frac r {R_1\gamma t e}\right)}\le m/2$, or $r\ge 0$ and $n e^t \le m/2$, then
\begin{align} \label{eq:lemmovefarst2}
\p{\#\{(j,u):j\in \llbracket n \rrbracket , u\in \mathcal N^j_t , \sup_{s\in [0,t]}\|X^j_s(u)\|\ge r\}\ge m}
\le 8m^{-2} n e^{2t}.
\end{align}
\end{lemma}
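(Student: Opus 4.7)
Both bounds concern the same counting random variable
\[
M := \#\{(j,u):j\in \llbracket n \rrbracket , u\in \mathcal N^j_t , \sup_{s\in [0,t]}\|X^j_s(u)\|\ge r\},
\]
and differ only in how one converts a bound on $\E{M}$ into a tail bound: the first uses Markov, the second uses Chebyshev, the latter paying off only because the hypothesis forces $\E{M}\le m/2$.

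The common first ingredient is a tail bound for a single CTRW $(X_s)_{s\ge0}$ with jump rate $\gamma$ and kernel $p$. Since $p$ is supported in $\mathcal B_{R_1}(0)$, each jump has norm at most $R_1$, so reaching distance $r$ in time $t$ requires at least $r/R_1$ jumps. Hence
\[
q(r,t):=\p{\sup_{s\in[0,t]}\|X_s\|\ge r}\le \p{N_t^{(\gamma)}\ge r/R_1},
\]
where $N_t^{(\gamma)}\sim \mathrm{Poisson}(\gamma t)$. A standard Chernoff bound optimised at $s=\log(r/(R_1\gamma t))$ yields
\[
q(r,t)\le e^{-\gamma t}\Big(\tfrac{R_1\gamma te}{r}\Big)^{r/R_1}\le \exp\!\Big(-\tfrac{r}{R_1}\log\tfrac{r}{R_1\gamma te}\Big),
\]
the right-hand side being trivially $\ge1$ (hence vacuous) when $r\le R_1\gamma te$.

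Applying the many-to-one lemma (Lemma~\ref{lem:many-to-one}) to each of the $n$ independent trees with the test functional $F((X_s)_{s\le t})=\1_{\sup_{s\le t}\|X_s\|\ge r}$ gives
\[
\E{M}=ne^t q(r,t)\le ne^t\exp\!\Big(-\tfrac{r}{R_1}\log\tfrac{r}{R_1\gamma te}\Big),
\]
and \eqref{eq:lemmovefarst1} follows immediately from Markov. For \eqref{eq:lemmovefarst2} I will first verify that the stated hypothesis implies $\E{M}\le m/2$: in the first case this is exactly the Poisson tail estimate above; in the second it follows from the trivial $q(r,t)\le 1$. Then $\{M\ge m\}\subseteq\{|M-\E{M}|\ge m/2\}$, and Chebyshev gives $\p{M\ge m}\le 4\,\mathrm{Var}(M)/m^2$. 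Writing $M=\sum_{j=1}^n M_j$ for the contributions of the independent trees and using $M_j\le |\mathcal N^j_t|$, the variance satisfies
\[
\mathrm{Var}(M)=n\,\mathrm{Var}(M_1)\le n\,\E{|\mathcal N^1_t|^2}.
\]
By Lemma~\ref{lem:yule}, $|\mathcal N^1_t|\sim\mathrm{Geom}(e^{-t})$, so $\E{|\mathcal N^1_t|^2}=(2-e^{-t})e^{2t}\le 2e^{2t}$, producing $\p{M\ge m}\le 8ne^{2t}/m^2$, as required.

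No step presents a serious obstacle; the only subtlety is recognising that upgrading Markov to Chebyshev buys an additional factor of $1/m$ at the cost of requiring a constraint on $m$ relative to $\E{M}$ — precisely the content of the two hypotheses in \eqref{eq:lemmovefarst2} — and that the crude domination $M_j\le |\mathcal N^j_t|$ suffices to reduce the variance estimate to a second moment of the Yule population, which is then explicit.
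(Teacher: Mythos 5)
Your proof is correct and follows essentially the same route as the paper: bound the per-tree expectation via the many-to-one lemma and the observation that reaching distance $r$ requires at least $r/R_1$ jumps (a Poisson tail), then conclude with Markov for the first bound and with Chebyshev plus the second moment of $|\mathcal N^1_t|\sim\mathrm{Geom}(e^{-t})$ for the second. The only cosmetic difference is that you note the Chernoff exponent is vacuous for $r\le R_1\gamma te$ in one stroke, whereas the paper separately treats $r\ge R_1\gamma t$ and $0<r<R_1\gamma t$; both dispose of the small-$r$ regime correctly.
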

\begin{proof}
Take $n\in \N$, $m,t>0$ and $r\ge 0$.
For $j\in \N$, let
$$
A_t^j =\#\{u\in \mathcal N^j_t :\sup_{s\in [0,t]}\|X^j_s(u)\|\ge r\}.
$$
Let $(N_t)_{t\ge 0}$ denote a Poisson process with rate $\gamma$,
and let $(X_t)_{t\ge 0}$ denote a continuous-time random walk starting at 0 with jump rate $\gamma$ and jump kernel $p$.
By the many-to-one lemma (Lemma~\ref{lem:many-to-one}), and then since $p(x)=0$ for $\|x\|\ge R_1$,
\begin{align} \label{eq:lemmovefar*}
\E{A_t^j} = e^t \p{\sup_{s\in [0,t]}\|X_s\|\ge r} \le e^t \p{N_t\ge r/R_1}.
\end{align}
Suppose first that $r\ge R_1 \gamma t$. Then by Markov's inequality,
\begin{align} \label{eq:lemmovefar*new}
\p{N_t\ge r/R_1}
\le e^{-\frac r {R_1} \log \left( \frac r {R_1\gamma t }\right)}\E{e^{ \log \left( \frac r {R_1\gamma t }\right) N_t}} 
&=e^{-\frac r {R_1} \log \left( \frac r {R_1\gamma t }\right)} e^{\gamma t \left( \frac r {R_1 \gamma t} -1\right)} \notag \\
&\le e^{-\frac r {R_1} \log \left( \frac r {R_1\gamma t e}\right)}.
\end{align}
Note that if instead $0<r< R_1 \gamma t$ then we have
\begin{align} \label{eq:lemmovefar**new}
\p{N_t\ge r/R_1}
\le 1
&\le e^{-\frac r {R_1} \log \left( \frac r {R_1\gamma t e}\right)}.
\end{align}
Therefore, for any $r>0$, by Markov's inequality and then by~\eqref{eq:lemmovefar*},
$$
\p{\sum_{j=1}^n A_t^j \ge m }\le m^{-1} \sum_{j=1}^n \E{A^j_t} \le m^{-1} n e^t e^{-\frac r {R_1} \log \left( \frac r {R_1 \gamma t e}\right)},
$$
which completes the proof of~\eqref{eq:lemmovefarst1}.

It remains to prove~\eqref{eq:lemmovefarst2}.
Now suppose either $r>0$ and $n e^t e^{-\frac r {R_1} \log \left( \frac r {R_1\gamma t e}\right)}\le m/2$ or $r\ge 0$ and $ne^t \le m/2$. Then by~\eqref{eq:lemmovefar*},~\eqref{eq:lemmovefar*new} and~\eqref{eq:lemmovefar**new}, $\sum_{j=1}^n \E{A_t^j}\le m/2$ and so
\begin{equation} \label{eq:lemmovefar2}
\p{\sum_{j=1}^n A_t^j \ge m } \le \p{\sum_{j=1}^n \left(A_t^j -\E{A_t^j}\right)\ge \tfrac 12 m }
\le 4m^{-2} \cdot n \text{Var}(A_t^1)
\end{equation}
by Chebychev's inequality.
Then since $A_t^1\le |\mathcal N^1_t|$ and $|\mathcal N^1_t|\sim \text{Geom}(e^{-t})$ by Lemma~\ref{lem:yule},
$$
 \text{Var}(A_t^1) \le \E{(A_t^1)^2} \le \E{|\mathcal N^1_t|^2}=(2-e^{-t})e^{2t}.
$$
The result follows by~\eqref{eq:lemmovefar2}.
\end{proof}
In the next three lemmas, we bound the probabilities of the bad events in the definition of $R_{x,r}$;
this will allow us to prove Lemma~\ref{lem:bad_red}.
\begin{lemma} \label{lem:Rno}
For $N$ sufficiently large, for $k\in \N$ and $x\in \Z^d$,
$$
\p{ R^{\mathrm{cr}}_{x,k} }\le  e^{-(2^k N)^{1/(2d+2)}}.
$$
\end{lemma}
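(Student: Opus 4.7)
The plan is to decompose $R^{\mathrm{cr}}_{x,k} = \bigcup_{i=1}^4 R^{\mathrm{cr},i}_{x,k}$, bound each sub-event separately, and apply a union bound. The first three sub-events will turn out to admit exponential-in-$2^kN$ bounds by elementary means, while the fourth, $R^{\mathrm{cr},4}_{x,k}$, is the main obstacle and is the sole source of the sub-exponential rate in the lemma.

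For $R^{\mathrm{cr},1}_{x,k}$ and $R^{\mathrm{cr},3}_{x,k}$, each is a tail of a binomial $\operatorname{Bin}(2^{k+1}K_1N, p_0)$ against the threshold $c_12^{k-2}K_1N$. The relevant $p_0$ is $e^{-\frac14\lambda K_1 c_0 c_1}$ in the first case (probability that an $\operatorname{Exp}(1)$ resilience exceeds $\frac14\lambda K_1 c_0 c_1$) and at most $1-e^{-\gamma c_0 2^{1-k}}\le 1-e^{-\gamma c_0}$ in the second (probability that the root jumps before branching or before time $c_0 2^{1-k}$). In both cases, conditions~\eqref{eq:K0defn} and~\eqref{eq:c0defn} give $16p_0<c_1$, so the binomial mean $np_0 < c_1 2^{k-3}K_1N$ is at most half the threshold and~\eqref{eq:concen_upper} with $\varepsilon\ge 1$ produces a bound of order $e^{-c\cdot 2^kN}$. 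For $R^{\mathrm{cr},2}_{x,k}$, I would apply Lemma~\ref{lem:geomsum}~\eqref{eq:geom2} with $n = 2^{k+1}K_1N$, $t = c_0 2^{1-k}$ (so $te \le c_0 e < 1$ by the choice of $c_0$), and $m = c_1 2^{k-2}K_1N$; since $\frac{te^2}{1-te}$ is increasing in $t$ and bounded by $\frac{c_0 e^2}{1-c_0 e}\le c_1/16$ by~\eqref{eq:c0defn}, the exponent is at most $-c_1 2^{k-3}K_1N$.

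The remaining event $R^{\mathrm{cr},4}_{x,k}$ is the technical crux: a union over $r\ge 1$ of $\{\#\{(j,u): \sup_{s\in[0,c_0 2^{1-k}]}\|X^{\mathrm{blue},x,k,j}_s(u)\|\ge r\} > \thr(r,k)\}$. First, using monotonicity of both the count and the threshold $\thr(r,k)=c_1 2^k g(r)^{1/3}K_1N$ in $r$, I would reduce the union over reals to the countable union over integers $i\ge 1$ of $\{\#\{D^j_u\ge i\}>\thr(i+1,k)\}$ (with $D^j_u := \sup_{s\le c_02^{1-k}}\|X^{\mathrm{blue},x,k,j}_s(u)\|$). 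For each $i$, Lemma~\ref{lem:movefar} provides two complementary bounds whose hypotheses are both secured by condition~\eqref{eq:c0defn2}: the Markov form~\eqref{eq:lemmovefarst1} carries the super-polynomial tail $e^{-\frac{i}{R_1}\log(i/(R_1\gamma c_0 e))}$ (useful for large $i$), while the Chebyshev form~\eqref{eq:lemmovefarst2} gives a per-$i$ bound of order $(i+1)^{2(6d+2)/3}/(2^kN)$ (useful for small $i$). The hard part is that neither suffices in isolation---summing Markov over $i$ is constant, while summing Chebyshev diverges---so obtaining the claimed rate $e^{-(2^kN)^{1/(2d+2)}}$ requires a sharpened Chebyshev estimate, e.g.~via a many-to-two bound on the Yule tree giving $\operatorname{Var}(A^1_i)\lesssim e^{2t}p_i$ with $A^1_i=\#\{u\in\mathcal N^1_{c_02^{1-k}}:D^1_u\ge i\}$ and $p_i = \p{\sup_{s\le c_02^{1-k}}\|X_s\|\ge i}$, or an exponential-moment (Chernoff) argument across the $n$ independent trees. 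Balancing the resulting per-$i$ bounds at the truncation $i_*\sim(2^kN)^{1/(2d+2)}$ then yields the stated bound, the exponent $1/(2d+2)$ emerging from matching the polynomial growth $g(r)^{-2/3}\sim r^{(12d+4)/3}$ in the (sharpened) Chebyshev bound against the $(2^kN)^{-1}$ prefactor.
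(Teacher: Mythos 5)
Your treatment of $R^{\mathrm{cr},1}_{x,k}$, $R^{\mathrm{cr},2}_{x,k}$, $R^{\mathrm{cr},3}_{x,k}$ matches the paper: binomial concentration via~\eqref{eq:concen_upper} for the first and third, and~\eqref{eq:geom2} for the second, each giving an exponential-in-$2^kN$ bound. You also correctly isolate $R^{\mathrm{cr},4}_{x,k}$ as the crux and correctly observe that neither the Markov nor the Chebyshev bound from Lemma~\ref{lem:movefar} suffices on its own.

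The gap is in how you propose to close $R^{\mathrm{cr},4}_{x,k}$. Your suggestion (a), a ``sharpened Chebyshev'' estimate $\operatorname{Var}(A^1_i)\lesssim e^{2t}p_i$, cannot produce the lemma's conclusion even in principle. Chebyshev gives only a polynomial tail: with $n\sim 2^{k+1}K_1 N$ trees, threshold $m\sim c_1 2^k g(i+1)^{1/3}K_1N$ and (say) $i=1$, the sharpened variance yields $\p{\sum_j A^j_1\ge m}\lesssim n\operatorname{Var}(A^1_1)/m^2$, which is of order $1/(2^k N)$ up to constants --- polynomially small, nowhere near $e^{-(2^kN)^{1/(2d+2)}}$. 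Since $R^{\mathrm{cr},4}_{x,k}$ contains the single event at $i=1$, no Chebyshev-type second-moment bound can give a sub-Gaussian/exponential tail in $2^kN$.

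Your suggestion (b), a Chernoff bound across the $n$ independent trees, is the right mechanism and is what the paper does, but it requires a nontrivial idea to make the exponential moment computable, which is missing from your sketch. The paper's key step is to couple, for each tree $j$, the quantity $N_{j,r} := |\mathcal N^{\mathrm{blue},x,k,j}_{c_0 2^{1-k}}|\1\{\exists u,s:\|X^{\mathrm{blue},x,k,j}_s(u)\|\ge r\}$ with a pair $(G,Z)$ where $G\sim\operatorname{Geom}(e^{-c_02^{1-k}})$ and $Z\mid G\sim\operatorname{Poisson}(c_02^{1-k}\gamma G)$, so that $N_{j,r}\le G\1\{Z\ge r/R_1\}$. (This uses the finite range $R_1$ of the jump kernel: if a particle travelled distance $\ge r$, the whole tree made $\ge r/R_1$ jumps, and the jumps are a Poisson process on the Yule tree whose total length up to time $t$ is at most $t|\mathcal N_t|$.) Then Markov's inequality on $e^{\sum_j N_{j,r}}$ yields a factorised bound $e^{-\thr(r+1,k)}\E{e^{G\1\{Z\ge r/R_1\}}}^n$, and the expectation is estimated by splitting on $\{G\ge 2r\}$ versus $\{G<2r\}$ together with the Poisson tail; the conditions~\eqref{eq:c0defn},~\eqref{eq:c0defn3} make this at most $1+\tfrac14 c_1 g(r+1)^{1/3}$, giving $p_r\le e^{-c_1 g(r+1)^{1/3}2^{k-1}K_1N}$. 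Summing and matching against the Markov bound at a truncation $r\sim (2^kN)^{1/(2d+5/3)}$ (not $1/(2d+2)$) yields the stated $e^{-(2^kN)^{1/(2d+2)}}$. Without the $(G,Z)$ coupling, the Chernoff plan has no way to control $\E{e^{N_{j,r}}}$, so as written your proof has a genuine hole at its central step.
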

\begin{proof}
Let $k\in\N$ and $x\in\Z^d$. Recall that $R^{\mathrm{cr}}_{x,k} = \bigcup_{i=1}^4 R^{\mathrm{cr},i}_{x,k}$.
We will bound $\p{R^{\mathrm{cr},i}_{x,k}}$ for each $i\in \{1,2,3,4\}$ separately.

\noindent \textbf{Case $i=1$.}
Recall from~\eqref{eq:K0defn} that we chose $K_1$ sufficiently large that $16e^{-\frac 14 \lambda K_1 c_0 c_1}<c_1$.
Then since $(\rho^{\mathrm{blue},x,k,j}(\emptyset))_{j\in \N}$ are i.i.d.~with distribution Exp(1), and recalling~\eqref{eq:Ybin},
\begin{align} \label{eq:Rno1}
\p{R^{\mathrm{cr},1}_{x,k}}
&= \p{Y(\lfloor 2^{k+1}K_1 N \rfloor, e^{-\frac 14 \lambda K_1 c_0 c_1})>c_1 2^{k-2} K_1 N} \notag\\
&\le  \p{Y(\lfloor 2^{k+1}K_1 N \rfloor, e^{-\frac 14 \lambda K_1 c_0 c_1})>2e^{-\frac 14 \lambda K_1 c_0 c_1}\cdot \lfloor 2^{k+1}K_1 N \rfloor} \notag\\
&\le e^{-\frac 3 8 e^{-\frac 14 \lambda K_1 c_0 c_1} \lfloor 2^{k+1}K_1 N \rfloor},
\end{align}
where the last line follows by \eqref{eq:concen_upper}.

\noindent \textbf{Case $i=2$.} Applying \eqref{eq:geom2} from Lemma~\ref{lem:geomsum} with $n = \lfloor 2^{k+1}K_1 N \rfloor$, $m = (c_1/8)n$ and $t = c_02^{1-k}\le c_0$, and using that $c_0e<1$ and $c_0e^2/(1-c_0e)\le c_1/16$ by \eqref{eq:c0defn}, we get that
\begin{align} \label{eq:Rno2}
\p{R^{\mathrm{cr},2}_{x,k}} \le e^{- \frac{c_1}{16} \lfloor 2^{k+1}K_1 N \rfloor}.
\end{align}

\noindent \textbf{Case $i=3$.}
Recall from~\eqref{eq:c0defn} that we chose $c_0>0$ sufficiently small that $1-e^{-\gamma c_0}\le c_1/16$.
Therefore
\begin{align} \label{eq:Rno3}
\p{R^{\mathrm{cr},3}_{x,k}}
&\le \p{Y(\lfloor 2^{k+1}K_1 N \rfloor, 1-e^{-\gamma c_0 2^{1-k}})>\tfrac 18 c_1 \lfloor 2^{k+1}K_1 N \rfloor} \notag \\
&\le \p{Y(\lfloor 2^{k+1}K_1 N \rfloor, \tfrac 1{16} c_1)>\tfrac 18 c_1 \lfloor 2^{k+1}K_1 N \rfloor} \notag \\
&\le e^{-\frac 38 \cdot \frac 1{16} c_1 \lfloor 2^{k+1}K_1 N \rfloor},
\end{align}
where the last line follows by \eqref{eq:concen_upper}.

\noindent \textbf{Case $i=4$.}
By a union bound, and since, by~\eqref{eq:thrdefn}, $r\mapsto \thr(r,k)$ is decreasing in $r\ge 1$, 
\begin{equation}\label{eq:Rno4*}
    \p{R^{\mathrm{cr},4}_{x,k}}\le \sum_{r=1}^\infty p_r,
\end{equation}
where, for $r\in \N$, we let
\begin{align*} 
&p_r:= \mathbb P \Big(\#\{(j,u):j \in \llbracket 2^{k+1} K_1 N\rrbracket , u \in \mathcal N^{\mathrm{blue},x,k,j}_{c_0 2^{1-k}}, \notag \\
&\hspace{4cm} \sup_{s\in [0,c_0 2^{1-k}]}\|X^{\mathrm{blue},x,k,j}_{s}(u)\|\ge r\}
> \thr(r+1,k)\Big).
\end{align*}
For $r\in \N$, by~\eqref{eq:lemmovefarst1} in Lemma~\ref{lem:movefar} with $n=\lfloor 2^{k+1}K_1 N \rfloor$, $t=c_0 2^{1-k}$ and $m=\thr(r+1,k) = c_1 g(r+1)^{1/3} 2^k K_1 N$,
we have
\begin{align} \label{eq:Rno4star}
p_r &\le c_1^{-1} g(r+1)^{-1/3} (2^k K_1 N)^{-1} 2^{k+1}K_1 N e^{c_0} e^{-\frac r {R_1} \log \left(\frac r {R_1 \gamma 2^{1-k}c_0 e} \right)} \notag \\
&\le 2c_1^{-1} e^{c_0} g(r+1)^{-1/3} e^{-\frac r {R_1} \log \left(\frac r {R_1 \gamma c_0 e} \right)}.
\end{align}
We will use this bound for $r\ge\lfloor(2^k N)^{1/(2d+\frac 53)}\rfloor$; we now establish another bound on $p_r$ which will be stronger than~\eqref{eq:Rno4star} for smaller values of $r$.

For $j,r\in \N$, let 
$$
N_{j,r}=|\mathcal N^{\mathrm{blue},x,k,j}_{c_0 2^{1-k}}|\1_{\{\exists u \in \mathcal N^{\mathrm{blue},x,k,j}_{c_0 2^{1-k}},\, s\in [0,c_0 2^{1-k}]\text{ s.t. }\|X^{\mathrm{blue},x,k,j}_{s}(u)\|\ge r\}}.
$$
Note that if $N_{j,r}>0$, then since the jump kernel $p$ is supported on $\ball_{R_1}(0)$, there must be at least $r/R_1$ jumps made by particles in $\mathcal T^{\mathrm{blue},x,k,j}$ by time $c_0 2^{1-k}$. Now recall from Section~\ref{sec:brw} that, conditioned on the Yule tree describing the underlying branching process, the jump times of the particles follow a Poisson process on the tree with intensity measure equal to $\gamma$ times the length measure of the tree. The total length of the branches of the tree up to time $t$ is trivially bounded by $t|\mathcal N^{\mathrm{blue},x,k,j}_t|$ and by Lemma~\ref{lem:yule}, $|\mathcal N^{\mathrm{blue},x,k,j}_t|\sim\operatorname{Geom}(e^{-t})$. It follows that we can couple $N_{j,r}$ with a pair of random variables $(G,Z)$ such that $G\sim \text{Geom}(e^{-c_0 2^{1-k}})$ and, conditional on $G$, $Z\sim \text{Poisson}( c_0 2^{1-k}\gamma G)$, and such that
$$
N_{j,r}\le G \1_{\{Z\ge r/R_1\}}.
$$
Therefore, for $r\in \N$, by Markov's inequality,
\begin{align} \label{eq:GYest}
p_r
&\le \p{\sum_{j=1}^{\lfloor 2^{k+1}K_1 N \rfloor } N_{j,r}> \thr(r+1,k)} 
\le e^{- \thr(r+1,k)} \E{e^{G \1_{Z\ge r/R_1}}}^{\lfloor 2^{k+1}K_1 N \rfloor }.
\end{align}
We can bound the expectation on the right hand side by writing
\begin{align} \label{eq:EGbd}
\E{e^{G \1_{Z\ge r/R_1}}}
&\le \E{e^{G }\1_{G\ge 2r} }+\E{e^{ 2r \1_{Z\ge r/R_1}}\1_{G< 2r }} \notag \\
&\le \sum_{\ell = 2 r }^\infty e^{ \ell} (1-e^{-c_0 2^{1-k}})^{\ell -1}
+1+ e^{ 2r} \p{Z\ge r/R_1, G< 2 r}.
\end{align}
Note that if $X\sim \text{Poisson}(c)$ for some $c>0$, then for $y\ge c$, by Markov's inequality,
$$
\p{X\ge y}\le e^{-y\log(y/c)} \E{e^{X \log(y/c)}}
=e^{-y \log (y/c)} e^{c (y/c -1)}
\le e^{y(1-\log (y/c))}.
$$
Therefore, since $r/R_1\ge c_0 2^{1-k}\gamma \cdot 2r$ by our assumption above~\eqref{eq:c0defn}, by conditioning on $G$ we have
$$
\p{Z\ge r/R_1, G<2r}\le e^{\frac r {R_1}(1-\log (R_1^{-1}(c_02^{2-k}\gamma )^{-1}))}.
$$
Since $1-e^{-t}\le t$ for $t\ge 0$, and $ec_0<1$ by our assumption above~\eqref{eq:c0defn}, it follows from~\eqref{eq:EGbd} that
\begin{align*}
\E{e^{G \1_{Z\ge r/R_1}}}
&\le e\sum_{\ell = 2r }^\infty  (e c_0 2^{1-k})^{\ell -1}
+1+ e^{2 r} e^{\frac r {R_1} (1+\log (R_1 c_0 2^{2-k}\gamma))}\\
&\le e(e c_0)^{2r-1}(1-ec_0)^{-1} +1+(2e^{2R_1+1}R_1 c_0\gamma )^{r/R_1}\\
&\le 1+\tfrac 14 c_1 g(r+1)^{1/3},
\end{align*}
where the last line follows by~\eqref{eq:c0defn3} and since $r\ge 1$.
Hence by~\eqref{eq:GYest},
\begin{align*}
p_r
&\le e^{-c_1 g(r+1)^{1/3} 2^k K_1 N}(e^{\frac 14 c_1 g(r+1)^{1/3}})^{\lfloor 2^{k+1}K_1 N\rfloor }
\le e^{- c_1 g(r+1)^{1/3} 2^{k-1} K_1 N} .
\end{align*}
Therefore, by~\eqref{eq:Rno4*} and~\eqref{eq:Rno4star}, and since, by the definition of $g$ in~\eqref{eq:gdefn},
$g(r+1)^{1/3}\ge (2^k N)^{-(2d+\frac 23)/(2d+\frac 53)}$ for $1\le r\le (2^k N)^{1/(2d+\frac 53)}-1$, we have
\begin{align} \label{eq:Rno4}
\p{R^{\mathrm{cr},4}_{x,k}} 
&\le \sum_{r=1}^\infty \min\Big(2c_1^{-1} e^{c_0} g(r+1)^{-1/3} e^{-\frac r {R_1} \log \left(\frac r {R_1 \gamma c_0 e} \right)},
e^{- c_1 g(r+1)^{1/3} 2^{k-1} K_1 N} \Big) \notag \\
&\le \sum_{r=1}^{\lfloor (2^k N)^{1/(2d+\frac 53)}\rfloor -1}e^{-c_1 (2^k N)^{-(2d+\frac 23)/(2d+\frac 53)}2^{k-1}K_1 N} \notag \\
&\qquad +\sum_{r=\lfloor(2^k N)^{1/(2d+\frac 53)}\rfloor}^\infty 2c_1^{-1} e^{c_0} (r+1)^{(6d+2)/3} e^{-\frac r {R_1} \log (\frac r {R_1 \gamma c_0 e})}
\notag \\
&\le \tfrac 12 e^{-(2^k N)^{1/(2d+2)}}
\end{align}
for $N$ sufficiently large.

The result now follows from~\eqref{eq:Rno1},~\eqref{eq:Rno2},~\eqref{eq:Rno3} and~\eqref{eq:Rno4}, and since 
$$\p{R^{\mathrm{cr}}_{x,k}}\le \sum_{i=1}^4 \p{R^{\mathrm{cr},i}_{x,k}}$$ by a union bound.
\end{proof}

\begin{lemma} \label{lem:Rnum}
For $N$ sufficiently large, for $r\in \N$, $k\in \N$ and $x\in \Z^d$,
$$
\p{R^{\mathrm{num}}_{x,k,r}}\le  \min (f(r)^2 ,N^{-1/2}).
$$
\end{lemma}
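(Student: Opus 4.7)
Decompose $R^{\mathrm{num}}_{x,k,r}$ as the union of two events: $E_1$, that the total number of descendants at time $c_0 2^{1-k}$ of the $\lfloor 2^{k+1}K_1 N\rfloor$ trees exceeds $g(r)^{-1/5} 2^{k+2}K_1 N$; and $E_2$, that there exists some $r'\ge r/2$ for which the number of descendants travelling distance at least $r'$ exceeds $\thr(r',k) = c_1 2^k g(r')^{1/3} K_1 N$. It suffices to bound each of $\P(E_1)$ and $\P(E_2)$ by $\tfrac12 \min(f(r)^2, N^{-1/2})$.

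For $E_1$, I plan to apply~\eqref{eq:geom1a} with $n = \lfloor 2^{k+1}K_1 N\rfloor$, $t = c_0 2^{1-k}$, $m = g(r)^{-1/5} 2^{k+2}K_1 N$. Since $k\ge 1$, $te\le c_0 e<1$, and $nt \le 4c_0 K_1 N$ uniformly in $k$, so $n\log(1/(1-te))$ contributes at most $O(c_0 K_1 N)$; meanwhile $m-n \ge (2g(r)^{-1/5}-1)\, 2^{k+1}K_1 N$. Exploiting the smallness of $c_0$ from~\eqref{eq:c0defn}, this yields a bound of the form $\exp(-c K_1 N\, g(r)^{-1/5})$ for some $c>0$, which dominates both $f(r)^2$ and $N^{-1/2}$ uniformly in $r\ge 1$ once $N$ is large.

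For $E_2$, a union bound reduces matters to estimating $\sum_{r'\ge \lceil r/2\rceil} p_{r'}$, where $p_{r'}$ is the corresponding probability. I will use both parts of Lemma~\ref{lem:movefar}: estimate (i) from~\eqref{eq:lemmovefarst1} yields $p_{r'} \le \tfrac{2 e^{c_0}}{c_1 g(r')^{1/3}}\exp(-\tfrac{r'}{R_1}\log\tfrac{r'}{R_1\gamma c_0 e})$, which is $N$-independent; and estimate (ii) from~\eqref{eq:lemmovefarst2}, whose hypothesis is delivered by~\eqref{eq:c0defn2} combined with monotonicity of $g$, yields $p_{r'} = O((r')^{(12d+4)/3}/(2^k K_1 N))$. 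To achieve the $f(r)^2$ bound I will use (i): for large $r$ the sum is dominated by $r'\approx r/2$ and decays like $\exp(-\tfrac{r}{2R_1}\log r)$, strictly faster than $f(r)^2 = \exp(-\tfrac{2r}{9R_1}\log(r+1))$ since $\tfrac 1{2R_1}>\tfrac 2{9R_1}$; for the finitely many small values of $r$ where this comparison is insufficient, $f(r)^2$ is bounded below by a positive constant and I instead use (ii) plus $N$-largeness. To achieve the $N^{-1/2}$ bound I split the sum at $r_* := \lfloor N^\alpha\rfloor$ with $\alpha := 3/(4(12d+7))$: on $\lceil r/2\rceil\le r'\le r_*$, estimate (ii) sums to $O(r_*^{(12d+7)/3}/N) = O(N^{-3/4})$; on $r'>r_*$, estimate (i) is super-polynomially small in $N$.

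The main technical point will be verifying the hypothesis of estimate (ii), namely $ne^t\exp(-\tfrac{r'}{R_1}\log\tfrac{r'}{R_1\gamma t e}) \le m/2$, throughout the relevant range of $r'$: since the left-hand side is increasing in $t$, one may replace $t$ by $c_0$, after which~\eqref{eq:c0defn2} together with $g(r'+1)^{1/3}\le g(r')^{1/3}$ closes the comparison with the explicit form of $m/n$. The tuning of the cutoff $r_*$ is a secondary technical point, forced by the polynomial prefactor $(r')^{(12d+4)/3}$ in estimate (ii) and the requirement that both halves of the split sum be $o(N^{-1/2})$.
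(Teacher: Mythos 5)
Your proposal takes essentially the same route as the paper's proof: the same decomposition into a count event and a spread event, the same application of~\eqref{eq:geom1a} for the count event (keeping the $g(r)^{-1/5}$ factor in the exponent), and the same dual use of both parts of Lemma~\ref{lem:movefar} for the spread event, with the hypothesis of~\eqref{eq:lemmovefarst2} verified via~\eqref{eq:c0defn2} and monotonicity in $t$, exactly as the paper does implicitly. The only cosmetic difference is the cutoff for the split sum in the $N^{-1/2}$ bound: you tune a polynomial cutoff $r_*=\lfloor N^\alpha\rfloor$ with $\alpha=3/(4(12d+7))$, whereas the paper uses the simpler $\lfloor(\log N)^2\rfloor$ — both work because the $N$-independent bound from~\eqref{eq:lemmovefarst1} is already super-polynomially small in $N$ beyond either cutoff, so the choice of cutoff is not forced and the polylog choice avoids the arithmetic of balancing exponents.
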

\begin{proof}
Take $x\in \Z^d$, $k\in \N$ and $r\in \N$.
By a union bound, and since $r'\mapsto g(r')$ is non-increasing for $r'\ge 0$,
\begin{align} \label{eq:lemRnum*}
&\p{R^{\mathrm{num}}_{x,k,r}} \notag \\
&\le \p{\#\{(j,u): j\in \llbracket 2^{k+1}K_1 N\rrbracket,\,u\in \mathcal N^{\mathrm{blue},x,k,j}_{c_0 2^{1-k}}\} >  g(r)^{-1/5} 2^{k+2}K_1 N} 
 + \sum_{r'=\lfloor r/2 \rfloor\vee 1}^\infty q_{r'},
\end{align}
where
\begin{align*}
    & q_{r'}:=\mathbb P \Big(\#\{(j,u): j\in \llbracket 2^{k+1}K_1 N\rrbracket,\,u\in \mathcal N^{\mathrm{blue},x,k,j}_{c_0 2^{1-k}},\notag \\
&\hspace{5cm} \sup_{s\in [0,c_0 2^{1-k}]} \|X^{\mathrm{blue},x,k,j}_{s}(u)\|\ge r'\} > c_1 g(r'+1)^{1/3} 2^{k}K_1 N\Big).
\end{align*}
Recall from our assumption before~\eqref{eq:c0defn} that $c_0 2^{1-k}\le c_0<e^{-1}$.
For the first term on the right hand side of~\eqref{eq:lemRnum*}, by \eqref{eq:geom1a} from Lemma~\ref{lem:geomsum} with $t=c_0 2^{1-k}$, $n=\lfloor 2^{k+1}K_1 N \rfloor $ and $m=g(r)^{-1/5} 2^{k+2}K_1 N$, we have
\begin{align} \label{eq:lemRnum*new}
&\p{\#\{(j,u): j\in \llbracket 2^{k+1}K_1 N\rrbracket,\,u\in \mathcal N^{\mathrm{blue},x,k,j}_{c_0 2^{1-k}}\} > g(r)^{-1/5} 2^{k+2}K_1 N} \notag \\
&\le \left(\frac{e^{1-2 g(r)^{-1/5}}}{1-ec_0 } \right)^{2^{k+1}K_1 N} \notag \\
&\le e^{(1+\frac 1{32} c_1-2 g(r)^{-1/5})2^{k+1}K_1 N} \notag \\
&\le e^{- g(r)^{-1/5} 2^{k}K_1 N},
\end{align}
where the second inequality holds since $1-ec_0\ge e^{-c_1/32}$ by~\eqref{eq:c0defn}, and the last inequality since $c_1<1$, and so $3 g(r)^{-1/5}/2>1+c_1/{32}$.

Now take $r'\in \N$. By~\eqref{eq:lemmovefarst1} in Lemma~\ref{lem:movefar} with $t=c_0 2^{1-k}$, $n=\lfloor 2^{k+1}K_1 N \rfloor$ and $m = c_1 g(r'+1)^{1/3} 2^{k}K_1 N$, we have
\begin{align} \label{eq:uselemfar}
q_{r'}
&\le c_1^{-1} g(r'+1)^{-1/3} (2^{k}K_1 N)^{-1} 2^{k+1}K_1 N e^{c_0} e^{-\frac {r'} {R_1} \log \left( \frac{r'}{R_1 \gamma c_0 e}\right)} \notag \\
&= 2 c_1^{-1}  e^{c_0} g(r'+1)^{-1/3} e^{-\frac {r'} {R_1} \log \left( \frac{r'}{R_1 \gamma c_0 e}\right)}.
\end{align}
We now establish a second upper bound on $q_{r'}$.
Recall from~\eqref{eq:gdefn} that $g(r)=1\wedge r^{-6d-2}$, and recall from~\eqref{eq:c0defn2} that
$$
e^{c_0} e^{-\frac {r'} {R_1} \log \left( \frac{r'}{R_1 \gamma c_0 e}\right)} \le \tfrac 14 c_1 g(r'+1)^{1/3} \quad \forall r'\in \N.
$$
Hence for $r'\in \N$, by~\eqref{eq:lemmovefarst2} in Lemma~\ref{lem:movefar} with $t=c_0 2^{1-k}$, $n=\lfloor 2^{k+1}K_1 N \rfloor$ and $m= c_1 g(r'+1)^{1/3} 2^k K_1 N$, 
\begin{align*}
q_{r'}
&\le 8 (c_1g(r'+1)^{1/3} 2^{k}K_1 N)^{-2} 2^{k+1}K_1 N e^{2c_0}
= 16 c_1^{-2} e^{2c_0}  g(r'+1)^{-2/3} (2^{k}K_1 N)^{-1}. 
\end{align*}
Therefore, by~\eqref{eq:lemRnum*},~\eqref{eq:lemRnum*new} and~\eqref{eq:uselemfar},
and since $c_1<1$ and $g(s)\le 1$ $\forall s\ge 0$,
\begin{align} \label{eq:Rnumsum}
\p{R^{\mathrm{num}}_{x,k,r}} 
&\le e^{- g(r)^{-1/5} 2^{k}K_1 N} \notag \\
&\quad +\sum_{r'=\lfloor r/2 \rfloor \vee 1}^\infty 16 c_1^{-2} e^{2c_0}  g(r'+1)^{-2/3}\min \Big(e^{-\frac {r'} {R_1} \log \left( \frac{r'}{R_1 \gamma c_0 e}\right)}, (2^{k}K_1 N)^{-1} \Big) .
\end{align}
By splitting the sum according to whether $r'\le \lfloor (\log N)^2 \rfloor$, and since $r'\mapsto g(r')$ is non-increasing and $g(r)\le 1$, we can write
\begin{align*}
\p{R^{\mathrm{num}}_{x,k,r}} 
&\le e^{- 2^{k}K_1 N}
+\1_{\lfloor (\log N)^2 \rfloor>\lfloor r/2 \rfloor \vee 1}
\sum_{r'=\lfloor r/2 \rfloor \vee 1}^{ \lfloor (\log N)^2 \rfloor}
16 c_1^{-2} e^{2c_0}  g((\log N)^2+1)^{-2/3} (2^{k}K_1 N)^{-1} \notag \\
&\quad +\sum_{r'=(\lfloor r/2 \rfloor \vee 1)\vee \lfloor (\log N)^2 \rfloor}^\infty 
16 c_1^{-2} e^{2c_0} g(r'+1)^{-2/3} e^{-\frac {r'} {R_1} \log \left( \frac{r'}{R_1 \gamma c_0 e}\right)}.
\end{align*}
Hence, for $N$ sufficiently large, for any $r\in \N$, $k\in \N$ and $x\in \Z^d$ we have
\begin{align*}
\p{R^{\mathrm{num}}_{x,k,r}} 
&\le N^{-1/2}.
\end{align*}
Moreover, for $N$ sufficiently large, if $r\in \N$ is sufficiently large that $f(r)^2 \le N^{-1/2}$, by~\eqref{eq:Rnumsum} we also have
\begin{align*}
\p{R^{\mathrm{num}}_{x,k,r}} 
&\le e^{- g(r)^{-1/5} 2^{k}K_1 N} +\sum_{r'=\lfloor r/2 \rfloor \vee 1 }^\infty 16 c_1^{-2}  e^{2c_0} (r'+1)^{2(6d+2)/3} e^{-\frac {r'} {R_1} \log \left( \frac{r'}{R_1 \gamma c_0 e}\right)}\\
&\le f(r)^2,
\end{align*}
since $f(r)^2=e^{-\frac{2r}{9R_1}\log(r+1)}$ by~\eqref{eq:fdefn}
and $g(r)^{-1/5}=r^{(6d+2)/5}$ by~\eqref{eq:gdefn}.
This completes the proof.
\end{proof}

\begin{lemma} \label{lem:Rxykr}
For $N$ sufficiently large, for $r,k\in \N$ with $k\le k_r$ and $x,y\in \Z^d$,
$$
\p{R_{x,y,k,r}}\le 2^k \min( f(r)^{2}, N^{-1/2}).
$$
\end{lemma}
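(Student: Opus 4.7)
My plan is to decompose $R_{x,y,k,r}$ as the union of its two constituent events and treat each separately, following the template of the proof of Lemma~\ref{lem:Rnum}. Let $n := \lfloor g(r)^{-1/5} 2^{k+3} K_1 N\rfloor$ denote the number of initial ``red'' particles under consideration.

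For the ``total number'' event (the second event in the definition of $R_{x,y,k,r}$), I will apply the bound \eqref{eq:geom1a} from Lemma~\ref{lem:geomsum} with $t = c_0$ and threshold $m_{\text{tot}} := \tfrac 12 k_r^{-1} |\ball_{r/2}(0)|^{-1} g(r)^{-1} K_1^2 N$. Writing
\[
m_{\text{tot}} = 2\cdot \bigl(\tfrac 14 k_r^{-1} |\ball_{r/2}(0)|^{-1} g(r)^{-4/5} K_1\bigr) \cdot g(r)^{-1/5} K_1 N
\]
and invoking \eqref{eq:K0defn2} together with $k \le k_r$, the parenthesized factor exceeds $2^{k+4}$, so $m_{\text{tot}} \ge 2^{k+5} g(r)^{-1/5} K_1 N \ge 4n$. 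Combining with $(1-c_0 e)^{-1}\le e^{c_1/32}$ from \eqref{eq:c0defn}, Lemma~\ref{lem:geomsum} produces a bound of order $e^{-\Omega(N)}$, far smaller than what is claimed.

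For the ``$\exists r'$'' event, I will take a union bound over integer $r' \ge \lceil r/2\rceil$ and write $p_{r'}$ for the probability that the number of particle--descendant pairs with displacement at least $r'$ exceeds $m_{r'} := k_{2r'}^{-1}|\ball_{r'}(0)|^{-1} g(2r') K_1^2 N$. I will bound $p_{r'}$ by the minimum of the Markov-type bound \eqref{eq:lemmovefarst1} and the Chebyshev-type bound \eqref{eq:lemmovefarst2} from Lemma~\ref{lem:movefar}. The applicability condition for \eqref{eq:lemmovefarst2}, namely $ne^{c_0} e^{-\frac{r'}{R_1}\log(r'/(R_1\gamma c_0 e))} \le m_{r'}/2$, will be verified using assumption \eqref{eq:K0defn3} applied with $s = r'-1$, together with the facts that $s\mapsto -\frac{s}{R_1}\log(s/(R_1\gamma c_0 e))$ is decreasing on $[1,\infty)$ (since $R_1\gamma c_0 e < 1$), that $2^k \le 2^{k_{2r'}}$ (since $k \le k_r \le k_{2r'}$), and that $g(2r')\le g(r)$ (since $2r' \ge r$).

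The final step is to sum the resulting bounds over $r'$, mimicking the split in the proof of Lemma~\ref{lem:Rnum}: I cut the range of $r'$ at $\lfloor(\log N)^2\rfloor$, using the Chebyshev bound for smaller $r'$ (which carries a $1/N$ factor together with polynomial growth in $r'$) and the Markov bound for larger $r'$ (where the super-polynomial decay $e^{-r'/R_1 \cdot \log r'}$ dominates). The first part will be summed to at most $2^k N^{-1/2}$ for $N$ large, while the tail will be dominated by $2^k f(r)^2$, using $f(r)^2 = e^{-\frac{2r}{9R_1}\log(r+1)}$ and $g$'s explicit polynomial form from \eqref{eq:gdefn}. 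I expect the main obstacle to be the bookkeeping of polynomial prefactors — namely, verifying that the products $k_{2r'}^2 |\ball_{r'}(0)|^2 g(2r')^{-2} g(r)^{-1/5}$ (from Chebyshev) and $k_{2r'} |\ball_{r'}(0)| g(2r')^{-1} g(r)^{-1/5}$ (from Markov) are absorbed by their respective decaying factors — but this is directly parallel to the treatment of $q_{r'}$ in Lemma~\ref{lem:Rnum} and poses no conceptual difficulty, with the extra $2^k$ in the statement precisely accounting for the extra $2^k$ in the size of $n$ relative to the analogous quantity there.
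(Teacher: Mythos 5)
Your plan follows the paper's own proof very closely: decompose $R_{x,y,k,r}$ into the ``total number'' event and the ``$\exists r'$'' event, handle the first with \eqref{eq:geom1a} and the choice of $K_1$ in \eqref{eq:K0defn2}, bound the probability of the second at each integer $r'$ by the minimum of \eqref{eq:lemmovefarst1} and \eqref{eq:lemmovefarst2}, verify the applicability hypothesis of \eqref{eq:lemmovefarst2} via \eqref{eq:K0defn3} and monotonicity, and finally split the sum over $r'$ at $\lfloor(\log N)^2\rfloor$ (with a separate Markov-only argument giving the $f(r)^2$ bound when $r$ is large). This is exactly the paper's argument.

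One bookkeeping point is worth fixing before you write it out. Since the event quantifies over \emph{real} $r'\ge r/2$ but the union bound runs over integers, the threshold at integer $r'$ must be taken a notch smaller than the value the defining inequality uses at $r'$: if $\text{count}(r^*)>m(r^*)$ for some real $r^*$, you only get $\text{count}(\lfloor r^*\rfloor) > m(\lfloor r^*\rfloor +1)$, not $\text{count}(\lfloor r^*\rfloor)>m(\lfloor r^*\rfloor)$. The paper accordingly defines $\tilde q_{r'}$ with the threshold $k_{2(r'+1)}^{-1}|\ball_{r'+1}(0)|^{-1}g(2(r'+1))K_1^2 N$, whereas your $p_{r'}$ uses $k_{2r'}^{-1}|\ball_{r'}(0)|^{-1}g(2r')K_1^2 N$, which makes $\sum_{r'}p_{r'}$ fail to dominate the union. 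Switching to the smaller threshold also removes the need to apply \eqref{eq:K0defn3} with $s=r'-1$ (and the attendant gap at $r'=1$): you can then invoke it directly with the parameter equal to $r'\ge 1$, as the paper does. None of this changes the order of the final estimates, so it is purely a matter of aligning the indexing.
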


\begin{proof}
Recall the definition of $k_r$ in~\eqref{eq:krdefn}.
Take $x,y\in \Z^d$.
For $r,k\in \N$, by a union bound, and since $r'\mapsto g(r')$ is non-increasing and $r'\mapsto k_{r'}$ is non-decreasing,
\begin{align} \label{eq:lemRxyinit}
&\p{R_{x,y,k,r}} \notag \\ &\le \p{\#\{(j,u): j\in \llbracket g(r)^{-1/5} 2^{k+3}K_1 N\rrbracket,\,u\in \mathcal N^{\mathrm{red},x,y,k,j}_{c_0}\}\ge \tfrac 12 k_r^{-1} |\ball_{r/2}(0)|^{-1} g(r)^{-1}K_1^2 N}
\notag \\
&\quad + \sum_{r'=\lfloor r/2\rfloor \vee 1}^\infty \tilde q_{r'},
\end{align}
where
\begin{align*}
    &\tilde q_{r'}:=\mathbb P \big(\#\{(j,u): j\in \llbracket  g(r)^{-1/5} 2^{k+3}K_1 N\rrbracket,\,u\in \mathcal N^{\mathrm{red},x,y,k,j}_{c_0}, \notag\\
&\hspace{4cm}\sup_{t\in [0,c_0]}\|X^{\mathrm{red},x,y,k,j}_t(u)\|\ge r' \} > k^{-1}_{2(r'+1)}|\ball_{r'+1}(0)|^{-1}g(2(r'+1))K_1^2 N \big).
\end{align*}
For $r'\in \N$, by~\eqref{eq:lemmovefarst1} in Lemma~\ref{lem:movefar} with $t=c_0$, $n=\lfloor g(r)^{-1/5} 2^{k+3}K_1 N\rfloor$ and $m=k^{-1}_{2(r'+1)}|\ball_{r'+1}(0)|^{-1}$ $g(2(r'+1))K_1^2 N$,
\begin{align} \label{eq:lemRxyfar}
\tilde q_{r'}
 &\le
(k^{-1}_{2(r'+1)}|\ball_{r'+1}(0)|^{-1}g(2(r'+1))K_1^2 N)^{-1}g(r)^{-1/5} 2^{k+3}K_1 N e^{c_0} e^{-\frac {r'} {R_1} \log \left( \frac {r'} {R_1 \gamma c_0 e} \right) }\notag \\
&=k_{2(r'+1)}|\ball_{r'+1}(0)|g(2(r'+1))^{-1}K_1^{-1}g(r)^{-1/5}2^{k+3}e^{c_0}
e^{-\frac {r'} {R_1} \log \left( \frac {r'} {R_1 \gamma c_0 e} \right) }.
\end{align}
We now bound the first term on the right hand side of~\eqref{eq:lemRxyinit}.
Recall from our assumption before~\eqref{eq:c0defn} that $c_0 e<1$.
Therefore
by \eqref{eq:geom1a} from Lemma~\ref{lem:geomsum} with $t=c_0$, $n=\lfloor g(r)^{-1/5} 2^{k+3}K_1 N \rfloor$ and $m=\tfrac 12 k_r^{-1}|\ball_{r/2}(0)|^{-1} g(r)^{-1} K_1^2 N$, if $k\le k_r$,
\begin{align} \label{eq:geomsum*R}
&\p{\#\{(j,u): j\in \llbracket g(r)^{-1/5} 2^{k+3}K_1 N\rrbracket,\,u\in \mathcal N^{\mathrm{red},x,y,k,j}_{c_0}\}\ge \tfrac 12 k_r^{-1}|\ball_{r/2}(0)|^{-1} g(r)^{-1} K_1^2 N} \notag \\
&\le e^{-\frac 12 k_r^{-1}|\ball_{r/2}(0)|^{-1} g(r)^{-1} K_1^2 N}\left(\frac {e} {1-ec_0 }\right)^{g(r)^{-1/5} 2^{k_r+3}K_1 N} \notag \\
&\le e^{(-\frac 12 k_r^{-1}|\ball_{r/2}(0)|^{-1} g(r)^{-4/5} K_1+(1+\frac 1 {32}c_1)2^{k_r+3})g(r)^{-1/5} K_1 N} \notag \\
&\le e^{-\frac 14 k_r^{-1}|\ball_{r/2}(0)|^{-1} g(r)^{-1} K_1^2N},
\end{align}
where the second inequality follows by~\eqref{eq:c0defn} and the last inequality by~\eqref{eq:K0defn2} and since $c_1<1$.
It follows from~\eqref{eq:lemRxyinit} and~\eqref{eq:lemRxyfar} that for $N$ sufficiently large, for $r\in \N$ sufficiently large that $f(r)^{2}\le N^{-1/2}$ and $k\in \N$ with $k\le k_r$,
\begin{align} \label{eq:Rxykrbd1}
\p{R_{x,y,k,r}}&\le 2^{k+3}e^{c_0}K_1^{-1}g(r)^{-1/5}\sum_{r'=\lfloor r/2\rfloor \vee 1}^\infty k_{2(r'+1)}|\ball_{r'+1}(0)| g(2(r'+1))^{-1}e^{-\frac {r'} {R_1} \log \left( \frac {r'} {R_1 \gamma c_0 e} \right) }
\notag \\
&\qquad +e^{-\frac 14 k_r^{-1}|\ball_{r/2}(0)|^{-1} r^{6d+2} K_1^2N}
\notag \\
&\le 2^k f(r)^{2}.
\end{align}
For $r\in \N$ and $r'\ge \lfloor r/2\rfloor \vee 1$, we have $r\le 2(r'+1)$, and so
$g(r)^{1/5}\ge g(2(r'+1))^{1/5}$ and $k_r\le k_{2(r'+1)}$. Therefore
by~\eqref{eq:K0defn3}, for $k\in \N$ with $k\le k_r$,
$$
g(r)^{-1/5}2^{k+3}e^{c_0}e^{-\frac {r'} {R_1} \log \left( \frac {r'} {R_1 \gamma c_0 e} \right) }\le \tfrac 12 k_{2(r'+1)}^{-1} |\ball_{r'+1}(0)|^{-1} g(2(r'+1))K_1.
$$
Hence by~\eqref{eq:lemmovefarst2} in Lemma~\ref{lem:movefar} with $n=\lfloor g(r)^{-1/5}2^{k+3}K_1 N\rfloor$, $m=k_{2(r'+1)}^{-1}|\ball_{r'+1}(0)|^{-1}g(2(r'+1))K_1^2 N$ and $t=c_0$, we have that for $r,k \in \N$ with $k \le k_r$ and $r'\ge \lfloor r/2\rfloor \vee 1$,
\begin{align*}
\tilde q_{r'}
 &\le 
8e^{2c_0}k_{2(r'+1)}^2 |\ball_{r'+1}(0)|^2 g(2(r'+1))^{-2}K_1^{-4}N^{-2}\cdot g(r)^{-1/5}2^{k+3}K_1 N
\\
&=8e^{2c_0}2^{k+3} K_1^{-3}k_{2(r'+1)}^2 |\ball_{r'+1}(0)|^2 g(2(r'+1))^{-2}g(r)^{-1/5}N^{-1}.
\end{align*}
By~\eqref{eq:lemRxyinit},~\eqref{eq:lemRxyfar} and~\eqref{eq:geomsum*R},
it follows that for $N$ sufficiently large, for $r,k\in \N$ with $f(r)^{2}\ge N^{-1/2}$ and $k\le k_r$,
we have $r\le (\log N)^2$ and so
\begin{align*}
&\p{R_{x,y,k,r}} \\
&\le e^{-\frac 14 k_r^{-1} |\ball_{r/2}(0)|^{-1}g(r)^{-1}K_1^2 N}\\
&\quad +\sum_{r'=\lfloor r/2\rfloor \vee 1}^{ \lfloor (\log N)^2 \rfloor}8e^{2c_0}2^{k+3} K_1^{-3}k_{2(r'+1)}^2 |\ball_{r'+1}(0)|^2 g(2(r'+1))^{-2}g(r)^{-1/5}N^{-1}\\
&\quad +
2^{k+3}e^{c_0}K_1^{-1}g(r)^{-1/5}\sum_{r'= \lfloor (\log N)^2 \rfloor}^\infty  k_{2(r'+1)}|\ball_{r'+1}(0)| g(2(r'+1))^{-1}e^{-\frac {r'} {R_1} \log \left( \frac {r'} {R_1 \gamma c_0 e} \right) }
\\
&\le
2^{k} N^{-1/2},
\end{align*}
which, together with~\eqref{eq:Rxykrbd1}, completes the proof.
\end{proof}
We can now combine Lemmas~\ref{lem:Rno},~\ref{lem:Rnum} and~\ref{lem:Rxykr} to prove Lemma~\ref{lem:bad_red}.
\begin{proof}[Proof of Lemma~\ref{lem:bad_red}]
Fix $\ep>0$, and then take
$x\in \Z^d$.
Recall from~\eqref{eq:krdefn} that $k_r = \lfloor (2d+3)\log_2 (r+1) \rfloor $ for $r\ge 1$; let $k_0=0$.
By a union bound, for $r\in \N$,
\begin{align} \label{eq:Pxrsum}
\p{R_{x,r} }
&\le  \sum_{k=k_r+1}^{\infty} \p{R^{\mathrm{cr}}_{x,k}}+ \sum_{k=1}^{k_r} \p{R^{\mathrm{num}}_{x,k,r} }+ \sum_{y\in \ball_{r/2}(x)}\sum_{k=1}^{k_r-1} \p{R_{x,y,k,r}},
\end{align}
and $\p{R_{x,0} }\le  \sum_{k=k_0+1}^{\infty} \p{R^{\mathrm{cr}}_{x,k}}$.
For the first sum on the right hand side of~\eqref{eq:Pxrsum}, for $N$ sufficiently large, by Lemma~\ref{lem:Rno} we have
\begin{align*}
\sum_{k=k_r+1}^{\infty} \p{R^{\mathrm{cr}}_{x,k}}
\le \sum_{k=k_r+1}^{\infty} e^{-(2^k N)^{1/(2d+2)}}
&\le \tfrac 13 \min\big(\ep, f(r+1+\ep ^{-1}) \big)
\end{align*}
for $N$ sufficiently large, by~\eqref{eq:fdefn} and since $2^{(k_r+1)/(2d+2)}\ge (r+1)^{(2d+3)/(2d+2)}$.
For the second sum on the right hand side of~\eqref{eq:Pxrsum}, for $N$ sufficiently large, by Lemma~\ref{lem:Rnum}, for $r\in \N$,
\begin{align*}
\sum_{k=1}^{k_r} \p{R^{\mathrm{num}}_{x,k,r} }
&\le k_r \min(f(r)^2,N^{-1/2})\le \tfrac 13 \min\big(\ep, f(r+1+\ep ^{-1}) \big)
\end{align*}
for $N$ sufficiently large, since $k_r f(r)^2 \le \tfrac 13 \min\big(\ep, f(r+1+\ep ^{-1}) \big)$ for $r$ sufficiently large.
For the third sum on the right hand side of~\eqref{eq:Pxrsum}, for $N$ sufficiently large, by Lemma~\ref{lem:Rxykr}, for $r\in \N$,
\begin{align*}
\sum_{y\in \ball_{r/2}(x)}\sum_{k=1}^{k_r-1} \p{R_{x,y,k,r}}
&\le |\ball_{r/2}(0)|k_r 2^{k_r} \min(f(r)^{2},N^{-1/2})\\
&\le \tfrac 13 \min\big(\ep, f(r+1+\ep ^{-1}) \big)
\end{align*}
for $N$ sufficiently large, since $|\ball_{r/2}(0)|k_r 2^{k_r} f(r)^{2}\le \tfrac 13 \min\big(\ep, f(r+1+\ep ^{-1}) \big)$ for $r$ sufficiently large.
Collecting these inequalities and plugging them into \eqref{eq:Pxrsum} completes the proof.
\end{proof}

\subsection{Proof of Lemmas \ref{lem:blue1} and \ref{lem:bad_blue}} \label{subsec:lembadblue}

We first prove Lemma~\ref{lem:blue1}. We will see that it is basically a consequence of the following large deviation result, which is a direct consequence of Theorem~1.2 in \cite{deacosta} (see also Theorem~13 part 1 in~\cite{borovkov}).

Let $(X_t)_{t\ge 0}$ be a continuous-time random walk starting at 0 with jump rate $\gamma$ and jump kernel $p$. Recall from our assumptions \new{in~\eqref{eq:p_assumptions}} that $p$ has finite range, and so in particular $\E{e^{\theta \|X_1\|}}<\infty$ $\forall \theta>0$. 
Also, recall from~\eqref{eq:Idefn} that for $v\in \R^d$, we let $I (v)=\sup_{u \in \R^d} \left(\langle v,u \rangle - \log \E{e^{\langle u, X_1 \rangle}}\right).$
\begin{lemma} \label{lem:largedev}
For $v\in \R^d$ and $\epsilon>0$,
$$
\liminf_{t\to \infty} \frac 1 t \log \p{\left\| \frac{X_{ts}}t -sv \right\|<\epsilon \; \forall s\in [0,1]}\ge -I (v).
$$
\end{lemma}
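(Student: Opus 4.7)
The plan is to apply the classical Cramér exponential change of measure together with a functional central limit theorem under the tilted law. If $I(v)=\infty$ the inequality is trivial, and the boundary case $v\in\partial\dom(I)$ reduces to the interior case by convexity: setting $v_n = (1-1/n)v + (1/n)\mu$ places $v_n$ in $\dom(I)^\circ$ (using $\mu\in\dom(I)^\circ$ from Lemma~\ref{lem:basicLambda}), gives $I(v_n)\to I(v)$ by convexity and lower semi-continuity, and ensures that the analogous event for $v_n$ with radius $\epsilon/2$ is eventually contained in the event for $v$ with radius $\epsilon$. I therefore henceforth assume $v\in\dom(I)^\circ$. Writing $\Lambda(u) = \log \E{e^{\langle u,X_1\rangle}} = \gamma\sum_x p(x)(e^{\langle u,x\rangle}-1)$ as in~\eqref{eq:Cramer}, I observe that $\Lambda$ is smooth and strictly convex on $\R^d$ since $p$ spans $\R^d$ by assumption, so there exists a unique $u^*\in\R^d$ with $\nabla\Lambda(u^*) = v$; Legendre duality then gives $I(v) = \langle u^*,v\rangle - \Lambda(u^*)$.

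Next I would define a tilted probability measure $\widetilde{\P}$ on path space via the exponential martingale $e^{\langle u^*, X_t\rangle - t\Lambda(u^*)}$. A short computation shows that under $\widetilde{\P}$, $(X_s)_{s\ge 0}$ is again a continuous-time random walk, now with jump rate $\gamma\sum_x p(x)e^{\langle u^*,x\rangle}$ and jump kernel proportional to $p(x)e^{\langle u^*,x\rangle}$, and its first moment satisfies $\widetilde{\E}[X_1] = \nabla\Lambda(u^*)=v$. Since $p$ has finite range, $X_1$ has finite variance (in fact, all exponential moments) under $\widetilde{\P}$.

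For $C>0$ to be chosen sufficiently large, consider the event $B_t := \{\sup_{s\in[0,1]}\|X_{ts} - tsv\|\le C\sqrt{t}\}$. Donsker's functional CLT applied to $(X_{ts}-tsv)/\sqrt{t}$ under $\widetilde{\P}$, combined with the continuous mapping theorem, yields $\liminf_{t\to\infty}\widetilde{\P}(B_t)>0$ for $C$ large. Moreover, for all $t$ large enough that $C\sqrt{t}<\epsilon t$, one has $B_t\subseteq A_t:=\{\|X_{ts}/t-sv\|<\epsilon\ \forall s\in[0,1]\}$. Applying the change of measure,
\[
\p{A_t}\ge \p{B_t} = \widetilde{\E}\left[e^{-\langle u^*,X_t\rangle + t\Lambda(u^*)}\1_{B_t}\right] \ge e^{-tI(v) - C\sqrt{t}\|u^*\|}\widetilde{\P}(B_t),
\]
where the last inequality uses the deterministic estimate $\langle u^*,X_t\rangle \le t\langle u^*,v\rangle + C\sqrt{t}\|u^*\|$ valid on $B_t$ combined with $I(v)=\langle u^*,v\rangle - \Lambda(u^*)$. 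Taking logarithms, dividing by $t$ and sending $t\to\infty$ then yields the required bound, since the $C\|u^*\|/\sqrt{t}$ correction vanishes and $t^{-1}\log \widetilde{\P}(B_t) \to 0$.

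The hardest step is probably not conceptual but the careful bookkeeping at the boundary of $\dom(I)$, together with the verification that the functional CLT applies under the tilted law (routine given finite variance and independent increments). In practice, given the standard nature of the argument, I would appeal directly to Theorem~1.2 of \cite{deacosta} rather than redo the computation in full, as is done in the paper.
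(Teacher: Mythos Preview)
Your argument is correct. The exponential tilting at the Legendre-dual point $u^*$, followed by a functional CLT under the tilted law, is the classical route to sample-path large deviation lower bounds for random walks with light tails; your handling of the interior case is clean, and the reduction of the boundary case $v\in\partial\dom(I)$ to the interior via the convex combination $v_n=(1-1/n)v+(1/n)\mu$ and the inclusion of events is also fine.

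The paper, however, does not prove the lemma at all: it simply cites it as a direct consequence of Theorem~1.2 in \cite{deacosta} (with \cite{borovkov} as an alternative reference). You anticipated this in your last sentence. So the comparison is: the paper outsources the entire argument to a black-box sample-path LDP for processes with i.i.d.\ increments, whereas you unpack the standard proof behind that black box. Your version is self-contained and makes clear exactly where the finite-range assumption on $p$ (hence finite exponential moments and finite variance under the tilt) is used; the paper's version is one line. Either is appropriate here, and you lose nothing by citing the result as the paper does.
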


\begin{proof}[Proof of Lemma~\ref{lem:blue1}]
Let $v_0\in\R^d\backslash\{0\}$ and let $a\in [0,a_{v_0})$. Recall from Lemma~\ref{lem:basicLambda} and the remark after \eqref{eq:a0defn} that $I(\mu) = 0$ and $I(\mu+a_{v_0} v_0) \le 1$. By convexity of $I$, we then have $I(\mu+av_0) \le a/a_{v_0} < 1$. Furthermore, $\mu+av_0\in\dom(I)^\circ$ because it is a non-trivial convex combination of $\mu\in\dom(I)^\circ$ (by Lemma~\ref{lem:basicLambda}) and $\mu+a_{v_0} v_0\in \dom(I)$, see Theorem~6.1 in \cite{Rockafellar1970}. Hence, using the continuity of $I$ on $\dom(I)^\circ$ provided by Lemma~\ref{lem:basicLambda}, there exists $\delta_0>0$ such that $I(v) < 1-4\delta_0$ for all $v \in V$, where
\[
V = \{v\in \R^d:\|v-(\mu+av_0)\|\le \delta_0\}.
\]

Let $\epsilon>0$ and take a finite $\epsilon/2$-mesh $V_\epsilon$ of $V$, i.e.~$V_\epsilon$ is a finite subset of $V$ such that for all $v\in V$ there exists $v^*\in V_\epsilon $ with $\|v-v^*\|<\epsilon/2$.
By Lemma~\ref{lem:largedev}, we can take $t_0>1$ sufficiently large that for $t\ge t_0$, for each $v\in V_\epsilon$,
\begin{equation} \label{eq:lemblue1*}
\p{\left\| \frac{X_{ts}}t -sv \right\|<\tfrac 12 \epsilon \; \forall s\in [0,1]}\ge e^{-(1-3\delta_0)t}.
\end{equation}
Take $t\ge t_0$ and $x,y\in \R^d$ with $\|y-x-(\mu+av_0)t\|\le \delta_0 t$. Then there exists $v\in V_\epsilon$ with $\|v-(y-x)/t\|<\epsilon/2$ and so
\begin{align*}
\p{X_s \in \ball_{\epsilon t}((y-x)s/t ) \, \, \forall s\in [0,t]}
&\ge \p{\left\| \frac{X_{ts}}t -s v \right\|<\tfrac 12 \epsilon \; \forall s\in [0,1]}\\
&\ge e^{-(1-3\delta_0)t},
\end{align*}
by~\eqref{eq:lemblue1*}. This finishes the proof.
\end{proof}

We now go on to prove Lemma~\ref{lem:bad_blue}. We will prove each item seperately.
As in Section~\ref{subsec:lembadred},
let $\mathcal T^1$, $\mathcal T^2,\ldots$ be i.i.d.~copies of the BRW tree with resiliences $\mathcal T$, with $\mathcal T^i = ((\mathcal N^i_t)_{t\ge0},(X^i_t(u),t\ge0,u\in \mathcal N^i_t),(\alpha^i(u),u\in \mathcal U),(\beta^i(u),u\in \mathcal U),(\rho^i(u),u\in \mathcal U))$.
Take $v_0\in \R^d \backslash \{0\}$ and $0\le \aminus <\aplus <a_{v_0}$ as in Section~\ref{sec:propclosedinit}.
Let $\delta_0 = \min(\delta_0(\new{\gamma,p,}v_0,\aminus),\delta_0(\new{\gamma,p,}v_0,\aplus))>0$ and for $\epsilon>0$, let $t_0(\epsilon)=\max(t_0(\epsilon,\new{\gamma,p,}v_0,\aminus),t_0(\epsilon,\new{\gamma,p,}v_0,\aplus))$, as defined in Lemma~\ref{lem:blue1}.
Item~1 of Lemma~\ref{lem:bad_blue} will follow easily from the following result.
\begin{lemma} \label{lem:blue2}
For $\epsilon >0$ and $J_1,J_2\in \N$, for $t\ge t_0(\epsilon) \vee (\delta_0^{-1} \log (2J_2)+1)$ and $x,y \in \Z^d$ with $\|y-x-(\mu +av_0)t\|<\delta_0 t$ for some $a\in \{\aminus,\aplus\}$,
\begin{align*}
&\p{ \#\Big\{ (j,u): j\in\llbracket J_1\rrbracket,\, u\in \mathcal N_t^{j}, 
 \rho^{j}(v) \ge \delta_0 \,\forall v \prec u,
X^{j}_s(u) \in \ball_{\epsilon t}((y-x)s/t)\,\forall s\in[0,t]\Big\} < J_1J_2}\\
&\qquad \le 2e^{2t}J_1^{-1}.
\end{align*}
\end{lemma}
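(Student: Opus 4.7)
The plan is to set $M_j := \#\{(j',u): j'=j,\ u\in\mathcal N_t^{j'},\ \rho^{j'}(v)\ge\delta_0\ \forall v\prec u,\ X_s^{j'}(u)\in\ball_{\epsilon t}((y-x)s/t)\ \forall s\in[0,t]\}$ and $S:=\sum_{j=1}^{J_1}M_j$, so that the $(M_j)$ are i.i.d.\ and the probability to bound is $\p{S<J_1J_2}$. I will apply Chebyshev's inequality: once we show $\E{M_1}\ge 2J_2$, we have $\E{S}-J_1J_2\ge J_1J_2$, so
\[
\p{S<J_1J_2}\le \frac{\mathrm{Var}(S)}{(J_1J_2)^2}=\frac{\mathrm{Var}(M_1)}{J_1J_2^2}\le \frac{\E{M_1^2}}{J_1 J_2^2}\le \frac{2e^{2t}}{J_1 J_2^2}\le \frac{2e^{2t}}{J_1},
\]
using $J_2\ge 1$ in the last step, and using that $\E{M_1^2}\le \E{|\mathcal N_t^1|^2}=(2-e^{-t})e^{2t}\le 2e^{2t}$ via Lemma~\ref{lem:yule}. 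The task thus reduces to the first-moment lower bound.

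For this, I will split the three independent sources of randomness built into the BRW with resiliences of Section~\ref{sec:construction}: the Yule branching structure, the random walk increments along lineages, and the resiliences $\{\rho^1(v)\}_{v\in\mathcal U}$. For any fixed $u\in\mathcal U$, conditional on $u\in\mathcal N_t^1$ the trajectory $(X_s^1(u))_{s\in[0,t]}$ is distributed as a continuous-time random walk with rate $\gamma$ and kernel $p$ started at $0$, and is independent of $|u|$; the resilience event $\{\rho^1(v)\ge \delta_0\ \forall v\prec u\}$ has probability $e^{-\delta_0(|u|+1)}$ since it is an intersection of $|u|+1$ independent $\{\mathrm{Exp}(1)\ge\delta_0\}$ events. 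Summing over $u$, and letting $(X_s)_{s\ge 0}$ denote the underlying random walk,
\[
\E{M_1}=e^{-\delta_0}\cdot \p{X_s\in \ball_{\epsilon t}((y-x)s/t)\ \forall s\in[0,t]}\cdot \E{\textstyle\sum_{u\in\mathcal N_t^1} e^{-\delta_0|u|}}.
\]
A classical calculation for the Yule process shows that $G(z,t):=\E{\sum_u z^{|u|}}$ satisfies $\partial_t G=(2z-1)G$ with $G(z,0)=1$, hence $G(z,t)=e^{(2z-1)t}$. Plugging in $z=e^{-\delta_0}$ and invoking Lemma~\ref{lem:blue1} (which applies since $t\ge t_0(\epsilon)$ and $\|y-x-(\mu+av_0)t\|<\delta_0 t$),
\[
\E{M_1}\ge e^{-\delta_0}\cdot e^{-(1-3\delta_0)t}\cdot e^{(2e^{-\delta_0}-1)t}=e^{-\delta_0}\cdot e^{t(2e^{-\delta_0}-2+3\delta_0)}\ge e^{\delta_0(t-1)},
\]
where the last inequality uses $e^{-\delta_0}\ge 1-\delta_0$ to conclude $2e^{-\delta_0}-2+3\delta_0\ge \delta_0$. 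The hypothesis $t\ge \delta_0^{-1}\log(2J_2)+1$ then yields $\E{M_1}\ge 2J_2$, completing the argument.

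The main subtlety I anticipate is that the path-functional many-to-one lemma as stated (Lemma~\ref{lem:many-to-one}) cannot absorb functionals depending on the generation $|u|$, so the resilience factor $e^{-\delta_0(|u|+1)}$ cannot be handled by a single direct application. The workaround is the independence split above, together with the explicit Yule generating-function identity $G(z,t)=e^{(2z-1)t}$; once those are in place the remaining ingredients (Chebyshev, the second-moment bound from Lemma~\ref{lem:yule}, and the trajectory estimate from Lemma~\ref{lem:blue1}) combine directly.
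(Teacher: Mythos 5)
Your proposal is correct and follows essentially the same route as the paper: you compute the first moment of the weighted particle count by decoupling the tree, spatial motion, and resiliences, and close with Chebyshev via the second-moment bound from Lemma~\ref{lem:yule} and the trajectory estimate from Lemma~\ref{lem:blue1}. The only cosmetic difference is how the mean is computed — you use the Yule generating-function identity $\E{\sum_u z^{|u|}} = e^{(2z-1)t}$ at $z=e^{-\delta_0}$, whereas the paper views the thinned particle count $\bar A^j_s$ as a binary branching process with birth rate $e^{-2\delta_0}$ and death rate $(1-e^{-\delta_0})^2$ and reads off the mean growth rate $2e^{-\delta_0}-1$; these are the same calculation, and your remark that the stated path-functional many-to-one lemma does not directly handle the generation-dependent resilience weight is an accurate observation about the paper's (slightly informal) phrasing.
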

\begin{proof}
Take $\epsilon>0$, $J_1,J_2\in \N$, $t\ge t_0(\epsilon) \vee (\delta_0^{-1} \log (2J_2)+1)$, $a\in \{\aminus,\aplus\}$ and $x,y \in \Z^d$ with $\|y-x-(\mu +av_0)t\|<\delta_0 t$.
For $j\in \llbracket J_1\rrbracket$, let 
$$ A^j_t = \#\Big\{ u\in \mathcal N_t^{j}: 
 \rho^{j}(v) \ge \delta_0 \,\forall v \prec u,
X^{j}_s(u) \in \ball_{\epsilon  t}((y-x)s/t)\; \forall s\in[0,t]\Big\},
$$
and for $s\ge 0$, let
$$ \bar{A}^j_s = \#\Big\{ u\in \mathcal N_s^{j}: 
 \rho^{j}(v) \ge \delta_0 \,\forall v \prec u\Big\}.
$$
Then $(\bar{A}^j_s)_{s\ge 0}$ is the number of particles in a continuous-time branching process in which each individual branches into two offspring at rate $e^{-2\delta_0}$ and dies at rate $(1-e^{-\delta_0})^2$, with $\E{\bar{A}^j_0}=e^{-\delta_0}$.
It follows that for $s\ge 0$,
$$
\E{\bar{A}^j_s}=\E{\bar{A}^j_0} e^{(e^{-2\delta_0}-(1-e^{-\delta_0})^2)s}=e^{-\delta_0} e^{(2e^{-\delta_0}-1)s}.
$$
By the many-to-one lemma (Lemma~\ref{lem:many-to-one}), and then by Lemma~\ref{lem:blue1},
for each $j\in \llbracket J_1\rrbracket$, it follows that
$$
\E{A^j_t}
= e^{-\delta_0} e^{(2e^{-\delta_0}-1)t} \p{X_s \in \ball_{\epsilon  t}((y-x)s/t)\;  \forall s\in [0,t]}
\ge e^{-\delta_0} e^{(2e^{-\delta_0}-1)t}e^{-(1-3\delta_0)t}\ge e^{ \delta_0 (t-1)},
$$
where the last inequality follows since $e^{-\delta_0}\ge 1-\delta_0 $.
Moreover, since $\mathcal N^j_t\sim \text{Geom}(e^{-t})$ by Lemma~\ref{lem:yule},
$$
\text{Var}(A^j_t)\le \E{(A^j_t)^2} \le \E{|\mathcal N^j_t |^2 }\le 2 e^{2t}.
$$
Since we chose $t\ge \delta_0^{-1} \log (2J_2)+1$, we have $e^{ \delta_0 (t-1)}\ge 2J_2$, and so
by Chebychev's inequality,
\begin{align*}
\p{\sum_{j\in \llbracket J_1\rrbracket}A^j_t <J_1J_2}
\le \p{\left| \sum_{j\in \llbracket J_1\rrbracket} \left(A^j_t -\E{A^j_t}\right)
\right| \ge J_1}
\le J_1^{-2} \sum_{j\in \llbracket J_1\rrbracket}\text{Var}(A^j_t)
\le 2e^{2t}J_1^{-1},
\end{align*}
which completes the proof.
\end{proof}
\begin{proof}[Proof of Lemma~\ref{lem:bad_blue} item 1]
\new{Recall the definition of $B_{x,y}$ in~\eqref{eq:Bxydefn}.
Recall~\eqref{eq:Ybin} and note that for any $x\in \Z^d$,
\begin{align} \label{eq:bluenomove}
    &\p{\#\{ j\in\llbracket J\rrbracket : \beta^{\mathrm{blue},x,j}(\emptyset) > T,
 \rho^{\mathrm{blue},x,j}(\emptyset) \ge K_2 T, X^{\mathrm{blue},x,j}_t(\emptyset) =0\,\forall t\in[0,T]\} =0}\notag \\
 &\quad \le \p{Y(J,e^{-(1+K_2+\gamma)T})=0}=(1-e^{-(1+K_2+\gamma)T})^{J}.
\end{align}}
Take $a\in \{\aminus,\aplus\}$.
Recall from~\eqref{eq:R'Ldefn} that $c_2 T=\delta_0$ and $L=\epsilon_0 T$.
Recall from~\eqref{eq:Tdefn} that
$T\ge t_0\vee (\delta_0^{-1} \log(2|\ball_L(0)|)+1).$
Applying Lemma~\ref{lem:blue2} with $\epsilon=\epsilon_0$, $J_1=J$, $J_2 = |\ball_L(0)|$ and $t=T$, \new{and using~\eqref{eq:bluenomove} and a union bound}, for $x,y \in \Z^d$ with $\|y-x-(\mu +av_0)T\|<\delta_0 T$, 
$$
\p{B_{x,y}}\le 2 e^{2T}J^{-1}\new{+(1-e^{-(1+K_2+\gamma)T})^{J}} < \ep,
$$
for $N$ sufficiently large, recalling that $J=\lfloor N^{1/3}\rfloor$.
\end{proof}
The following lemma will easily imply item~2 of Lemma~\ref{lem:bad_blue}.
\begin{lemma} \label{lem:yellow}
For $\epsilon>0$, for $t\ge t_0(\epsilon)>1$ and $a\in \{\aminus,\aplus\}$, for $N$ sufficiently large, for $x,y \in \Z^d$ with $\|y-x-r(\mu +av_0)t\|<\delta_0 r t$ for some $r\in [\epsilon_1,1]$ or $x=y$,
\begin{align*}
&\mathbb P \Big( \#\Big\{ j\in\llbracket N^{1/2} \rrbracket : 
 \beta^{j}(\emptyset) > t,  \rho^{j}(\emptyset) \ge K_2 t, 
 X^{j}_s(\emptyset) \in \ball_{\epsilon t}((y-x)(\tfrac s{\epsilon_1 t}\wedge 1))\,\forall s\in[0,t]
\Big\} < N^{2/5}\Big)\\
&\qquad \le e^{-N^{2/5}/8}.
\end{align*}
\end{lemma}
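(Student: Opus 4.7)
The plan is to prove that each of the $\lfloor N^{1/2}\rfloor$ root particles independently satisfies the three required conditions with probability bounded below by some $p > 0$ depending only on the parameters of the model and on $t$, and then invoke a Chernoff bound. By the BRW tree construction of Section~\ref{sec:construction}, the lifetime $\beta^j(\emptyset)$, the resilience $\rho^j(\emptyset)$ (each $\operatorname{Exp}(1)$), and the jump trajectory $(X^j_s(\emptyset))_{s\ge 0}$ (a continuous-time random walk with rate $\gamma$ and kernel $p$) of the root particle are mutually independent. Hence
\[
p = e^{-t}\cdot e^{-K_2 t}\cdot q, \qquad q := \p{X_s \in \ball_{\epsilon t}\bigl((y-x)\bigl(\tfrac{s}{\epsilon_1 t}\wedge 1\bigr)\bigr)\;\forall s\in[0,t]},
\]
where $(X_s)_{s\ge 0}$ is a continuous-time random walk started at $0$ with jump rate $\gamma$ and kernel $p$.

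To bound $q$ from below I would distinguish the two cases. If $x=y$, the tube is centred at the origin throughout, so having no jump on $[0,t]$ (probability $e^{-\gamma t}$) already places the walk inside the tube, giving $q \ge e^{-\gamma t}$. If $\|y-x-r(\mu+av_0)t\| < \delta_0 rt$ for some $r\in[\epsilon_1,1]$, I would split $[0,t]$ at the breakpoint $\epsilon_1 t$ and use the strong Markov property. On $[0,\epsilon_1 t]$, Lemma~\ref{lem:largedev} applied with time parameter $\epsilon_1 t$ and velocity $v = (y-x)/(\epsilon_1 t)$ gives, for $\epsilon_1 t$ sufficiently large,
\[
\p{X_s \in \ball_{\frac{\epsilon t}{2}}\bigl((y-x)\tfrac{s}{\epsilon_1 t}\bigr)\;\forall s\in[0,\epsilon_1 t]} \ge e^{-(I(v)+o(1))\epsilon_1 t}.
\]
On $[\epsilon_1 t, t]$, the event that $X$ makes no jump (probability $e^{-\gamma(1-\epsilon_1)t}$) keeps $X_s = X_{\epsilon_1 t}$, which on the Phase-1 event sits within $\epsilon t/2$ of $y-x$; hence the walk remains in the tube of radius $\epsilon t$ around $y-x$. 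Combining, $q \ge e^{-(I(v)+o(1))\epsilon_1 t - \gamma(1-\epsilon_1)t}$.

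The crucial technical step is to bound $I(v)$ uniformly in $r\in[\epsilon_1,1]$, $a\in\{\aminus,\aplus\}$ and the allowed perturbation. Inspecting the proof of Lemma~\ref{lem:blue1}, the closed ball $V=\{v':\|v'-(\mu+av_0)\|\le\delta_0\}$ satisfies $I(v')<1-4\delta_0$ on $V$; since $\{I<1-4\delta_0\}$ is open by lower semi-continuity of $I$ and contained in $\dom(I)$, it lies in $\dom(I)^\circ$, so $V\subseteq\dom(I)^\circ$. Lemma~\ref{lem:basicLambda} tells us $\overline{\dom(I)}$ is a cone containing the interior point $\mu$, which forces $\dom(I)^\circ$ to be itself closed under positive scaling; consequently $(r/\epsilon_1)V \subseteq \dom(I)^\circ$ for every $r\in[\epsilon_1,1]$. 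The velocity $v=(y-x)/(\epsilon_1 t)$ lies in $(r/\epsilon_1)V$, as the condition $\|y-x-r(\mu+av_0)t\|<\delta_0 rt$ translates into a perturbation of size $<(r/\epsilon_1)\delta_0$; and the union $\bigcup_{r,a}(r/\epsilon_1)V$ is a compact subset of $\dom(I)^\circ$ on which $I$ is bounded by some $M<\infty$ by continuity of $I$ on $\dom(I)^\circ$ (Lemma~\ref{lem:basicLambda}). Hence $I(v)\le M$, giving $p \ge e^{-Ct}$ for some constant $C$ independent of $N$.

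Finally, since the $\lfloor N^{1/2}\rfloor$ root particles are independent, the number $S$ of successes stochastically dominates a $\operatorname{Bin}(\lfloor N^{1/2}\rfloor, e^{-Ct})$ random variable; for $N$ large enough that $\lfloor N^{1/2}\rfloor e^{-Ct} \ge 2N^{2/5}$, Chernoff's inequality~\eqref{eq:concen_lower} (with $\varepsilon=1/2$) yields $\p{S<N^{2/5}}\le e^{-\lfloor N^{1/2}\rfloor e^{-Ct}/8}\le e^{-N^{2/5}/8}$. The main obstacle is the uniform control of $I(v)$: since the relevant velocities $v=(y-x)/(\epsilon_1 t)\approx (r/\epsilon_1)(\mu+av_0)$ are not of the form $\mu+a'v_0$ with $a'\in[0,a_{v_0})$ for $r>\epsilon_1$, Lemma~\ref{lem:blue1} cannot be applied directly; it is the matching between the perturbation radius $\delta_0 rt$ and the scaling factor $r/\epsilon_1$, combined with the cone structure of $\dom(I)^\circ$, that makes a uniform bound possible.
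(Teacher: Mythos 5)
Your approach is genuinely different from the paper's. The paper handles the scaled velocity $(y-x)/(\epsilon_1 t)$ by a Poisson-thinning/time-change argument: coupling $X$ with a faster walk $Y$ of rate $\gamma r/\epsilon_1$ whose thinned version has the law of $X$, and then observing that after the deterministic time change the tube event for $X$ on $[0,\epsilon_1 t]$ reduces to a tube event for the velocity $r^{-1}(y-x)$ over $[0,t]$, which satisfies $\|r^{-1}(y-x)-(\mu+av_0)t\|<\delta_0 t$ and so falls directly under Lemma~\ref{lem:blue1} with the \emph{same} threshold $t_0(\epsilon)$. Your route instead exploits the cone structure of $\dom(I)$: since $\dom(I)^\circ=(\overline{\dom(I)})^\circ$ and $\overline{\dom(I)}$ is a convex cone, $\dom(I)^\circ$ is closed under positive scaling, so the dilated balls $(r/\epsilon_1)V$ sweep out a compact subset of $\dom(I)^\circ$ on which $I$ is bounded, and one then invokes the sample-path LDP directly. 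This is a valid alternative, and you correctly identified the obstacle that the relevant velocities are not of the form $\mu+a'v_0$. It yields the lemma at the cost of a possibly larger threshold $t_0$ (one needs $\epsilon_1 t$, not $t$, large enough for the LDP over the dilated velocity set), which is harmless downstream but does not literally reproduce the stated threshold the way the paper's time change does.

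Two points need repair. First, the claim that ``$\{I<1-4\delta_0\}$ is open by lower semi-continuity of $I$'' is backwards: lower semi-continuity makes sublevel sets $\{I\le c\}$ closed (equivalently $\{I>c\}$ open), not $\{I<c\}$ open. A proper lsc convex function can be finite on $\partial\dom(I)$, in which case $\{I<c\}$ fails to be open. The inclusion $V\subseteq\dom(I)^\circ$ that you actually need is nonetheless true, but the correct reason is simply that in the proof of Lemma~\ref{lem:blue1} the radius $\delta_0$ is (implicitly) taken small enough that the closed ball around the interior point $\mu+av_0$ stays inside the open set $\dom(I)^\circ$; cite that, not semi-continuity. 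Second, Lemma~\ref{lem:largedev} gives a pointwise-in-$v$ liminf, so to obtain $e^{-(I(v)+o(1))\epsilon_1 t}$ uniformly over your compact velocity set you still need the finite $\epsilon$-net covering argument used in the proof of Lemma~\ref{lem:blue1}; you establish the uniform bound $I(v)\le M$ but not the uniform-in-$v$ probability bound, which is the step that actually uses compactness.
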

\begin{proof}
Take $t\ge t_0(\epsilon)$, $r\in [\epsilon_1,1]$ and $x,y\in \Z^d$ with $\|y-x-r(\mu +av_0)t\|<\delta_0 r t$ or $x=y$.
For $j\in \N$, let
$$
A^j_t = \1_{\beta^{j}(\emptyset) > t,  \rho^{j}(\emptyset) \ge K_2 t, 
X^{j}_s(\emptyset) \in \ball_{\epsilon t}((y-x)(\frac s{\epsilon_1 t}\wedge 1))\,\forall s\in[0,t]}.
$$
Then
\begin{equation} \label{eq:EAjyellow}
\E{A^j_t} \ge e^{-t} e^{-K_2 t} e^{-\gamma(t-\epsilon_1 t)}\p{X_s \in \ball_{\epsilon t}((y-x)s/(\epsilon_1 t))\; \forall s\in [0,\epsilon_1 t]}.
\end{equation}
The probability on the right hand side of \eqref{eq:EAjyellow} can be bounded from below by a negative exponential in $t$. We make the constants explicit. Impatient readers can jump directly to \eqref{eq:direct_jump}.
Let $(N_s)_{s\ge 0}$ be a Poisson process with rate $\gamma$. Let $\zeta_1,\zeta_2,\ldots $ be i.i.d.~with distribution given by the jump kernel $p$, and take an independent sequence $\theta_1, \theta_2, \ldots $ i.i.d.~with $\theta_1$ a Bernoulli random variable with mean $\epsilon_1 /r$.
For $s\ge 0$, let $Y_s=\sum_{i=1}^{N_{sr/\epsilon_1}}\zeta_i$ and $Y'_s=\sum_{i=1}^{N_{sr/\epsilon_1}}\zeta_i \theta_i$.
Then by the thinning property of Poisson processes, $(Y'_s)_{s\ge 0}\stackrel{d}{=}(X_s)_{s\ge 0}$.
Therefore, for any $\eta>0$,
\begin{align*}
&\p{X_s \in \ball_{\epsilon t}((y-x)s/(\epsilon_1 t))\; \forall s\in [0,\epsilon_1 t]}\\
&\ge \p{Y_s \in \ball_{\epsilon t}((y-x)s/(\epsilon_1 t))\; \forall s\in [0,\epsilon_1 t], Y_s =Y'_s \; \forall s\in [0,\epsilon_1 t]}\\
&\ge \p{Y_s \in \ball_{\epsilon t}((y-x)s/(\epsilon_1 t))\; \forall s\in [0,\epsilon_1 t], \theta_i =1 \; \forall i\in \llbracket \eta t \rrbracket, N_{rt}\le \eta t}.
\end{align*}
Note that since $(Y_s)_{s\ge 0}\stackrel{d}{=}(X_{sr/\epsilon_1})_{s\ge 0}$,
\begin{align} \label{eq:timechange}
\p{Y_s \in \ball_{\epsilon t}((y-x)s/(\epsilon_1 t))\; \forall s\in [0,\epsilon_1 t]}
&=\p{X_s \in \ball_{\epsilon t}(r^{-1}(y-x)s/t)\; \forall s\in [0,r t]} \notag \\
&\ge \p{X_s \in \ball_{\epsilon t}(r^{-1}(y-x)s/t)\; \forall s\in [0,t]} \notag \\
&\ge e^{-(\gamma \vee 1)t}
\end{align}
by Lemma~\ref{lem:blue1} and since $t\ge t_0(\epsilon)$ and either $\|r^{-1}(y-x)-(\mu +av_0)t\|<\delta_0  t$ or $x=y$.
Let
$\eta=\gamma(e-1)+1+\gamma \vee 1$; then by Markov's inequality and since $N_{rt} \sim \text{Poisson}(\gamma r t)$,
\begin{align} \label{eq:Ntbig}
\p{N_{rt} >\eta t} \le e^{-\eta t} e^{\gamma r t(e-1)}\le  e^{-(1+\gamma \vee 1)t}
\end{align}
by our choice of $\eta$.
It follows that
\begin{align*}
&\p{X_s \in \ball_{\epsilon t}((y-x)s/(\epsilon_1 t))\; \forall s\in [0,\epsilon_1 t]}\\
&\ge \p{Y_s \in \ball_{\epsilon t}((y-x)s/(\epsilon_1 t))\; \forall s\in [0,\epsilon_1 t], \theta_i =1 \; \forall i\in \llbracket \eta t \rrbracket}\\
&\qquad - \p{\theta_i =1 \; \forall i\in \llbracket \eta t \rrbracket, N_{rt} > \eta t}\\
&=(\epsilon_1/r)^{\lfloor \eta t \rfloor} \left( \p{Y_s \in \ball_{\epsilon t}((y-x)s/(\epsilon_1 t))\; \forall s\in [0,\epsilon_1 t]}-\p{N_{rt}>\eta t}
\right)\\
&\ge (\epsilon_1/r)^{\eta t}(e^{-(\gamma \vee 1)t}-e^{-(1+\gamma \vee 1)t}),
\end{align*}
where the last line follows by~\eqref{eq:timechange} for the first term, and by~\eqref{eq:Ntbig} for the second term.
Hence, since $t\ge t_0(\epsilon)>1$, by~\eqref{eq:EAjyellow} we have
\begin{equation}
\label{eq:direct_jump}
\E{A^j_t} \ge (1-e^{-1}) e^{-(1+K_2 +\gamma+\gamma\vee 1 +\eta \log(1/\epsilon_1)) t} .
\end{equation}
Take $N$ sufficiently large that $N^{1/10} e^{-(1+K_2 +\gamma+\gamma\vee 1 +\eta \log(1/\epsilon_1)) t} \ge 4$. Then 
$\E{A^j_t}\ge 2 N^{-1/10}$ for each $j\in \N$, and so, recalling the notation from~\eqref{eq:Ybin},
\begin{align*}
\p{\sum_{j\in \llbracket N^{1/2} \rrbracket} A^j_t <N^{2/5} }
\le \p{Y(\lfloor N^{1/2} \rfloor, 2  N^{-1/10})<N^{2/5}}
\le e^{-N^{2/5}/8}
\end{align*}
for $N$ sufficiently large, by \eqref{eq:concen_lower}.
\end{proof}
\begin{proof}[Proof of Lemma~\ref{lem:bad_blue} item 2]
\new{Recall the definition of the event $Y_{x,y}$ in~\eqref{eq:Yxydefn}.
First note that for any $x\in \Z^d$,
\begin{align*}
    &\p{\#\{j\in \llbracket N^{1/2}\rrbracket :\beta^{\mathrm{yellow},x,j}(\emptyset)>T, \rho^{\mathrm{yellow},x,j}(\emptyset)\ge K_2 T, X_t^{\mathrm{yellow},x,j}(\emptyset)=0\,\forall t\in [0,T]\}=0}\\
    &\le \p{Y(\lfloor N^{1/2}\rfloor , e^{-(1+K_2+\gamma)T})=0}
    =(1-e^{-(1+K_2+\gamma)T})^{\lfloor N^{1/2} \rfloor}.
\end{align*}}
Recall from~\eqref{eq:Tdefn} that $T\ge t_0=t_0(\epsilon_0)$.
Take $N$ sufficiently large that Lemma~\ref{lem:yellow} holds with $\epsilon=\epsilon_0$ and $t=T$, and that
$\lfloor N^{1/3} \rfloor |\ball_L(0)|\le N^{2/5}$ and $e^{-N^{2/5}/8}\new{+(1-e^{-(1+K_2+\gamma)T})^{\lfloor N^{1/2} \rfloor}} < \ep$. Recall that $J=\lfloor N^{1/3}\rfloor$.
Then for $a\in \{\aminus, \aplus\}$ and $x,y\in \Z^d$ with $\|y-x-r(\mu +av_0)T\|<\delta_0 r T$ for some $r\in [\epsilon_1,1]$ or $x=y$, since $L=\epsilon_0 T$ we have 
$$\p{Y_{x,y}}\le e^{-N^{2/5}/8}\new{+(1-e^{-(1+K_2+\gamma)T})^{\lfloor N^{1/2} \rfloor}}<\ep $$
by Lemma~\ref{lem:yellow} and our choice of $N$.
\end{proof}

It remains to prove item~3 of Lemma~\ref{lem:bad_blue}, which will follow easily from the following result.
\begin{lemma} \label{lem:BYspread}
For $N$ sufficiently large, for $r\in \N_0$ and $x\in \Z^d$, 
$$
\p{R^{\mathrm{spread}}_{x,r}}\vee \p{B^{\mathrm{spread}}_{x,r}}\le  \min( f(r)^2 , N^{-1/2})
\quad \text{ and } \quad 
\p{Y^{\mathrm{spread}}_{x,r}}\le  \min( f(r)^2 , N^{-1/4}).
$$
\end{lemma}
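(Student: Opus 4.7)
The plan is to bound the three probabilities in parallel, exploiting a common structure: each bad event is a union of two sub-events, namely a supremum-in-time event on particles that have spread beyond radius $r/2$ (or $r/4$ for $R^{\mathrm{spread}}_{x,r}$), and a fixed-time event on the total number of particles at time $T$. The fixed-time event I would control directly: by Lemma~\ref{lem:yule}, each $|\mathcal N^{*,j}_T|$ is $\operatorname{Geom}(e^{-T})$-distributed, so an exponential Chernoff bound with parameter $s=\tfrac12 e^{-T}$ (which ensures $\mathbb E[e^{s|\mathcal N^{*,j}_T|}]\le e$) should yield a probability of order $\exp(-c K_0 N)$ (or $\exp(-c N^{1/2})$ in the yellow case), which is much smaller than what is required.

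For the supremum-in-time event, the crucial observation that I will use is that particles in a BRW tree never die: branching replaces a particle by two offspring at the same location, so $|\mathcal N^{*,j}_t|$ is non-decreasing in $t$. This gives the pointwise inequality
\[
\sup_{t\in[0,T]} \#\bigl\{u \in \mathcal N^{*,j}_t : \|X^{*,j}_t(u)\| \ge r/2\bigr\} \le \#\bigl\{u \in \mathcal N^{*,j}_T : \sup_{s\in[0,T]}\|X^{*,j}_s(u)\| \ge r/2\bigr\},
\]
because every particle $u$ alive at some time $t$ with $\|X^{*,j}_t(u)\|\ge r/2$ has at least one descendant $u'\in\mathcal N^{*,j}_T$ whose trajectory passes through $X^{*,j}_t(u)$ at time $t$, and distinct particles alive at the same time have disjoint descendant sets at time $T$. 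After summing over the trees, the right-hand side is exactly the kind of quantity controlled by Lemma~\ref{lem:movefar}.

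To extract both the $f(r)^2$ and the $N^{-1/2}$ (resp.~$N^{-1/4}$) bound, I will apply Lemma~\ref{lem:movefar} twice. Using~\eqref{eq:lemmovefarst1} with $n=K_0N$, $t=T$, $m=e^{3T}g(r)K_0N$ and radius $r/2$ should give a Markov-type bound of order $e^{-2T}g(r)^{-1}\exp(-\tfrac{r}{2R_1}\log(\tfrac{r}{2R_1\gamma Te}))$, which is at most $f(r)^2$ once $r$ exceeds a constant depending only on $T$ and $d$, because $1/(2R_1)>2/(9R_1)$. Using~\eqref{eq:lemmovefarst2} instead (after checking its precondition, which holds for $r$ beyond a $T$-dependent threshold and, via the alternative condition $ne^t\le m/2$, also for $r=0$) should yield a Chebychev-type bound of order $e^{-4T}g(r)^{-2}/(K_0 N)$, which is at most $N^{-1/2}$ for $r$ polynomially bounded in $N$. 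For $R^{\mathrm{spread}}_{x,r}$ the strategy is identical but with $n=|\ball_{(r/4)\vee 1}(x)|\cdot g(r/4)^{-1}K_0N$ trees and radius $r/4$; the inequality $1/(4R_1)>2/(9R_1)$ still leaves enough room for the Markov exponential to swallow the extra polynomial factor $|\ball_{(r/4)\vee 1}(x)|g(r/4)^{-1}$. For $Y^{\mathrm{spread}}_{x,r}$ only $n=\lfloor N^{1/2}\rfloor$ changes, and the Chebychev step then produces a bound of order $g(r)^{-2}/N^{1/2}$, which explains the weaker $N^{-1/4}$ target.

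The hard part will be the bookkeeping that stitches the two bounds into $\min(f(r)^2,N^{-1/2})$ (resp.~$\min(f(r)^2,N^{-1/4})$) uniformly in $r\in\N_0$. As in the proofs of Lemmas~\ref{lem:Rnum} and~\ref{lem:Rxykr}, I expect this to follow from the asymptotic complementarity of the two regimes: $f(r)^2$ decays super-polynomially in $r$ while the Chebychev bound contributes only polynomial factors in $r$, so the Markov bound handles the large-$r$ tail where $f(r)^2<N^{-1/2}$, the Chebychev bound handles the small-$r$ regime where $f(r)^2\ge N^{-1/2}$, and for $N$ sufficiently large these regimes overlap with a safe margin.
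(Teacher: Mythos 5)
Your proposal is correct and follows essentially the same route as the paper's proof: you split each event into its sup-in-time spread part and its time-$T$ count part, bound the count part by a Chernoff estimate on sums of $\operatorname{Geom}(e^{-T})$ variables with parameter $s=\tfrac12 e^{-T}$ (this is exactly~\eqref{eq:geom1} in Lemma~\ref{lem:geomsum}), and bound the spread part by first passing to the terminal-time count of ever-far particles and then applying both estimates of Lemma~\ref{lem:movefar}. Your explicit justification of the pointwise domination $\sup_{t\le T}\#\{u\in\mathcal N_t:\|X_t(u)\|\ge r/2\}\le\#\{u\in\mathcal N_T:\sup_{s\le T}\|X_s(u)\|\ge r/2\}$ actually makes a step explicit that the paper carries out silently. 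The only cosmetic difference is that the paper observes $\p{B^{\mathrm{spread}}_{x,r}}\le\p{R^{\mathrm{spread}}_{x,r}}$ and bounds only the latter, whereas you treat $B^{\mathrm{spread}}$ separately; and your precondition check for~\eqref{eq:lemmovefarst2} should also cover intermediate $r\ge 1$ below the $T$-dependent threshold (which is precisely the role of the $\min(1,\cdot)$ in~\eqref{eq:C1defn}), not just $r=0$ and $r$ large, but this is easily repaired by the same argument you give for $r=0$.
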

\begin{proof}
Take $r\in \N_0$ and $x\in \Z^d$.
Note first that since $|\ball_{(r/4)\vee 1}(x)|\ge 1$ and $g(r/4)^{-1}\ge 1$, we have $\p{B^{\mathrm{spread}}_{x,r}} \le \p{R^{\mathrm{spread}}_{x,r}}$. Furthermore, by a union bound,
\begin{align} \label{eq:Bspreadinit}
&\p{R^{\mathrm{spread}}_{x,r}}\notag \\
&\le 
\mathbb P\Big( \#\{(y,j,u):y\in \ball_{(r/4)\vee 1}(x), j\in \llbracket g(r/4)^{-1} K_0 N \rrbracket,u\in \mathcal N^{\mathrm{red},x,y,j}_T, \notag\\ &\hspace{8cm} \sup_{t\in [0,T]} \|X^{\mathrm{red},x,y,j}_t (u)\|\geq r/4 \}\geq e^{3T} g(r)K_0 N
\Big) \notag \\
&\quad + \p{\#\{(y,j,u):y\in \ball_{(r/4)\vee 1}(x), j\in \llbracket g(r/4)^{-1} K_0 N \rrbracket,u\in \mathcal N^{\mathrm{red},x,y,j}_T\}\ge e^{3T}g(r)^{-2}K_0 N}.
\end{align}
We begin by bounding the first term on the right hand side of~\eqref{eq:Bspreadinit}.
For $r\in \N$, by~\eqref{eq:lemmovefarst1} in Lemma~\ref{lem:movefar} with $n=|\ball_{(r/4)\vee 1}(0)|\lfloor g(r/4)^{-1} K_0 N \rfloor$, $t=T$ and $m=e^{3T} g(r) K_0 N$,
\begin{align} \label{eq:Bspreadinit2}
&\mathbb P\Big( \#\{(y,j,u):y\in \ball_{(r/4)\vee 1}(x), j\in \llbracket g(r/4)^{-1} K_0 N \rrbracket,u\in \mathcal N^{\mathrm{red},x,y,j}_T, \notag\\ &\hspace{8cm} \sup_{t\in [0,T]} \|X^{\mathrm{red},x,y,j}_t (u)\|\geq r/4 \}\geq e^{3T} g(r)K_0 N
\Big) \notag \\
&\quad \le (e^{3T} g(r) K_0 N)^{-1} |\ball_{(r/4)\vee 1}(0)|g(r/4)^{-1} K_0 N e^T e^{-\frac r {4R_1} \log \left( \frac r {4R_1 \gamma T e}\right)} \notag \\
&\quad =  |\ball_{(r/4)\vee 1}(0)|g(r/4)^{-1}e^{-2T} g(r)^{-1} e^{-\frac r {4R_1} \log \left( \frac r {4R_1 \gamma T e}\right)}.
\end{align}
We now bound the second term on the right hand side of~\eqref{eq:Bspreadinit}.
Let $s= e^{-T}/2$.
Recall from before~\eqref{eq:C1defn2} that we chose $T$ sufficiently large that $(1-e^{-T})e^s <1$.
Hence for any $r\in \N_0$, by~\eqref{eq:geom1} from Lemma~\ref{lem:geomsum} with $t=T$, $n =|\ball_{(r/4)\vee 1}(0)| \lfloor g(r/4)^{-1}K_0 N \rfloor$ and $m=e^{3T}g(r)^{-2} K_0 N$,
\begin{align} \label{eq:Bspreadgeom}
 &\p{\#\{(y,j,u):y\in \ball_{(r/4)\vee 1}(x), j\in \llbracket g(r/4)^{-1} K_0 N \rrbracket,u\in \mathcal N^{\mathrm{red},x,y,j}_T\}\ge e^{3T}g(r)^{-2}K_0 N} \notag \\
&\quad \le e^{- s e^{3T} g(r)^{-2}K_0 N } \left(\frac {e^s} {1-(1-e^{-T})e^s}\right)^{|\ball_{(r/4)\vee 1}(0)|g(r/4)^{-1} K_0 N} \notag \\
&\quad \le e^{-\frac 14 e^{2T} g(r)^{-2}K_0 N},
\end{align}
where the last line follows since
$\tfrac 14 e^{2T}g(r)^{-2}\ge |\ball_{(r/4)\vee 1}(0)|g(r/4)^{-1} \log (\tfrac{e^s}{1-(1-e^{-T})e^s})$ by~\eqref{eq:C1defn2} and $se^{3T}=\frac 12 e^{2T}$.
Hence for $r\in \N$ sufficiently large, by~\eqref{eq:Bspreadinit} and~\eqref{eq:Bspreadinit2},
and since $f(r)^2 =e^{-\frac{2r}{9R_1}\log (r+1)}$, we have
$$
\p{R^{\mathrm{spread}}_{x,r}}\vee \p{B^{\mathrm{spread}}_{x,r}} \le f(r)^2.
$$
We now establish another bound on the first term on the right hand side of~\eqref{eq:Bspreadinit}, which will be stronger than~\eqref{eq:Bspreadinit2} for small values of $r$.
By~\eqref{eq:C1defn}, we have
$$
|\ball_{r/4}(0)| g(r/4)^{-1} e^T \min\left(1, e^{-\frac r {4R_1} \log \left( \frac r {4R_1 \gamma T e}\right)}\right)\le \tfrac 12 e^{3T} g(r) \quad \forall r\ge 1 \quad \text{ and }\quad  e^T \le \tfrac 12 e^{3T}.
$$
Hence for $r\in \N_0$, by~\eqref{eq:lemmovefarst2} in Lemma~\ref{lem:movefar} with $n=|\ball_{(r/4)\vee 1}(0)|\lfloor g(r/4)^{-1} K_0 N \rfloor$, $t=T$ and $m=e^{3T}g(r)K_0 N$,
\begin{align*}
&\mathbb P\Big( \#\{(y,j,u):y\in \ball_{(r/4)\vee 1}(x), j\in \llbracket g(r/4)^{-1} K_0 N \rrbracket,u\in \mathcal N^{\mathrm{red},x,y,j}_T, \notag\\ &\hspace{8cm} \sup_{t\in [0,T]} \|X^{\mathrm{red},x,y,j}_t (u)\|\geq r/4 \}\geq e^{3T} g(r)K_0 N
\Big) \notag \\
&\quad \le 8 (e^{3T} g(r) K_0 N)^{-2}  |\ball_{(r/4)\vee 1}(0)| g(r/4)^{-1} K_0 N e^{2T} \\
&\quad = 8 e^{-4T} g(r)^{-2}|\ball_{(r/4)\vee 1}(0)| g(r/4)^{-1} (K_0 N)^{-1}.
\end{align*}
For $N$ sufficiently large, for $r\in \N_0$, if $f(r)\ge N^{-1/2}$ then $r\le \log N$ and so by~\eqref{eq:Bspreadinit} and~\eqref{eq:Bspreadgeom},
\begin{align*}
&\p{R^{\mathrm{spread}}_{x,r}}\vee \p{B^{\mathrm{spread}}_{x,r}}\\
&\le e^{-\frac 14  e^{2T}g(r)^{-2}K_0 N}+8 e^{-4T} K_0^{-1} g(\log N)^{-2} |\ball_{(\log N)/4}(0)| g((\log N)/4)^{-1} N^{-1} \le N^{-1/2}
\end{align*}
for $N$ sufficiently large.
The bound on $\p{Y^{\mathrm{spread}}_{x,r}}$ follows by the same argument.
\end{proof}

\begin{proof}[Proof of Lemma~\ref{lem:bad_blue} item 3]
Fix $\ep>0$; then for $x\in \Z^d$ and $r\in \N_0$, by a union bound,
\begin{equation} \label{eq:Pxrepsum}
\p{P_{x,r,\ep}}\le \p{Z^{(x)}_\ep >r/4}+\p{ R^{\mathrm{spread}}_{x,r} }+\p{ B^{\mathrm{spread}}_{x,r} }+ \p{Y^{\mathrm{spread}}_{x,r} }.
\end{equation}
By~\eqref{eq:Zepsdefn} we have that if $\ep$ is sufficiently small, for all $r\ge 0$,
$$
\p{Z^{(x)}_\ep >r/4}=\p{Z^{(x)}_\ep \ge \lfloor r/4 \rfloor +1}=\min(f(\lfloor r/4 \rfloor +1+\ep^{-1}),\ep)
\le \tfrac 12 \min\Big(2\ep, \frac{f(r+\ep ^{-1})^{1/5}}{|\ball_{r+3D/2}(0)|} \Big).
$$
Moreover, for $N$ sufficiently large, by Lemma~\ref{lem:BYspread},
\begin{align*}
\p{ R^{\mathrm{spread}}_{x,r} }+\p{ B^{\mathrm{spread}}_{x,r} }+ \p{Y^{\mathrm{spread}}_{x,r} }\le 3\min(f(r)^2 ,N^{-1/4})\le \tfrac 12 \min\Big(2\ep, \frac{f(r+\ep ^{-1})^{1/5}}{|\ball_{r+3D/2}(0)|} \Big)
\end{align*}
for $N$ sufficiently large.
By~\eqref{eq:Pxrepsum}, this completes the proof.
\end{proof}

\bibliographystyle{alpha}
\bibliography{2021_brw_nonlocal_competition}

\end{document}